\pgfplotsset{compat=1.16}
\definecolor{histcolor}{rgb}{0.97858, 0.678934, 0.157834}
\DeclareMathOperator{\id}{\mathrm{id}}
\DeclareMathOperator{\IE}{\mathbb{E}}
\newcommand{\itensor}{\oocirc} \newcommand{\llangle}{\langle\!\langle} 
\newcommand{\rrangle}{\rangle\!\rangle}
\newcommand{\freeprod}{\mathbin{\amalg{}}} 
\newcommand{\FP}{\freeprod}
\renewcommand{\star}{\mathbin{\ast{}}}
\DeclareMathOperator{\diag}{\mathrm{diag}}
\DeclareMathOperator{\Ord}{\mathcal{O}}
\DeclareMathOperator{\tensT}{\mathcal{T}}
\DeclareMathOperator{\tensTb}{\overline{\mathcal{T}}}
 \DeclareMathOperator{\bdeconc}{\mbox{$\widebar{\Delta}$}_d}
\DeclareMathOperator{\fdeconc}{\tilde{\Delta}_d}
\DeclareMathOperator{\bdeconci}{\mbox{$\widebar{\Delta}$}_d^{\itensor}}
\DeclareMathOperator{\IntPart}{\mathrm{Int}}
\DeclareMathOperator{\Comp}{\mathcal{C}}
\DeclareMathOperator{\descents}{\mathcal{D}}
\DeclareFontFamily{U} {MnSymbolC}{}
\DeclareFontShape{U}{MnSymbolC}{m}{n}{
  <-6> MnSymbolC5
  <6-7> MnSymbolC6
  <7-8> MnSymbolC7
  <8-9> MnSymbolC8
  <9-10> MnSymbolC9
  <10-12> MnSymbolC10
  <12-> MnSymbolC12}{}
\DeclareFontShape{U}{MnSymbolC}{b}{n}{
  <-6> MnSymbolC-Bold5
  <6-7> MnSymbolC-Bold6
  <7-8> MnSymbolC-Bold7
  <8-9> MnSymbolC-Bold8
  <9-10> MnSymbolC-Bold9
  <10-12> MnSymbolC-Bold10
  <12-> MnSymbolC-Bold12}{}
\DeclareSymbolFont{MnSyC} {U} {MnSymbolC}{m}{n}
\DeclareMathSymbol{\oocirc}{\mathbin}{MnSyC}{99}
\newcommand{\bPsi}{\boldsymbol{\Psi{}}}
\def\R{{\mathbb R}}
\newcommand\IC{{\mathbb C}}
\newcommand\IK{{\mathbb K}}
\newcommand\IF{{\mathbb F}}
\newcommand\IN{{\mathbb N}}
\newcommand\uXn{{X_1,X_2,\ldots,X_n}}
\newcommand\uXk{{X_1,X_2,\ldots,X_k}}
\newcommand\bX{{\mathcal{X}}}
\newcommand\bub[1]{\mathring{#1}}
\def\<{\langle}
\def\>{\rangle}
\def\E{\mathbb{E}}
\DeclareMathOperator{\NC}{\mathit{NC}} \DeclareMathOperator{\NCirr}{\NC^\mathit{irr}} 
 \newcommand{\alg}[1]{\mathcal{#1}}
\renewcommand{\norm}[1]{\lVert#1\rVert}
\renewcommand{\abs}[1]{\lvert#1\rvert}
\newcommand{\harpleftsign}{\scriptstyle\leftharpoonup}
\newcommand{\harpleft}[2]{\ifx\displaystyle#1\doalign{$\harpleftsign$}{#1#2}\fi
  \ifx\textstyle#1\doalign{$\harpleftsign$}{#1#2}\fi
  \ifx\scriptstyle#1\doalign{\scalebox{.6}[.9]{$\harpleftsign$}}{#1#2}\fi
  \ifx\scriptscriptstyle#1\doalign{\scalebox{.5}[.8]{$\harpleftsign$}}{#1#2}\fi
}
\newcommand{\harprightsign}{\scriptstyle\rightharpoonup}
\newcommand{\harpright}[2]{\ifx\displaystyle#1\doalign{$\harprightsign$}{#1#2}\fi
  \ifx\textstyle#1\doalign{$\harprightsign$}{#1#2}\fi
  \ifx\scriptstyle#1\doalign{\scalebox{.6}[.9]{$\harprightsign$}}{#1#2}\fi
  \ifx\scriptscriptstyle#1\doalign{\scalebox{.5}[.8]{$\harprightsign$}}{#1#2}\fi
}
\newcommand{\doalign}[2]{{\vbox{\offinterlineskip\ialign{\hfil##\hfil\cr#1\cr$#2$\cr}}}}
\newcommand{\hl}[1]{\mathpalette\harpleft{#1}}
\newcommand{\hr}[1]{\mathpalette\harpright{#1}}
\DeclareMathOperator{\rDelta}{\mathpalette\harpright{\Delta}}
\DeclareMathOperator{\lDelta}{\mathpalette\harpleft{\Delta}}
 \newcommand*{\closeindex}[1]{_{\mkern-4.5mu#1}}
 \DeclareRobustCommand{\rdelta}{\mathpalette\harpright{\delta}\@ifnextchar_{\expandafter\closeindex\@gobble}{}}
 \DeclareRobustCommand{\ldelta}{\mathpalette\harpleft{\delta}\@ifnextchar_{\expandafter\closeindex\@gobble}{}}
 \DeclareRobustCommand{\rnabla}{\hr{\nabla}\@ifnextchar_{\expandafter\closeindex\@gobble}{}}
 \DeclareRobustCommand{\lnabla}{\hl{\nabla}\@ifnextchar_{\expandafter\closeindex\@gobble}{}}
\renewcommand{\tocsection}[3]{\indentlabel{\@ifnotempty{#2}{\bfseries\ignorespaces#1 #2\quad}}\bfseries#3}
\renewcommand{\tocsubsection}[3]{\indentlabel{\@ifnotempty{#2}{\ignorespaces#1 #2\quad}}#3}
\newcommand\@dotsep{4.5}
\def\@tocline#1#2#3#4#5#6#7{\relax
	\ifnum #1>\c@tocdepth \else
	\par \addpenalty\@secpenalty\addvspace{#2}\begingroup \hyphenpenalty\@M
	\@ifempty{#4}{\@tempdima\csname r@tocindent\number#1\endcsname\relax
	}{\@tempdima#4\relax
	}\parindent\z@ \leftskip#3\relax \advance\leftskip\@tempdima\relax
	\rightskip\@pnumwidth plus1em \parfillskip-\@pnumwidth
	#5\leavevmode\hskip-\@tempdima{#6}\nobreak
	\leaders\hbox{$\m@th\mkern \@dotsep mu\hbox{.}\mkern \@dotsep mu$}\hfill
	\nobreak
	\hbox to\@pnumwidth{\@tocpagenum{\ifnum#1=1\bfseries\fi#7}}\par \nobreak
	\endgroup
	\fi}
\def\l@subsection{\@tocline{2}{0pt}{2.5pc}{5pc}{}}
\newcommand{\bbeta}[1]{\beta^b_{#1}}
\newcommand{\bbetan}[2]{\beta^{b^{(\scriptstyle #2)}}_{#1}\!}
\newcommand{\fbeta}[1]{\beta^\delta_{#1}}
\newcommand{\fbetan}[2]{\beta^{\delta^{(\scriptstyle #2)}}_{#1}\!}
\newcommand{\be}{\begin{equation}}
\newcommand{\ee}{\end{equation}}
\newtheorem{theorem}{Theorem}[section]
\newtheorem{proposition}[theorem]{Proposition}
\newtheorem{corollary}[theorem]{Corollary}
\newtheorem{lemma}[theorem]{Lemma}
\theoremstyle{definition}
\newtheorem{notation}[theorem]{Notation}
\newtheorem{definition}[theorem]{Definition}
\newtheorem{assumption}[theorem]{Assumption}
\newtheorem{algorithm}[theorem]{Algorithm}
\theoremstyle{remark}
\newtheorem{remark}[theorem]{Remark}
\newtheorem{example}[theorem]{Example}
\title{Free Integral Calculus I}
\author[F. Lehner]{Franz Lehner}
\address[F. Lehner]{Institut f\"ur Diskrete Mathematik\\
Technische Universit\"at Graz\\
Steyrergasse 30\\
A-8010 Graz, Austria\\
ORCID: 0000-0002-6902-5148
}
\email{lehner@math.tu-graz.ac.at}
\author[K. Szpojankowski]{Kamil Szpojankowski}
\address[K. Szpojankowski]{Wydzia\l{} Matematyki i Nauk Informacyjnych\\
Politechnika Warszawska\\
ul. Koszykowa 75\\
00-662 Warszawa, Poland\\
ORCID: 0000-0003-0926-1285
}
\email{kamil.szpojankowski@pw.edu.pl}
\thanks{Supported by the Austrian Federal Ministry of Education, Science and
  Research and the Polish Ministry of Science and Higher Education, grants
  N$^{\textrm{os}}$ PL 08/2016 and PL 06/2018.\\
  FL: This research was funded in part by the Austrian Science Fund (FWF) grant
  I 6232-N BOOMER (WEAVE).\\
KSz: This research was funded in part by National Science Centre, Poland WEAVE-UNISONO grant BOOMER 2022/04/Y/ST1/00008.
\\ For the purpose of Open Access, the authors have applied a CC-BY public copyright licence to any Author Accepted Manuscript (AAM) version arising from this submission.}
\numberwithin{equation}{section}
\begin{document}
\date{Rev.\SVNRevision, \today}

\begin{abstract}
  We study the problem of conditional expectations in free random variables
  and provide closed formulas for the conditional expectation of a resolvent
  $\Psi(z) = (1-zP(a_1,\ldots,a_n))^{-1}$  of an arbitrary non-commutative
  polynomial $P(a_1,\ldots,a_n)$ in free random variables
  $a_1,\ldots,a_n$ onto the subalgebra of an arbitrary subset of the
  variables $a_1,\ldots,a_n$.
  More precisely, given a linearization of $\Psi(z)$, our methods allow to compute a
  linearization of its conditional expectation.
  The coefficients of the expressions obtained in this process involve certain  Boolean
  cumulant functionals which can be computed by solving a system of equations.
  On the way towards the main result we introduce a   non-commutative
  differential calculus which allows to evaluate
  conditional expectations and certain Boolean cumulant functionals $\bbeta{}$
  and $\fbeta{}$.
  We conclude the paper with several examples which
  illustrate the working of the developed machinery.
  Two appendices complement the paper. 
  The first appendix contains a purely algebraic approach to Boolean cumulants
  and the second appendix provides a crash course on linearizations of rational
  series. 
\end{abstract}

\keywords{Free Probability, subordination, Boolean cumulants, conditional expectation}
\subjclass[2020]{Primary: 46L54. Secondary:  15B52.}

\maketitle

\setcounter{tocdepth}{2}

\tableofcontents{}

\section{Introduction}

\subsection{Outline}

Free probability was introduced by Voiculescu 40 years ago
\cite{Voiculescu:1985:symmetries}
as a means to solve long-standing problems in von Neumann algebras.
Over the years deep connections to several branches of mathematics came
to light, most notably random matrix theory.
Complementing Voiculescu's analytic approach,
Speicher developed a combinatorial theory of
free cumulants \cite{SpeicherNC}, the latter already being announced in
\cite{Voiculescu:1985:symmetries}.
In this approach, free independence is characterized by vanishing of mixed free
cumulants, in analogy to classical independence, which can by characterized by
vanishing of mixed classical cumulants.
Indeed, most properties of free cumulants can be obtained from the
corresponding properties of classical cumulants by replacing the lattice of all
set partitions by the lattice of non-crossing partitions,
following the general scheme of multiplicative functions on lattices  \cite{DoubiletRotaStanley:1972:foundations6}.
The discovery of free cumulants triggered a lot of progress in Free
Probability,
and it was the starting point of many deep combinatorial studies of various
structures in free probability, see the standard reference \cite{NicaSpeicherLect} for a detailed treatment of free cumulants.

Boolean cumulants were introduced in classical probability as 
``centered moments'' by Statulevi\v{c}ius \cite {statulevicius:1969:limit2}
and in physics as ``partial cumulants''  by von Waldenfels
\cite{Waldenfels:1973:approach}
in order to simplify certain calculations in mathematical physics
The name ``Boolean cumulants'' (together with the  corresponding notion of
\emph{Boolean independence}) was introduced later in\cite{SpeicherWoroudi:1997:boolean} as a special case
of  conditionally free independence
\cite{Bozejko:1986:positive,BozejkoSpeicher:psi:1991,BozLeinSpeich}.
Let us also mention that Boolean cumulants are known as ``self-energy''
in quantum field theory \cite {ItzyksonZuber:QFT:1980}
and have a  natural interpretation as first
return probabilities of random walks \cite{Woess:1986:nearest}.

Combinatorially Boolean cumulants follow the pattern of classical and free cumulants by
replacing the lattice of set partitions (resp.~non-crossing partitions)
with the lattice of interval partitions, which is isomorphic to the Boolean
lattice
and thus from a combinatorial point of view Boolean cumulants are the
simplest kind of cumulants.
 
Recently it was noticed that despite their simplicity
Boolean cumulants are useful for non-commutative probability in general
\cite{JekelLiu:2019:operad} and free probability in particular
\cite{BelinschiNicaBBPforKTuples,FMNS2,LehnerSzpojan,SzpojanWesol}.
Boolean cumulants were used for the first combinatorial solution to the problem
of the free anti-commutator in \cite{FMNS2} (an analytic solution was found
previously by Vasilchuk \cite{Vasilchuk:2003:commutator}; a solution in terms of free
cumulants was presented more recently in \cite{Perales:anticommutator}),
and for the identification of the coefficients of the power series expansion of
subordination functions \cite{LehnerSzpojan}.
From a graph theoretic perspective, first return probabilities are a basic tool
in the study of random walks on free product groups  \cite{Woess:1986:nearest,Woess:2000}.

In the present paper we continue these investigations and show that Boolean
cumulants may indeed be used for a systematic study of free random variables.
The first step in this direction is a surprisingly simple characterization of
freeness by the vanishing of \emph{some} (but not all) mixed Boolean cumulants.

\begin{theorem}[Characterization of freeness in terms of Boolean cumulants]\label{thm:charfree}
  Let $(\alg{M},\varphi)$ be a tracial non-commutative probability space. Subalgebras $\alg{A}$ and $\alg{B}$ are free if and only if  $\beta_{m}(a_1,a_2,\ldots,a_n)=0$ whenever $n>1$ and  $a_j\in\alg{A}\cup\alg{B}$ for $j=1,2,\ldots,n$ with $a_1$ and $a_n$ coming from different subalgebras.
\end{theorem}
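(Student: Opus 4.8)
The plan is to pivot on the combinatorial relation between Boolean and free cumulants, namely
\[
  \beta_n(a_1,\dots,a_n)=\sum_{\pi\in\NCirr(n)}\kappa_\pi(a_1,\dots,a_n),
\]
where $\kappa_\pi$ denotes the product of free cumulants over the blocks of $\pi$ and $\NCirr(n)$ is the set of \emph{irreducible} non-crossing partitions (those admitting no splitting of $\{1,\dots,k\}$ from $\{k+1,\dots,n\}$ into unions of blocks). I would first record the elementary structural fact that drives everything: \emph{if $\pi\in\NCirr(n)$, then $1$ and $n$ lie in the same block.} Indeed, suppose $1$ and $n$ lie in different blocks and let $B\ni 1$ with $p=\max B$, so $p<n$; non-crossingness forces every block meeting $\{1,\dots,p\}$ to be contained in it (an element $j$ of some block $C$ with $1<j<p$ together with an element $l>p$ of $C$ and with $1,p\in B$ would produce the crossing $1<j<p<l$), whence $\{1,\dots,p\}$ is a union of blocks and $\pi$ is reducible. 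Thus in any irreducible $\pi$ the outermost block swallows both endpoints.

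Granting this, the forward direction is immediate. If $\alg{A}$ and $\alg{B}$ are free, then mixed free cumulants vanish, so $\kappa_\pi=0$ unless every block of $\pi$ is monochromatic. When $a_1$ and $a_n$ come from different subalgebras, the block containing $1$ and $n$ is bichromatic for \emph{every} $\pi\in\NCirr(n)$, whence every summand vanishes and $\beta_n(a_1,\dots,a_n)=0$.

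For the converse I would prove, by induction on $n$ (the base case being vacuous), that every mixed free cumulant vanishes, which is equivalent to freeness; the traciality hypothesis enters here in an essential way. Assume the claim for all orders $<n$ and take $a_1,\dots,a_n$ with a non-constant coloring. If $a_1$ and $a_n$ lie in different subalgebras, the hypothesis gives $\beta_n=0$; in the expansion above every term with $\pi\neq 1_n$ contains the (bichromatic, hence mixed) block through $1$ and $n$, which has size $<n$, so it vanishes by the inductive hypothesis, leaving $\kappa_n=\beta_n=0$. If instead $a_1$ and $a_n$ share a subalgebra, I would rotate: since the coloring is non-constant there is a cyclic shift placing differently-colored letters at the two ends, and because $\varphi$ is a trace the free cumulant $\kappa_n$ is invariant under cyclic permutation of its arguments, so $\kappa_n(a_1,\dots,a_n)$ equals the cumulant of the rotated tuple, which vanishes by the previous case.

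The main obstacle is exactly this second case: the hypothesis only constrains $\beta_n$ when the \emph{endpoints} differ, a condition that is not preserved by the Boolean moment–cumulant expansion, so there is no direct combinatorial route to $\kappa_n$ for endpoint-monochromatic mixed tuples. The resolution is to trade the weak-looking endpoint hypothesis against the cyclic symmetry of free cumulants supplied by traciality; verifying (or citing) that free cumulants are cyclically invariant for a trace is the one nontrivial external input, and it is precisely what upgrades the seemingly partial vanishing condition into full freeness.
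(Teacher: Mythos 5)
Your proposal is correct, and the converse direction follows a genuinely different route from the paper's. The forward implication coincides with one of the two arguments the paper indicates (the expansion of Boolean cumulants over irreducible non-crossing partitions, which the paper cites and complements with a self-contained induction on the recurrence $\varphi(a_1\dotsm a_n)=\sum_k\beta_k(a_1,\dots,a_k)\varphi(a_{k+1}\dotsm a_n)$); your self-contained proof that every $\pi\in\NCirr(n)$ joins $1$ and $n$ in one block is exactly the structural fact needed there. For the converse, however, the paper stays at the level of \emph{moments}: it expands $\varphi(a_1b_1\dotsm a_nb_n)$ for centered alternating words by the Boolean recurrence, kills the even-order terms by the hypothesis $(CAC)$, and disposes of the remaining terms by traciality together with Voiculescu's strengthening $(F(n)')$ of the freeness condition (allowing one end of the alternating word to be non-centered), thereby verifying freeness directly. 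You instead run the induction at the level of \emph{free cumulants}: the Boolean-to-free expansion plus the inductive hypothesis forces $\beta_n=\kappa_n$ for endpoint-bichromatic tuples, and traciality enters once, as cyclic invariance of $\kappa_n$, to transport the vanishing to endpoint-monochromatic mixed tuples; Speicher's vanishing-of-mixed-cumulants criterion then yields freeness. Both arguments need one external input of comparable weight (Voiculescu's $(F(n)')$ lemma versus cyclic invariance of free cumulants for traces plus Speicher's criterion), and both use traciality for a rotation; the paper's version is more in the spirit of the paper, which deliberately avoids free-cumulant combinatorics, while yours is shorter and arguably more transparent if the standard free-cumulant toolkit is taken for granted, since it isolates exactly where traciality is indispensable.
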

For a random walk interpretation of this characterization see Remark~\ref{rem:quenell}.
This property turns out to be the key to the efficient calculation of conditional
expectations in free random variables.
In particular, for free random variables $a_1,a_2,\ldots,a_n$ we determine the
conditional expectation of the resolvent $(1-z P(a_1,a_2,\ldots,a_n))^{-1}$
of an arbitrary non-commutative polynomial $P(a_1,a_2,\dots,a_n)$ onto the subalgebra
generated by some subset of the variables $a_1,a_2,\ldots,a_n$.
To this end we employ the concept of linearizations, a method also used in
\cite{BelinschiMaiSpeicherSubordination,HeltonMaiSpeicher:ApplicationsOfRealizations};
more precisely, given a linearization of the resolvent, we obtain a
linearization of its conditional expectation.
While our discussion is focused on non-commutative rational functions of the form
$(1-z P(a_1,a_2,\ldots,a_n))^{-1}$ where $P$ is a polynomial,  an example at the end of this paper
shows that with some extra effort the methods  are applicable to a wider class
of rational functions.
In subsequent work we will extend the method
to conditionally free random variables \cite{LehnerSzpojankowski:freeint2}.

The first step of our approach is based on a recurrence which
can be summarized as a free integration formula.
In the case of resolvents it naturally leads to a system of linear equations
for the conditional expectation which can be turned into a linearization of the latter.
The coefficients  appearing  in this linearization are certain
generating functions of mixed Boolean cumulants of free random variables.

For the sake of simplicity let us consider the case of non-commutative
polynomials in two free random variables $X$ and $Y$.
We start with a constructive proof of the well known fact
that the conditional expectation of a polynomial in free random variables
is a polynomial again, whose coefficients are obtained by evaluating
a certain functional $\bbeta{Y}$ which depends on the Boolean cumulants
of $X$ and $Y$. The proof is recursive and involves a certain derivation
$\rdelta_{X}$ acting on non-commutative polynomials.
For precise definitions we refer to Section \ref{sec:calcCond}.  
\begin{theorem}[Free integration formula]\label{thm:free_int_formula}
  The conditional expectation of a non-commutative polynomial $P\in\IC\langle
  X,Y\rangle$
  satisfies the identity 
	\begin{align*}
          \E_{X}[P]=\bbeta{Y}(P)+(\bbeta{Y}\otimes\E_{X})(\rdelta_{X}(P)).
	\end{align*}
\end{theorem}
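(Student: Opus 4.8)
My plan is to verify the formula on monomials and extend by linearity, running an induction on the length of the word. Since $\E_X$, $\bbeta{Y}$ and $\rdelta_{X}$ are all linear, it suffices to treat a single word $w=w_1w_2\cdots w_n$ with each $w_i\in\{X,Y\}$; I read the right-hand side as an element of the subalgebra generated by $X$, obtained by applying $\bbeta{Y}$ to the left leg and $\E_X$ to the right leg of $\rdelta_{X}(w)$ and multiplying the two (both land in that subalgebra). The base cases are immediate from the definitions in Section~\ref{sec:calcCond}: when $w$ contains no $X$ one has $\rdelta_{X}(w)=0$ and the statement reduces to $\E_X[w]=\bbeta{Y}(w)$, i.e.\ that $\bbeta{Y}$ agrees with $\varphi$ on $\IC\langle Y\rangle$; the single-letter cases $w=X$ and $w=Y$ pin down the normalization of $\bbeta{Y}$ against the bimodule value $\E_X[X]=X$.

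The engine of the induction is the first-block recurrence for Boolean cumulants. Expanding $\E_X[w]$ through the (operator-valued) Boolean moment--cumulant formula groups the terms according to the first interval block $\{1,\dots,j\}$ of the word, each contributing the Boolean cumulant of the prefix $w_1\cdots w_j$ times $\E_X$ of the suffix $w_{j+1}\cdots w_n$. The extreme term $j=n$, in which the whole word forms one block, is exactly $\bbeta{Y}(w)$; the remaining terms $1\le j<n$, with a proper prefix and a strictly shorter suffix, are precisely what $(\bbeta{Y}\otimes\E_X)\circ\rdelta_{X}$ reassembles once $\rdelta_{X}$ is unfolded through its Leibniz rule. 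Matching the two descriptions term by term, and feeding the strictly shorter suffixes to the inductive hypothesis, closes the recursion.

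The step I expect to be the main obstacle is controlling exactly which prefixes survive, and this is where Theorem~\ref{thm:charfree} enters decisively. By that characterization a mixed Boolean cumulant vanishes as soon as its first and last arguments come from different subalgebras, so most of the prefix cumulants drop out and the unrestricted block sum collapses onto the support actually recorded by $\rdelta_{X}$. The delicate bookkeeping is twofold: first, the surviving prefixes carry $X$'s that must be transported through $\E_X$ in the correct order, so the $X$-bimodule structure has to be tracked on both tensor legs; and second, one must check that the vanishing furnished by Theorem~\ref{thm:charfree}, together with the absorption rules for cumulants having a unit or an $X$-argument, matches the support of $\rdelta_{X}$ exactly, leaving no spurious contributions on either side. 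Once this combinatorial alignment is established the inductive step is purely formal.
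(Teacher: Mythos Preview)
Your outline has the right ingredients but the central step is misstated in a way that would derail the argument.

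The expansion you invoke,
\[
\E_X[w_1\cdots w_n]\ \stackrel{?}{=}\ \sum_{j=1}^n \beta_j(w_1,\ldots,w_j)\,\E_X[w_{j+1}\cdots w_n],
\]
is simply false for words beginning with $X$: take $w=XY$, where the left side is the operator $X\,\varphi(Y)$ while the right side is the scalar $\varphi(X)\varphi(Y)$. A conditional expectation onto $\langle X\rangle$ cannot absorb its own generators into scalar prefix cumulants; this is exactly why the genuine recurrence~\eqref{eq:introCondExprecurrence} carries an explicit operator $b_k$ between the cumulant factor and the residual $\E_{\alg{B}}$. There is no general ``operator-valued Boolean moment--cumulant formula'' of the scalar-prefix shape you assume.

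Even restricting to words beginning with $Y$ --- where your expansion \emph{can} be justified by pairing against $b\in\langle X\rangle$ and using Theorem~\ref{thm:charfree} to kill $\beta_{n+1}(w_1,\ldots,w_n,b)$ --- your identification of the $j=n$ term with $\bbeta{Y}(w)$ is wrong: that term is the fully factored cumulant $\fbeta{}(w)=\beta_n(w_1,\ldots,w_n)$, whereas $\bbeta{Y}(w)$ is the block cumulant, and for instance $\fbeta{}(Y^2)=\beta_2(Y,Y)\neq\varphi(Y^2)=\bbeta{Y}(Y^2)$. Your letter-wise sum does ultimately coincide with the right-hand side, but only after collapsing $Y$-runs via the product formula (Lemma~\ref{lem:productformula}); carrying out that regrouping is the actual content, and it is absent from your sketch.

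The paper's proof (Theorem~\ref{thm:condexprec}) avoids both problems by working at block level from the outset, via a four-way case split on the $\alg{A}$--$\alg{B}$ type of the monomial. The nontrivial case (type $Y$--$Y$) is exactly the block recurrence of Proposition~\ref{Prop:34}, which follows from applying the scalar recurrence~\eqref{eq:recurrenceboolcum} to $\varphi(wb)$ and invoking Property~$(CAC)$. Words starting with $X$ give the trivial identity $\E_X[w]=\bbeta{Y}(1)\cdot\E_X[w]$ from the $1\otimes w$ summand of $\rdelta_X(w)$, with all other summands killed by $\bbeta{Y}$; no cumulant expansion or induction is needed there at all.
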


This formula has a certain resemblance to classical integration.
$\E_X$ ``integrates away'' the variable $Y$ and if we denote
$\mathfrak{I}_Y=\bbeta{Y}\otimes \E_X$, then the formula reads
$$
\mathfrak{I}_Y(\rdelta_{X}(P)) = \E_{X}[P] - \bbeta{Y}(P)
= \mathfrak{I}_Y(I\otimes P - P\otimes I).
$$

The formula above allows to calculate conditional expectations in terms of Boolean cumulants. Thus we are faced with the problem to calculate Boolean cumulants of functions
in free random variables.
In order to do this we introduce an algebraic calculus
involving linear functionals $\bbeta{}$ and
$\fbeta{}$ on the free algebra which evaluate Boolean cumulants in two ways,
\emph{blockwise} and \emph{fully factored}.
We then establish mutual recursive equations between these 
with the help of the previously developed algebraic devices
which lead to a closed system of algebraic equations.
Although the functionals $\bbeta{}$ and $\fbeta{}$ are defined in view
of conditional expectations, we expect them to have much wider
applications, and the calculus we introduce for them is of independent
interest. It is based on generalizations of
observations from \cite{FMNS2,LehnerSzpojan} and 
subsumes the combinatorial case-by-case analysis of functions in
free variables into a general algebraic machinery.

On first sight it would seem more natural to use free cumulants for such a
calculus,
since mixed free cumulants of free random variables vanish and they turn additive free
convolution into the simple operation of addition of cumulants.
 Indeed 
first steps towards such a calculus were done in \cite{Cebron:2013:free} and
 we expect  the unshuffle algebras of \cite{EbrahimiFardPatras:2015:halfshuffles} to play a role here. 
However it turned out that when it comes to multiplicative free convolution,
the formulas are actually identical \cite{BelinschiNicaBBPforKTuples} and 
free cumulants offer no advantage over Boolean cumulants.
In fact the combinatorial simplicity
of  Boolean cumulants allows us to almost entirely dispense with the
combinatorial machinery of set partitions.
The main steps of our calculations are as follows:
\begin{enumerate}[(i)]
 \item
  Turn the simple combinatorics of Boolean cumulants of products of free
  variables into an algebraic rule involving certain derivations
  which appeared previously in free probability.
 \item
  Apply these derivations to resolvents and use the fact that these
  play the role of ``eigenfunctions''  analogous to the
  exponential function in classical calculus.
 \item
  Use the formula for Boolean cumulants with free entries in order to separate
  free variables and obtain equations for the generating functions.
\end{enumerate}

Thus we combine combinatorial and algebraic identities into
an effective machinery to make them available for free analysis.
It allows to systematically establish equations for generating
functions of Boolean cumulants of functions in free variables
which in many cases can be effectively solved
and used to compute conditional expectations and distributions
of arbitrary polynomials in free random variables.

In particular we present an algebraic interpretation
of the formula for Boolean cumulants with products as
entries and a characterization of freeness in terms of Boolean
cumulants from \cite{FMNS2} in terms of Voiculescu's free
derivative.
We refer to Theorem~\ref{thm:intro_bbeta_sbeta} for the precise statement and
Definitions~\ref{def:bbeta} and \ref{def:fbeta} for definitions of
the functionals $\bbeta{}$ and $\fbeta{}$.

The paper is organized as follows.

The rest of the introduction is devoted to an exposition of the problem of
conditional expectations.

Section~\ref{sec:Preliminaries} presents results about the basic ingredients: conditional
expectations, Boolean cumulants and derivations.

In Section~\ref{sec:boolchar} we prove the characterization of freeness announced above in Theorem \ref{thm:charfree}.

In Section \ref{sec:calcCond} we provide a method to compute conditional expectations
in terms of Boolean cumulants. In particular we prove Theorem \ref{thm:free_int_formula}.

In Section~\ref{sec:calcbbeta}
 we introduce a calculus for the Boolean functionals $\bbeta{}$ and
$\fbeta{}$ which is summarized in Theorem \ref{thm:intro_bbeta_sbeta}.

In Section~\ref{sec:LinearizationAndCondExp} we show how linearizations allow to solve the
problem for subordinations for general polynomials. In particular we prove Theorem \ref{thm:1.1} which we discuss below.

Section~\ref{sec:examples}  contains explicit examples, featuring among others:
the conditional expectations of commutators and anticommutators (which
coincide, see Section~\ref{ssec:commutator});
a Lie polynomial which is  a sum of elements which are in additive
free position without actually being free (see Section~\ref{ssec:ex:lie});
a symmetric polynomial of order 3 and in particular a random walk of length 3
on the free group (see Section~\ref{ex:XYZ}); and finally, an example
of a noncommutative rational function (Section~\ref{ssec:ex:rational}).

In Appendix~\ref{sec:algbool} we give self-contained algebraic proofs
of the basic results about Boolean cumulants\ as well as a reformulation of these in terms of tensor algebras.

Finally Appendix~\ref{app:linearization} contains a brief survey of regular
non-commutative rational functions and provides the essentials
required for the computation of linearizations. 

\subsection{Subordination for general polynomials}

Let $X$ and $Y$ be classically independent random variables with distributions $\mu_X$ and $\mu_Y$,
then their joint distribution is $\mu_{X,Y}=\mu_X\otimes\mu_Y$, i.e.,
the expectation of any integrable function $f(X,Y)$ can in practice
be computed as a double integral
$$
\IE f(X,Y) = \iint f(x,y)\, d\mu_Y(y)\,d\mu_X(x)
.
$$
In the non-commutative case there is no such integral representation,
however the inner integral is in fact the conditional expectation
\begin{equation}
  \label{eq:intfXy=EfXY}
\int f(X,y) \,d\mu_Y(y) = \IE[f(X,Y)|X]
\end{equation}
and thus
$$
\IE f(X,Y) = 
\IE [ \IE[f(X,Y)| X] ]
$$
which does have a non-commutative analogue.
In the present paper we propose a method to compute
this non-commutative conditional expectation
$$
\IE_X[P(X,Y)] 
$$
for arbitrary non-commutative polynomials $P$ and more general rational
functions in \emph{free} random variables.
It is the analogy with \eqref{eq:intfXy=EfXY} 
which motivated us to call our endeavour \emph{free integral calculus}.

This approach follows ideas of Voiculescu \cite{VoiculescuAnaloguesOfEntropyI} and Biane
\cite{Biane98} who showed that for the sum $a+b$ of two
free random variables $a$, $b$  there exists an analytic self map of
$\omega:\IC^+\to\IC^+$ such that
the conditional  expectation of the resolvent onto the algebra
generated by $a$ is a resolvent again
\begin{equation}
  \label{eq:subordination}
  \IE_a[(z-a-b)^{-1}]=(\omega(z)-a)^{-1}
  ,
\end{equation}
which after application of $\varphi$ yields results in the subordination
relation $G_{a+b}(z)=G_a(\omega(z))$ (already observed implicitly in \cite{Woess:1986:nearest}).
Moreover the function $\omega$ is known to satisfy a fixed point equation. Subordination turned out to be a very effective tool for the study of regularity properties of free convolutions (see \cite{BelBerNewApproach,BelinschiRegularity})
and its operator valued generalization is one of the main  tools in the linearization approach of \cite{BelinschiMaiSpeicherSubordination}.
It is still an active area of research in free probability \cite{BelinschiBercovici:Upgrading}.

In the present paper we generalize this method to arbitrary polynomials using an algebraic-analytic method based on
the observation from our previous work \cite{FMNS2,LehnerSzpojan} that Boolean
cumulants turn out to be a convenient device to ``store'' the
results of the ``partial integral'' described above.

Fix a non-commutative probability
space $(\alg{M},\varphi)$.
Given a non-commutative polynomial $P\in \IC\langle X_1,X_2,\ldots,X_n\rangle$ and self-adjoint random variables $a_1,a_2,\ldots,a_n\in\alg{M}$,
our objective is an explicit formula for the conditional expectation of $
\bigl(1-zP(a_1,a_2,\ldots,a_n)\bigr)^{-1}$
onto the algebra generated by some subset of the variables $a_1,a_2,\ldots,a_n$. Without loss of generality we may assume that the subset consists of $a_1,a_2,\ldots,a_k$, where $k<n$. 

In the first step we construct a \emph{linearization} of the resolvent, i.e., 
matrices $C_1,C_2,\ldots,C_n\in M_N(\IC)$ such that for $L=C_1\otimes a_1+C_2\otimes a_2+\dots +C_n \otimes a_n\in M_N(\IC[z])\otimes \alg{M}$ and $z$ in some neighborhood of zero we have 
\begin{equation}
  \label{eq:intro:linearization}
  (1-z^mP(a_1,a_2,\dots,a_n))^{-1}=u^t (I_N-zL)^{-1}v
\end{equation}
for some vectors $u,v\in\IC^N$, where $m$ is the total degree of $P$.
In general a polynomial may have many linearizations;
in Appendix~\ref{app:linearization} we discuss in detail algorithms for finding
linearizations which work for our purposes.
It suffices to say for the moment that in contrast to
\cite{BelinschiMaiSpeicherSubordination} some technical issues force us to
work with regular linearizations which are not self-adjoint
and  to  restrict the calculations to the level of formal power series.

In the case when  $\alg{M}$ is the von Neumann subalgebra freely generated by $a_1,a_2,\ldots,a_n$,
we prove the following theorem.
It follows by evaluating  the formal expression  from
Theorem~\ref{thm:EIPsimain} below in the variables $a_1,a_2,\dots,a_n$.
\begin{theorem}[Subordination for general polynomials]
  \label{thm:1.1}
  
  Given a linearization   \eqref{eq:intro:linearization} for a polynomial
  $P\in\IC\langle X_1,X_2,\dots,X_n\rangle$ of degree $m$
  we have for $z$ in some neighbourhood of $0$ the following linearization for
  the conditional expectation of the resolvent onto the subalgebra
  $\alg{A}_k$ generated by $a_1,a_2,\dots,a_k$ is
  \begin{multline}
    \label{eq:thm:1.1}    
          \E_{\alg{A}_k}\left[(1-z^mP(a_1,a_2,\ldots,a_n))^{-1}\right]
          \\
          =u^t\left(I_N-z\left(C_1 \otimes a_1 +C_2 \otimes a_2 +\dots+C_k\otimes a_k+ (C_{k+1}
              F_{k+1}+\dotsm+C_n F_n)\otimes 1\right)\right)^{-1}v
	\end{multline}
	where the matrices $F_1,F_2,\ldots,F_n$ constitute the unique
        fixed point of the system of equations
	\begin{equation}
          \label{eq:thm:1.1:eta}              
          F_i(z)=\widetilde{\eta}_{a_i}
            \biggl(z\Bigl(I_N-z \sum_{j\neq i} C_jF_j(z)\Bigr)^{-1} C_i\biggr)
          \qquad\text{ for $i=1,2,\ldots,n$}
	\end{equation}
        with entries which are analytic at $0$.
        Here by  $\widetilde{\eta}_a(z)=\sum_{n=1}^{\infty} \beta_{n}(a)z^{n-1}$
        we denote the shifted Boolean cumulant generating function of a random
        variable $a$.

        In particular, the moment generating function is
        $$
        M(z^m) = u^t
        \bigl(
        I_N - z\sum_{j=1}^n C_jF_j(z)
        \bigr)^{-1}v
        .
        $$
\end{theorem}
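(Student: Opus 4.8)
The plan is to establish the corresponding formal identity on the free algebra, namely the expression of Theorem~\ref{thm:EIPsimain}, and then to evaluate it in the free self-adjoint variables $a_1,a_2,\ldots,a_n$. First I would insert the linearization \eqref{eq:intro:linearization} and expand the resolvent as a formal power series $(I_N-zL)^{-1}=\sum_{\ell\ge 0}z^\ell L^\ell$ with $L=\sum_{i=1}^n C_i\otimes a_i$, so that $L^\ell=\sum_{i_1,\ldots,i_\ell}C_{i_1}\cdots C_{i_\ell}\otimes a_{i_1}\cdots a_{i_\ell}$. Applying $\id_{M_N}\otimes\E_{\alg{A}_k}$ then reduces the whole computation to evaluating the conditional expectations $\E_{\alg{A}_k}[a_{i_1}\cdots a_{i_\ell}]$ of words in the generators. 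These I would unfold recursively through the free integration formula of Theorem~\ref{thm:free_int_formula}, applied with $a_1,\ldots,a_k$ playing the role of $X$ and $a_{k+1},\ldots,a_n$ the role of $Y$, which rewrites each such conditional expectation in terms of the Boolean functional $\bbeta{}$ on subwords, the derivation $\rdelta$, and conditional expectations of strictly shorter words.

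The heart of the matter is to resum this recursion into the closed resolvent form. Here I would invoke the characterization of freeness of Theorem~\ref{thm:charfree}: a mixed Boolean cumulant of the generators vanishes as soon as its two extreme entries come from different free subalgebras, and this ``no straddling'' property forces the surviving contributions to arrange themselves into a first-return decomposition, just as for random walks on free products. Concretely, every maximal excursion into a single generator $a_i$ should contribute the shifted Boolean cumulant generating function $\widetilde{\eta}_{a_i}$, evaluated at the propagator assembled from all the \emph{other} generators, which is exactly the right-hand side of \eqref{eq:thm:1.1:eta}; feeding this back through the geometric series collapses the sum into the subordinated resolvent of \eqref{eq:thm:1.1}, with the retained generators $a_1,\ldots,a_k$ keeping their operator slot $C_i\otimes a_i$ and the integrated generators $a_{k+1},\ldots,a_n$ replaced by their scalar return matrices $C_iF_i\otimes 1$. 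The passage that identifies the coefficients produced by the recursion with a solution of the closed system is where the calculus of $\bbeta{}$ and $\fbeta{}$ of Theorem~\ref{thm:intro_bbeta_sbeta} does the real work, and this is where I expect the main obstacle to lie: one must verify that the \emph{same} matrices $F_i$ simultaneously govern the conditioning (for $i>k$) and the internal propagator of every generator (for all $i$), so that the equations genuinely close on themselves.

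It remains to justify the fixed point and the final formula. Since the argument of $\widetilde{\eta}_{a_i}$ in \eqref{eq:thm:1.1:eta} carries an explicit factor $z$, the right-hand side strictly raises the $z$-adic order, so the system is contractive for the $z$-adic topology on formal power series and determines the coefficients of the $F_i$ uniquely, order by order; a Banach fixed point or implicit function argument in a neighbourhood of $0$ then promotes this to matrix entries analytic at $0$. Finally, taking $k=0$ reduces $\E_{\alg{A}_0}$ to $\varphi$ and replaces every operator slot by $C_iF_i\otimes 1$, which yields directly $M(z^m)=u^t(I_N-z\sum_{j=1}^n C_jF_j(z))^{-1}v$; the normalization $u^tv=1$ forced by \eqref{eq:intro:linearization} gives the expected $M(0)=1$.
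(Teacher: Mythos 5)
Your overall architecture coincides with the paper's: Theorem~\ref{thm:1.1} is indeed obtained by proving a formal statement on the free algebra (Theorem~\ref{thm:EIPsimain}) and then evaluating it in $a_1,\dots,a_n$, and your $z$-adic contraction argument for uniqueness of the $F_i$ is in substance Lemma~\ref{lem:iteration}. The genuine gap sits exactly where you flag it: the derivation of the closed system \eqref{eq:thm:1.1:eta} together with the subordinated resolvent form \eqref{eq:thm:1.1} \emph{is} the content of the theorem, and the route you propose for it --- expanding $(I_N-zL)^{-1}$ into words, applying Theorem~\ref{thm:free_int_formula} word by word, and resumming via a ``first-return decomposition'' --- is left at the level of a heuristic. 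Nothing in your text supplies the mechanism by which the sum over words reorganizes into matrices $F_i$, nor why one and the same matrix should appear both in the conditioning slots $C_iF_i\otimes 1$ and inside the argument of every $\widetilde{\eta}_{a_j}$; you say this ``must be verified'' but give no way to verify it, and carrying out that combinatorial resummation by hand is a substantial piece of work, not a routine step.

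The paper closes precisely this gap by never expanding into words. The scalar tools are first amplified to $M_N(\IC\langle X_1,\dots,X_n\rangle)$ (Lemmas~\ref{lem:amplifications} and \ref{lem:amplifyderiv} and their corollaries) and then applied to the matrix resolvent $\bPsi=(I_N-zL)^{-1}$ as a single object: (a) the amplified free integration formula gives in one stroke $\E_I[\bPsi]=\bigl(I_N-zH_J\sum_{i\in I}C_iX_i\bigr)^{-1}H_J$ with $H_J=\bbetan{J}{N}(\bPsi)$, cf.~\eqref{eq:EnPsi=1-zHAsumCbHA}; (b) the amplified form of \eqref{eq:intro_betas_prod_as_entries} identifies $H_J=\bigl(I_N-z\sum_{j\in J}C_jF_j\bigr)^{-1}$ where $F_j=\fbetan{X_j}{N}(X_j\bPsi)$, cf.~\eqref{eq:bbetaPsi=1-zsumfbetaPsiaiCi}; and (c) Proposition~\ref{prop:fbetaX1NM} combined with the Leibniz-rule computation $\partial_{X_i}^k(X_i\bPsi)=z^{k-1}1\odot(\bPsi C_i)^{\odot (k-1)}\odot\bPsi+z^kX_i(\bPsi C_i)^{\odot k}\odot\bPsi$ yields $F_i=\widetilde{\eta}_{X_i}\bigl(zH_{[n]\setminus i}C_i\bigr)$, which is exactly \eqref{eq:thm:1.1:eta}. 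In particular, your worry about the ``same $F_i$'' evaporates: both roles are, by definition, the single object $\fbetan{X_i}{N}(X_i\bPsi)$, so there is nothing to match up --- identities (a)--(c) are statements about that one object. Your first-return intuition is sound (it is the combinatorial content behind \eqref{eq:IntroVNRP_for_betas:uni} and Remark~\ref{rem:quenell}), but to turn your sketch into a proof you would either have to carry out the word-level resummation explicitly or, as the paper does, replace it by this matrix-level resolvent calculus, in which the recursion closes after a single application of each identity.
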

\begin{remark}
  \begin{enumerate}[(i)] \item []
   \item The  shifted Boolean transform is essentially the same as the
    reciprocal Cauchy transform
    $$
    \tilde{\eta}_\mu(1/z)
    =z-1/G_\mu(z)
    $$
    which replaces the Cauchy transform in the subordination
    approach to free convolution.

   \item From a practical point of view Theorem~\ref{thm:1.1}
    asserts that the evaluation of the conditional expectation of the resolvent
        \[
          \left(
            1-z^mP(a_1,a_2,\ldots,a_n)
          \right)^{-1}=u^t \left(I_N - z\left(C_1\otimes a_1+C_2\otimes a_2+\dots+ C_n \otimes a_n\right)\right)^{-1}v
        \]
        onto $\alg{A}$ (i.e., ``integrating out'' $a_{k+1},\ldots,a_n$)
        amounts to  replacing the corresponding summands $I_N\otimes a_i$ with
        the matrices $F_i\otimes 1$.
       \item Although only the matrices  $F_{k+1},F_{k+2},\ldots,F_n$ explicitly appear in
        the final formula     \eqref{eq:thm:1.1},
        the matrices $F_1,F_2,\ldots,F_k$ are required as well in order to extract
        the former from the solution of equation \eqref{eq:thm:1.1:eta}.
        Moreover observe that in general the equations cannot be effectively decoupled
        (unless $n=2$, see example~\ref{ex:1:additiveconvolution} below)
        and each matrix from $F_1,F_2,\ldots,F_n$ depends on the distributions of all variables $a_1,a_2,\ldots,a_n$.
  \end{enumerate}
\end{remark}

\begin{remark}[Subordination]
  The previous theorem generalizes the subordination phenomenon in the
  following sense.
	For simplicity consider the case of two free variables
        $a,b$ and fix a polynomial $P\in \IC \langle X,Y\rangle$ of
        degree $m$. We fix an $N\times N$ linearization
        \[
          (1-z^mP(X,Y))^{-1}
          =u^t(I_N-z C_1X-z C_2Y)^{-1}
        \]
        then the above theorem asserts that the moment
        generating function $M_P(z)=\varphi\left((1-z P(a,b))^{-1}\right)$ is
        obtained
        from the linearization via
	\[
          M_{P(a,b)}(z^m)=u^t\varphi^{(N)} (I_N-z C_1 a-z C_2 b)^{-1} v
        \]
	where $\varphi^{(N)}$ is the entry-wise application of $\varphi$.
	Let $H_1=(I_N-zC_1 F_1)^{-1}$, then the preceding identity can be written as
	\[M_{P(a,b)}(z^m)=u^t\varphi^{(N)} (I_N-z H_1 C_2 b)^{-1} H_1 v\]
	Suppose that the matrix $H_1C_2$ is diagonalizable and write $zH_1C_2=Q D Q^{-1}$ with $D=\diag(\lambda_1,\ldots,\lambda_n)$. Then we obtain
	\[M_{P(a,b)}(z^m)=(Pu)^t\varphi (I_N-D b)^{-1} P^{-1}H_1
          v=\widehat{u}^t
          \begin{bmatrix}
            M_{b}(\lambda_1) & 0 & \ldots & 0 \\
            0&  M_{b}(\lambda_2)   & \ldots &0 \\
            \vdots & & \ddots   & \vdots \\
		0   & 0 & \ldots &M_{b}(\lambda_n)
	\end{bmatrix}
	\widehat{v}\]	
      where $\widehat{u}=Pu$, $\widehat{v}=P^{-1}H_1 v$ and
      $M_{b}(z) = \varphi((1-zb)^{-1})$
      is the moment generating function of $b$.
	
	Therefore the eigenvalues $\lambda_i=\lambda_i(z)$ of $z H_1 C_2$ can be understood as a generalization of the subordination functions from free additive convolution.
	At the time of this writing we do not know whether $H_1 C_2$ is actually diagonalizable in general.
        If this turns out  not to be the case, one can use the Jordan canonical form and the
        derivatives $M_{b}'$, $M_{b}''$,\ldots{} will populate the upper triangular
        parts fo the  Jordan blocks.
\end{remark}

Let us illustrate Theorem~\ref{thm:1.1} with a quick derivation of the
subordination functions for additive free convolution and for the anti-commutator.
For further examples see Section~\ref{sec:examples}.
\begin{example}
  \label{ex:1:additiveconvolution}
  The polynomial $P(a,b)=a+b$  is already linear and  we can apply
  Theorem~\ref{thm:1.1} to obtain
	\begin{align*}
          \E_{a}\bigl[(1-z(a+b))^{-1} \bigr]
          =(1-z a-z F_b(z))^{-1}	
	\end{align*}
        and
        $$
        \tilde{\eta}_{a+b}(z) = F_a(z)+F_b(z)
        $$
	where
	\begin{align*}
          F_a(z) &= \widetilde{\eta}_{a}\Bigl(\frac{z}{1-zF_b(z)}\Bigr) & 
          F_b(z) &= \widetilde{\eta}_{b}\Bigl(\frac{z}{1-zF_a(z)}\Bigr)
                   .
	\end{align*}
	It is
easy to see that these are equivalent to the well known subordination
        results for additive free convolution.
        In this case it is particularly
        straightforward to decouple the equations and obtain separate fixed point equations for $F_1$ and $F_2$
        after a simple substitution of one equation into the other.
        \end{example}
        \begin{example}
          Let $P(a,b)=a b+b a$ be the anti-commutator, then for $z$
          in some neighbourhood of zero
          the conditional expectations of the resolvent are
          \begin{align*}
            \IE_{a}\left[(1-z^2(ab+ba))^{-1}\right]&=\left(1-f_{2,43} z-z^2 a \left(f_{2,33} +f_{2,44} \right)-f_{2,34} a^2 z^3\right)^{-1},\\
            \IE_{b}\left[(1-z^2(ab+ba))^{-1}\right]&=\left(1-f_{1,12} z-z^2 b \left(f_{1,11} +fx_{22} \right)-f_{1,21} b^2 z^3\right)^{-1}.
          \end{align*}
          This result is obtained with the linearization involving the matrices
          \[
            C_1=
            \begin{bmatrix}
              0 & 0 & 0 & 0 \\
              1 & 0 & 0   & 0 \\
              1 & 0 & 0   & 0 \\
              0   & 1 & 0 & 0
            \end{bmatrix}
            \qquad
            C_2=
            \begin{bmatrix}
              0 & 0 & 1 & 0 \\
              0 & 0 & 0 & 1 \\
              0 & 0 & 0 & 1 \\
              0 & 0 & 0 & 0
            \end{bmatrix}.\]
          A priori the equations \eqref{eq:thm:1.1:eta} involve a 
          total of 32 variables $f_{k,ij}$, $k\in\{1,2\},1\leq i,j\leq 4$.
          However it is easy to see with the help of the projection
          matrices onto $\ker C_1$ and $\ker C_2$
          (cf.~Remark~\ref{rem:projectionsQi})
          that many variables vanish and the solution matrices have the form
\[
            F_1 = 
            \begin{bmatrix}
              f_{1,11} & f_{1,12} & 0 & 0 \\
              f_{1,21} & f_{1,22} & 0 & 0 \\
              0 & 0 & 0 & 0 \\
              0 & 0 & 0 & 0
            \end{bmatrix}
            \,
            F_2=
            \begin{bmatrix}
              0 & 0 & 0 & 0 \\
              0 & 0 & 0 & 0 \\
              0 & 0 & f_{2,33} & f_{2,34} \\
              0 & 0 & f_{2,43} & f_{2,44}
            \end{bmatrix}
          \]
          and satisfy the following system of matrix equations
          \begin{align*}
            \begin{cases}
              F_{1}&=\widetilde{\eta}_{a}\left(z Q_1 (I-zC_2 F_2)^{-1}C_1\right),\\
              F_{2}&=\widetilde{\eta}_{b}\left(z Q_2 (I-zC_1 F_1)^{-1}C_2\right),
            \end{cases}
          \end{align*}
          where $Q_i$ is the projection onto the cokernel of the matrix $C_i$,
          i.e.,
          $C_iQ_i=C_i$.
          This system is the analogue for the anti-commutator of the fixed point equation in the case of additive convolution stated in the previous example.
          Note that the explicit formulas for the matrices $H_1=Q_1
          (I-zC_2F_2)^{-1}C_1$ and $H_2=Q_2 (I-zC_1 F_1)^{-1}C_2$ are
          essentially the same as those for matrices $H_a$ and $H_b$ from
          Theorem~6.1 in \cite{FMNS2}, i.e., linearizations were already
          implicitly present in \cite{FMNS2}.
\end{example}

\clearpage{}
\section{Preliminaries}
\label{sec:Preliminaries}

In this section we introduce the main ingredients of the paper:
conditional expectations, Boolean cumulants and derivations.

\subsection{Non-commutative probability spaces}

We assume that $\alg{M}$ is a unital $*$-algebra and
$\varphi:\alg{M}\mapsto\IC$ is a positive unital linear functional, commonly
called a \emph{state} and we usually assume it to be faithful.
We will refer to the pair $(\alg{M},\varphi)$ as a \emph{non-commutative probability
space}.
For technical reasons we will mostly work with the free associative algebra.
\begin{notation}\label{notation:nc_polynomials}
  Let $\alg{X}=\{X_1,X_2,\ldots,X_n\}$ be an alphabet.
  We denote by $\alg{X}^+=\{X_{i_1}X_{i_2}\cdots X_{i_k}\mid k\in\IN, i_j\in
  \{1,2,\dots,n\}\}$
  the free semigroup it generates and by
  $\alg{X}^*=\alg{X}^+\cup\{1\}$ the free monoid.
  
  We denote by $\IC\langle\alg{X}\rangle= \IC\langle X_1,X_2,\ldots,X_n\rangle$ the free associative
  algebra generated by the variables $X_1,X_2,\ldots,X_n$, i.e., the linear
  span of $\alg{X}^*$ with the concatenation product, also known as the algebra of
  non-commutative polynomials.

  For elements $a_1,a_2,\ldots,a_n\in\alg{M}$ and $P\in\IC\langle
  X_1,X_2,\dots,X_n\rangle$
  we denote
  by $P(a_1,a_2,\ldots,a_n)$ the evaluation of a
  polynomial $P\in\IC\langle X_1,X_2,\ldots,X_n\rangle$, i.e., the element
  of $\alg{M}$ obtained after substituting every $X_i$ with $a_i$ for 
  $i=1,2,\ldots,n$.

  The \emph{joint distribution} of $a_1,a_2,\ldots,a_n$ is the linear functional
  $\mu:\IC\langle X_1,X_2,\ldots,X_n\rangle\to\IC$ given by
  \[\mu(P)=\varphi\left(P(a_1,a_2,\ldots,a_n)\right).\]
\end{notation}

\begin{definition}
  \label{def:freeindep}
  A family of subalgebras $(\alg{A}_i)_{i\in I}$ of a ncps $(\alg{M},\varphi)$ is called
  \emph{free} or \emph{free independent} if
  \begin{equation*}
    \varphi(u_1u_2\dotsm u_n)=  0
  \end{equation*}
  for any choice of $u_j\in \bigcup_i\alg{A}_i$ such that $\varphi(u_j)=0$ and
  $u_j\in\alg{A}_{i_j}$ with $i_j\ne i_{j+1}$ for all $j\in\{1,2,\dots,n-1\}$.
\end{definition}

\subsection{Conditional expectations}

Fix a non-commutative probability space $(\alg{M},\varphi)$
and let $\alg{A}\subseteq \alg{M}$ be a subalgebra.
A \emph{conditional expectation}
is a state-preserving projection $\E_{\alg{A}}:\alg{M}\to \alg{A}$,
i.e.,
such that $\varphi\circ\E_{\alg{A}}=\varphi$.
In general such a map not necessarily needs to exists,
unless $\alg{M}$ is a finite von Neumann algebra and $\varphi$ is
tracial \cite[Proposition~5.2.36]{Takesaki1}.
If it does exist and  the state $\varphi$ is faithful, then the conditional expectation
$\E_{\alg{A}}[u]$ 
is the unique  element $\tilde{u}\in\alg{A}$ such that for any
$a\in\alg{A}$ 
one has $\varphi(ua)=\varphi(\tilde{u}a)$.
$\E_{\alg{A}}:\alg{M}\to\alg{A}$ is a unital $\alg{A}$-bimodule map, i.e.,
$E_{\alg{A}}[a_1ua_2]=a_1E_{\alg{A}}[u]a_2$, for all $u\in\alg{M}$ and $a_1,a_2\in\alg{A}$. 

In the present paper we will always assume that
the algebra $\alg{M}$ is freely generated by two of its subalgebras
$\alg{A},\alg{B}\subseteq\alg{M}$,
or more specifically, $\alg{M}$ is freely generated by 
some subalgebras $\alg{A}_i$, $i\in I$
and the subalgebra $\alg{A}$ is generated by some subset of these,
i.e., $\alg{A}=\langle\alg{A}_j\rangle_{j\in J}$ where $J\subseteq I$,
and $\alg{B}=\langle \alg{A}_i\rangle_{i\in I\setminus J}$.
In this case the existence of the conditional expectation $\E_{\alg{A}}$ onto $\alg{A}$
is generally warranted by combinatorial arguments \cite[\S2.5]{MingoSpeicher}.
Alternatively, in the C${}^*$-algebraic context, if the state is faithful,
then $\alg{M}$ is isomorphic to the reduced free product 
$(\alg{A},\varphi|_{\alg{A}})*(\alg{B},\varphi|_{\alg{B}})$
and the existence of the conditional expectation also follows from  \cite[Proposition~1.3]{Avitzour:1982:free}.

More precisely, in order to find the conditional expectation of a non-commutative polynomial
in variables $a_1,a_2,\ldots,a_n$ onto the algebra $\alg{A}$ generated by
$a_1,a_2,\ldots,a_k$  one has to find suitable expressions for moments of the form 
\begin{equation*}
\varphi\left(a_{i_1}a_{i_2}\dotsm a_{i_r} b\right),
\end{equation*}
where $i_1,i_2,\ldots,i_r\in \{1,\ldots,n\}$ and $b\in\alg{A}$.
It is a fundamental property of freeness
(as one of the  universal notions of independence in the sense of \cite{Muraki:2002:five})
that all joint moments of
freely independent random variables are uniquely determined by the marginal moments
of the variables in question. Thus for each moment of the form indicated above
there is a universal formula (not depending on the particular choice of the
distributions of $a_1,a_2,\ldots,a_n$) which expresses any joint moment as a sum
of products of marginal moments.

After fixing random variables $a_1,a_2,\ldots,a_n$ all relevant information we need
for finding the conditional expectation is contained in the pair
$(\IC\langle \uXn\rangle,\mu)$ defined in Notation~\ref{notation:nc_polynomials} and therefore we will mostly work on a formal
level and focus on this non-commutative probability space. Note that such $\mu$ need not to be faithful.

Working with non-commutative polynomials is very useful as it allows us to ignore algebraic relations satisfied by
variables $a_1,a_2,\ldots,a_n$. Another advantage of $\IC\langle \uXn\rangle$
is that it is augmented (see below) and has a natural linear basis, hence we can easily define linear mappings and functional on this algebra by prescribing values on the basis elements. It is also straightforward then, again by linearity, to extend all those mappings to the algebra of formal power series in non-commuting variables $\uXn$, denoted by $\IC\llangle \uXn \rrangle$.

\subsection{Main idea --- Boolean cumulants and conditional expectations}
Boolean cumulants appeared in various contexts and disguises in the literature
\cite{Waldenfels:1973:approach,Terwiel:1974:projection,SpeicherWoroudi:1997:boolean}.
In our previous work \cite{LehnerSzpojan} we observed
that Boolean cumulants appear naturally in connection with conditional expectations of functions in free
variables. The present paper is an exploration of this connection based on a
recursive reformulation which is suitable for explicit calculations in closed form.
In addition we introduce a non-commutative differential calculus for Boolean
cumulants of polynomials in free variables which reduces the 
combinatorial apparatus to a minimum.

Although Speicher's free cumulants are the tool of choice in free probability
\cite{SpeicherNC,NicaSpeicherLect}, 
more recently it turned out that for certain questions Boolean cumulants are useful as well.
Indeed some problems like the free anti-commutator \cite{FMNS2}
and subordination functions \cite{LehnerSzpojan,SzpojanWesol2} are easier
to describe in terms of Boolean cumulants rather than free cumulants.
The present paper extends and unifies these ideas. Before discussing this, 
let us start with a review of some basic facts about Boolean cumulants.
An interval partition is a partition $\pi=\{B_1,B_2,\ldots,B_k\}$ of the set
$\{1,2,\ldots,n\}$ such that all blocks are intervals, i.e.,
for all $1\leq i\leq k$ we have $B_i=\{k,k+1,\ldots,l\}$ for some $k\leq l$ in
$\{1,2,\ldots,n\}$.
The set of all interval partitions of $\{1,2,\ldots,n\}$ is denoted by $\IntPart(n)$.

For any tuple $a_1,a_2,\ldots,a_n\in\alg{M}$ we define the Boolean cumulant
functional $\beta_n:\alg{M}^n\to\IC$ implicitly via the formula
\begin{equation}
  \label{eq:phi=sumbeta}
  \varphi(a_1a_2\dotsm a_n)=\sum_{\pi\in \IntPart(n)} \beta_\pi(a_1,a_2,\ldots,a_n),
\end{equation}
where $\beta_\pi(a_1,a_2,\ldots,a_n)=\prod_{B\in \pi} \beta_{|B|}\left((a_1,a_2,\ldots,a_n)|B\right)$, and by $\beta_{|B|}\left((a_1,a_2,\ldots,a_n)|B\right)$ we mean that we take the functional $\beta_k$ such that $k=|B|$, and we evaluate it at those $a_i$ for which $i\in B$, and the arguments $a_i$ appear in the natural order (one should note that in general Boolean cumulants are not invariant under permutations of arguments).
One way to  inverting the formula  \eqref{eq:phi=sumbeta} and to
obtain an explicit formula for Boolean cumulants is M\"obius inversion on the
lattice of interval partitions.
We refrain from doing so and rather base our calculations on a well known
recurrence, namely
\begin{equation}
	\label{eq:recurrenceboolcum}
	\varphi(a_1a_2\dotsm a_n)
	= \sum_{k=1}^n \beta_k(a_1,a_2,\dots,a_k)\,\varphi(a_{k+1}a_{k+2}\dotsm a_n)
\end{equation}
or equivalently
\begin{equation}
	\label{eq:recurrenceboolcumright}
	\varphi(a_1a_2\dotsm a_n)
	= \sum_{k=1}^n \varphi(a_1a_2\dotsm a_{k-1})\,\beta_{n-k+1}(a_{k},a_{k+1},\dots,a_n).
\end{equation}
This elementary recurrence is the starting point for our investigation.
It immediately implies a similar recurrence
for conditional expectations of products of free random variables:
Assume that $\{a_1,a_2,\ldots,a_{n}\}\in\alg{A}$ and
$\{b_1,b_2,\ldots,b_{n-1}\}\in\alg{B}$ are two families of variables and
assume that subalgebras $\alg{A}$ and $\alg{B}$ are free.

In order to calculate the conditional expectation of
$a_1b_1a_2\dotsm b_{n-1}a_n\in \alg{M}$ onto the subalgebra $\alg{B}$,
we have to find an element $\IE_{\alg{B}}[a_1b_1a_2\dotsm b_{n-1}a_n]\in\alg{B}$ such that 
\[\varphi(a_1b_1a_2\dotsm b_{n-1}a_n b)=\varphi\left(\IE_{\alg{B}}[a_1 b_1a_2\dotsm a_n] b\right)\]
for any $b\in\alg{B}$.
If $\varphi$ is faithful then this element is uniquely determined and can be
found using the following recursive reformulation of  \cite[Proposition~1.1]{LehnerSzpojan}.
\begin{proposition}
Assume that $\varphi$ is faithful. Then for $a_i\in\alg{A}$ and
$b_i\in\alg{B}$ the conditional expectation of the alternating product
satisfies the recurrence
\begin{align}\label{eq:introCondExprecurrence}
\IE_{\alg{B}}[a_1b_1a_2\dotsm a_{n-1}b_{n-1} a_n]=\sum_{k=1}^{n}\beta_{2k-1}(a_1,b_1,a_2,\ldots,
  a_k) \, b_k \,\IE_{\alg{B}}\left[a_{k+1}b_{k+1}\dotsm a_n\right].
\end{align}
\end{proposition}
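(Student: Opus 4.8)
The plan is to verify the identity through the defining property of the conditional expectation. Since $\varphi$ is faithful, $\IE_{\alg{B}}[u]$ is the \emph{unique} element of $\alg{B}$ satisfying $\varphi(ub)=\varphi(\IE_{\alg{B}}[u]\,b)$ for every $b\in\alg{B}$. Writing $R$ for the right-hand side of the asserted recurrence, which plainly lies in $\alg{B}$ (each summand is a scalar $\beta_{2k-1}$ times $b_k\in\alg{B}$ times an element of $\alg{B}$), it therefore suffices to show that $\varphi(a_1b_1a_2\dotsm b_{n-1}a_n\,b)=\varphi(Rb)$ for all $b\in\alg{B}$; the claim then follows by uniqueness.

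First I would relabel the factors of the test word $a_1b_1a_2\dotsm a_n b$ as $c_1c_2\dotsm c_{2n}$, where $c_{2i-1}=a_i$ and $c_{2i}=b_i$ in the relevant ranges and $c_{2n}=b$. Thus all odd-position factors lie in $\alg{A}$ and all even-position factors lie in $\alg{B}$, with $c_1=a_1$. Applying the Boolean cumulant recurrence \eqref{eq:recurrenceboolcum} to this word peels off an initial cumulant block:
\[
  \varphi(c_1\dotsm c_{2n})=\sum_{j=1}^{2n}\beta_j(c_1,\dots,c_j)\,\varphi(c_{j+1}\dotsm c_{2n}).
\]

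The decisive step, and the only place where freeness is used, is to prune this sum. By the characterization of freeness in Theorem~\ref{thm:charfree}, any mixed Boolean cumulant whose first and last arguments come from different subalgebras vanishes; since $c_1=a_1\in\alg{A}$, the summand survives only when $c_j\in\alg{A}$ as well (or $j=1$), i.e. precisely for the odd indices $j=2k-1$ with $k=1,\dots,n$. For such $j$ the initial block is exactly $(a_1,b_1,\dots,a_k)$, giving the coefficient $\beta_{2k-1}(a_1,b_1,\dots,a_k)$, while the complementary tail is $b_k a_{k+1}b_{k+1}\dotsm a_n b$. Hence the test quantity reduces to $\sum_{k=1}^n\beta_{2k-1}(a_1,b_1,\dots,a_k)\,\varphi(b_k a_{k+1}\dotsm a_n\,b)$.

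It remains to turn each inner product into a conditional expectation. Fixing $k$ and setting $X=a_{k+1}b_{k+1}\dotsm a_n$, the $\alg{B}$-bimodule property of $\IE_{\alg{B}}$ gives $\IE_{\alg{B}}[b_k X b]=b_k\,\IE_{\alg{B}}[X]\,b$, and state-preservation $\varphi\circ\IE_{\alg{B}}=\varphi$ then yields $\varphi(b_k X b)=\varphi(b_k\,\IE_{\alg{B}}[X]\,b)$; summing over $k$ identifies the reduced expression with $\varphi(Rb)$ and finishes the verification. I expect no serious obstacle beyond applying Theorem~\ref{thm:charfree} correctly; the only care needed is bookkeeping---confirming that exactly the odd indices survive, and reading the boundary term $k=n$ with $b_n$ and the empty tail $\IE_{\alg{B}}[\,\cdot\,]$ both taken to be the unit $1$, so that this term contributes $\beta_{2n-1}(a_1,b_1,\dots,a_n)\,\varphi(b)$ as required.
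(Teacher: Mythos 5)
Your proposal is correct and follows essentially the same route as the paper's proof: expand $\varphi(a_1b_1\dotsm a_n b)$ via the Boolean recurrence \eqref{eq:recurrenceboolcum}, discard the even-index terms because cumulants beginning in $\alg{A}$ and ending in $\alg{B}$ vanish (Theorem~\ref{thm:charfree}), rewrite each surviving term using the $\alg{B}$-bimodule and state-preserving properties of $\IE_{\alg{B}}$, and conclude by faithfulness. Your handling of the boundary term $k=n$ is also consistent with the paper's conventions.
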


\begin{proof}
The recurrence \eqref{eq:recurrenceboolcum} yields
\begin{multline*}
  \varphi\left(\IE_{\alg{B}}[a_1b_1a_1\dotsm a_n] b\right)=\varphi(a_1b_1a_2\dotsm a_n b)\\
  =\sum_{k=1}^{n}
    \beta_{2k-1}(a_1,b_1,a_2,\dots, a_k)\,
    \varphi(b_{k}a_{k+1}\dotsm b)
    +\sum_{k=1}^{n}
    \beta_{2k}(a_1,b_1,a_2,\dots, b_k)\,
    \varphi(a_{k+1}b_{k+1}\dotsm b)
\end{multline*}

Now it follows from Theorem \ref{thm:charfree}
(see also Definition~\ref{def:propertyCAC} and
Proposition~\ref{prop:characterization} below)
that for free random variables the mixed Boolean cumulant
$\beta_{2k}(a_1,b_1,a_2,\ldots, a_k,b_k)$ vanishes for $k=1,2,\ldots,n$
and hence
\begin{align*}
  \varphi\left(\IE_{\alg{B}}[a_1b_1a_2\dotsm a_n] b\right)
    &=\sum_{k=1}^{n}\beta_{2k-1}(a_1,b_1,a_2,\dots, a_k) \, \varphi(b_{k}a_{k+1}b_{k+1}\dotsm a_nb)
    \\
    &=\varphi
      \Bigl(
      \sum_{k=1}^{n}\beta_{2k-1}(a_1,b_1,a_2,\dots, a_k)\, b_{k} a_{k+1}b_{k+1}\dotsm       a_nb
      \Bigr)
    \\
    &=\varphi
      \Bigl(
      \sum_{k=1}^{n}\beta_{2k-1}(a_1,b_1,a_2,\dots, a_k)\, b_k \IE_{\alg{B}}\left[a_{k+1}b_{k+1}\dotsm a_n\right]b
      \Bigr).
\end{align*}
\end{proof}
Observe that vanishing of cumulants of  the form
$\beta_{2k}(a_1,b_1,a_2,\ldots,a_k,b_k)$ is essential in this proof,
because it eliminates $\beta_{2n}(a_1,b_1,a_2,\ldots,a_{n-1},b)$.
Otherwise $b$ would be trapped inside this term
and the recurrence would fail.

We will also make use of the original non-recursive version of the formula for the conditional
expectation found in \cite{LehnerSzpojan}.

\begin{corollary}
  \label{cor:condexp}
  Let $(\alg{A},\varphi)$ be a ncps and 
  $\alg{B}\subseteq \alg{A}$  a subalgebra such that the conditional
  expectation $\E_{\alg{B}}:\alg{A}\to\alg{B}$ exists.
  Let  $\{b_1,b_2,\ldots,b_{n-1}\}\subseteq\alg{B}$ and
  assume that the family $\{a_1,a_2,\ldots,a_n\}\subseteq\alg{A}$ is free
  from $\alg{B}$.
  \begin{enumerate}[(i)]
   \item 
  \begin{multline}
    \label{eq:phiYXY}
    \varphi\left(a_1b_1a_2\dotsm b_{n-1}a_nb_n\right)
    \\
    =
    \sum_{k=0}^{n-1}\sum_{1\leq i_1<i_2<\dots< i_k\leq n-1}
    \varphi(b_{i_1}b_{i_2}\dotsm b_{i_k}b_n)\prod_{j=0}^{k}\beta_{2(i_{j+1}-i_j)-1}(a_{{i_j}+1},b_{i_{j}+1},a_{{i_j}+2},\ldots,a_{i_{j+1}}),
  \end{multline}
   \item 
  Then the conditional expectation of alternating monomials can be evaluated as follows
  \begin{multline}
    \label{eq:EYXY}
    \E_{\alg{B}}\left[a_1b_1a_2\dotsm b_{n-1}a_n\right]
    \\
    =
    \sum_{k=0}^{n-1}\sum_{1\leq i_1<i_2<\dots< i_k\leq n-1}
    b_{i_1}b_{i_2}\dotsm b_{i_k}\prod_{j=0}^{k}\beta_{2(i_{j+1}-i_j)-1}(a_{{i_j}+1},b_{i_{j}+1},a_{{i_j}+2},\ldots,a_{i_{j+1}}),
  \end{multline}
  where in the above sum for each sequence $0< i_1<i_2<\dots< i_k<n$ we set $i_0=0$ and $i_{k+1}=n$.
  \end{enumerate}
\end{corollary}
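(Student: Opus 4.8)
The plan is to establish part (ii) by induction on $n$ using the recurrence \eqref{eq:introCondExprecurrence} proved just above, and then to read off part (i) by pairing the result against $b_n$. Since \eqref{eq:EYXY} is precisely the closed-form solution of the recurrence \eqref{eq:introCondExprecurrence}, I would iterate that recurrence and organize the outcome as a sum indexed by the positions at which the recurrence ``cuts'' the word. Note that the vanishing of the even mixed cumulants (Theorem~\ref{thm:charfree}) is already built into \eqref{eq:introCondExprecurrence}, so no further combinatorial input is needed.

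First I would set up the induction. For $n=1$ the statement reduces to $\E_{\alg{B}}[a_1]=\varphi(a_1)\,1=\beta_1(a_1)$, which holds because $a_1$ is free from $\alg{B}$; on the right-hand side of \eqref{eq:EYXY} only the empty index set survives and produces $\beta_{2(1-0)-1}(a_1)=\beta_1(a_1)$. For the inductive step I would apply \eqref{eq:introCondExprecurrence} in the form
\[
\E_{\alg{B}}[a_1 b_1\dotsm a_n]=\sum_{\ell=1}^{n}\beta_{2\ell-1}(a_1,b_1,\dots,a_\ell)\,b_\ell\,\E_{\alg{B}}[a_{\ell+1}b_{\ell+1}\dotsm a_n],
\]
with the boundary convention that the $\ell=n$ term carries no trailing $b$-factor and that $\E_{\alg{B}}$ of the empty word equals $1$. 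The tail $a_{\ell+1}b_{\ell+1}\dotsm a_n$ is again an alternating word whose $a$-letters are free from $\alg{B}$, so the induction hypothesis applies to it and expands it as a sum over subsets $\ell<i'_1<\dots<i'_{k'}\le n-1$ with blocks running from $\ell$ to $n$.

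Multiplying the leading factor $\beta_{2\ell-1}(a_1,\dots,a_\ell)\,b_\ell$ into this expansion, I would identify $\ell$ as the smallest cut point and set $i_1=\ell,\ i_2=i'_1,\dots$; the leading cumulant becomes the $j=0$ block $\beta_{2(i_1-i_0)-1}$ with $i_0=0$, the factor $b_\ell$ joins the product $b_{i_1}\dotsm b_{i_k}$, and the remaining blocks are supplied by the hypothesis. Summing over $\ell\in\{1,\dots,n-1\}$ then produces exactly the terms of \eqref{eq:EYXY} whose index set is nonempty, each counted once via its smallest element $\ell$, while the boundary term $\ell=n$ supplies the single-block contribution $\beta_{2n-1}(a_1,\dots,a_n)$ corresponding to the empty index set. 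This reproduces \eqref{eq:EYXY}.

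Finally, part (i) follows by applying $\varphi(\,\cdot\,b_n)$ to \eqref{eq:EYXY}: since $\E_{\alg{B}}$ is state-preserving and a right $\alg{B}$-module map, one has $\varphi(a_1b_1\dotsm a_n b_n)=\varphi(\E_{\alg{B}}[a_1b_1\dotsm a_n]\,b_n)$, and because the Boolean cumulants are scalars they factor out of $\varphi$, turning the products $b_{i_1}\dotsm b_{i_k}$ into the joint moments $\varphi(b_{i_1}\dotsm b_{i_k}b_n)$ of \eqref{eq:phiYXY}. I expect the main obstacle to be purely bookkeeping: verifying that the assignment ``smallest cut point $=\ell$'' is a bijection between the terms generated by the recurrence and the index sets in \eqref{eq:EYXY}, and handling the boundary conventions (the absent trailing $b$-factor and the empty word) so that the single-block term is generated exactly once.
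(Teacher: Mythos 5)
Your proof is correct. Note, however, that the paper itself gives no proof of this corollary: it is imported from \cite{LehnerSzpojan} as the ``original non-recursive version'' of the formula, with the preceding proposition (the recurrence \eqref{eq:introCondExprecurrence}) described as its recursive reformulation. Your argument reverses that logical order: you take the recurrence --- which the paper does prove, from \eqref{eq:recurrenceboolcum} together with the vanishing of the even mixed cumulants --- and unroll it by induction on $n$, matching each term of the recurrence at cut $\ell$ with the subsets $\{i_1<\dots<i_k\}\subseteq\{1,\dots,n-1\}$ having smallest element $i_1=\ell$, the empty subset arising from the boundary term $\ell=n$; part (i) then follows from part (ii) because $\E_{\alg{B}}$ is a state-preserving right $\alg{B}$-module map, so $\varphi(a_1b_1\dotsm a_nb_n)=\varphi(\E_{\alg{B}}[a_1b_1\dotsm a_n]\,b_n)$. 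The bookkeeping you flag (smallest cut point, boundary conventions) does work out exactly as you describe, so the induction is sound, and this route has the merit of making the statement self-contained within the paper. The only caveat is a mismatch of hypotheses: the corollary assumes merely that $\E_{\alg{B}}$ exists, whereas the recurrence you invoke is proved in the paper under the additional assumption that $\varphi$ is faithful; your derivation therefore establishes the corollary under that extra hypothesis, while the direct proof in \cite{LehnerSzpojan} of the moment formula \eqref{eq:phiYXY} needs no conditional expectation at all.
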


\subsection{Boolean cumulants with products as entries}
In this subsection we present the basic tools preparing
the non-commutative differential calculus for Boolean cumulants of polynomials in free variables.

The main tool is the formula for Boolean cumulants with products as
entries in terms of cumulants of individual variables.
Such formulas are known for classical cumulants
\cite{LeonovShiryaev:1959:method,Speed:1983:cumulantsI} and free cumulants
\cite{KrawczykSpeicher:2000:combinatorics}
and follow a general pattern \cite {Lehner:2004:cumulants1}.
The analogous formula for Boolean cumulants is given below. The proof may go
via a standard argument involving M\"obius inversion (see Lecture 14 in
\cite{NicaSpeicherLect}).
In view of possible generalizations and for the
sake of completeness we present alternative algebraic proofs
based solely on the recurrence  \eqref{eq:recurrenceboolcum} in Appendix~\ref{ssec:algproof}.

\begin{proposition}
  \label{prop:boolprod}
  Let $a_1,a_2,\dots,a_n\in \alg{A}$ be random variables
  then
  \begin{equation}
    \label{eq:BoolProd}
    \beta_{m+1}(a_1a_2\cdots a_{d_1},a_{d_1+1}a_{d_1+2}\cdots a_{d_2},\ldots,a_{d_{m}+1}a_{d_{m}+2}\cdots a_{n})
    =\sum_{\substack{\pi \in \IntPart(n)\\ \pi \vee \rho=1_n}}\beta_\pi(a_1,a_2,\ldots,a_n),
  \end{equation}
  where $\rho=\{\{1,2,\ldots,d_1\},\{d_1+1,d_1+2,\ldots,d_2\},\ldots,\{d_m+1,\ldots,n\}\}\in \IntPart(n)$, and $\vee$ is the join in the lattice of interval partitions.
  The condition $\pi \vee \rho=1_n$ is equivalent to
  \begin{equation*}
\pi \geq \{\{1\},\{2\},\dots,\{d_1-1\},\{d_1,d_1+1\},\{d_1+2\},\dots,\{d_m-1\},\{d_m,d_m+1\},\dots,\{d_{n}\}\}.
  \end{equation*}
\end{proposition}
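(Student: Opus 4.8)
The plan is to prove the identity by Möbius inversion on the lattice $\IntPart(n)$ of interval partitions, exploiting the fact that the moment--cumulant relation \eqref{eq:phi=sumbeta} is a multiplicative relation on this lattice whose associated zeta and Möbius functions are themselves multiplicative. Throughout, write $A_1=a_1\cdots a_{d_1}$, $A_2=a_{d_1+1}\cdots a_{d_2}$, \ldots, $A_{m+1}=a_{d_m+1}\cdots a_n$ for the products forming the entries on the left-hand side, let $\mu(\cdot,\cdot)$ denote the Möbius function of the interval-partition lattice (the two arguments distinguish it from the joint-distribution functional $\mu$ of Notation~\ref{notation:nc_polynomials}), and for an interval partition $\tau$ let $\varphi_\tau$ be the multiplicative extension of moments, $\varphi_\tau(a_1,\ldots,a_n)=\prod_{D\in\tau}\varphi\bigl(\prod_{j\in D}a_j\bigr)$, understood over whichever alphabet is relevant.

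First I would record the inversion of \eqref{eq:phi=sumbeta}. Applied to the coarse entries $A_1,\ldots,A_{m+1}$ and extended multiplicatively, the moment--cumulant relation reads $\varphi_\tau(A_1,\ldots,A_{m+1})=\sum_{\sigma\le\tau}\beta_\sigma(A_1,\ldots,A_{m+1})$ for every $\tau\in\IntPart(m+1)$; Möbius inversion on $\IntPart(m+1)$ then gives, at $\tau=1_{m+1}$,
\[
\beta_{m+1}(A_1,\ldots,A_{m+1})=\sum_{\sigma\in\IntPart(m+1)}\mu(\sigma,1_{m+1})\,\varphi_\sigma(A_1,\ldots,A_{m+1}).
\]
The next step is to rewrite these coarse moments in terms of the fine variables. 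Since each block $C$ of $\sigma$ is an interval of coarse indices, the product $\prod_{i\in C}A_i$ is again a contiguous product of the $a_j$, so $\varphi_\sigma(A)=\varphi_{\tilde\sigma}(a)$, where $\tilde\sigma\in\IntPart(n)$ is the \emph{pullback} of $\sigma$: the unique interval partition $\ge\rho$ obtained by replacing each coarse index by its range of fine indices. Expanding each coarse moment again by \eqref{eq:phi=sumbeta} yields $\varphi_{\tilde\sigma}(a)=\sum_{\pi\le\tilde\sigma}\beta_\pi(a_1,\ldots,a_n)$.

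Substituting and interchanging the order of summation, I would obtain
\[
\beta_{m+1}(A_1,\ldots,A_{m+1})=\sum_{\pi\in\IntPart(n)}\beta_\pi(a_1,\ldots,a_n)\sum_{\substack{\sigma\in\IntPart(m+1)\\ \tilde\sigma\ge\pi}}\mu(\sigma,1_{m+1}).
\]
Here the decisive structural input is that $\sigma\mapsto\tilde\sigma$ is a lattice isomorphism from $\IntPart(m+1)$ onto the interval $[\rho,1_n]$ of $\IntPart(n)$; in particular it carries $1_{m+1}$ to $1_n$ and induces an isomorphism $[\sigma,1_{m+1}]\cong[\tilde\sigma,1_n]$, so that $\mu(\sigma,1_{m+1})=\mu(\tilde\sigma,1_n)$. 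Writing $\tau=\tilde\sigma$ and noting that $\tau$ ranges over $[\rho,1_n]$, the constraints $\tau\ge\rho$ and $\tau\ge\pi$ combine into the single condition $\tau\ge\pi\vee\rho$. The inner sum thus becomes $\sum_{\tau\ge\pi\vee\rho}\mu(\tau,1_n)$, which by the defining summation property of the Möbius function equals $1$ if $\pi\vee\rho=1_n$ and $0$ otherwise. This collapses the expression to $\sum_{\pi:\,\pi\vee\rho=1_n}\beta_\pi(a_1,\ldots,a_n)$, as claimed. The stated reformulation of the condition $\pi\vee\rho=1_n$ then follows from the description of interval partitions by their cut-point sets in $\{1,\ldots,n-1\}$, under which the join corresponds to intersection of cut sets: since the cut set of $\rho$ is $\{d_1,\ldots,d_m\}$, the equality $\pi\vee\rho=1_n$ holds precisely when $\pi$ has no cut point at any $d_i$, i.e.\ when $d_i$ and $d_i+1$ lie in a common block of $\pi$ for every $i$.

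I expect the main obstacle to be the careful bookkeeping behind the lattice isomorphism $\IntPart(m+1)\cong[\rho,1_n]$ and the resulting identification of Möbius values: one must check that the pullback is order-preserving and bijective onto $[\rho,1_n]$ and keep the refinement/coarsening conventions consistent throughout (finer $=$ more cut points, $1_n$ the one-block partition). Everything else is the routine machinery of multiplicative functions on the partition lattice. An alternative, entirely recursive derivation avoiding Möbius inversion and resting only on the recurrence \eqref{eq:recurrenceboolcum} is also possible, and is presumably what is carried out in Appendix~\ref{ssec:algproof}.
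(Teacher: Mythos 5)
Your argument is correct, but it takes a genuinely different route from the paper's. You carry out the M\"obius-inversion argument on $\IntPart(n)$ that the paper explicitly mentions (citing Lecture 14 of the Nica--Speicher book) and then deliberately sets aside: the paper instead proves Proposition~\ref{prop:boolprod} by iterating Lemma~\ref{lem:productformula} and Corollary~\ref{cor:RecursiveBoolProd}, whose proofs in Appendix~\ref{ssec:algproof} are purely algebraic, resting on nothing but the recurrence \eqref{eq:recurrenceboolcum} and induction (one computes $\varphi(a_1a_2\dotsm a_n)$ in two ways, once treating $a_pa_{p+1}$ as a single factor and once not, and cancels). The steps you sketch are all sound: the blockwise moment--cumulant relation and its inversion, the fact that the pullback $\sigma\mapsto\tilde\sigma$ is a lattice isomorphism of $\IntPart(m+1)$ onto the interval $[\rho,1_n]$ carrying $1_{m+1}$ to $1_n$ and hence preserving M\"obius values, and the collapse of $\sum_{\tau\ge\pi\vee\rho}\mu(\tau,1_n)$ to the indicator of $\pi\vee\rho=1_n$; the cut-set description of joins also correctly yields the stated reformulation of the condition. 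What your route buys is brevity and transparency --- it is the standard machinery of multiplicative functions on lattices, made especially simple here because $\IntPart(n)$ is (anti-)isomorphic to a Boolean lattice, and it makes the appearance of the condition $\pi\vee\rho=1_n$ conceptually inevitable. What the paper's route buys is self-containedness and robustness for generalization: everything follows from the single recurrence \eqref{eq:recurrenceboolcum}, which is precisely the property the authors say they want to preserve ``in view of possible generalizations'' (for instance to settings where multiplicativity over blocks, and hence M\"obius inversion, is more delicate). Your closing remark that an entirely recursive derivation is what Appendix~\ref{ssec:algproof} contains is exactly right.
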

Proposition~\ref{prop:boolprod} can be proved by iteration of the following
lemma and Corollary~\ref{cor:RecursiveBoolProd} below. 
\begin{lemma}
  \label{lem:productformula}  
  \begin{multline}
    \label{eq:lem:productformula}
    \beta_{n-1}(a_1,a_2,\dots,a_pa_{p+1},a_{p+2},a_{p+3},\dots,a_n)
    \\
    =  \beta_{p}(a_1,a_2,\dots,a_p)
    \,
    \beta_{n-p}(a_{p+1},a_{p+2},a_{p+3},\dots,a_n)
    +
    \beta_{n}(a_1,a_2,\dots,a_n)
  \end{multline}
\end{lemma}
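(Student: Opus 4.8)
The plan is to prove the single-merge formula \eqref{eq:lem:productformula} by strong induction on $n$, using only the recurrence \eqref{eq:recurrenceboolcum} together with the elementary but decisive observation that moments are insensitive to the bracketing of their arguments: since $(a_1\cdots a_p)(a_{p+1}\cdots a_n)=a_1a_2\cdots a_n$ as an element of $\alg{M}$, the moment of the merged tuple equals $\varphi(a_1a_2\cdots a_n)$. Write $b_1,\ldots,b_{n-1}$ for the merged tuple, with $b_i=a_i$ for $i<p$, $b_p=a_pa_{p+1}$, and $b_i=a_{i+1}$ for $i>p$. The base case $n=2$, $p=1$ reduces to $\varphi(a_1a_2)=\beta_2(a_1,a_2)+\beta_1(a_1)\beta_1(a_2)$, which is immediate from \eqref{eq:phi=sumbeta}.

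For the inductive step I would first isolate the top cumulant from \eqref{eq:recurrenceboolcum}: its $k=m$ summand is $\beta_m(a_1,\ldots,a_m)$, so
\[
\beta_m(a_1,\ldots,a_m)=\varphi(a_1\cdots a_m)-\sum_{k=1}^{m-1}\beta_k(a_1,\ldots,a_k)\,\varphi(a_{k+1}\cdots a_m).
\]
Applying this both to the merged tuple (length $n-1$) and to the unmerged tuple (length $n$) and subtracting, the two moment terms $\varphi(a_1\cdots a_n)$ cancel, so $\beta_{n-1}(b_1,\ldots,b_{n-1})-\beta_n(a_1,\ldots,a_n)$ becomes a difference of two sums of products of lower cumulants and moments.

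Next I would sort the surviving summands by the position of the split relative to the merge. Summands in which the merge lies strictly to the right of the split (split index $<p$) are literally equal on the two sides, since there the leading block is $a_1,\ldots,a_j$ untouched and the tail moment coincides; these cancel in pairs. For the remaining summands the leading cumulant already contains the merged letter $a_pa_{p+1}$ and has length strictly less than $n-1$, so the induction hypothesis applies and rewrites each as $\beta_p(a_1,\ldots,a_p)\,\beta_{j+1-p}(a_{p+1},\ldots,a_{j+1})+\beta_{j+1}(a_1,\ldots,a_{j+1})$. The pure $\beta_{j+1}(a_1,\ldots,a_{j+1})$ pieces telescope (after reindexing $k=j+1$) against the unmerged sum, leaving exactly the single term $\beta_p(a_1,\ldots,a_p)\,\varphi(a_{p+1}\cdots a_n)$ together with a factor $\beta_p(a_1,\ldots,a_p)$ multiplying the remaining cumulant--moment sum. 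Factoring out $\beta_p(a_1,\ldots,a_p)$, I would finally recognize the bracketed expression, via the very same isolation identity now applied to $\varphi(a_{p+1}\cdots a_n)$, as precisely $\beta_{n-p}(a_{p+1},\ldots,a_n)$, which produces the claimed right-hand side.

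The only genuine hazard is the index bookkeeping: one must track that merging shifts every label to the right of $p$ down by one, verify that the induction hypothesis is invoked solely on tuples of length strictly below $n$ (which holds because the relevant split indices run only up to $n-2$), and confirm that the reindexing $l=j+1-p$ turns the leftover sum into the defining recurrence for $\beta_{n-p}(a_{p+1},\ldots,a_n)$. Everything else is routine cancellation, and in particular no combinatorics of interval partitions beyond the base case is required, in keeping with the recurrence-only approach of Appendix~\ref{ssec:algproof}.
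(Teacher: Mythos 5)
Your proposal is correct and follows essentially the same route as the paper's proof in Appendix~\ref{ssec:algproof}: induction on $n$, expanding the same moment $\varphi(a_1a_2\dotsm a_n)$ via the recurrence \eqref{eq:recurrenceboolcum} once with $a_pa_{p+1}$ merged and once unmerged, cancelling the common terms with split index below $p$, invoking the induction hypothesis on the lower-order merged cumulants, and identifying the surviving sum as the recurrence for $\beta_{n-p}(a_{p+1},\dots,a_n)$. Your subtraction of the two isolated top cumulants is just a rearrangement of the paper's equating of the two moment expansions, so the two arguments coincide step for step.
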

We will actually mostly make use of the following recursive version of Proposition~\ref{prop:boolprod}.
\begin{corollary}
  \label{cor:RecursiveBoolProd}
Let $a_1,a_2,\dots,a_n\in \alg{A}$ be random variables consider the interval partition $\rho=\{\{1,\ldots,d_1\},\{d_1+1,\ldots,d_2\},\ldots,\{d_m+1,\ldots,n\}\}\in \IntPart(n)$. We write $\rho=\{B_1,B_2,\ldots,B_{m+1}\}$, where blocks are ordered in natural order. For $j\in\{1,\ldots,n\}$ denote by $\rho(j)$ the number of block containing $j$, i.e. we have $\rho(j)=k$ if $j\in B_{k}$, 
then
\begin{equation}
  \label{eq:RecursiveBoolProd}
  \begin{multlined}
	\beta_{m+1}(a_1a_2\cdots a_{d_1},a_{d_1+1}a_{d_1+2}\cdots a_{d_2},\ldots,a_{d_{m}+1}a_{d_{m}+2}\cdots a_{n})\\
	=\sum_{j\in\{1,\ldots,n\}\setminus\{d_1,d_2,\ldots,d_m\}}\beta_{j}(a_1,a_2,\ldots,a_j)\beta_{m-\rho(j)+1)}(a_{j+1}a_{j+2}\cdots a_{d_{\rho(j)}},\ldots,a_{d_m+1}\cdots a_n).
  \end{multlined}
\end{equation}
\end{corollary}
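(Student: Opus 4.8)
The plan is to read Corollary~\ref{cor:RecursiveBoolProd} as a ``peel off the leading cumulant'' recursion and to establish it by a single induction that uses only Lemma~\ref{lem:productformula}. I would keep the argument independent of Proposition~\ref{prop:boolprod}, since that Proposition is in turn assembled by iterating the present recursion. The induction would be on the number of merges $n-(m+1)$, i.e.\ on how far the grouping $\rho$ is from the all-singletons partition. The base case $\rho=\{\{1\},\dots,\{n\}\}$ is immediate: the left-hand side is $\beta_n(a_1,\dots,a_n)$, the excluded set $\{d_1,\dots,d_m\}$ equals $\{1,\dots,n-1\}$ so that only $j=n$ survives, and the corresponding summand is $\beta_n(a_1,\dots,a_n)$ times the empty cumulant.

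For the inductive step I would split the leading variable off the first genuinely merged block. Writing the first entry as $A_1=a_1\,(a_2\cdots a_{d_1})$ (so $d_1\geq 2$) and applying Lemma~\ref{lem:productformula} at the leading position gives
\begin{equation*}
  \beta_{m+1}(A_1,A_2,\dots,A_{m+1})
  =\beta_1(a_1)\,\beta_{m+1}(a_2\cdots a_{d_1},A_2,\dots,A_{m+1})
  +\beta_{m+2}(a_1,a_2\cdots a_{d_1},A_2,\dots,A_{m+1}).
\end{equation*}
The first summand is precisely the $j=1$ term of the claim ($\rho(1)=1$, residual products $a_2\cdots a_{d_1},A_2,\dots,A_{m+1}$), while the second is a cumulant of products for the refined grouping $\rho''=\{\{1\},\{2,\dots,d_1\},B_2,\dots,B_{m+1}\}$, which has one more block and hence one fewer merge. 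Applying the induction hypothesis to it, with excluded set $\{1,d_1,\dots,d_m\}$, I would obtain exactly the summands indexed by $j\in\{2,\dots,n\}\setminus\{d_1,\dots,d_m\}$; together with the $j=1$ term this is the full sum. The case $d_1=1$ runs identically but starts the split one block later, so that $j=1$ is a boundary and is correctly omitted.

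Two points would need care. First, the reason the boundary indices $d_1,\dots,d_m$ never occur as summation indices: a summand $\beta_j(a_1,\dots,a_j)\,\beta_\bullet$ is produced only by the ``product'' branch of the Lemma, whereas finishing off a block and passing to the next always goes through the ``remainder'' ($\beta_N$) branch, so no index $j=d_i$ is ever generated. Second, matching the residual factor: for admissible $j$ with $\rho(j)=k$ the residual cumulant lives on $a_{j+1},\dots,a_n$ with grouping $\{\{j+1,\dots,d_k\},B_{k+1},\dots,B_{m+1}\}$, which is exactly the factor $\beta_\bullet(a_{j+1}\cdots a_{d_{\rho(j)}},\dots,a_{d_m+1}\cdots a_n)$ of \eqref{eq:RecursiveBoolProd}, its order being the number of blocks of that residual grouping; the degenerate end $j=n$ has empty residual grouping and collapses to $\beta_n(a_1,\dots,a_n)$.

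I expect the main obstacle to be purely bookkeeping rather than conceptual: one must check that the block label $\rho(j)$ and the argument counts stay consistent under the induction --- when $\{1\}$ splits off, every block index shifts by one but the residual products, hence the residual cumulants and their orders, are unchanged --- and one must treat the two degenerate ends ($d_1=1$ and $j=n$) carefully. As a quicker alternative, if one is willing to assume Proposition~\ref{prop:boolprod}, the Corollary drops out by grouping the sum $\sum_{\pi\vee\rho=1_n}\beta_\pi$ according to the first block $\{1,\dots,j\}$ of the interval partition $\pi$: the join condition forces $j\notin\{d_1,\dots,d_m\}$, $\beta_\pi$ factors as $\beta_j(a_1,\dots,a_j)\,\beta_{\pi'}$, and the residual partitions $\pi'$ are exactly those with $\pi'\vee\rho'=1$ on $\{j+1,\dots,n\}$, whose sum is again a cumulant of products by the Proposition --- though this route is circular if the Corollary is wanted en route to the Proposition.
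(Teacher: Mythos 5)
Your proof is correct and takes essentially the same route as the paper's: the paper also inducts on the number of multiplication signs $r=n-m-1$ and applies Lemma~\ref{lem:productformula} to peel $a_1$ off the first block, producing exactly your two terms. If anything, your write-up is the more precise version of the same argument --- the paper tersely states that the induction hypothesis applies ``to both terms,'' whereas you correctly observe that the first term is already the $j=1$ summand of \eqref{eq:RecursiveBoolProd} and only the second term needs the induction hypothesis, and you also handle the boundary cases $d_1=1$ and $j=n$ explicitly.
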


We record here one immediate consequence of Lemma~\ref{lem:productformula}
which allows to eliminate units. This also follows from Proposition \ref{prop:vnrp} below.
\begin{corollary}[{\cite[Proposition~3.3]{Popa:2009:multiplicative}}]
  \label{cor:boolcumunit}
  For any $n\geq 2$ we have
  \begin{align}
  \label{eq:boolcumunit:1}
    \beta_n(1,a_2,\dots,a_n)
    &=0\\
\beta_n(a_1,a_2,\dots,a_{n-1},1)
    &=0\\
  \label{eq:boolcumunit:3}
    \beta_n(a_1,a_2,\dots,a_{k-1},1,a_{k+1},\dots,a_n)
    &= \beta_{n-1}(a_1,a_2,\dots,a_{k-1},a_{k+1},\dots,a_n)
  \end{align}
\end{corollary}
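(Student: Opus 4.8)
The plan is to derive all three identities from the product formula of Lemma~\ref{lem:productformula}, together with the elementary observation that $\beta_1=\varphi$ (the $n=1$ instance of \eqref{eq:phi=sumbeta}), so that $\beta_1(1)=\varphi(1)=1$. The unifying idea is to realize the missing or surplus unit as a trivial factor $1\cdot a$ or $a\cdot 1$ and use the product formula to reabsorb it, after which the desired cumulant is isolated by cancellation.

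First I would dispatch the two boundary identities. For $\beta_n(1,a_2,\ldots,a_n)$ I would apply Lemma~\ref{lem:productformula} with $p=1$ and first argument equal to $1$, writing the merged entry as $1\cdot a_2=a_2$. Since $\beta_1(1)=1$, the formula reads
\[
\beta_{n-1}(a_2,a_3,\ldots,a_n)=\beta_1(1)\,\beta_{n-1}(a_2,\ldots,a_n)+\beta_n(1,a_2,\ldots,a_n),
\]
and cancelling the common term forces $\beta_n(1,a_2,\ldots,a_n)=0$. The identity $\beta_n(a_1,\ldots,a_{n-1},1)=0$ follows symmetrically by taking $p=n-1$ with last argument $1$, so that $a_{n-1}\cdot 1=a_{n-1}$.

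For the interior case $2\leq k\leq n-1$ I would again invoke Lemma~\ref{lem:productformula}, this time merging the unit in position $k$ with its right neighbour: setting $c_i=a_i$ for $i\neq k$ and $c_k=1$ and applying the product formula at $p=k$, so that $c_kc_{k+1}=1\cdot a_{k+1}=a_{k+1}$, yields
\[
\beta_{n-1}(a_1,\ldots,a_{k-1},a_{k+1},\ldots,a_n)=\beta_k(a_1,\ldots,a_{k-1},1)\,\beta_{n-k}(a_{k+1},\ldots,a_n)+\beta_n(a_1,\ldots,a_{k-1},1,a_{k+1},\ldots,a_n).
\]
Here $k\geq 2$, so the cross term $\beta_k(a_1,\ldots,a_{k-1},1)$ vanishes by the second boundary identity, and what remains is exactly \eqref{eq:boolcumunit:3}.

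The argument is essentially mechanical once the product formula is available, so I do not expect a genuine obstacle beyond bookkeeping. The only points requiring care are the input $\beta_1(1)=1$, which is just unitality of $\varphi$, and the mild restriction $k\geq 2$ needed to invoke the boundary identity when eliminating the cross term in the interior case. One should also respect the logical order: the boundary identities must be established before the interior one, since the latter consumes $\beta_k(a_1,\ldots,a_{k-1},1)=0$.
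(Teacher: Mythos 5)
Your proposal is correct and takes essentially the same approach as the paper: the paper gives no written-out proof but states the corollary as an immediate consequence of Lemma~\ref{lem:productformula}, and your argument — specializing the product formula to a unit entry, using $\beta_1(1)=\varphi(1)=1$, and cancelling, with the boundary identities established before the interior one — is exactly that intended derivation.
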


\subsection{Tensor products and tensor algebras}
The \emph{tensor product} $U\otimes V$ of two vector spaces has the universal property
that every bilinear map $B:U\times V\to W$ has a unique extension to a linear map $B:U\otimes V \to W$. 
Consequently, a family of multilinear maps $f_n:V^n\to W$, $n\geq 0$,
corresponds uniquely to a linear map $f:T(V) \to W$ on the tensor algebra
$T(V) = \oplus_{n=0}^\infty V^{\otimes n}$.
Moreover, any linear map on $V$ can be extended to a derivation of the tensor
algebra (Lemma~\ref{lem:derivTV}).

\begin{notation}\label{notation:tensor}
  It will be convenient to denote the product operation on the tensor algebra by $\odot$ and extend it to matrices as follows:
  Given a matrix $A=[a_{ij}]\in M_n(\alg{A})$ and $a\in\alg{A}$, let $a\odot A = [a\otimes a_{ij}] \in M_n(\alg{A}\otimes \alg{A})$.
  Similarly, for two matrices $A,  B\in M_n(\alg{A})$ we denote by $A\odot B\in M_n(\alg{A}\otimes \alg{A})$ the matrix
  with entries
  $$
  (A\odot B)_{ij} = \sum_k a_{ik}\otimes b_{kj}
  .
  $$
  Note that associativity holds in connection with multiplication with scalar matrices $C\in M_n(\IC)$,
  in the sense that  $A\odot CB = AC\odot B$.
\end{notation}

\subsection{Free products of algebras}
  A \emph{coproduct} or  (algebraic) \emph{free product} of unital algebras $\alg{A}_1$ and $\alg{A}_2$
  over a field $\IK$ is a unital algebra $\alg{A}$ with embeddings
  $\iota_1:\alg{A}_1\to\alg{A}$
  and $\iota_2:\alg{A}_2\to\alg{A}$ such that the images generate $\alg{A}$
  and every pair of homomorphisms
  $h_1:\alg{A}_1\to \alg{B}$ and  $h_2:\alg{A}_2\to \alg{B}$ has a unique
  extension to a homomorphism $\alg{A}\to\alg{B}$.

  For details about free products and tensor algebras we refer to
  \cite[\S1.4]{BeidarMartindaleMikhalev:1996:rings}
  and \cite{Bourbaki:A1-3}.
  
  \begin{proposition}
\label{prop:algfreeproduct}
    \begin{enumerate}[(i)]
     \item []
     \item The free product is unique and is given by the quotient of the
      tensor algebra
      $T^+(A_1\oplus A_2)=\sum_{n=1}^\infty (A_1\oplus A_2)^{\otimes n}$ with respect to the ideal generated by all elements of
      the form
      $$
      a_1\otimes b_1 - a_1b_1
      \qquad
      a_2\otimes b_2 - a_2b_2
      \qquad
      1_{\alg{A}_1} - 1_{\alg{A}_2}
      ,
      \qquad
      a_i,b_i\in\alg{A}_i
      .
      $$
      The algebraic free product is denoted by $\alg{A}_1\freeprod\alg{A}_2$.
     \item \cite[Lemma~1.4.5]{BeidarMartindaleMikhalev:1996:rings}
    Let $\alg{A}_1$ and $\alg{A}_2$ be algebras with 1 over a field $\IK$ with
    respective
    $\IK$-bases $\{1\}\cup M_1$ and  $\{1\}\cup M_2$.
    Then $\{1\}\cup M$ is a $\IK$-basis for $\alg{A}_1\FP \alg{A}_2$ where
    $M$ is the set of alternating monomials in letters from $M_1$ and $M_2$.
    \end{enumerate}
  \end{proposition}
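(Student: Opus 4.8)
The plan is to establish part (i) by the standard universal-property argument and to leave part (ii) to the cited reference \cite[Lemma~1.4.5]{BeidarMartindaleMikhalev:1996:rings}. Uniqueness is immediate and purely formal: if $(\alg{A},\iota_1,\iota_2)$ and $(\alg{A}',\iota_1',\iota_2')$ both satisfy the defining universal property, then feeding the pair $(\iota_1',\iota_2')$ into the property of $\alg{A}$ and the pair $(\iota_1,\iota_2)$ into the property of $\alg{A}'$ produces homomorphisms $\alg{A}\to\alg{A}'$ and $\alg{A}'\to\alg{A}$ whose composites restrict to the identity on the images of $\alg{A}_1$ and $\alg{A}_2$; since those images generate, the composites are the identity, so the two objects are canonically isomorphic. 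It therefore suffices to exhibit one concrete model with the universal property, and the explicit quotient $Q=T^+(A_1\oplus A_2)/I$ is the natural candidate.

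First I would set up the model. Write $[x]$ for the class of $x$ in $Q$ and define $\iota_i:\alg{A}_i\to Q$ by sending $a$ to the class of $a$ viewed as a degree-one tensor under the inclusion $\alg{A}_i\hookrightarrow A_1\oplus A_2$. The relations $a_i\otimes b_i-a_ib_i$ force $[a_i][b_i]=[a_ib_i]$, so each $\iota_i$ is multiplicative, and the images of $\alg{A}_1$ and $\alg{A}_2$ generate $Q$ because $A_1\oplus A_2$ generates the tensor algebra. The remaining embedding claim, namely that each $\iota_i$ is injective, is subsumed by the basis statement (ii): the alternating monomials form a $\IK$-basis, and in particular the degree-one letters from $\alg{A}_i$ remain linearly independent.

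Next I would verify the universal property of $Q$. Given unital homomorphisms $h_1:\alg{A}_1\to\alg{B}$ and $h_2:\alg{A}_2\to\alg{B}$, the induced linear map $A_1\oplus A_2\to\alg{B}$ extends uniquely to an algebra homomorphism $\tilde h:T^+(A_1\oplus A_2)\to\alg{B}$ by the universal property of the tensor algebra. One then checks that every generator of $I$ lies in $\ker\tilde h$: the multiplicativity relations map to $h_i(a_i)h_i(b_i)-h_i(a_ib_i)=0$, while $1_{\alg{A}_1}-1_{\alg{A}_2}\mapsto 1_{\alg{B}}-1_{\alg{B}}=0$ since the $h_i$ are unital. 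Hence $\tilde h$ factors through $Q$, yielding the required extension, and its uniqueness follows because the images of $\iota_1,\iota_2$ generate $Q$.

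The one genuinely delicate point—and the step I expect to require the most care—is the treatment of the unit. Since $T^+$ is taken without a degree-zero summand, $Q$ is a priori non-unital, so one must show that the common class $[1_{\alg{A}_1}]=[1_{\alg{A}_2}]$ is a two-sided identity for $Q$. On a degree-one letter this is exactly the relation $a_i\otimes b_i-a_ib_i$ specialized to $b_i=1_{\alg{A}_i}$ (and symmetrically on the left); for a general alternating monomial one argues by induction on tensor length, absorbing the unit at either end via these relations and invoking the identification $[1_{\alg{A}_1}]=[1_{\alg{A}_2}]$ to pass between letters of the two algebras. Establishing this identity carefully—and concluding that $\iota_1,\iota_2$ are consequently unital—is precisely what promotes the quotient from a non-unital to a genuine unital free product, and it is the part of the argument where the bookkeeping is least automatic.
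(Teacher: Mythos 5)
Your proof is correct. The paper gives no proof of this proposition at all — it is quoted as a standard fact, with part (ii) cited to \cite[Lemma~1.4.5]{BeidarMartindaleMikhalev:1996:rings} and the general theory deferred to that reference and to \cite{Bourbaki:A1-3} — and your argument (uniqueness via the two-way application of the universal property, existence via the quotient of $T^+(\alg{A}_1\oplus\alg{A}_2)$, the unit relation promoting the a priori non-unital quotient to a unital algebra, and injectivity of the embeddings $\iota_i$ deferred to the cited basis statement (ii)) is precisely the standard argument those sources give, with the genuinely delicate point (the unit) handled correctly.
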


  \begin{definition}
    \label{def:augmentation}
    An algebra $\alg{A}$ is called \emph{augmented} if it comes with an
    \emph{augmentation map}, i.e., an algebra homomorphism $\epsilon:\alg{A}\to\IC$.
    Its kernel $\widebar{\alg{A}}=\ker\epsilon$ is called the
    \emph{augmentation ideal}.
  \end{definition}

  \begin{example}
    \label{ex:polyalg:augmented}
    Typical examples of augmented algebras are
    polynomial algebras (both commutative and non-commutative)
    where the augmentation map
    $$
    \epsilon(P) = P(0) = \text{constant coefficient}
    $$
    is clearly a homomorphism.
  \end{example}
  
  The free product of augmented algebras is clearly augmented.
  Moreover, it is isomorphic to a subalgebra of the
  tensor algebra and this fact will be helpful for the definition of certain
  functionals to be defined in Section~\ref{sec:calcbbeta} below.
  \begin{proposition}[\cite{Hungerford:1968:free}]
    \label{prop:freeprodaugmented}
    The free product of augmented algebras is isomorphic to
    \begin{equation}
      \label{eq:freeprodaugmalg}
    \alg{A}\freeprod\alg{B} \simeq
    \IC1\oplus\bigoplus_{n=1}^\infty T_n(\widebar{\alg{A}},\widebar{\alg{B}})\oplus T_n(\widebar{\alg{B}},\widebar{\alg{A}})
    \end{equation}
    where by
    $$
    T_n(U,V) = U\otimes V\otimes U\otimes \dotsm \quad\text{($n$ factors)}
    $$
    we denote the alternating tensor product of two vector spaces.
    The multiplication is given by the tensor product modulo the identifications
    $a'\otimes a'' = a'a''$
    and $b'\otimes b'' = b'b''$ already present in Proposition~\ref{prop:algfreeproduct}.
\end{proposition}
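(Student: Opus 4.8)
The plan is to build a canonical linear isomorphism from the right-hand side of \eqref{eq:freeprodaugmalg} onto $\alg{A}\freeprod\alg{B}$ and then check that it intertwines the two multiplications. First I would exploit the augmentation to split $\alg{A}=\IC1\oplus\widebar{\alg{A}}$ and $\alg{B}=\IC1\oplus\widebar{\alg{B}}$ as vector spaces, which is possible because $\epsilon(1)=1$ forces $a-\epsilon(a)1\in\ker\epsilon=\widebar{\alg{A}}$ for every $a$. On each summand $T_n(\widebar{\alg{A}},\widebar{\alg{B}})=\widebar{\alg{A}}\otimes\widebar{\alg{B}}\otimes\dotsm$ the multilinear map $(a_1,b_1,a_2,\ldots)\mapsto a_1b_1a_2\dotsm$, where the product is formed inside $\alg{A}\freeprod\alg{B}$ via the embeddings of Proposition~\ref{prop:algfreeproduct}, factors through the tensor product and yields a linear map $T_n(\widebar{\alg{A}},\widebar{\alg{B}})\to\alg{A}\freeprod\alg{B}$; doing the same for $T_n(\widebar{\alg{B}},\widebar{\alg{A}})$ and sending $1\mapsto1$ assembles into a single linear map $\Phi$ from the right-hand side into $\alg{A}\freeprod\alg{B}$.

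The bijectivity of $\Phi$ is where I would invoke Proposition~\ref{prop:algfreeproduct}(ii). Choosing $\IK$-bases $M_1$ of $\widebar{\alg{A}}$ and $M_2$ of $\widebar{\alg{B}}$, the sets $\{1\}\cup M_1$ and $\{1\}\cup M_2$ are bases of $\alg{A}$ and $\alg{B}$, so the alternating monomials in letters from $M_1\cup M_2$, together with $1$, form a basis of $\alg{A}\freeprod\alg{B}$. On the other hand, the simple tensors $u_1\otimes v_1\otimes\dotsm$ with $u_i\in M_1$ and $v_i\in M_2$ form a basis of $T_n(\widebar{\alg{A}},\widebar{\alg{B}})$, and $\Phi$ sends them to exactly the length-$n$ alternating monomials beginning with an $M_1$-letter; likewise the $T_n(\widebar{\alg{B}},\widebar{\alg{A}})$-summand covers those beginning with an $M_2$-letter. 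Thus $\Phi$ carries a basis bijectively onto a basis and is a linear isomorphism.

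It remains to check that $\Phi$ is multiplicative for the product described on the right-hand side, namely concatenation of tensors followed by the identifications $a'\otimes a''=a'a''$ and $b'\otimes b''=b'b''$. For two simple tensors whose last and first factors lie in different algebras this is immediate, since $\Phi$ of the concatenation is the product of the two alternating monomials in $\alg{A}\freeprod\alg{B}$. The one genuinely delicate case, and the step I expect to be the main obstacle, is the collision case where the two adjacent factors come from the same algebra, say both from $\widebar{\alg{A}}$: then their product $a''a'$ computed in $\alg{A}$ need not lie in $\widebar{\alg{A}}$ but splits as $\epsilon(a''a')1+\bigl(a''a'-\epsilon(a''a')1\bigr)$, so the tensor must be re-expanded and possibly shortened, and this shortening can cascade once a unit is absorbed and its former neighbours become adjacent. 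One has to verify that the identification $a'\otimes a''=a'a''$ on the right-hand side implements exactly this splitting and is compatible with the multiplication in $\alg{A}\freeprod\alg{B}$, which holds because the relations defining the free product in Proposition~\ref{prop:algfreeproduct}(i) are precisely $a_1\otimes b_1-a_1b_1$ and their analogues, including $1_{\alg{A}}-1_{\alg{B}}$. Once this bookkeeping is settled, $\Phi$ is an algebra isomorphism. As an alternative one could instead equip the right-hand side with this multiplication directly, verify associativity and the universal property, and conclude by the uniqueness clause of Proposition~\ref{prop:algfreeproduct}(i); this is conceptually cleaner but shifts the burden onto checking associativity by hand.
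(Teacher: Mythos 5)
The paper offers no proof of this proposition at all: it is imported from \cite{Hungerford:1968:free} with the citation as sole justification, so there is nothing internal to compare your argument against; judged on its own merits, your proof is correct. Your key steps are the right ones. The augmentation splittings $\alg{A}=\IC1\oplus\widebar{\alg{A}}$ and $\alg{B}=\IC1\oplus\widebar{\alg{B}}$ let you choose bases $\{1\}\cup M_1$ of $\alg{A}$ and $\{1\}\cup M_2$ of $\alg{B}$ with $M_1\subseteq\widebar{\alg{A}}$, $M_2\subseteq\widebar{\alg{B}}$, and Proposition~\ref{prop:algfreeproduct}(ii) then shows that your map $\Phi$ carries the basis of simple tensors in basis letters bijectively onto the alternating-monomial basis $\{1\}\cup M$ of $\alg{A}\freeprod\alg{B}$, which gives the linear isomorphism. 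Multiplicativity is, as you say, a bookkeeping induction: since the embeddings of $\alg{A}$ and $\alg{B}$ into the free product are algebra homomorphisms, $\Phi(w_1)\Phi(w_2)$ is computed by multiplying the colliding same-algebra factors inside $\alg{A}$ (or $\alg{B}$), splitting the result into $\epsilon(\cdot)1$ plus its centered part, and cascading when the scalar term makes new neighbours adjacent --- which is exactly the reduction defining the product on the right-hand side, so $\Phi$ intertwines the two products; a benefit of running the argument in this order is that associativity of the right-hand-side product is then inherited from $\alg{A}\freeprod\alg{B}$ rather than checked by hand. Two minor remarks: in your collision case the relevant product is $a'a''$ (last factor of the first word times first factor of the second), not $a''a'$; and your alternative ending via the universal property of Proposition~\ref{prop:algfreeproduct}(i) is essentially the route of the cited reference, so it is equally legitimate, at the cost of the associativity check you mention.
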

  \begin{example}
    The free associative algebra $\IC\langle X,Y\rangle$ is isomorphic to
    algebraic free product  $\IC[X]\freeprod\IC[X]$.
    The decomposition
    \eqref{eq:freeprodaugmalg} corresponds to the decomposition into the
    constant term and alternating monomials
    $X^{k_1}Y^{l_1}X^{k_2}Y^{l_2}\dotsm$ with exponents $k_j,l_j>0$.
  \end{example}
    \begin{notation}
    Extending the terminology of the preceding example to free products of general augmented algebras,
    alternating products of the form
    $
    w = a_1b_1a_2b_2\dotsm  
    $
    (resp.~$w = b_1a_1b_2a_2\dotsm$) 
    with $a_i\in\widebar{\alg{A}}$ and $b_i\in\widebar{\alg{B}}$,
    i.e., the images of elementary tensors from $T_n(\alg{A},\alg{B})$ (resp.~ $T_n(\alg{B},\alg{A})$),
    will be called \emph{monomials}.
  \end{notation}

  The reduced free product of ncps $(\alg{A},\varphi)$ and $(\alg{B},\psi)$
  will be denoted by  $(\alg{A},\varphi)*(\alg{B},\psi)$ or
  $\alg{A}*\alg{B}$ in short.
  It 
  is realized as the image of the free product of their GNS representations, see   
  \cite[\S1.5]{VoiDykNica}. For our purpose the following subalgebra is sufficient.
  \begin{proposition}[{\cite[\S1.5]{VoiDykNica}}]
    \label{prop:reducedfreeprod}
    Let $(\alg{A}_1,\varphi_1)$ and $(\alg{A}_2,\varphi_2)$ be ncps and denote by
    $\bub{A}_i=\ker\varphi_i$ their centered components.
    Then the orthogonal direct sum
    $$
    \IC1\oplus \bigoplus_{n=1}^\infty\oplus \bigoplus_{i_1\ne i_2\ne\cdots\ne i_n}
    \bub{\alg{A}}_{i_1}\bub{\alg{A}}_{i_2} \dotsm \bub{\alg{A}}_{i_n} 
    $$
    is a dense subalgebra of the reduced free product.
  \end{proposition}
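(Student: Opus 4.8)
The plan is to recall the GNS realization of the reduced free product and to match the purely algebraic decomposition of the free product with the orthogonal Hilbert--space decomposition of the underlying free product Hilbert space. First I would perform the GNS construction for each $(\alg{A}_i,\varphi_i)$, obtaining a Hilbert space $H_i$ with cyclic vector $\xi_i$ and a representation $\pi_i$ with $\varphi_i(a)=\langle\pi_i(a)\xi_i,\xi_i\rangle$. Splitting $H_i=\IC\xi_i\oplus\bub{H}_i$ with $\bub{H}_i=\overline{\pi_i(\bub{\alg{A}}_i)\xi_i}$, I form the free product Hilbert space $(H,\Omega)$, which by construction carries the orthogonal decomposition
\[
H=\IC\Omega\oplus\bigoplus_{n\geq1}\bigoplus_{i_1\ne i_2\ne\cdots\ne i_n}\bub{H}_{i_1}\otimes\bub{H}_{i_2}\otimes\cdots\otimes\bub{H}_{i_n}.
\]
On $H$ the free product representations $\lambda_i$ are defined in the usual way, and the reduced free product $\alg{A}_1*\alg{A}_2$ is the $*$-algebra they generate together with its closure, carrying the vacuum state $\varphi(x)=\langle x\Omega,\Omega\rangle$.

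Next I would verify that the algebraic span $\mathcal{S}=\IC1\oplus\bigoplus_{n}\bigoplus_{i_1\ne\cdots\ne i_n}\bub{\alg{A}}_{i_1}\cdots\bub{\alg{A}}_{i_n}$ is a subalgebra. The only nontrivial point is closure under multiplication: when two alternating products are concatenated and the adjacent inner letters $a\in\bub{\alg{A}}_i$ and $a'\in\bub{\alg{A}}_i$ come from the same algebra, one recenters via $aa'=\varphi_i(aa')1+(aa'-\varphi_i(aa')1)$, the first summand feeding a shorter alternating product and the second remaining a centered letter; iterating reduces any product back into $\mathcal{S}$. This is precisely the algebraic free-product multiplication of Proposition~\ref{prop:algfreeproduct}, with augmentation given by the states, so $\mathcal{S}$ is isomorphic to $\alg{A}_1\freeprod\alg{A}_2$, and the displayed sum is genuinely direct by the basis statement in Proposition~\ref{prop:algfreeproduct}(ii) (equivalently by Proposition~\ref{prop:freeprodaugmented}).

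I would then obtain orthogonality and density simultaneously from the map $w\mapsto w\Omega$. A centered alternating monomial $a_{1}a_{2}\cdots a_{n}$ with $a_j\in\bub{\alg{A}}_{i_j}$ sends $\Omega$ into the summand $\bub{H}_{i_1}\otimes\cdots\otimes\bub{H}_{i_n}$, using that $\lambda_i(\bub{\alg{A}}_i)\Omega\subseteq\bub{H}_i$ and that each subsequent letter, coming from a different algebra, prepends a tensor factor. Hence distinct summands of $\mathcal{S}$ land in mutually orthogonal subspaces of $H$, which yields the asserted orthogonality with respect to the inner product $\langle x,y\rangle=\varphi(y^*x)$. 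Density is then immediate: the reduced free product is by definition the closure of $\mathcal{S}$, and since $\Omega$ is cyclic with $\overline{\mathcal{S}\Omega}=H$, nothing is lost.

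The main obstacle is the bookkeeping in the second step together with the injectivity needed to transport the algebraic direct sum into the completion. One must confirm that the recentering procedure terminates and reproduces exactly the free-product relations, and, if one wants faithfulness of $\varphi$ rather than merely the existence of a dense subalgebra, that $w\mapsto w\Omega$ is injective on $\mathcal{S}$, i.e.\ that no nonzero element of the algebraic free product acts as zero on $\Omega$. This injectivity is what makes the tensor summands genuinely independent and is the technical heart of the free-product Hilbert-space construction underlying the cited result.
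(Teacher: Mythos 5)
Note first that the paper offers no proof of Proposition~\ref{prop:reducedfreeprod}: it is quoted from \cite[\S 1.5]{VoiDykNica}, and the sentence preceding it (``realized as the image of the free product of their GNS representations'') is exactly the construction you carry out, so your route coincides with that of the cited source. Within that route, your treatment of the subalgebra property (recentering at the junction, induction on word length), of orthogonality (each centered alternating word sends $\Omega$ into the tensor summand of $H$ labelled by its type, and $\varphi(y^*x)=\langle x\Omega,y\Omega\rangle$), and of density ($\mathcal{S}$ is the $*$-algebra generated by $\lambda_1(\alg{A}_1)\cup\lambda_2(\alg{A}_2)$, whose closure is the reduced free product by definition) is correct.

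What you never actually prove is the remaining claim of the proposition, namely that the sum is \emph{direct}. Your second step deduces directness from Proposition~\ref{prop:algfreeproduct}(ii) via the assertion that $\mathcal{S}$ ``is isomorphic to $\alg{A}_1\freeprod\alg{A}_2$''; but $\mathcal{S}$, as it appears in the proposition, is the image of $\alg{A}_1\freeprod\alg{A}_2$ under the homomorphism induced by $\lambda_1,\lambda_2$, and transporting the direct-sum decomposition along that homomorphism requires exactly the injectivity which you concede in your final paragraph is unproven --- so this step is circular. Moreover, the sufficient condition you propose there (``no nonzero element acts as zero on $\Omega$'', i.e.\ $\Omega$ is separating for $\mathcal{S}$) is false without a faithfulness hypothesis on the $\varphi_i$: for $\alg{A}_1=M_2(\IC)$ with $\varphi_1(a)=a_{11}$, the element $e_{12}$ is centered and $\lambda_1(e_{12})\neq 0$, yet $\lambda_1(e_{12})\Omega=\pi_1(e_{12})\xi_1=0$ because $\varphi_1(e_{12}^*e_{12})=0$. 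Under the paper's standing assumption that the states are faithful (cf.\ Proposition~\ref{prop:algfree}, its algebraic counterpart) the gap closes in a few lines, and in a way that gives orthogonality, directness and the separating property simultaneously: faithfulness makes $a\mapsto\widehat{a}=\pi_i(a)\xi_i$ injective on $\bub{\alg{A}}_i$; the map $(a_1,\dots,a_n)\mapsto\lambda_{i_1}(a_1)\dotsm\lambda_{i_n}(a_n)$ is multilinear, hence factors through $\bub{\alg{A}}_{i_1}\otimes\dots\otimes\bub{\alg{A}}_{i_n}$, on which evaluation at $\Omega$ becomes $\widehat{\cdot}\otimes\dots\otimes\widehat{\cdot}$, an injective map (a tensor product of injective linear maps over a field is injective) into $\bub{H}_{i_1}\otimes\dots\otimes\bub{H}_{i_n}$; so on each type summand $x\Omega=0$ forces $x=0$, and since distinct types land in mutually orthogonal subspaces of $H$, any vanishing finite sum $\sum_\tau x_\tau=0$ gives $x_\tau\Omega=0$ and hence $x_\tau=0$ for every type $\tau$.
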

  
  \begin{definition}
    Unital subalgebras $\alg{A}$, $\alg{B}$ of an algebra $\alg{M}$ are
    called \emph{algebraically free} if they do not satisfy any mutual
    algebraic relation,
    i.e., if the free extension of the embeddings $\iota_A:\alg{A}\to \alg{M}$,
    $\iota_B:\alg{B}\to \alg{M}$ to a homomorphism
    $h:\alg{A}\FP\alg{B}\to\alg{M}$ is injective.
  \end{definition}
  
  \begin{proposition}
    \label{prop:algfree}
    Let $(\alg{M},\varphi)$ be a ncps with faithful state $\varphi$
    and let $\alg{A}$, $\alg{B}$ be freely independent subalgebras
    in the sense of Definition~\ref{def:freeindep}.
    Then  $\alg{A}$ and $\alg{B}$ are algebraically free.
  \end{proposition}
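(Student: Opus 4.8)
The plan is to show directly that the canonical homomorphism $h:\alg{A}\FP\alg{B}\to\alg{M}$ is injective, by exhibiting a convenient linear basis of the free product and proving that its image is linearly independent in $\alg{M}$. Since $\varphi$ is a state, $\varphi(1)=1$, so each subalgebra splits as $\alg{A}=\IC1\oplus\bub{\alg{A}}$ and $\alg{B}=\IC1\oplus\bub{\alg{B}}$, where $\bub{\alg{A}}=\ker\varphi|_{\alg{A}}$ and $\bub{\alg{B}}=\ker\varphi|_{\alg{B}}$ are the centered components. Choosing linear bases $M_1$ of $\bub{\alg{A}}$ and $M_2$ of $\bub{\alg{B}}$, the sets $\{1\}\cup M_1$ and $\{1\}\cup M_2$ are bases of $\alg{A}$ and $\alg{B}$, so by Proposition~\ref{prop:algfreeproduct}(ii) the unit together with the alternating monomials in letters from $M_1\cup M_2$ form a basis of $\alg{A}\FP\alg{B}$. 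Hence it suffices to prove that the images under $h$ of these basis elements, i.e.\ the unit $1$ together with all alternating products $w=u_1u_2\dotsm u_n$ of centered elements $u_i\in\bub{\alg{A}}\cup\bub{\alg{B}}$ with consecutive factors from different subalgebras, are linearly independent in $\alg{M}$.

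To detect linear independence I would use the GNS inner product $\langle u,v\rangle=\varphi(v^{*}u)$, which is positive definite on $\alg{M}$ precisely because $\varphi$ is faithful (here I use that $\alg{A}$ and $\alg{B}$ are $*$-subalgebras, so the adjoint of a centered element is again centered in the same subalgebra). The key input from freeness is an orthogonality-and-factorization lemma: for alternating centered products $w=u_1\dotsm u_n$ and $v=v_1\dotsm v_m$ one has $\langle w,1\rangle=\varphi(w)=0$, and $\langle w,v\rangle=0$ unless $w$ and $v$ have the same length and the same pattern of subalgebras, in which case $\langle w,v\rangle=\prod_{i}\langle u_i,v_i\rangle$. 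This is essentially the content of the reduced-free-product construction recorded in Proposition~\ref{prop:reducedfreeprod}; a self-contained proof proceeds by induction on $n+m$, writing $v^{*}w=v_m^{*}\dotsm v_1^{*}u_1\dotsm u_n$, centering the innermost product $v_1^{*}u_1$ (when its two factors lie in the same subalgebra) as $\varphi(v_1^{*}u_1)1$ plus a centered remainder, and applying the defining vanishing property of Definition~\ref{def:freeindep} to the shorter alternating words that arise.

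Granting the lemma, the Gram matrix of the family $\{1\}\cup\{w\}$ is block diagonal: the unit sits in its own one-dimensional block, and two alternating products contribute to the same block only if they share the same subalgebra pattern $\tau=(i_1,i_2,\dotsc,i_n)$. On the block indexed by $\tau$ the identity $\langle w,v\rangle=\prod_i\langle u_i,v_i\rangle$ identifies the pairing with the tensor-product inner product on $\bub{\alg{A}}_{i_1}\otimes\dotsm\otimes\bub{\alg{A}}_{i_n}$, which is positive definite because it is a tensor product of the positive definite forms induced by $\varphi$ on $\bub{\alg{A}}$ and $\bub{\alg{B}}$. Thus every block is positive definite, the whole Gram matrix is nonsingular, and the images of the basis elements are linearly independent, so $h$ is injective, which is the assertion. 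The main obstacle is the orthogonality-and-factorization lemma, since that is the only place where freeness actually enters and where the combinatorial bookkeeping of the centering induction must be carried out; once it is in hand, positive definiteness of the blocks and hence injectivity of $h$ follow routinely from faithfulness.
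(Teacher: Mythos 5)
Your proof is correct, but it takes a genuinely more self-contained route than the paper's. The paper disposes of the statement in two lines by citation: the algebra generated by $\alg{A}$ and $\alg{B}$ is identified with the reduced free product $(\alg{A},\varphi|_{\alg{A}})*(\alg{B},\varphi|_{\alg{B}})$, and Avitzour's Proposition~2.3 is then invoked to say that the reduced free product contains a faithful copy of the algebraic free product. What you do is prove the content of those two citations by hand: your orthogonality-and-factorization lemma is exactly the orthogonal direct-sum decomposition $\IC1\oplus\bigoplus\bub{\alg{A}}_{i_1}\bub{\alg{A}}_{i_2}\dotsm\bub{\alg{A}}_{i_n}$ recorded in Proposition~\ref{prop:reducedfreeprod}, and your Gram-matrix argument is the linear-independence assertion hidden in the phrase ``faithful copy''. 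The centering induction you sketch does work (when the innermost pair $v_1^*u_1$ comes from the same subalgebra one splits off $\varphi(v_1^*u_1)1$ and the remainder term dies by Definition~\ref{def:freeindep}; a pattern mismatch or an exhausted word likewise leaves an alternating centered word, which has vanishing expectation), and nonsingularity of each block follows as you say because the factorization identifies it with the Gram matrix of linearly independent elementary tensors under a tensor product of positive definite forms. What your approach buys is a purely algebraic argument that never leaves $\alg{M}$ (no GNS completion, no C${}^*$-theory); the cost is length and a few routine verifications you should make explicit, e.g.\ that positivity of $\varphi$ gives $\varphi(x^*)=\overline{\varphi(x)}$, so adjoints of centered elements are again centered. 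One caveat, which you flag yourself: your argument needs $\alg{A}$ and $\alg{B}$ to be unital $*$-subalgebras, whereas the proposition literally says only ``subalgebras''. This is not a defect of your proof relative to the paper's, since the appeal to the reduced free product and to Avitzour's C${}^*$-algebraic result presupposes exactly the same; the $*$-closedness is best read as a standing hypothesis in both arguments.
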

  \begin{proof}
    The algebra generated by $\alg{A}$ and $\alg{B}$ is isomorphic
    to their reduced free product $\alg{A}*\alg{B}$
    and thus by \cite[Proposition~2.3]{Avitzour:1982:free} contains a faithful copy
    of the algebraic free product.
  \end{proof}

\subsection{Derivations}
Coincidentally it turns out that similar to classical calculus, integration has
strong ties to derivations.
More precisely, we will be concerned with several  derivations
from an algebra $\alg{A}$ into the bimodule $\mathfrak{M} = \alg{A}\otimes\alg{A}$ with
the natural action
\begin{equation}
  \label{eq:AotimAmodule}
a_1\cdot (x\otimes y) \cdot a_2 = (a_1\otimes 1)(x\otimes y)(1\otimes a_2) =
a_1x\otimes ya_2
\end{equation}
\begin{definition}
  Let $\alg{A}$ be an algebra and $\mathfrak{M}$ be an $\alg{A}$-bimodule.
  An \emph{$\mathfrak{M}$-derivation} is a linear map $D:\alg{A}\to\mathfrak{M}$ satisfying
  the Leibniz rule
  \begin{equation}
    \label{eq:leibniz}
  D(a_1a_2) = D(a_1)\cdot a_2 + a_1\cdot D(a_2)
\end{equation}
\end{definition}
If  one allows modifications of the  left and right actions of $\alg{A}$ on
$\alg{A}\otimes\alg{A}$ then
homomorphisms become a rich source of derivations.
\begin{proposition}
  \label{prop:derivphi1-phi2}
Let $\alg{A}$ be a unital algebra and
$\Phi_1,\Phi_2:\alg{A}\to\alg{A}\otimes\alg{A}$ be two homomorphisms,
then $D = \Phi_1-\Phi_2$ is a derivation in the sense that
$D(ab) = D(a)\Phi_2(b) + \Phi_1(a)D(b)$,
i.e.,
regarding $\alg{A}\otimes\alg{A}$ as a bimodule with action
$$
a_1\cdot (x\otimes y) \cdot a_2 = \Phi_1(a_1)(x\otimes y)\Phi_2(a_2)
.
$$
\end{proposition}

The following examples of derivations will play a major role in the
developments below.
\begin{example}
  \label{ex:lnabla}
  The ``tensor commutators''
  $\lnabla:\alg{A}\to\alg{A}\otimes\alg{A}$
mapping $\lnabla a = a\otimes 1-1\otimes a$
  is a derivation for any algebra $\alg{A}$.
  It appears in \cite[Section~5.3]{Voiculescu:1999:entropy6} (see Section~\ref{ssec:characterization} below)
  and has the universal property that every derivation factors through it
  \cite[p.~III.132, Proposition~17]{Bourbaki:A1-3}.
  It will be convenient to denote $\rnabla=-\lnabla$.
\end{example}

\begin{example}
  \label{ex:freederiv}
  The \emph{free derivative} or \emph{free difference quotient} on $\IC[x]$ is the map
  $\partial:\IC[x]\to\IC[x]\otimes\IC[x]$
  given by
  \begin{equation*}
    \partial x^n = \sum_{k=0}^{n-1} x^k\otimes x^{n-k-1}
    .
  \end{equation*}
  In the natural identification $\IC[x]\otimes\IC[x]\simeq \IC[x,y]$ this
  coincides with the difference quotient
  \begin{equation*}
    \partial p(x) =  \frac{p(x)-p(y)}{x-y}
  \end{equation*}
  and for this reason is also known as the \emph{Newtonian coproduct}
  \cite[\S  XII]{JoniRota:1978:coalgebras}.
  It first appeared in free probability in connection with the non-commutative Hilbert transform approach to free entropy
  \cite {Voiculescu:1998:entropy5}.
\end{example}

\begin{example}
  \label{ex:divpower}
  A slight modification of the previous derivation comes from the \emph{divided powers coproduct}
  \cite[\S  VI]{JoniRota:1978:coalgebras}.
  Let again $\alg{A}=\IC[X]$ be the polynomial algebra,
  then the deconcatenation coproduct is a homomorphism
  $$
  \Delta_d(x^n) = \sum_{k=0}^n x^k\otimes x^{n-k}
  $$
  and thus both
  \begin{align*}
    \ldelta p(x) &= \Delta_d p(x) - 1\otimes p(x) = (x\otimes  1)\partial p(x)\\
    \rdelta p(x) &= \Delta_d p(x) - p(x)\otimes 1 = (1\otimes  x)\partial p(x)
  \end{align*}
  are derivations.
\end{example}
\begin{example}
  \label{ex:blockderiv}
  For an augmented algebra, the map $\Delta(a) = a-\epsilon(a)$ is a derivation
  in the sense of Proposition~\ref{prop:derivphi1-phi2}
  and so are the left and right ``block derivatives''
  \begin{equation*}
    \begin{aligned}
      \rDelta:\alg{A}&\to\alg{A}\otimes\alg{A}   &    \lDelta:\alg{A}&\to\alg{A}\otimes\alg{A}   \\
      a&\mapsto 1\otimes( a-\epsilon(a))         &   a&\mapsto(a-\epsilon(a))\otimes 1
      .
    \end{aligned}
  \end{equation*}

\end{example}

Next we discuss free products of derivations.
\begin{proposition}
  Let $\alg{A}_1$ and $\alg{A}_2$ be algebras and
  $\alg{A}=\alg{A}_1\freeprod\alg{A}_2$ their (unital) free product.
  Let $\mathfrak{M}$  an $\alg{A}$-bimodule (equivalently, a bimodule for both
  $\alg{A}_1$ and $\alg{A}_2$)
  and $D_i:\alg{A}_i\to \mathfrak{M}$, $i=1,2$ be derivations.
  Then there exists a unique derivation $D_1\ast D_2:\alg{A}_1\freeprod\alg{A}_2\to\mathfrak{M}$
  extending $D_1$ and $D_2$.
  Moreover, we have the decomposition
  $$
  D_1\ast D_2= D_1\ast0 + 0\ast D_2
  $$
  where $0$ is the trivial derivation.
\end{proposition}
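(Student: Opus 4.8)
The plan is to reduce the construction of $D_1\ast D_2$ to the defining universal property of the free product (Proposition~\ref{prop:algfreeproduct}) by encoding derivations as unital algebra homomorphisms into a \emph{square-zero extension}. Recall the standard device: for the $\alg{A}$-bimodule $\mathfrak{M}$ one forms the unital algebra $\alg{A}\ltimes\mathfrak{M}$ with underlying space $\alg{A}\oplus\mathfrak{M}$ and product
\[
(a,m)(a',m')=(aa',\,a\cdot m'+m\cdot a'),
\]
together with the projection $\pi:\alg{A}\ltimes\mathfrak{M}\to\alg{A}$, $(a,m)\mapsto a$. A linear map $D:\alg{A}\to\mathfrak{M}$ is a derivation in the sense of \eqref{eq:leibniz} if and only if $a\mapsto(a,D(a))$ is an algebra homomorphism splitting $\pi$; it is automatically unital since $D(1)=0$ (apply the Leibniz rule to $1\cdot1$). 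Thus derivations into $\mathfrak{M}$ correspond bijectively to unital algebra sections of $\pi$.

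For existence and uniqueness, first I would observe that each prescribed datum gives a unital homomorphism $h_i:\alg{A}_i\to\alg{A}\ltimes\mathfrak{M}$, $h_i(a)=(\iota_i(a),D_i(a))$, where $\iota_i:\alg{A}_i\to\alg{A}$ is the canonical embedding and $\mathfrak{M}$ is regarded as an $\alg{A}_i$-bimodule by restriction of scalars along $\iota_i$ (this is exactly the content of the parenthetical remark in the statement). The homomorphism property of $h_i$ is precisely the Leibniz rule for $D_i$, and $h_i$ is unital because $D_i(1)=0$. The universal property of $\alg{A}=\alg{A}_1\freeprod\alg{A}_2$ then yields a unique unital homomorphism $h:\alg{A}\to\alg{A}\ltimes\mathfrak{M}$ extending $h_1$ and $h_2$. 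Since $\pi\circ h$ and $\id_{\alg{A}}$ are both homomorphisms $\alg{A}\to\alg{A}$ extending the $\iota_i$, uniqueness in the universal property forces $\pi\circ h=\id_{\alg{A}}$; hence $h(a)=(a,D(a))$ for a unique linear map $D:\alg{A}\to\mathfrak{M}$, which by the correspondence above is a derivation restricting to $D_i$ on $\alg{A}_i$. I then set $D_1\ast D_2:=D$. Uniqueness is immediate: any derivation $D'$ extending $D_1,D_2$ yields a section $a\mapsto(a,D'(a))$ extending $h_1,h_2$, so it coincides with $h$, whence $D'=D$.

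Finally, for the decomposition I would use that the sum of two $\mathfrak{M}$-derivations is again an $\mathfrak{M}$-derivation, so $D_1\ast0+0\ast D_2$ is a derivation on $\alg{A}$. Restricting it to $\alg{A}_1$ gives $D_1+0=D_1$ and restricting it to $\alg{A}_2$ gives $0+D_2=D_2$; thus it extends the same data as $D_1\ast D_2$, and the two agree by the uniqueness just established. The only points requiring care---and the closest thing to an obstacle---are bookkeeping: checking that the bimodule action used in each $h_i$ is genuinely the restriction of the fixed $\alg{A}$-action (so that the single algebra $\alg{A}\ltimes\mathfrak{M}$ accommodates all the maps simultaneously), and invoking uniqueness twice, once to identify $\pi\circ h$ with $\id_{\alg{A}}$ and once to pin down $D_1\ast D_2$. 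No genuinely hard estimate or combinatorial argument is involved.
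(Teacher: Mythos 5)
Your proof is correct, but it takes a genuinely different route from the paper's. The paper argues constructively: it extends $D_1\oplus D_2$ to the tensor algebra $T^+(\alg{A}_1\oplus\alg{A}_2)$ by the Leibniz rule (Lemma~\ref{lem:derivTV}) and then checks that the defining ideal of the free product (generated by the elements $a'\otimes a''-a'a''$ and $1_{\alg{A}_1}-1_{\alg{A}_2}$ of Proposition~\ref{prop:algfreeproduct}) lies in the kernel, so the map descends to $\alg{A}_1\freeprod\alg{A}_2$. That computation produces an explicit monomial-wise formula for $D_1\ast D_2$, which is what the paper actually exploits afterwards (e.g., for the partial block derivatives in Example~\ref{ex:DeltaA}), while uniqueness and the decomposition $D_1\ast D_2=D_1\ast 0+0\ast D_2$ are not spelled out there. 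Your square-zero-extension argument trades the explicit formula for structure: encoding derivations as unital algebra sections of $\pi:\alg{A}\ltimes\mathfrak{M}\to\alg{A}$ lets the universal property of the coproduct do all the work, and both uniqueness and the decomposition then fall out of the uniqueness clause with no computation, exactly as you say. Two small points are worth making explicit. First, $(1_{\alg{A}},0)$ is a unit of $\alg{A}\ltimes\mathfrak{M}$, and $D(1)=0$ follows from the Leibniz rule, only when the bimodule action of $\alg{A}$ on $\mathfrak{M}$ is unital; this hypothesis is equally implicit in the paper's proof, which needs $D_i(1_{\alg{A}_i})=0$ in order to kill $1_{\alg{A}_1}-1_{\alg{A}_2}$, so it is a shared convention rather than a gap. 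Second, uniqueness of $D_1\ast D_2$ can also be seen directly without the semidirect product, since $\alg{A}_1\cup\alg{A}_2$ generates $\alg{A}$ and the Leibniz rule determines a derivation on products of generators; your route gets it for free, but it is not deep either way.
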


\begin{proof}
  We first extend $\tilde{D}=D_1\oplus D_2:\alg{A}_1\oplus\alg{A}_2\to\mathfrak{M}$
  to the tensor algebra $T^+(\alg{A}_1\oplus\alg{A}_2)$ by the Leibniz rule,
  see Lemma~\ref{lem:derivTV}:
$$
  \tilde{D}(u_1\otimes u_2\otimes\dotsm \otimes u_n)=\sum_{k=1}^n
  u_1\otimes u_2\otimes u_{k-1}\cdot \tilde{D}(u_k)\cdot u_{k+1}\otimes\dotsm \otimes u_n
  $$
  and then check that it passes to the quotient
  (see  Proposition~\ref{prop:algfreeproduct}),
  i.e.,
  the ideal generated by $1_{\alg{A}}-1_{\alg{B}}$, $a'\otimes a''-a'a''$ for
  $a',a''\in\alg{A}_1\cup\alg{A}_2$ is contained in  $\ker\tilde{D}$.
  Clearly $\tilde{D}(1)=0$ and on the other hand for $a',a''\in\alg{A}_1$
  \begin{equation*}
    \begin{multlined}
  \tilde{D}(u\otimes(a'\otimes a''-a'a'')\otimes v) =
  \tilde{D}(u)\cdot (a'\otimes a''-a'a'')\otimes v)
  \\
  + 
  u\cdot(D_1(a')\cdot a''
  + a'\cdot D_1(a'')   - D_1(a'a''))\cdot v
  \\
  +
  u\otimes (a'\otimes a''-a'a'')\cdot\tilde{D}(v)
    \end{multlined}
  \end{equation*}
  which is mapped to $0$ in the quotient space
  and the Leibniz rule is satisfied by definition.
\end{proof}

\begin{example}
  If we identify the algebra of non-commutative polynomials with the unital free product
  $\IC\langle x,y\rangle \simeq \IC[ x]\ast\IC[x]$,
  then we can construct the partial derivatives $\ldelta_x$, $\rdelta_x$, $\partial_x$ and
  $\lnabla_x$ 
  as  free products of the derivations $\lnabla$ (Example~\ref{ex:lnabla}), $\ldelta$, $\rdelta$
  (Example~\ref{ex:divpower})  and $0$  on $\IC[x]$:
  
  \begin{align*}
    \ldelta_x&=\ldelta{}\!\star{} 0 &    \ldelta_y&=0\ast\ldelta \\
    \rdelta_x&=\rdelta{}\!\star{} 0 &    \rdelta_y&=0\ast\rdelta\\        
    \partial_x&=\partial\star 0 &  \partial_y&=0\ast\partial \\
    \lnabla_x&=\lnabla{}\!\ast 0 &    \lnabla_y&=0\ast\lnabla \\
    \rnabla_x&=\rnabla{}\!\ast 0 &  \rnabla_y&=0\ast\rnabla\\
  \end{align*}
  moreover, we obtain decompositions $\ldelta = \ldelta_x+\ldelta_y$ and $\lnabla = \lnabla_x+\lnabla_y$.
\end{example}

\begin{example}
  \label{ex:DeltaA}
  Let $\alg{A}$ and $\alg{B}$ be augmented algebras and set
  $\mathfrak{M}=(\alg{A}\freeprod\alg{B})\otimes(\alg{A}\freeprod\alg{B})$.
  Denote by $\lDelta_{\alg{A}}$ and $\lDelta_{\alg{B}}$
  (resp.~$\rDelta_{\alg{A}}$ and   $\rDelta_{\alg{B}}$) the left (resp.~right) block derivatives
  from Example~\ref{ex:blockderiv} into the corresponding submodules of $\mathfrak{M}$.
  The free products   $\rDelta_{\alg{A}}{}\!\star 0,0*\rDelta_{\alg{B}}:\alg{A}\freeprod\alg{B}\to \mathfrak{M}$
  are called \emph{partial block derivatives}.
  By abuse of notation we will denote them by $\rDelta_{\alg{A}}$ and
  $\rDelta_{\alg{B}}$ as well.
  Their action on the augmentation ideal is deconcatenation
  on monomials
  \begin{align*}
    \rDelta_{\alg{A}} (a_1b_1a_1a_2b_2\dotsm a_nb_n) 
    &= \sum_{k=1}^n  a_1b_2a_2\dotsm b_k \otimes a_k b_{k+1}a_{k+1}\dotsm      a_nb_n
    \\
    \lDelta_{\alg{A}} (a_1b_1a_1a_2b_2\dotsm a_nb_n) 
    &= \sum_{k=1}^n  a_1b_2a_2\dotsm a_k \otimes b_ka_{k+1} b_{k+1}\dotsm      a_nb_n
  \end{align*}
  where $a_i\in\widebar{\alg{A}}$ and $b_i\in\widebar{\alg{B}}$.
  In contrast to the previous derivations, neither the partial block derivatives
  nor the deconcatenation operator $\rDelta=\rDelta_{\alg{A}}+\rDelta_{\alg{B}}$
  satisfy the Leibniz rule with respect to the natural action
  \eqref{eq:AotimAmodule},
but the ``compatibility relation'' of unital infinitesimal bialgebras applies \cite[Definition~2.1]{LodayRonco:2006:structure}
  $$
  \rDelta (uv) = (\rDelta u)(1\otimes v) + (u\otimes 1)\rDelta v - u\otimes v.
  $$
  Note however that $\rDelta_{\alg{A}}-\lDelta_{\alg{A}}=\rnabla_{\alg{A}}$ is
  a derivation.
\end{example}

We conclude the preliminaries with the observation that resolvents behave nicely under derivations
and will in a certain sense serve as an analogue of the exponential function in classical calculus.
\begin{remark}
Let $\alg{A}$ be a unital algebra and $D:\alg{A}\to \mathfrak{M}$ be a derivation into an
$\alg{A}$-bimodule $\mathfrak{M}$. Then for any invertible element $a\in\alg{A}$ we have
as a consequence of the Leibniz rule
\begin{equation}
	\label{eq:derivationofinverse}
	D(a^{-1})=-a^{-1}D(a)a^{-1}  
\end{equation}
In particular, the derivation of a resolvent $R=(z-a)^{-1}$ satisfies
\begin{equation*}
D(R) = RD(a)R
\end{equation*}
while for $\Psi=(1-za)^{-1}$ we have
\begin{equation}
	\label{eq:derivationofpsi}
	D(\Psi) = z\Psi D(a)\Psi
	.
\end{equation}	
\end{remark}

\clearpage{}
\section{Boolean cumulants of free random variables}
\label{sec:boolchar}

\hfill{}
\parbox{0.6\textwidth}{
  \begin{verse}
\emph{If you climb up a tree, \\ you must climb down the same tree.}
  \end{verse}

  \hfill{}proverb from Sierra Leone\footnotemark
}
\footnotetext{D.~Crystal, \emph{As they say in Zanzibar},
Collins, London 2006}

\subsection{Characterization of freeness by Boolean cumulants}
\label{ssec:characterization}
In this section we elaborate on the main property which allowed us to derive
recurrence \eqref{eq:introCondExprecurrence}, i.e., 
the fact that mixed Boolean cumulants of free variables which start and end
with mutually free variables vanishes.
The main result is  a surprisingly simple characterization of freeness
which is inspired by an algebraic characterization of freeness found by
Voiculescu in \cite{Voiculescu:1999:entropy6}.
We are grateful to R.~Speicher and J.~Mingo for bringing this paper to our attention.
\begin{definition}
  \label{def:propertyCAC}
  \begin{enumerate}[(i)]
   \item []
   \item 
  Subalgebras $\alg{A},\alg{B}$ of a ncps $(\alg{M},\varphi)$ have vanishing
  \emph{cyclically alternating cumulants}  if
  $$
  \beta_{n}(u_1,u_2,\dots,u_n)=0
  $$
  whenever $u_i\in\alg{A}\cup\alg{B}$ such that $u_1$ and $u_n$ come from
  different algebras.
  We will call this \emph{Property $(CAC)$}.
   \item 
  Subalgebras $\alg{A},\alg{B}$ of a ncps $(\alg{M},\varphi)$ satisfy 
   \emph{Property $(WCAC)$} (weak $(CAC)$), if they satisfy Property $(CAC)$ for alternating
  words,   i.e.
  $$
  \beta_{n}(a_1,b_1,a_2,b_2,\dots,a_n,b_n)=0
  $$
  and 
  $$
  \beta_{n}(b_1,a_1,b_2,a_2,\dots,b_n,a_n)=0
  $$
  for any choice of $a_i\in\alg{A}$, $b_i\in\alg{B}$.
 \item
  Subalgebras $\alg{A},\alg{B}$ of a ncps $(\alg{M},\varphi)$
satisfy property $(\nabla)$ if
  \begin{equation}
    \label{eq:propertynablaA}
  (\varphi\otimes\varphi)\circ\lnabla_{\alg{A}}(a_1b_1a_2b_2\dotsm a_nb_n)=0
  \end{equation}
  for any choice of $a_i\in\alg{A}$ and $b_i\in\alg{B}$.
  Here the expression on the right hand side of \eqref{eq:propertynablaA}
  is meant to be evaluated as follows:
  \begin{enumerate}[1.]
   \item The formal derivative $\lnabla_x x_1y_1x_2y_2\dotsm x_ny_n$ is computed in
    the free associative algebra
    $\IC\langle x_1,x_2,\dots,x_n,y_1,y_2,\dots,y_n\rangle$;
   \item substitute $x_i=a_i$, $y_i=b_i$ to obtain an element of
    $\alg{M}\otimes\alg{M}$;
   \item evaluate $\varphi\otimes\varphi$ on the latter.
  \end{enumerate}
  \end{enumerate}
\end{definition}
\begin{remark}
  Since $\lnabla = \lnabla_{\alg{A}}+\lnabla_{\alg{B}}$ and trivially
  $(\varphi\otimes\varphi)\circ\lnabla=0$,
  identity \eqref{eq:propertynablaA} is equivalent to
    \begin{equation*}
  (\varphi\otimes\varphi)\circ\lnabla_{\alg{B}}(a_1b_1a_2b_2\dotsm a_nb_n)=0
  \end{equation*}
\end{remark}

\begin{lemma}
  \label{lem:altABimpliesAB}
 Property $(WCAC)$  is equivalent to Property $(CAC)$.
\end{lemma}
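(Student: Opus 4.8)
The plan is to prove both implications, the forward one being immediate and the reverse one requiring an induction powered by the product formula for Boolean cumulants. The implication $(CAC)\Rightarrow(WCAC)$ is trivial: an alternating word $a_1b_1a_2b_2\dotsm a_nb_n$ begins in $\alg{A}$ and ends in $\alg{B}$, and symmetrically $b_1a_1\dotsm b_na_n$ begins in $\alg{B}$ and ends in $\alg{A}$, so both cumulants vanish by $(CAC)$.

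For the converse I would argue by induction on the length $n$ of a word $u_1,u_2,\dots,u_n$ with $u_i\in\alg{A}\cup\alg{B}$ whose endpoints lie in different subalgebras; without loss of generality $u_1\in\alg{A}$ and $u_n\in\alg{B}$. The case $n=2$ is exactly an alternating word and is covered by $(WCAC)$; more generally, whenever the word happens to be strictly alternating, the different-endpoint hypothesis forces it into one of the two patterns controlled by $(WCAC)$, so $\beta_n=0$ directly. Assume then that the word is not strictly alternating, and fix a position $p$ with $u_p$ and $u_{p+1}$ in the same subalgebra; their product $u_pu_{p+1}$ again lies in that subalgebra. Applying Lemma~\ref{lem:productformula} in rearranged form gives
\[
  \beta_n(u_1,\dots,u_n)
  =\beta_{n-1}(u_1,\dots,u_pu_{p+1},\dots,u_n)
  -\beta_p(u_1,\dots,u_p)\,\beta_{n-p}(u_{p+1},\dots,u_n).
\]

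The heart of the argument is to dispatch all three cumulants on the right by the induction hypothesis. The merged cumulant has length $n-1$ and its first and last letters are the unchanged $u_1\in\alg{A}$ and $u_n\in\alg{B}$, so it vanishes by induction. For the product term the key observation is that \emph{one} of its two factors always has endpoints in different subalgebras: if $u_p,u_{p+1}\in\alg{A}$ then $u_{p+1}\in\alg{A}$ while $u_n\in\alg{B}$, so $\beta_{n-p}(u_{p+1},\dots,u_n)$ vanishes by induction; if instead $u_p,u_{p+1}\in\alg{B}$ then $u_1\in\alg{A}$ while $u_p\in\alg{B}$, so $\beta_p(u_1,\dots,u_p)$ vanishes by induction. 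In either case the product term is zero, whence $\beta_n(u_1,\dots,u_n)=0$, completing the induction.

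The main obstacle I anticipate is precisely this bookkeeping that guarantees one factor of the product term is ``different-ended'': a naive attempt to always annihilate the same factor fails, and one must exploit that the merged pair sits in a single subalgebra while the two ends of the whole word sit in opposite subalgebras, so that the algebra of the pair matches exactly one of the endpoints and mismatches the other. Once this dichotomy is in place the induction is routine, and multilinearity of $\beta_n$ lets us assume throughout that each $u_i$ is a homogeneous element of either $\alg{A}$ or $\alg{B}$ without loss of generality.
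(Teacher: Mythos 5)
Your proof is correct and follows essentially the same route as the paper's: induction on the order, merging an adjacent same-algebra pair via the rearranged product formula of Lemma~\ref{lem:productformula}, and observing that the merged cumulant vanishes by induction while exactly one factor of the product term has endpoints in different subalgebras and hence also vanishes. The only cosmetic difference is that you spell out both cases of the dichotomy (pair in $\alg{A}$ versus pair in $\alg{B}$), where the paper treats one configuration and declares the rest analogous.
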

\begin{proof}
  Clearly Property $(CAC)$ implies  Property $(WCAC)$.
  The converse can be seen as a special case of Lemma~\ref{lem:BoolSplitGroup} below,
  but here is a proof by induction,  assuming that   Property $(WCAC)$ holds for all orders.
  
  Suppose that Property $(CAC)$ holds  for all orders up to
  $n-1$ and 
  pick a tuple $u_1,u_2,\dots,u_n\subseteq \alg{A}\cup\alg{B}$ such that $u_1$
  and
  $u_n$ come from different algebras.
  
  If the tuple is alternating then there is nothing to prove.

  Therefore assume that it is not alternating.
  Without loss of generality we consider the following configuration (the proof
  of the other cases is  analogous):
  $u_1=a,u_k=a',u_{k+1}=a''\in\alg{A}$ and $u_n=b\in\alg{B}$.
  Thus we have to show that the cumulant
  $\beta_n(a,u_2,u_2,\dots,a',a'',u_{k+2},\dots,u_{n-1},b)$ vanishes.
  We apply the product formula \eqref{eq:lem:productformula} in the reverse direction and obtain
  \begin{multline*}
    \beta_n(a,u_2,\dots,u_{k-1},a',a'',u_{k+2},\dots,u_{n-1},b)
    \\
    =
\beta_{n-1}(a,u_2,u_2,\dots,u_{k-1},a'a'',u_{k+2},\dots,u_{n-1},b)\\
      -\beta_{k}(a,u_2,u_2,\dots,u_{k-1},a')\,\beta_{n-k}(a'',u_{k+2},\dots,u_{n-1},b)
\end{multline*}
  All cumulants on the right hand side are of lower order and with the
  exception of the
  second one satisfy the
  assumptions of the induction hypothesis. It follows that the right hand side
  vanishes.
\end{proof}

\begin{proposition}
  \label{prop:ABiffnabla}
  Property $(WCAC)$  is equivalent to Property $(\nabla)$.
\end{proposition}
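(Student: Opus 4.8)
The plan is to compute $S:=(\varphi\otimes\varphi)\circ\lnabla_{\alg{A}}(a_1b_1\dotsm a_nb_n)$ explicitly, rewrite it as a linear combination of Boolean cumulants $\beta_\pi$, and read off both implications from the shape of the coefficients. First I would spell out the derivation. Since $\lnabla_{\alg{A}}=\lnabla\star 0$ differentiates only at the $\alg{A}$-letters, the Leibniz rule together with the bimodule action \eqref{eq:AotimAmodule} gives
\[
  \lnabla_{\alg{A}}(a_1b_1\dotsm a_nb_n)
  =\sum_{k=1}^n\bigl[(a_1b_1\dotsm b_{k-1}a_k)\otimes(b_ka_{k+1}\dotsm b_n)
  -(a_1b_1\dotsm b_{k-1})\otimes(a_kb_k\dotsm b_n)\bigr],
\]
so that $S=\sum_{k=1}^n\bigl[\varphi(a_1\dotsm a_k)\,\varphi(b_k\dotsm b_n)-\varphi(a_1\dotsm b_{k-1})\,\varphi(a_k\dotsm b_n)\bigr]$. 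Each summand is a difference of products of two moments of the single word $w=a_1b_1\dotsm a_nb_n$, split once at the boundary between positions $2k-1$ and $2k$ and once at the boundary between $2k-2$ and $2k-1$.

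The key step is the following elementary consequence of the moment--cumulant formula \eqref{eq:phi=sumbeta}: splitting $w$ at a boundary $p\mid p+1$ yields $\varphi(c_1\dotsm c_p)\,\varphi(c_{p+1}\dotsm c_{2n})=\sum_{\pi}\beta_\pi$, the sum running over interval partitions $\pi\in\IntPart(2n)$ none of whose blocks crosses that boundary. Substituting this for both products, the full moments $\varphi(w)$ cancel, and collecting the boundary-adjacency sums $c_j:=\sum_{\pi:\,j\sim_\pi j+1}\beta_\pi$ (where $j\sim_\pi j+1$ means $j,j+1$ lie in a common block, and $c_0:=0$) one finds $S=\sum_{k=1}^n(c_{2k-2}-c_{2k-1})=\sum_{j=1}^{2n-1}(-1)^jc_j$. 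Exchanging the order of summation and, for each fixed $\pi$, summing the signs over the internal adjacencies of its blocks, a short computation shows that a block of \emph{odd} size contributes $0$ while a block of \emph{even} size $B$ contributes $(-1)^{\min B}$. Hence
\[
  S=\sum_{\pi\in\IntPart(2n)}\Bigl(\sum_{\substack{B\in\pi\\ |B|\text{ even}}}(-1)^{\min B}\Bigr)\beta_\pi.
\]
The decisive observation is that $\beta_\pi$ carries a nonzero coefficient only when $\pi$ has at least one even block, and an even interval block of the alternating word $w$ is itself an alternating word whose first and last letters come from different algebras --- precisely an alternating cumulant occurring in Property $(WCAC)$.

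Both implications now drop out of this formula. For $(WCAC)\Rightarrow(\nabla)$: every $\pi$ with nonzero coefficient contains an even block, whose cumulant vanishes by $(WCAC)$, so $\beta_\pi=0$ and therefore $S=0$. For $(\nabla)\Rightarrow(WCAC)$ I would induct on $n$, the base case $n=1$ being $S=\varphi(a_1)\varphi(b_1)-\varphi(a_1b_1)=-\beta_2(a_1,b_1)$. The unique single-block partition is $\pi=1_{2n}$, which has even size $2n$ and contributes $-\beta_{2n}(a_1,b_1,\ldots,a_n,b_n)$; every other $\pi$ with nonzero coefficient has at least two blocks, hence an even block of length strictly less than $2n$, whose alternating cumulant vanishes by the induction hypothesis, so $\beta_\pi=0$. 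Thus $S=-\beta_{2n}(a_1,b_1,\ldots,a_n,b_n)$, and $(\nabla)$ forces this cumulant to vanish. The $\alg{B}$-first family of $(WCAC)$ follows by the same computation with the roles of $\alg{A}$ and $\alg{B}$ interchanged.

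I expect the main obstacle to be the sign bookkeeping in passing from $S=\sum_j(-1)^jc_j$ to the block-coefficient formula, i.e.\ verifying that odd blocks cancel and even blocks contribute $(-1)^{\min B}$; this is exactly what makes the support of $S$ land on partitions possessing an even (hence alternating) block. A secondary subtlety, easy to overlook, is that both orientations of the alternating cumulants are genuinely coupled: the even sub-blocks of $w$ come in both the $\alg{A}$-first and $\alg{B}$-first shapes, so $(WCAC)\Rightarrow(\nabla)$ already uses both families, and the converse induction must be run on the two orientations in tandem (equivalently, Property $(\nabla)$ must be read symmetrically in $\alg{A}$ and $\alg{B}$, cf.\ the Remark following \eqref{eq:propertynablaA}).
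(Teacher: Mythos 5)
Your proposal is correct, and it reaches the paper's key identity by a genuinely different computation. The paper never expands the moments into full interval-partition sums: it applies the one-step recurrences \eqref{eq:recurrenceboolcumright} and \eqref{eq:recurrenceboolcum} at the two split points, re-indexes the resulting double sums, and lets terms cancel telescopically, leaving sums of the form (moment)$\,\times\,$(lower-order mixed alternating cumulant)$\,\times\,$(moment) plus the single term $-\beta_{2n}(a_1,b_1,\dots,a_n,b_n)$. You instead expand each split product $\varphi(c_1\dotsm c_p)\,\varphi(c_{p+1}\dotsm c_{2n})$ over interval partitions whose blocks do not cross the split, and compute the exact coefficient of every $\beta_\pi$ in the alternating sum, arriving at the closed formula
\[
(\varphi\otimes\varphi)\,\lnabla_{\alg{A}}(a_1b_1\dotsm a_nb_n)
=\sum_{\pi\in\IntPart(2n)}\Bigl(\sum_{\substack{B\in\pi\\ |B|\text{ even}}}(-1)^{\min B}\Bigr)\beta_\pi ,
\]
whose sign bookkeeping (odd blocks contribute $0$, even blocks contribute $(-1)^{\min B}$) is correct. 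What your route buys: the support of the left-hand side is visible at a glance --- only partitions possessing an even, hence mixed-endpoint alternating, block can contribute --- so $(WCAC)\Rightarrow(\nabla)$ is immediate, and in the converse induction $1_{2n}$ is isolated as the unique surviving partition, with coefficient $-1$. What the paper's route buys: no global coefficient computation, and the argument stays entirely within the recurrence \eqref{eq:recurrenceboolcum} on which the rest of the paper is built. Both proofs share the same architecture (an identity modulo lower-order alternating cumulants, then induction for $(\nabla)\Rightarrow(WCAC)$), and in both the induction hypothesis must cover the $\alg{A}$-first and $\alg{B}$-first orientations simultaneously, since blocks (respectively cumulant factors) of both shapes occur. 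You make this coupling explicit, running the two orientations in tandem and noting that $(\nabla)$ reaches $\alg{B}$-first words (e.g.\ via unital entries); the paper's proof leaves this point entirely implicit, so your write-up is, if anything, the more careful one there.
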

\begin{proof}
We apply recurrence \eqref{eq:recurrenceboolcumright} to the
first factor of the first sum
and \eqref{eq:recurrenceboolcum} to the
second factor of the second sum in the following expression
  \begin{multline*}
    (\varphi\otimes\varphi)(\lnabla_{\alg{A}} a_1b_1\dotsm a_nb_n)\\
    \begin{aligned}[t]
    &=
\sum_{k=1}^n\varphi(a_1b_1\dotsm a_k)\,\varphi(b_ka_{k+1}\dotsm a_nb_n)
      - \varphi(a_1b_1\dotsm a_{k-1}b_{k-1})\,\varphi(a_kb_k\dotsm a_nb_n)
    \\
    &=
    \begin{multlined}[t]
      \sum_{k=1}^n \sum_{p=1}^k\varphi(a_1b_1\dotsm b_{p-1})\,\beta_{2(k-p)+1}(a_p,b_p,\dots,a_k)\,\varphi(b_ka_{k+1}\dotsm a_nb_n)
      \\
      + \sum_{k=1}^n \sum_{p=1}^{k-1}\varphi(a_1b_1\dotsm a_p)\,\beta_{2(k-p)}(b_p,\dots,a_k)\,\varphi(b_ka_{k+1}\dotsm a_nb_n)      
      \\
      - \sum_{k=1}^n \sum_{p=k}^n\varphi(a_1b_1\dotsm b_{k-1})\,\beta_{2(p-k)+1}(a_k,b_k,\dots,a_p)\,\varphi(b_pa_{p+1}\dotsm a_nb_n)
      \\
      - \sum_{k=1}^n \sum_{p=k}^n\varphi(a_1b_1\dotsm b_{k-1})\,\beta_{2(p-k)+2}(a_k,b_k,\dots,b_p)\,\varphi(a_{p+1}\dotsm a_nb_n)
    \end{multlined}
    \\
    &=
    \begin{multlined}[t]
      \sum_{1\leq p\leq k\leq n}\varphi(a_1b_1\dotsm b_{p-1})\,\beta_{2(k-p)+1}(a_p,b_p,\dots,a_k)\,\varphi(b_ka_{k+1}\dotsm a_nb_n)
      \\
      + \sum_{k=1}^n \sum_{p=1}^{k-1}\varphi(a_1b_1\dotsm a_p)\,\beta_{2(k-p)}(b_p,\dots,a_k)\,\varphi(b_ka_{k+1}\dotsm a_nb_n)      
      \\
      -       \sum_{1\leq k\leq p\leq n}\varphi(a_1b_1\dotsm b_{k-1})\,\beta_{2(p-k)+1}(a_k,b_k,\dots,a_p)\,\varphi(b_pa_{p+1}\dotsm a_nb_n)
      \\
      - \sum_{k=1}^{n-1} \sum_{p=k}^n\varphi(a_1b_1\dotsm
      b_{k-1})\,\beta_{2(p-k)+2}(a_k,b_k,\dots,b_p)\,\varphi(a_{p+1}\dotsm
      a_nb_n)
      \\
      -\beta_{2n}(a_1,b_1,\dots,a_n,b_n)
    \end{multlined}
    \\
    &=
    \begin{multlined}[t]
      \sum_{k=1}^n \sum_{p=1}^{k-1}\varphi(a_1b_1\dotsm a_p)\,\beta_{2(k-p)}(b_p,\dots,a_k)\,\varphi(b_ka_{k+1}\dotsm a_nb_n)      
      \\
      - \sum_{k=1}^{n-1} \sum_{p=k}^n\varphi(a_1b_1\dotsm
      b_{k-1})\,\beta_{2(p-k)+2}(a_k,b_k,\dots,b_p)\,\varphi(a_{p+1}\dotsm
      a_nb_n)
      \\
      -\beta_{2n}(a_1,b_1,\dots,a_n,b_n)
    \end{multlined}
  \end{aligned}
\end{multline*}
Now all but the last term on the right hand side
involve lower order cumulants which vanish by by induction hypothesis.
  Therefore the left hand side vanishes if and only if $\beta_{2n}(a_1,b_1,\dots,a_n,b_n)=0$.
\end{proof}

\begin{lemma}
  \label{lem:beta(ab)=0}
  Free subalgebras satisfy Property $(CAC)$.
\end{lemma}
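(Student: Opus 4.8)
The plan is to route the argument through the two equivalences established just before the statement. By Lemma~\ref{lem:altABimpliesAB} and Proposition~\ref{prop:ABiffnabla}, Property $(CAC)$ is equivalent to Property $(\nabla)$, so it suffices to prove that free subalgebras satisfy $(\nabla)$, i.e.\ that $(\varphi\otimes\varphi)\circ\lnabla_{\alg{A}}(a_1b_1\dotsm a_nb_n)=0$. In fact I would prove the slightly stronger statement that $(\varphi\otimes\varphi)\circ\lnabla_{\alg{A}}(w)=0$ for \emph{every} alternating word $w$ in letters from $\alg{A}$ and $\alg{B}$, regardless of which algebras its first and last letters come from; this extra generality is precisely what makes the induction below self-contained, and the case $w=a_1b_1\dotsm a_nb_n$ returns exactly $(\nabla)$.

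The proof would go by induction on the length $\ell$ of $w$. Since $\lnabla_{\alg{A}}$ and $\varphi$ are linear, I would use multilinearity to split each letter into its centered part plus a scalar, writing $a=\mathring{a}+\varphi(a)1$ and $b=\mathring{b}+\varphi(b)1$, and expand. The resulting terms are of two kinds. If at least one letter is replaced by a scalar multiple of $1$, then, because $\lnabla_{\alg{A}}(1)=0$ and $\lnabla_{\alg{A}}$ is a derivation, that $1$ merely merges its neighbours (two adjacent $\alg{A}$- or $\alg{B}$-letters coalesce into a single letter of the same algebra, or a boundary letter drops out), producing a strictly shorter \emph{alternating} word $w'$ with $(\varphi\otimes\varphi)\circ\lnabla_{\alg{A}}(w)=\lambda\,(\varphi\otimes\varphi)\circ\lnabla_{\alg{A}}(w')$, which vanishes by the induction hypothesis. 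The base case $\ell=1$ is immediate, since $(\varphi\otimes\varphi)\lnabla_{\alg{A}}(a)=\varphi(a)-\varphi(a)=0$ and $\lnabla_{\alg{A}}(b)=0$.

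It would then remain to treat the single term in which \emph{all} letters are centered. Here I would expand $\lnabla_{\alg{A}}(w)=\sum_j (\text{prefix}_j)\cdot(u_j\otimes 1-1\otimes u_j)\cdot(\text{suffix}_j)$, the sum running over the positions $j$ of the $\alg{A}$-letters $u_j$, and apply $\varphi\otimes\varphi$. Each resulting summand is a product of two factors, each of the form $\varphi$ evaluated on an alternating word of centered elements; by the very definition of freeness (Definition~\ref{def:freeindep}) such a factor vanishes unless the enclosed word is empty. A short inspection of the boundary positions shows that in every summand at least one factor is $\varphi$ of a nonempty centered alternating word, so the all-centered term is $0$ term by term. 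Combining the two kinds of terms closes the induction, and chaining back through Proposition~\ref{prop:ABiffnabla} and Lemma~\ref{lem:altABimpliesAB} delivers Property $(CAC)$.

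The step I expect to be the main obstacle is the bookkeeping in the centering reduction: collapsing a scalar letter can turn an $\alg{A}\cdots\alg{B}$ word into one whose two endpoints lie in the same algebra (e.g.\ $a_1b_1\dotsm a_nb_n$ with $a_1\mapsto 1$ becomes $b_1a_2\dotsm a_nb_n$), which is exactly why one must induct over \emph{all} alternating words rather than only over the words appearing in $(\nabla)$. Verifying in each of the interior and boundary cases that the shortened word is still genuinely alternating and shorter is the one place where care is needed; everything else reduces to the elementary fact that, for free variables, $\varphi$ of any nonempty alternating word of centered elements vanishes.
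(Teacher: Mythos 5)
Your proof is correct, but it takes a genuinely different route from the paper's. The paper also starts from Lemma~\ref{lem:altABimpliesAB}, reducing $(CAC)$ to $(WCAC)$, but then proves $(WCAC)$ directly by induction inside Boolean cumulant calculus: for alternating words of \emph{centered} elements the recurrence \eqref{eq:recurrenceboolcum} expresses $\beta_n(u_1,\dots,u_n)$ through moments of alternating centered words, which vanish by freeness, and the general alternating case is reduced to the centered one by multilinearity together with the unit-elimination rules of Corollary~\ref{cor:boolcumunit}. You instead push the reduction one step further, through Proposition~\ref{prop:ABiffnabla}, and verify Property $(\nabla)$ by hand. Both arguments share the same centering-plus-induction skeleton, but implemented at different levels: the paper centers the \emph{arguments of the cumulant} and absorbs the resulting units via Corollary~\ref{cor:boolcumunit}, while you center the \emph{letters of the word} and absorb units by merging neighbours before applying $\lnabla_{\alg{A}}$. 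What your route buys is that the key vanishing step needs nothing beyond Definition~\ref{def:freeindep}: once all letters are centered, every summand of $(\varphi\otimes\varphi)\lnabla_{\alg{A}}(w)$ contains a factor $\varphi$ of a nonempty alternating centered word (namely the tensor factor containing the differentiated letter $u_j$), so the expression vanishes term by term, with no cumulant recurrence in sight. The price is that the Boolean-cumulant work has not disappeared but has been outsourced to Proposition~\ref{prop:ABiffnabla}, whose proof is precisely the kind of recurrence manipulation the paper performs here, so the overall derivation is longer and less self-contained. Two points in your write-up deserve emphasis: strengthening the induction to \emph{all} alternating words, regardless of endpoint types, is indeed essential, since collapsing a scalar letter changes the endpoint pattern; and this strengthening also yields the $\lnabla$-identities for words of type $\alg{B}$--$\alg{A}$, which is exactly what is needed to recover \emph{both} families of mixed cumulants in Property $(WCAC)$ when transferring back through Proposition~\ref{prop:ABiffnabla}.
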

\begin{proof}
  This is an immediate consequence of the vanishing of mixed free cumulants and
  the formula expressing Boolean cumulants as a sum of free cumulants
  indexed by irreducible non-crossing partitions
  \cite{Lehner:2002:connected,BelinschiNicaBBPforKTuples},
  and is also a special case of Proposition~\ref{prop:vnrp} below.

  Here is a direct self-contained proof using only the recurrence
  \eqref{eq:recurrenceboolcum} and induction.
  By Lemma~\ref{lem:altABimpliesAB} it suffices to prove Property $(WCAC)$.

  To begin with, in the case $n=2$ the recurrence \eqref{eq:recurrenceboolcum}
  immediately resolves into the covariance:
  $$
  \beta_2(a,b) = \varphi(ab)-\varphi(a)\varphi(b)=0
  .
  $$

  For the induction step,
  we first verify that cumulants of alternating centered words vanish
  and then reduce the general case to this.

  Assume that the assertion holds for any order up to $k\leq n-1$
  and that  the tuple $u_1,u_2,\dots,u_n$ is alternating,
  i.e., neighbouring elements come from different algebras,
  and that all elements are centered.
  Then it follows from recurrence \eqref{eq:recurrenceboolcum} that
  \begin{equation*}
    \beta_{n}(u_1,u_2,\dots,u_n)
    =     \varphi(u_1u_2\dotsm u_n) -
    \sum_{k=1}^n\beta_k(u_1,u_2,\dots,u_k)\,\varphi(u_{k+1}u_{k+2}\dotsm u_n)
  \end{equation*}
  and all terms on the right hand said contain expectations of alternating
  words in centered elements. Freeness implies that they vanish and so does
  the cumulant on the left hand side.

  It remains to show that we can replace the letters of an alternating word
  with centered elements.
  We will do this step by step using multilinearity and
  Corollary~\ref{cor:boolcumunit}.
  The condition that the first and last arguments of the involved cumulants
  come from different algebras is not violated throughout the following manipulations.
  \begin{align*}
    \beta_n(u_1,u_2,\dots,u_n)
    &=     \beta_n(\bub{u}_1,u_2,\dots,u_n) +    \varphi(u_1)\,\beta_n(1,u_2,\dots,u_n) \\
    &=     \beta_n(\bub{u}_1,u_2,\dots,u_n) &    \text{by    \eqref{eq:boolcumunit:1}}\\
    &=     \beta_n(\bub{u}_1,\bub{u}_2,u_3,\dots,u_n) +\varphi(u_2)\,\beta_n(\bub{u}_1,1,u_3,\dots,u_n) \\
    &=     \beta_n(\bub{u}_1,\bub{u}_2,u_3,\dots,u_n) +\varphi(u_2)\,\beta_{n-1}(\bub{u}_1,u_3,\dots,u_n)     &\text{by    \eqref{eq:boolcumunit:3}}\\
    &=     \beta_n(\bub{u}_1,\bub{u}_2,u_3,\dots,u_n) &  \text{by  induction hypothesis} \\
    &\ \ \vdots\\
    &=     \beta_n(\bub{u}_1,\bub{u}_2,\dots,\bub{u}_n)
      .
  \end{align*}
\end{proof}

\begin{remark}
  \label{rem:quenell}
  In the case of random walks on groups, Boolean cumulants correspond to first
  return probabilities. The fact that Boolean cumulants of free variables
  vanish unless the first and last argument come from the same subalgebra
  is an algebraic counterpart of the following elementary dendrological exercise:
  the Cayley graph of a free product group is a bouquet of tree-like
  graphs. Hence any closed walk starting at the root and entering a certain
  subtree must pass  through
  the same subtree on its first return to the root;
  see \cite{Quenell:1994:combinatorics} for a detailed exposition.
\end{remark}

In conclusion, in the tracial case we have the following extension of Voiculescu's freeness criterion \cite[\S14.4]{Voiculescu:1999:entropy6}.

\begin{proposition}[Characterization of freeness in terms of Boolean cumulants]
  \label{prop:characterization}
  Let $(\alg{M},\varphi)$ be a tracial non-commutative probability space
  and $\alg{A},\alg{B}\subset\alg{M}$  two  subalgebras.
  Then the following are equivalent.
  \begin{enumerate}[(i)]
   \item\label{it:charfree1} $\alg{A}$ and $\alg{B}$ are free.
   \item\label{it:charfree2} $\alg{A}$ and $\alg{B}$ satisfy property $(CAC)$.
   \item\label{it:charfree3} $\alg{A}$ and $\alg{B}$ satisfy property $(\nabla)$.
  \end{enumerate}
\end{proposition}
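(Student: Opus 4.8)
The plan is to establish the proposition as a cycle of implications, observing that almost all of the work is already done in the preceding lemmas, so that traciality is needed in exactly one place. The equivalence (ii)$\Leftrightarrow$(iii) is immediate: Lemma~\ref{lem:altABimpliesAB} identifies Property $(CAC)$ with Property $(WCAC)$, and Proposition~\ref{prop:ABiffnabla} identifies Property $(WCAC)$ with Property $(\nabla)$. The implication (i)$\Rightarrow$(ii) is precisely Lemma~\ref{lem:beta(ab)=0}, which makes no use of the trace property. Hence the whole content of the proposition reduces to the single implication (ii)$\Rightarrow$(i), and this is the only step in which the tracial hypothesis will be used.

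To prove (ii)$\Rightarrow$(i) I would show that Property $(CAC)$ forces $\varphi$ to vanish on every alternating centered word, which is exactly the defining property of freeness in Definition~\ref{def:freeindep}. I would argue by strong induction on the length $n$ of a word $u_1u_2\dotsm u_n$ in which neighbouring letters lie in different subalgebras and $\varphi(u_j)=0$ for every $j$. The base case $n=1$ is the centering assumption. For the inductive step I would apply the recurrence \eqref{eq:recurrenceboolcum}; since each factor $\varphi(u_{k+1}\dotsm u_n)$ is again an alternating centered word of strictly smaller length, the induction hypothesis annihilates every term with $k<n$, leaving the single identity $\varphi(u_1u_2\dotsm u_n)=\beta_n(u_1,u_2,\dots,u_n)$.

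Two cases then remain. If $n$ is even the endpoints $u_1,u_n$ lie in different subalgebras, so Property $(CAC)$ gives $\beta_n=0$ at once. If $n$ is odd the endpoints lie in the \emph{same} subalgebra, say $\alg{A}$, and $(CAC)$ no longer applies directly; this is where traciality enters. I would use cyclic invariance of $\varphi$ to rewrite $\varphi(u_1u_2\dotsm u_n)=\varphi(u_nu_1u_2\dotsm u_{n-1})$ and then merge the two adjacent $\alg{A}$-letters into a single element $w=u_nu_1\in\alg{A}$. Splitting $w=\bub{w}+\varphi(w)1$ turns the expression into $\varphi(\bub{w}u_2\dotsm u_{n-1})+\varphi(w)\,\varphi(u_2\dotsm u_{n-1})$, and both summands are $\varphi$ of alternating centered words, of lengths $n-1$ and $n-2$ respectively, hence vanish by the induction hypothesis.

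The main obstacle is exactly this odd-length case. Property $(CAC)$ only controls cumulants whose first and last arguments come from different subalgebras, so the ``diagonal'' cumulants $\beta_n(u_1,\dots,u_n)$ with matching endpoints are precisely the ones it cannot see. Traciality is what allows us to rotate such a word until the two like-algebra endpoints become adjacent, absorb them into one element, and thereby reduce to a strictly shorter word; this is the only step that fails in the absence of the trace property, which is what makes the characterization genuinely a tracial statement.
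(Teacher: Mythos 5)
Your proof is correct, and your reduction of the whole proposition to the single implication (ii)$\Rightarrow$(i) -- via Lemma~\ref{lem:altABimpliesAB}, Proposition~\ref{prop:ABiffnabla} for (ii)$\Leftrightarrow$(iii) and Lemma~\ref{lem:beta(ab)=0} for (i)$\Rightarrow$(ii) -- matches the paper exactly. Where you genuinely diverge is inside (ii)$\Rightarrow$(i). The paper works with Voiculescu's conditions $(F(n))$, i.e., vanishing of $\varphi$ only on centered alternating words of the special pattern $a_1b_1\dotsm a_nb_n$, and leans on the external result of \cite{Voiculescu:1999:entropy6} that in the tracial case $(F(n))$ is equivalent to the strengthened $(F(n)')$ allowing one non-centered endpoint: it applies the recurrence \eqref{eq:recurrenceboolcum} to the full word, kills all cumulants $\beta_{2k}(a_1,\dots,b_k)$ by $(CAC)$, and disposes of the surviving terms $\varphi(b_ka_{k+1}\dotsm b_n)$ by rotating $b_k$ to the back (merging it with $b_n$) and invoking $(F(n-1)')$. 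You instead run a self-contained strong induction over \emph{all} alternating centered words: the recurrence plus the induction hypothesis collapses $\varphi(u_1\dotsm u_n)$ to the single top cumulant $\beta_n(u_1,\dots,u_n)$, which $(CAC)$ annihilates when the endpoints come from different algebras (even length), while for matching endpoints (odd length) you rotate $u_n$ to the front, merge $u_nu_1$ into one letter, and recenter. Your route buys independence from Voiculescu's criterion -- you never need the fact that the $(ab)^n$-patterns alone characterize freeness in the tracial setting, since every pattern is treated directly -- and it isolates cleanly where $(CAC)$ versus traciality act. The only cost is the standard implicit hypothesis that the subalgebras contain the unit, so that $\bub{w}=u_nu_1-\varphi(u_nu_1)1\in\alg{A}$; the paper hides the same issue inside its citation of $(F(n)')$. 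Both arguments use traciality at the analogous spot, so your closing remark that the rotation of matching endpoints is the genuinely tracial ingredient is on target.
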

\begin{proof}
  Items \eqref{it:charfree2} and \eqref{it:charfree3} are equivalent by
  Proposition~\ref{prop:ABiffnabla} even without traciality.

  Item \eqref{it:charfree2} implies \eqref{it:charfree3} by
  Lemma~\ref{lem:beta(ab)=0} and it remains to prove the converse in the tracial case, i.e., Property~$(CAC)$
  implies freeness.
	
  Recall that in \cite{Voiculescu:1999:entropy6} it is shown that in the
  tracial case for every $n$ the original freeness condition $(F(n))$
  \begin{equation*}
\varphi(a_1b_1\cdots a_n b_n )=0 
  \end{equation*}
  whenever all  elements $a_i$, $b_i$ are centered can be strengthened
  to condition $(F(n)')$ allowing one of $a_1$ or $b_n$ to have nonzero expectation. 
	
  We will use this equivalence and  proceed by induction
  and  show  that for every $n$ 
  condition $(F(k)')$ for $k<n$ together with $(CAC)$ implies condition $(F(n))$.
  To this end we pick centered elements $a_1,a_2,\ldots,a_n\in\alg{A}$
  and $b_1,b_2,\ldots,b_n\in\alg{B}$ and  show that
  $\varphi(a_1b_1\cdots a_n b_n)=0$.
  
  The case $n=1$ is obvious.

  For the induction step, observe that we can rewrite recurrence
  \eqref{eq:recurrenceboolcum} as follows
  \begin{multline*}
    \varphi(a_1b_1\dotsm a_n b_n)
    =
    \sum_{k=1}^{n} \beta_{2k-1}(a_1,b_1,\ldots, a_k)\,\varphi(b_{k}a_{k+1}b_{k+1}\dotsm b_n)
    \\
    +\sum_{k=1}^{n}\beta_{2k}(a_1,b_1,\ldots, b_k)\, \varphi(a_{k+1}b_{k+1}\dotsm b_n)
    .
  \end{multline*}
  All terms in the second sum vanish by property (CAC).
  In the first sum  the term corresponding to $k=n$ vanishes because we assumed
  that $\varphi(b_n)=0$. 
  In order to see
  that the remaining terms corresponding to $k<n$ vanish as well,
  first note that
  by traciality   $\varphi(b_{k}a_{k+1}\ldots b_n)=\varphi(a_{k+1}\ldots b_n b_k)$
  which is an alternating product with at most $2(n-1)$ factors.
  Now the element $b_n b_k$ needs not to be centered but  as discussed above, 
  the freeness conditions $(F(n-1)')$   and $(F(n-1))$ are equivalent 
  and thus the expectation vanishes.
\end{proof}
\begin{remark}
  Note that traciality of the linear functional is essential for this
  characterization. Property $(CAC)$ (and hence Property $(\nabla)$) also holds
  for Boolean independent and more generally conditionally free
  random variables. 
\end{remark}

\subsection{Mixed Boolean cumulants of free random variables}
The following characterization of freeness from \cite{FMNS2} generalizes
Property $(CAC)$ and will provide an essential tool for later considerations.

\begin{proposition}[{\cite[Theorem~1.2]{FMNS2}, \cite{JekelLiu:2019:operad}}]\label{prop:vnrp}
  Subalgebras $\alg{A}_1,\alg{A}_2,\ldots,\alg{A}_s\subseteq\alg{M}$ of a
  ncps $(\alg{M},\varphi)$ are free if and only if for any colouring
  $c:\{1,\ldots,n\}\to\{1,\ldots,s\}$ we have
  \begin{equation*}
\beta_n(a_1,a_2,\dots,a_n) = \sum_{\pi\in\NCirr(n) \text{ with VNRP}} \beta_\pi(a_1,a_2,\dots,a_n)
  \end{equation*}
  whenever  $a_i\in\alg{A}_{c(i)}$.
  Here a  partition $\pi\in \NCirr(n)$ is said to have $VNRP$ if $\pi\leq\ker
  c$ and every inner block covered nested by a block of different colour, i.e.,
  $c$ induces a proper coloring on the nesting tree of $\pi$.
\end{proposition}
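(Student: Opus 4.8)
The plan is to prove both implications through the standard bridge between Boolean and free cumulants. Throughout, write $\kappa_\pi$ for the multiplicative extension of the free cumulant functional and recall the identity
\[
\beta_n(a_1,\dots,a_n)=\sum_{\pi\in\NCirr(n)}\kappa_\pi(a_1,\dots,a_n),
\]
which holds with no assumptions \cite{Lehner:2002:connected,BelinschiNicaBBPforKTuples}, together with Speicher's characterization of freeness by the vanishing of mixed free cumulants \cite{SpeicherNC,NicaSpeicherLect}. For a colouring $c$ let $\ker c$ denote the partition into colour classes, so that a block is \emph{monochromatic} precisely when it lies below $\ker c$.

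First I would establish the forward implication. Assuming $\alg{A}_1,\dots,\alg{A}_s$ free, every $\kappa_\pi$ with a non-monochromatic block vanishes, so only the terms with $\pi\leq\ker c$ survive and $\beta_n=\sum_{\pi\in\NCirr(n),\,\pi\leq\ker c}\kappa_\pi$. The heart of the argument is then a purely combinatorial \emph{resummation}: I would group the monochromatic $\pi$ according to the partition $\tau$ obtained by repeatedly merging every block directly nested inside a block of the same colour, i.e.\ by contracting the monochromatic edges of the nesting forest. Merging a nested block into its parent preserves both non-crossingness and irreducibility (the block joining $1$ and $n$ is never split), the operation is confluent since the outcome only records the clusters of the ``same-colour-nested'' relation, and the resulting $\tau$ has a properly coloured nesting tree, i.e.\ $\tau$ satisfies VNRP. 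Conversely, refining each block $B$ of a VNRP partition $\tau$ by an arbitrary $\rho_B\in\NCirr(B)$ (irreducible non-crossing partitions of the positions of $B$) and taking the union reproduces exactly the monochromatic $\pi$ in the fibre over $\tau$: the refinement stays monochromatic and non-crossing, inherits irreducibility from the outer block of $\tau$ together with irreducibility of the refinement there, and the VNRP property guarantees that no new same-colour nesting arises between distinct blocks, so merging recovers $\tau$. Summing $\kappa_\pi=\prod_{B\in\tau}\kappa_{\rho_B}$ over each fibre factorises as $\prod_{B\in\tau}\bigl(\sum_{\rho_B\in\NCirr(B)}\kappa_{\rho_B}\bigr)=\prod_{B\in\tau}\beta_{|B|}=\beta_\tau$, which yields the claimed identity $\beta_n=\sum_{\tau\ \mathrm{VNRP}}\beta_\tau$.

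For the converse I would run the expansion in reverse. Since the resummation is a bijection, the asserted formula is equivalent to $\sum_{\pi\in\NCirr(n),\ \pi\not\leq\ker c}\kappa_\pi=0$ for every $n$ and every colouring. Arguing by induction on $n$, all proper blocks have size $<n$, so any irreducible $\pi\neq 1_n$ carrying a non-monochromatic block contributes a factor $\kappa_{|B|}$ which is a mixed free cumulant of lower order, vanishing by the induction hypothesis; the sum therefore collapses to $\kappa_n(a_1,\dots,a_n)$ whenever the colouring is non-constant, forcing it to vanish. Thus all mixed free cumulants vanish and freeness follows from Speicher's characterization. Note that, in contrast to Proposition~\ref{prop:characterization}, no traciality is needed here.

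I expect the combinatorial resummation in the forward direction to be the main obstacle: one must check that the merge map is well defined (confluence of the monochromatic edge contractions), that non-crossingness and irreducibility are preserved in both directions, and that the fibre over $\tau$ consists precisely of the independent irreducible refinements on the blocks of $\tau$ (in particular, that the extreme positions of each $B$ must lie in a common block, since otherwise they could never be nested-merged together). A self-contained alternative that sidesteps free cumulants entirely would mirror the inductive proof of Lemma~\ref{lem:beta(ab)=0}: feed the recurrence \eqref{eq:recurrenceboolcum} and the product formula of Proposition~\ref{prop:boolprod} into an induction on $n$, peeling off the first block and invoking the already established Property $(CAC)$ to discard the terms whose first block is not monochromatic.
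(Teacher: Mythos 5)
The paper never proves Proposition~\ref{prop:vnrp}; it is quoted verbatim with attribution to \cite{FMNS2} and \cite{JekelLiu:2019:operad}, so there is no in-paper argument to compare against, and your proposal has to stand on its own. It does: as far as I can check it is a correct, essentially self-contained proof whose only external inputs are facts the paper itself invokes elsewhere (in Lemma~\ref{lem:beta(ab)=0}), namely the unconditional identity $\beta_n=\sum_{\pi\in\NCirr(n)}\kappa_\pi$ and Speicher's characterization of freeness by vanishing of mixed free cumulants. The real content is your contraction/refinement bijection, and the two points you flag as delicate are exactly the right ones, and both go through. First, if $\tau$ is VNRP and each block $B$ is refined by some $\rho_B\in\NCirr(B)$, then a block of $\rho_{B'}$ can be a \emph{direct} child, in the nesting tree of the refined partition, of a block of $\rho_B$ only if $B'$ is a direct child of $B$ in the nesting tree of $\tau$ (otherwise the root block of an intermediate $\tau$-block separates them), so the VNRP property forbids new monochromatic nesting edges and contraction indeed recovers $\tau$. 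Second, any $\pi$ in the fibre over $\tau$ restricts to each block $B$ as an \emph{irreducible} partition automatically, since all blocks of a same-colour component are nested inside the component's root block $V_0$, forcing $\min B,\max B\in V_0$; this justifies your parenthetical remark. The converse direction is clean and correct: because the resummation is a purely combinatorial identity valid without freeness, the asserted formula is equivalent to $\sum_{\pi\in\NCirr(n),\,\pi\not\leq\ker c}\kappa_\pi=0$, and induction on $n$ collapses that sum to the single term $\kappa_n(a_1,\dots,a_n)$ for any non-constant colouring, so all mixed free cumulants vanish; as you note, no traciality is needed, in contrast to Proposition~\ref{prop:characterization}. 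Compared with simply citing \cite{FMNS2}, your route buys a proof of the quoted result from ingredients already present in the paper; the alternative you sketch at the end (induction via the recurrence \eqref{eq:recurrenceboolcum} and Proposition~\ref{prop:boolprod}) would plausibly give the forward implication but is vaguer, so the cumulant-conversion argument should remain the main proof.
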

We will only use the preceding theorem in the special case of alternating
arguments,
which was first explicitly stated in \cite{SzpojanWesol2}.
\begin{proposition}
	Let $\{a_1,a_2,\ldots,a_{n}\}$ and $\{b_1,b_2,\ldots,b_{n-1}\}$ be free, $n\ge 1$. Then 
	\begin{equation}
		\label{bocu2}
                \begin{multlined}
                  \beta_{2n-1}(a_1,b_1,\ldots,a_n,b_{n-1},a_{n})\\
                  =\sum_{k=2}^{n}\,\sum_{1=j_1<j_2<\dots<j_k=n}\beta_k(a_{j_1},a_{j_2},\ldots,a_{j_k})\,\prod_{\ell=1}^{k-1}\,\beta_{2(j_{\ell+1}-j_{\ell})-1}(b_{j_l},a_{j_{\ell}+1},\ldots,a_{j_{\ell+1}-1},b_{j_{\ell+1}-1}).
                \end{multlined}
	\end{equation}
\end{proposition}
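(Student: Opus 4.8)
The plan is to specialize the general formula of Proposition~\ref{prop:vnrp} to the alternating word and then to organize the resulting sum according to the block containing the two endpoints. I assign to the $2n-1$ arguments the two-colouring $c$ which colours the odd positions (carrying $a_1,\dots,a_n$) with one colour and the even positions (carrying $b_1,\dots,b_{n-1}$) with the other. Proposition~\ref{prop:vnrp} then expresses $\beta_{2n-1}(a_1,b_1,\dots,a_n)$ as the sum of $\beta_\pi$ over all $\pi\in\NCirr(2n-1)$ having the VNRP with respect to $c$.

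Next I analyse the \emph{outer block}, i.e.\ the block of $\pi$ containing the first and the last position. Since $\pi$ is irreducible these two positions lie in the same block, and since $\pi\le\ker c$ this block is monochromatic; as both endpoints carry $a$'s it consists of odd positions only, say those carrying $a_{j_1},\dots,a_{j_k}$ with $1=j_1<\dots<j_k=n$. This block contributes the factor $\beta_k(a_{j_1},\dots,a_{j_k})$, and the remaining $a$'s together with all the $b$'s are distributed among the $k-1$ gaps between consecutive elements of the outer block; the $\ell$-th gap carries exactly the arguments $b_{j_\ell},a_{j_\ell+1},\dots,a_{j_{\ell+1}-1},b_{j_{\ell+1}-1}$. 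Since no block may cross the outer block, every other block of $\pi$ lies within a single gap, so $\pi$ restricts to a genuine subpartition of each gap, inheriting the VNRP.

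The key step is to show that the restriction of $\pi$ to each gap is an \emph{irreducible} VNRP partition of that gap. The gap is an alternating word beginning and ending with a $b$, and by the proper-colouring condition the blocks sitting directly below the outer block carry the $b$-colour. If two consecutive $b$'s of a gap belonged to different such blocks, these would be non-nested blocks spanning disjoint intervals, and the single $a$ sandwiched between them could neither be nested inside either block nor be a child of the outer block without violating the proper colouring; hence all $b$'s of the gap lie in one block and the induced partition is irreducible. Conversely, prescribing the outer block and an arbitrary irreducible VNRP partition in each gap yields a non-crossing irreducible partition which is again VNRP (an $A$-root, $B$-blocks properly below it, and so on recursively), and these data are in bijection with the $\pi$ under consideration. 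Because $\beta_\pi$ factors as a product over the blocks, the sum over all $\pi$ with a fixed outer block factors as $\beta_k(a_{j_1},\dots,a_{j_k})$ times a product of the $k-1$ gap-sums.

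Finally I evaluate each gap-sum. As the families $\{a_i\}$ and $\{b_i\}$ are free, the arguments of the $\ell$-th gap again satisfy the hypotheses of Proposition~\ref{prop:vnrp}, so the sum of $\beta_\sigma$ over all irreducible VNRP partitions $\sigma$ of that gap collapses to the single Boolean cumulant $\beta_{2(j_{\ell+1}-j_\ell)-1}(b_{j_\ell},a_{j_\ell+1},\dots,a_{j_{\ell+1}-1},b_{j_{\ell+1}-1})$. Substituting these values and summing over all admissible index sequences $1=j_1<\dots<j_k=n$ and over $k$ yields exactly the asserted identity. I expect the combinatorial heart of the argument --- showing that consecutive $b$'s within a gap cannot be separated, so that each gap is forced to be irreducible, and verifying that this yields a genuine bijection --- to be the main obstacle, the remaining manipulations being routine bookkeeping.
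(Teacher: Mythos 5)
Your proposal is correct and takes essentially the same route as the paper, which obtains \eqref{bocu2} precisely as the specialization of Proposition~\ref{prop:vnrp} to alternating arguments (citing \cite{SzpojanWesol2} for the explicit statement rather than writing out the details you supply: isolating the monochromatic outer block, checking that each gap carries an irreducible VNRP partition, and resumming each gap via Proposition~\ref{prop:vnrp} into a single Boolean cumulant). One caveat: your intermediate claim that \emph{all} $b$'s of a gap lie in one block is too strong and false in general --- e.g.\ the gap $b_1a_2b_2a_3b_3$ admits the irreducible VNRP partition with blocks $\{b_1,b_3\}$, $\{a_2,a_3\}$, $\{b_2\}$ --- but this overstatement is harmless, since your sandwiched-$a$ argument does force the two endpoint $b$'s into a common block directly below the outer block (after also ruling out, by the alternation/parity of positions, adjacent root blocks and roots separated by more than one letter), and that irreducibility is all the bijection requires.
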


Next we record another property which will be important in the following,
namely as we already know a Boolean cumulant which takes as arguments elements
of two free algebras $\alg{A}$ and $\alg{B}$ vanishes
unless it starts and ends with elements from the same algebra.
Suppose this is the case, say the both the first and last argument come from
$\alg{A}$, then the Boolean cumulant does not change under arbitrary splitting
of thee arguments coming from the subalgebra $\alg{B}$ into factors.
This will allow us to write any Boolean cumulant in free variables as an alternating cumulant.

\begin{lemma}\label{lem:BoolSplitGroup}
  Suppose $\alg{A},\alg{B}\subseteq\alg{M}$ are free and let
  $a_1,a_2,\ldots,a_n\in\alg{A}$ and
  $b_1,b_2,\ldots,b_{n-1}\in\alg{B}$.
  Assume further that for each $j=1,2,\ldots,n-1$ we have $b_i=c^{(i)}_1\cdots c^{(i)}_{j_i}$ with $c^{(i)}_1,\ldots, c^{(i)}_{j_i}\in\alg{B}$, then we have
	\begin{align*}
		\beta_{2n-1}(a_1,b_1,a_2,\ldots,b_{n-1},a_n)=\beta_{n+j_1+\ldots+j_{n-1}}(a_1,c^{(1)}_1,\ldots, c^{(1)}_{j_1},a_2,\ldots,c^{(n-1)}_1,\ldots ,c^{(n-1)}_{j_{n-1}},a_n).
	\end{align*}
\end{lemma}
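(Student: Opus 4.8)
The plan is to remove the grouping of the $\alg{B}$-letters one factor at a time, using the product formula of Lemma~\ref{lem:productformula} together with the vanishing of cyclically alternating cumulants (Lemma~\ref{lem:beta(ab)=0}). Concretely, I would induct on the total number of splits $J=\sum_{i=1}^{n-1}(j_i-1)$. When $J=0$ every $b_i=c^{(i)}_1$ consists of a single factor, so the two sides of the asserted identity are literally the same cumulant and the base case is immediate.

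For the inductive step, suppose $J\geq 1$ and fix an index $i$ with $j_i\geq 2$. Set $N=n+\sum_{\ell=1}^{n-1}j_\ell$ for the arity of the fully split cumulant. Inside that cumulant I would recombine just the first two factors of the $i$-th block, replacing the adjacent entries $c^{(i)}_1,c^{(i)}_2$ by their product $c^{(i)}_1c^{(i)}_2\in\alg{B}$. Writing $p$ for the position of $c^{(i)}_1$ in the list, Lemma~\ref{lem:productformula}, rearranged, gives
\begin{multline*}
  \beta_{N}(a_1,c^{(1)}_1,\ldots,c^{(i)}_1,c^{(i)}_2,\ldots,a_n)
  =\beta_{N-1}(a_1,\ldots,c^{(i)}_1c^{(i)}_2,\ldots,a_n)
  \\
  -\beta_{p}(a_1,\ldots,c^{(i)}_1)\,\beta_{N-p}(c^{(i)}_2,\ldots,a_n).
\end{multline*}
The left factor $\beta_{p}(a_1,\ldots,c^{(i)}_1)$ of the correction term begins with $a_1\in\alg{A}$ and ends with $c^{(i)}_1\in\alg{B}$, so it vanishes by Property~$(CAC)$ (Lemma~\ref{lem:beta(ab)=0}). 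Hence the fully split cumulant equals the cumulant in which $b_i$ has been regrouped as $(c^{(i)}_1c^{(i)}_2)\,c^{(i)}_3\cdots c^{(i)}_{j_i}$, which is exactly an instance of the same identity with $J$ replaced by $J-1$. The induction hypothesis then identifies it with $\beta_{2n-1}(a_1,b_1,\ldots,a_n)$, closing the argument.

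The only point requiring care is the bookkeeping of positions and the verification that the left factor of every correction term straddles the $\alg{A}$--$\alg{B}$ boundary. Since the whole word starts at $a_1\in\alg{A}$ and every split is performed at an internal $\alg{B}$-junction, the left factor invariably begins in $\alg{A}$ and ends in $\alg{B}$, so $(CAC)$ disposes of it; there is no genuine analytic or combinatorial obstacle. Alternatively, one could give a one-shot proof by expanding the product-entry cumulant on the left directly via Proposition~\ref{prop:boolprod}: the sum ranges over interval partitions $\pi$ with $\pi\vee\rho=1_n$, where $\rho$ groups the letters of each $b_i$. For any such $\pi\neq 1_n$ its first block starts at $a_1\in\alg{A}$ and, being unable to cut at a $\rho$-junction, must terminate at some $c\in\alg{B}$; thus $\beta_\pi=0$ by $(CAC)$ and only the full block $\pi=1_n$ survives, yielding the right-hand side. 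I would present the inductive version, as it stays closer to the paper's stated aim of avoiding the lattice of set partitions.
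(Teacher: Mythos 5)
Your proof is correct, and your primary (inductive) argument takes a slightly different route from the paper's. The paper proves the lemma in one shot: it expands the blocked cumulant by the product formula of Proposition~\ref{prop:boolprod} as a sum of $\beta_\pi$ over interval partitions $\pi$ complementary to the partition $\rho$ induced by the grouping, and then argues that the block of $\pi$ containing $a_1$ can neither end at any letter $c^{(i)}_l$ (Property~$(CAC)$) nor at any $a_i$ with $i<n$ (that would violate $\pi\vee\rho=1$), so only $\pi=1$ survives --- this is precisely the ``one-shot'' alternative you sketch in your final paragraph. Your induction on the number of splits instead uses only the two-entry Lemma~\ref{lem:productformula}, merging one adjacent pair $c^{(i)}_1,c^{(i)}_2$ at a time and killing each correction term $\beta_p(a_1,\ldots,c^{(i)}_1)\,\beta_{N-p}(c^{(i)}_2,\ldots,a_n)$ by $(CAC)$ via Lemma~\ref{lem:beta(ab)=0}; since the regrouped factor $c^{(i)}_1c^{(i)}_2$ stays in $\alg{B}$, the induction hypothesis (which allows arbitrary factorizations of the $b_i$ into $\alg{B}$-elements) applies verbatim. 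In substance this inlines the very iteration by which the paper derives Proposition~\ref{prop:boolprod} from Lemma~\ref{lem:productformula}, so both proofs rest on the same two pillars; what your version buys is a self-contained argument that never touches the interval-partition bookkeeping (at the cost of the positional bookkeeping you acknowledge), while the paper's version is shorter once Proposition~\ref{prop:boolprod} is available.
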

\begin{proof}
  This follows from the formula for Boolean cumulants with products as
  entries \eqref{eq:BoolProd} and  property $(CAC)$.
  More precisely,
  applying said formula to the Boolean cumulant
  $$
  \beta_{n+j_1+\dots+j_{n-1}}(a_1,c^{(1)}_1c^{(1)}_2\dotsm  c^{(1)}_{j_1},
    a_2,\dots,c^{(n-1)}_1c^{(n-1)}_2\dotsm c^{(n-1)}_{j_{n-1}},a_n)
  $$
  recall that we have to sum over interval partitions $\pi$ such that $\pi \vee \sigma=1_{n+j_1+\dots+j_{n-1}}$, where $\sigma$ is given by the product structure, hence we sum over $\pi$ greater than $\{\{1,2\},\{3\}\dots,\{j_1\},\{j_1+1,j_1+2\},\dots\{n-1,n\}\}$ or in other words are complementary
  to $\tilde{\rho}=(1,j_1,1,j_2,\dots,j_{n-1},1)$.
  Observe that the block of $\pi$ containing $a_1$ cannot end in any
  $c_{i}^{l}$, as in that case the corresponding  Boolean cumulant  vanishes
  by property $(CAC)$. On the other hand the block containing $a_1$ cannot
  finish in any of the variables $a_1,a_2,\dots,a_{n-1}$ as this would violate
  the complementarity property. Thus the block which contains $a_1$ has to end
  in $a_n$ and since we consider only interval partitions, there is only one such partition $\pi=1_{n+j_1+\dots+j_{n-1}}$.
\end{proof}

\begin{remark}
  \label{rem:alternating}
  Lemma~\ref{lem:BoolSplitGroup} allows  to rewrite an arbitrary joint Boolean cumulant
  of free random variables as an alternating cumulant, by grouping together terms from the
  algebra $\alg{B}$ above.
  Indeed
  after regrouping the inner variables into
  free blocks, Corollary~\ref{cor:boolcumunit} allows us to fill the gaps
  between elements of $\alg{A}$ with  units  to obtain
  $$
  \beta_{k}(a_1,a_2,\dots,a_{l},a_{l+1},\dots,a_n)
  =\beta_{k}(a_1,1_{\alg{M}},a_2,\dots,a_{l},1_{\alg{M}},a_{l+1},\dots,a_n)
  .
  $$ 
\end{remark}

\clearpage{}
\section{Calculus for conditional expectations}\label{sec:calcCond}

In order to define several objects objects used in this and the next section we require
additional structure on the ncps we work with.
\begin{assumption}
  \label{ass:augmented}
  From this point we will assume that $(\alg{M},\varphi)$ is a ncps,
  where $\alg{M}$ is an augmented algebra (see
  Definition~\ref{def:augmentation}).
  Furthermore we assume 
  $\alg{A}$, $\alg{B}\subseteq \alg{M}$ are freely independent subalgebras which generate
  $\alg{M}$ as an algebra, that is, $\alg{M}=\alg{A}\freeprod\alg{B}$.
  The latter is a moderate restriction since it is a dense subalgebra
  by   Proposition~\ref{prop:algfree}.
\end{assumption}

\begin{remark}
  \label{rem:augmented-polynomialalg}
  \begin{enumerate}[(i)]
   \item []
   \item 
  This assumption is necessary to make sure that the functionals introduced
  below are well   defined.
  Indeed in the augmented case
  the algebraic free product is canonically isomorphic to a subspace of the tensor
  algebra (see Proposition~\ref{prop:freeprodaugmented})
  and   thus families of multilinear
  functionals can be identified with linear functionals on the latter.
 \item   \label{rem:augmented-polynomialalg:it:polynomial}
  The reader not familiar with these notions should think of the algebras as  polynomial
  algebras from Example~\ref{ex:polyalg:augmented} in the following. More
  precisely,
  $$
  \alg{M} = \IC\langle X_1,X_2,\dots,X_m,Y_1,Y_2,\dots,Y_n\rangle
  \qquad
  \alg{A} = \IC\langle X_1,X_2,\dots,X_m\rangle
  \qquad
  \alg{B} = \IC\langle Y_1,Y_2,\dots,Y_n\rangle
  $$
  and $\varphi=\mu$ is some formal distribution, i.e., a unital linear
  functional $\mu:\alg{M}\to\IC$.
  The augmentation is $\epsilon(P)=P(0)$.
  
  In fact it is sufficient to understand the bivariate case
  $$
  \alg{M} = \IC\langle X,Y\rangle
  \qquad
  \alg{A} = \IC\langle X\rangle
  \qquad
  \alg{B} = \IC\langle Y\rangle
  $$
  and the general case will be straightforward.

  In the case of polynomial algebras $w\in\widebar{\alg{M}}$ means that $w$ has zero constant term
  and in fact it is sufficient to consider monomials $w=a_1b_2a_2b_2\dotsm$
  where
  $a_i$ and $b_i$ come from the monomial basis,
  i.e., 
  in the case of $\IC\langle X,Y\rangle$ this means that $a_i\in\{X^k|k\geq 1\}$
  and  $b_i\in\{Y^k|k\geq 1\}$.
  \end{enumerate}

\end{remark}

\begin{notation}

  It follows from Proposition~\ref{prop:freeprodaugmented} that
  in the free product of augmented algebras any  monomial (i.e., simple tensor)
  $W\in\widebar{\alg{M}}$   has a unique factorization into an
  alternating product of elements from $\widebar{\alg{A}}$ and $\widebar{\alg{B}}$
  in one out of four types:
  \begin{align*}
    \text{type $\alg{A}$--$\alg{A}$}& &\text{if }
                                                w&=a_1b_1a_2\dotsm b_{n-1}a_n,
                                                   \\
\text{type $\alg{A}$--$\alg{B}$} && \text{if }
                                            w&=a_1b_1a_2\dotsm a_nb_{n},
                                                   \\
  \text{type $\alg{B}$--$\alg{A}$} &&\text{if }
                                              w&=b_1a_1b_2\dotsm b_na_{n},
                                                   \\
  \text{type $\alg{B}$--$\alg{B}$} &&\text{if }
                                              w&=b_1a_1b_2\dotsm a_{n-1}b_n,
  \end{align*}
  with $a_1,a_2,\dots,a_{n}\in\widebar{\alg{A}}$ and
  $b_1,b_2,\dots,b_{n-1}\in\widebar{\alg{B}}$.
We will call such monomials $\alg{A}$--$\alg{A}$ monomials etc.~and
  we will refer to the factorization as the $\alg{A}$--$\alg{B}$ \emph{block factorization}.
\end{notation}

\begin{remark}\label{rem:42}
  At this point augmentation is essential. Otherwise allowing
  units in monomials leads to inconsistencies like
  $
  (1+a)b = b + ab
  $ belonging to the image of $\alg{A}\otimes \alg{B}$ and $(\alg{B}\oplus
  \alg{A}\otimes \alg{B})$ simultaneously.
  However the direct sum decomposition \eqref{eq:freeprodaugmalg}
  is essential to consistently extend the multilinear maps to be defined shortly
  (see Definition~\ref{def:bbeta}) 
  to linear maps on the free product.
\end{remark}

We now take formula \eqref{eq:EYXY} as a formal definition of a conditional expectation.
\begin{definition}
  \label{def:condexpploy}
  Define a linear mapping $\E_{\alg{B}}:\alg{M}\to \alg{B}$ via the following requirements:
  \begin{enumerate}[(i)]
   \item
    $\E_{\alg{B}}[1]=1$,
   \item
    $\E_{\alg{B}}[bwb']=b\E_{{B}}[w]b'$ for any $w\in\alg{M}$ and any $b,b'\in\alg{B}$.
   \item
    We define the conditional expectation of a
    monomial of type $\alg{A}$--$\alg{A}$
    $w=a_1b_1a_2\dotsm b_{n-1}a_n$ with $a_i\in\widebar{\alg{A}}$ and $b_i\in\widebar{\alg{B}}$
    as
    \begin{multline*}
      \E_{\alg{B}}\left[a_1b_1a_2\dotsm b_{n-1}a_n\right]
      = \beta_{2n-1}\left(a_1,b_1,a_2,\ldots ,b_{n-1},a_n\right)+\\
      \sum_{k=1}^{n-1}\sum_{1\leq i_1<i_2<\dots< i_k\leq n-1}
      b_{i_1}b_{i_2}\dotsm b_{i_k}\prod_{j=0}^{k}\beta_{2(i_{j+1}-i_j)-1}(a_{{i_j}+1},b_{i_{j}+1},a_{{i_j}+2},\ldots,a_{i_{j+1}}),
    \end{multline*}
   \item We define the \emph{block cumulant} functional
    $\bbeta{}:\alg{M}\to\IC$ by prescribing the values on monomials
    \begin{align*}	
      \bbeta{}(1)&=1\\
      \bbeta{}(u_1u_2\dotsm u_n)
                          &= \beta_{n}(u_1,u_2,\dots, u_n)
    \end{align*}
    whenever $u_1u_2\dotsm u_n$ is an alternating word with $u_i\in\widebar{\alg{A}}\cup\widebar{\alg{B}}$.
  \end{enumerate} 
\end{definition}

\begin{remark}
  \label{rem:augmentation}
  \begin{enumerate}[(i)]
   \item []
   \item 
    Note that property $(CAC)$ implies $\beta{}(u_1u_2\dotsm u_n)$ vanishes
    unless $n$ is odd.
   \item  \label{rem:item:augmentation}
    Augmentation is essential here to make sure that the definition of $\bbeta{}$
    does not depend on the choice of basis.
    Indeed  if instead we choose orthonormal bases
    $\{1\}\cup(a_k)_{k\in\IN}\subseteq \alg{A}$ and
    $\{1\}\cup(b_k)_{k\in\IN}\subseteq \alg{B}$ then
    it follows from Proposition~\ref{prop:reducedfreeprod} that
    the alternating words in $a_i$ and $b_j$ together with $1$ form an
    orthonormal basis of the reduced free product.
    However $\beta(u_1,u_2,\dots,u_n)=0$ for any such word
    and thus the corresponding notion of $\bbeta{}$ would coincide with
    $\varphi$ in this case.
  \end{enumerate}
\end{remark}
As an immediate consequence of Proposition~\ref{prop:freeprodaugmented} and
\eqref{eq:phiYXY}
we have the following.

\begin{proposition}
  Consider the maps defined in Definition~\ref{def:condexpploy} with
  Assumption~\ref{ass:augmented},
  $\varphi$ not necessarily faithful.
  \begin{enumerate}[(i)]
   \item The maps $\E_{\alg{B}}$ and $\bbeta{}$ are well defined.
   \item The map $\E_{\alg{B}}$ defined above is a conditional expectation.
  \end{enumerate}
\end{proposition}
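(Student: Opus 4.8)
The plan is to treat the two assertions separately: well-definedness rests on the direct-sum structure of the augmented free product together with multilinearity, while the conditional-expectation property reduces, after dispatching the structural requirements, to a single state-preservation computation built on formula \eqref{eq:phiYXY}.

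For part (i), I would start from Proposition~\ref{prop:freeprodaugmented}, which exhibits $\alg{M}=\alg{A}\freeprod\alg{B}$ as the direct sum of $\IC1$ and the alternating tensor spaces $T_n(\widebar{\alg{A}},\widebar{\alg{B}})$ and $T_n(\widebar{\alg{B}},\widebar{\alg{A}})$. A linear map out of $\alg{M}$ is thus the same data as a family of multilinear maps on tuples of centered elements, one for each of the four block types. The point to check is that both prescriptions are genuinely multilinear in their centered arguments: the value $\beta_n(u_1,\dots,u_n)$ defining $\bbeta{}$ is multilinear by construction, and the right-hand side of Definition~\ref{def:condexpploy}(iii) is a sum of products of Boolean cumulants and of the inner factors $b_{i_1}\dotsm b_{i_k}$, hence multilinear in $a_1,\dots,a_n,b_1,\dots,b_{n-1}$. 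Each prescription therefore descends to a well-defined linear map on the corresponding summand, independent of any basis; restricting to centered tensors via the augmentation is exactly what avoids the inconsistencies flagged in Remark~\ref{rem:42} (compare Remark~\ref{rem:augmentation}). For the three block types other than $\alg{A}$--$\alg{A}$ I would define $\E_{\alg{B}}$ through rule (ii) by extracting the leading and trailing centered $\alg{B}$-factors and reducing to the $\alg{A}$--$\alg{A}$ case; that rule (ii) then holds for \emph{arbitrary} $b,b'\in\alg{B}$ follows by writing $b=\epsilon(b)1+\bub{b}$ with $\bub{b}\in\widebar{\alg{B}}$ and invoking linearity.

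For part (ii), the structural properties are immediate: inspection of Definition~\ref{def:condexpploy}(iii) shows $\E_{\alg{B}}$ takes values in $\alg{B}$; rule (i) gives unitality; rule (ii) with $w=1$ gives $\E_{\alg{B}}[b]=b$ for $b\in\alg{B}$, so $\E_{\alg{B}}$ restricts to the identity on $\alg{B}$ and is an idempotent projection onto $\alg{B}$; and the $\alg{B}$-bimodule property is rule (ii) itself. What remains is $\varphi\circ\E_{\alg{B}}=\varphi$, which by linearity I would verify on monomials. Using the bimodule property, every monomial is $v=b_0\,u\,b_n$ with $u=a_1b_1\dotsm b_{n-1}a_n$ of type $\alg{A}$--$\alg{A}$ (or $u=1$) and $b_0,b_n\in\alg{B}$ the extracted outer factors (with $b_0$, resp.\ $b_n$, equal to $1$ when $v$ does not begin, resp.\ end, in $\alg{B}$). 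Since $\E_{\alg{B}}[v]=b_0\,\E_{\alg{B}}[u]\,b_n$ and $\E_{\alg{B}}[u]$ is the sum $\sum b_{i_1}\dotsm b_{i_k}\prod\beta_{\dots}$ of \eqref{eq:EYXY}, applying $\varphi$ reduces the claim to the two-sided identity
\[
\varphi(b_0a_1b_1\dotsm a_nb_n)=\sum_{k}\ \sum_{1\le i_1<\dots<i_k\le n-1}\varphi(b_0b_{i_1}\dotsm b_{i_k}b_n)\prod_{j}\beta_{2(i_{j+1}-i_j)-1}(a_{i_{j}+1},\dots,a_{i_{j+1}}).
\]

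This two-sided identity is where the real work lies and is the main obstacle. When $v$ begins in $\alg{A}$ (so $b_0=1$) it is exactly \eqref{eq:phiYXY}. When $v$ begins in $\alg{B}$ one cannot simply move $b_0$ to the right, because $\varphi$ is \emph{not} assumed tracial; the key device is instead to prepend a fictitious unit $a_0=1\in\alg{A}$ and apply \eqref{eq:phiYXY} to the longer word $a_0b_0a_1b_1\dotsm a_nb_n$. In the resulting expansion every term whose first Boolean cumulant factor has order $\ge 3$ begins with $\beta_{\ge 3}(1,b_0,\dots)$ and hence vanishes by \eqref{eq:boolcumunit:1}; the surviving terms are precisely those in which the leading block is $\beta_1(1)=1$, which forces $b_0$ to sit at the front of the scalar factor $\varphi(b_0b_{i_1}\dotsm b_n)$ and reindexes the remaining blocks to exactly the product above. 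Substituting back yields $\varphi(\E_{\alg{B}}[v])=\varphi(v)$ for all monomials, establishing state-preservation and completing the proof that $\E_{\alg{B}}$ is a conditional expectation.
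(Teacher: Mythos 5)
Your proof is correct and takes essentially the same route as the paper, which states the proposition as an immediate consequence of Proposition~\ref{prop:freeprodaugmented} (well-definedness via the direct-sum decomposition and multilinearity) and of formula \eqref{eq:phiYXY} (state preservation). The extra step you supply---prepending a unit $a_0=1$ and invoking \eqref{eq:boolcumunit:1} to reduce monomials beginning with an element of $\alg{B}$ to the one-sided identity \eqref{eq:phiYXY}---is a correct elaboration of what the paper leaves implicit.
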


Note that the above conditional expectation is universal and allows for
calculations on the level of
arbitrary algebras.
\begin{proposition}
  \label{prop:condexpformaleval}
  Let $(\alg{M},\varphi)$ be an arbitrary ncps
  with faithful state $\varphi$ 
  and $\alg{A},\alg{B}\subseteq\alg{M}$ be free subalgebras such that the conditional
  expectation $\E_{\alg{B}}$ exists (e.g., when $\alg{B}$ and $\alg{M}$ are von
  Neumann algebras or $\alg{M}=\alg{A}*\alg{B}$).
  Pick elements   $a_1,a_2,\ldots,a_m\in \alg{A}$
  and elements $b_1,b_2,\dots,b_n\in\alg{B}$.
  Let
  $$\mu:\IC\langle   X_1,X_2,\dots,X_m,Y_1,Y_2,\dots,Y_n\rangle\to\IC$$ be
  the joint distribution of the tuple $(a_1,a_2,\dots,a_m,b_1,b_2,\dots,b_n)$,
  i.e., 
  $$
  \mu(P(X_1,X_2,\dots,Y_n)) = \varphi(P(a_1,a_2,\dots,a_m,b_1,b_2,\dots,b_n))
  $$
  and let $\E^\mu_Y:\IC\langle X_1,X_2,\dots, X_m,Y_1,Y_2,\dots,Y_n\rangle\to\IC\langle Y_1,Y_2,\dots,Y_n\rangle$ be the conditional expectation
  from Definition~\ref{def:condexpploy} onto the subalgebra $\IC\langle Y_1,Y_2,\dots,Y_n\rangle$.
 Then	\[
          \E_{\alg{B}}[P(a_1,a_2,\dots,a_m,b_1,b_2,\dots,b_n)]
          =(\E_Y^{\mu}[P])(b_1,b_2,\dots,b_n),
        \]
	where $(\E_Y^{\mu}[P])(b_1,b_2,\dots,b_n)$
        is the
        polynomial $\,\E_Y^{\mu}[P]\in \IC \langle Y_1,Y_2,\dots,Y_n \rangle$ evaluated in $b_1,b_2,\ldots,b_n$.
\end{proposition}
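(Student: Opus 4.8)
The plan is to reduce the identity, by linearity and the block factorization, to a single monomial of type $\alg{A}$--$\alg{A}$, where it becomes a direct comparison between the \emph{defining} formula of $\E_Y^\mu$ and the formula for the genuine conditional expectation supplied by Corollary~\ref{cor:condexp}. Both sides of the asserted identity are linear in $P$, so it suffices to treat the case when $P$ is a monomial. Grouping maximal runs of $X$-letters and of $Y$-letters, any such monomial is already a block monomial $W=A_1B_1A_2\dotsm$ of one of the four types of the block factorization (Proposition~\ref{prop:freeprodaugmented}), each run being a product of generators and hence an element of $\widebar{\alg{A}}$ (a run of $X$'s) or of $\widebar{\alg{B}}$ (a run of $Y$'s).

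First I would dispose of the trivial case $W=1$, where both sides equal $1$. For the other types I would peel off any leading and trailing $\alg{B}$-block, using that $\E_{\alg{B}}$ is a unital $\alg{B}$-bimodule map and that requirement (ii) of Definition~\ref{def:condexpploy} asserts exactly the same bimodule property for $\E_Y^\mu$; since the stripped-off factors are evaluated identically on either side, the types $\alg{A}$--$\alg{B}$, $\alg{B}$--$\alg{A}$ and $\alg{B}$--$\alg{B}$ all reduce to a single type $\alg{A}$--$\alg{A}$ monomial $W=A_1B_1A_2\dotsm B_{r-1}A_r$ (the degenerate case $r=1$, $W=A_1$, included). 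For such a $W$ the formal value $\E_Y^\mu[W]$ is, by definition (Definition~\ref{def:condexpploy}(iii)), the right-hand side of \eqref{eq:EYXY} with Boolean cumulants taken with respect to $\mu$. After the substitution $X_i\mapsto a_i$, $Y_j\mapsto b_j$ the evaluated family $\{A_1(a),\dots,A_r(a)\}\subseteq\alg{A}$ is free from $\alg{B}$, so Corollary~\ref{cor:condexp}(ii) expresses $\E_{\alg{B}}[W(a,b)]$ by the very same formula \eqref{eq:EYXY}, now with the Boolean cumulants of the evaluated elements.

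It then remains to see that evaluation intertwines the two families of Boolean cumulants, i.e.\ that $\beta_k$ computed from $\mu$ on the polynomial arguments of $W$ equals $\beta_k$ of the corresponding evaluated elements computed from $\varphi$. This is where the hypothesis that $\mu$ is the joint distribution of $(a_1,\dots,a_m,b_1,\dots,b_n)$ is used, and it follows by a short induction on the length via the recurrence \eqref{eq:recurrenceboolcum}, matching moments term by term. I expect this compatibility to be the only genuinely delicate bookkeeping: the substantive content---that the genuine conditional expectation is given by \eqref{eq:EYXY} at all---is already contained in Corollary~\ref{cor:condexp}, and freeness of $\alg{A}$ and $\alg{B}$ enters only to license that corollary.
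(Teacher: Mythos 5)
Your proposal is correct and takes essentially the same route as the paper, whose entire proof reads ``This is an immediate consequence of the definition of $\mu$ and Corollary~\ref{cor:condexp}.'' Your write-up simply makes explicit the bookkeeping compressed into that sentence: reduction by linearity and the $\alg{B}$-bimodule property to $\alg{A}$--$\alg{A}$ monomials, the observation that Definition~\ref{def:condexpploy}(iii) and Corollary~\ref{cor:condexp}(ii) are both the formula \eqref{eq:EYXY}, and the compatibility of Boolean cumulants under evaluation, which follows from the recurrence \eqref{eq:recurrenceboolcum} since $\mu$ matches all joint moments.
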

\begin{proof}
  This is an immediate consequence of the definition of $\mu$ and Corollary \ref{cor:condexp}.
\end{proof}

It is no surprise that this formal conditional expectation satisfies the 
recurrence   \eqref{eq:introCondExprecurrence}. 

\begin{proposition}\label{Prop:34}
    For any $\alg{A}$--$\alg{A}$ monomial $w$ the mapping $\E_{\alg{B}}$ satisfies the following recurrence
  \begin{align}
    \label{eq:EBrecurrence}
    \E_{\alg{B}}[a_1b_1a_2\dotsm b_{n-1}a_n]
    &=
      \begin{multlined}[t]
        \beta_{2n-1}\left(a_1,b_1,a_2,\ldots ,b_{n-1},a_n\right)\\
    +\sum_{k=1}^{n-1}\beta_{2k-1}(a_1,b_1,a_2,\ldots,a_k)\,b_k\E_{\alg{B}}[a_{k+1}b_{k+1}a_{k+2}\dotsm
    a_n]
      \end{multlined}
    \\
    \label{eq:EBrecurrencebbeta}    
    &=
      \begin{multlined}[t]
        \bbeta{}\left(a_1b_1a_2\dotsm b_{n-1}a_n\right)\\
        +\sum_{k=1}^{n-1}\bbeta{}(a_1b_1a_2\dotsm a_k)\,b_k\E_{\alg{B}}[a_{k+1}b_{k+1}a_{k+2}\dotsm a_n].
      \end{multlined}
  \end{align}
\end{proposition}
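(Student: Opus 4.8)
The plan is to read off both identities directly from the closed formula for $\E_{\alg{B}}$ recorded in Definition~\ref{def:condexpploy}(iii) (equivalently \eqref{eq:EYXY}), by regrouping its terms according to the position of the \emph{first} surviving factor from $\alg{B}$. Recall that for an $\alg{A}$--$\alg{A}$ monomial
\[
  \E_{\alg{B}}[a_1b_1a_2\dotsm b_{n-1}a_n]
  = \sum_{k=0}^{n-1}\ \sum_{1\leq i_1<\dots< i_k\leq n-1}
  b_{i_1}\dotsm b_{i_k}\prod_{j=0}^{k}\beta_{2(i_{j+1}-i_j)-1}(a_{i_j+1},b_{i_j+1},\ldots,a_{i_{j+1}}),
\]
with the conventions $i_0=0$ and $i_{k+1}=n$. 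The $k=0$ summand corresponds to the empty subset, has empty $b$-product, and contributes exactly $\beta_{2n-1}(a_1,b_1,\ldots,a_n)$, which is precisely the isolated leading term on the right-hand side of \eqref{eq:EBrecurrence}.

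First I would collect the remaining summands (those with $k\geq 1$) according to the value $m=i_1$ of their smallest index, which ranges over $m\in\{1,\ldots,n-1\}$. For a fixed such $m$, the $j=0$ factor is $\beta_{2(i_1-i_0)-1}(a_1,b_1,\ldots,a_{i_1})=\beta_{2m-1}(a_1,b_1,\ldots,a_m)$ and the leading surviving variable is $b_{i_1}=b_m$; both are common to every term with $i_1=m$ and can be factored out in front. What remains is a sum over chains $m<i_2<\dots<i_k\leq n-1$ of the product $b_{i_2}\dotsm b_{i_k}\prod_{j=1}^{k}\beta_{2(i_{j+1}-i_j)-1}(\ldots)$.

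The key observation is that, after relabelling $(i_2,\ldots,i_k)$ as a chain $(i_1',\ldots,i_{k-1}')$ in the shifted gap set $\{m+1,\ldots,n-1\}$ and setting the new starting point $i_0'=m$ and inheriting the endpoint $n$, this residual sum is exactly the closed formula for $\E_{\alg{B}}[a_{m+1}b_{m+1}a_{m+2}\dotsm a_n]$ applied to the shorter $\alg{A}$--$\alg{A}$ monomial. Hence the block of terms with $i_1=m$ equals $\beta_{2m-1}(a_1,b_1,\ldots,a_m)\,b_m\,\E_{\alg{B}}[a_{m+1}b_{m+1}\dotsm a_n]$, and summing over $m=1,\ldots,n-1$ together with the isolated $k=0$ term yields \eqref{eq:EBrecurrence}. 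The only real care needed is the index bookkeeping: one must check that the reindexing $j\mapsto j-1$ sends $\prod_{j=1}^{k}$ to $\prod_{j'=0}^{k-1}$ and that the final factor $\beta_{2(n-i_k)-1}$ together with the endpoint convention $i_{k+1}=n$ transfers correctly to the sub-monomial. This is the main (though purely clerical) obstacle; no new combinatorial input beyond the closed formula is required.

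Finally, \eqref{eq:EBrecurrencebbeta} is a restatement of \eqref{eq:EBrecurrence} via the block cumulant functional. Since $a_1b_1a_2\dotsm b_{n-1}a_n$ and each prefix $a_1b_1a_2\dotsm a_k$ are alternating words in $\widebar{\alg{A}}\cup\widebar{\alg{B}}$ with $2n-1$ and $2k-1$ letters respectively, Definition~\ref{def:condexpploy}(iv) gives $\bbeta{}(a_1b_1a_2\dotsm a_n)=\beta_{2n-1}(a_1,b_1,\ldots,a_n)$ and $\bbeta{}(a_1b_1a_2\dotsm a_k)=\beta_{2k-1}(a_1,b_1,\ldots,a_k)$, so substituting these into \eqref{eq:EBrecurrence} produces \eqref{eq:EBrecurrencebbeta} verbatim.
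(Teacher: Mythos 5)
Your proposal is correct and follows essentially the same route as the paper: both start from the closed formula in Definition~\ref{def:condexpploy}(iii), split off the $k=0$ term, regroup the remaining terms according to the smallest index $i_1$, and recognize the inner sum as the defining formula for $\E_{\alg{B}}$ of the shorter monomial, after which \eqref{eq:EBrecurrencebbeta} follows by the definition of $\bbeta{}$ on alternating words.
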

\begin{proof}
  Observe that in Definition~\ref{def:condexpploy} (iii) on the right hand side
  of the equation we sum over all possible choices of subsets of
  $\{b_1,b_2,\dots,b_{n-1}\}$ and for variables between the chosen $b_i$'s we apply
  Boolean cumulant, where we split $b_i$'s from $a_i$'s. In order to obtain the
  recurrence above we reorganize the sum appearing in the definition of
  $\E_{\alg{B}}$ according to the value of $i_1$, i.e., we have
  \begin{multline*}
    \sum_{k=1}^{n-1}\sum_{1\leq i_1<\dots< i_k\leq n-1}
    b_{i_1}b_{i_2}\dotsm b_{i_k}\prod_{j=0}^{k}\beta_{2(i_{j+1}-i_j)-1}(a_{{i_j}+1},b_{i_{j}+1},a_{{i_j}+2},\ldots,a_{i_{j+1}})
    \\
    =\sum_{i_1=1}^{n-1} \beta_{2i_k-1}(a_1,b_1,a_2,\ldots,a_{i_1}) b_{i_1}
    \sum_{k=0}^{n-2}\sum_{i_1<i_2<\dots< i_k\leq n-1}
    b_{i_2}b_{i_3}\dotsm b_{i_k}
    \\
    \times
    \prod_{j=1}^{k}\beta_{2(i_{j+1}-i_j)-1}(a_{{i_j}+1},b_{i_{j}+1},a_{{i_j}+2},\ldots,a_{i_{j+1}}).
  \end{multline*}
  To conclude the proof observe that each fixed value of $i_1$ the inner summation over
        $i_2,\ldots,i_{n-1}$ reproduces exactly the definition of
        $\E_{\alg{B}}[a_{i_1+1}b_{i_1+1}a_{i_1+2}\dotsm a_n]$.
\end{proof}

Proposition \ref{Prop:34} offers a nice recursive formula for calculations of
conditional expectations in free variables, however a drawback is that it works
only on monomials starting and ending elements of algebra $\alg{A}$. In
order to make this formula useful for general calculations we need to have a
recurrence which works on any given polynomial.
To this end we split the functional $\bbeta{}$ somewhat analogously to the
partial block derivatives
$\rDelta=\rDelta_{\alg{A}}+\rDelta_{\alg{B}}$.
The latter turns out to be the right tool to capture the structure of the recurrence from Proposition \ref{Prop:34}.

\begin{definition}\label{def:bbeta}
  Under Assumption~\ref{ass:augmented}
  we define the \emph{block cumulant functional}
  $\bbeta{\alg{A}}:\alg{A}\freeprod\alg{B}\to\IC$
  by prescribing its values on monomials as follows:
  \begin{align*}	
    \bbeta{\alg{A}}(1)&=1\\
\bbeta{\alg{A}}(a_1b_1a_2\dotsm b_{n-1}a_n)
                          &= \beta_{2n+1}(a_1,b_1,a_2,\dots, b_{n-1},a_n)
  \end{align*}
  for any $\alg{A}$--$\alg{A}$ monomial $w$ with
  $\alg{A}$--$\alg{B}$ factorization $w=a_1b_1a_2\dotsm b_{n-1}a_n$ and
  $a_i\in\widebar{\alg{A}}$ and $b_i\in\widebar{\alg{B}}$.
  For monomials $w$ which are not of type $\alg{A}$--$\alg{A}$ we set 	$\bbeta{\alg{A}}(w)=0$.
	
  We define the analogous functional $\bbeta{\alg{B}}$,
  which produces the value of Boolean cumulants of ``blocked'' variables from $\widebar{\alg{A}}$ and $\widebar{\alg{B}}$,
  which vanishes unless a words starts and ends with elements from $\widebar{\alg{B}}$.
\end{definition}
\begin{remark}
  Note that because of Property $(CAC)$ we have
  $$
  \bbeta{}-\epsilon=\bbeta{\alg{A}}-\epsilon+\bbeta{\alg{B}}-\epsilon
  .
  $$
\end{remark}

With the help of these functionals and the block derivatives from Example~\ref{ex:DeltaA} we
can now extend recurrence
\eqref{eq:EBrecurrencebbeta} for $\E_{\alg{B}}$ to arbitrary monomials and
to compress it into an intuitive formula, which is the main result  of this section.

\begin{theorem}
  \label{thm:condexprec}
  Suppose $\alg{M}=\alg{A}\freeprod\alg{B}$ and Assumption~\ref{ass:augmented},
  then for any $w\in\alg{M}$  
  \begin{equation} 
    \label{eq:condexprec}
    \begin{aligned}
    \E_{\alg{B}}[w] &= \bbeta{\alg{A}}(w)
                      + (\bbeta{\alg{A}}\otimes \E_{\alg{B}}) [\rDelta_{\alg{B}}(w)]\\
                    &= \bbeta{\alg{A}}(w)
                      + (\bbeta{\alg{A}}\otimes\E_{\alg{B}}) [\rnabla_{\alg{B}}(w)]\\
                    &= \bbeta{\alg{A}}(w) + 
                            (\E_{\alg{B}}\otimes\bbeta{\alg{A}})[\lDelta_{\alg{B}}(w)].
    \end{aligned}
  \end{equation}
\end{theorem}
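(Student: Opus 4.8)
The plan is to prove all three identities by reducing to the recurrences already available, exploiting that $\bbeta{\alg{A}}$ annihilates every monomial not of type $\alg{A}$--$\alg{A}$, together with the explicit deconcatenation action of the partial block derivatives from Example~\ref{ex:DeltaA}. By linearity it suffices to treat a single monomial $w$, and I would split into cases according to the first and last letter of its block factorization.

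For the first identity I would first dispose of the monomials whose factorization begins with a $\widebar{\alg{B}}$--letter (types $\alg{B}$--$\alg{A}$ and $\alg{B}$--$\alg{B}$). Here $\bbeta{\alg{A}}(w)=0$, and since $\rDelta_{\alg{B}}$ splits immediately before every $\widebar{\alg{B}}$--block, the first such split contributes $1\otimes w$ while every other split has a left factor again beginning in $\widebar{\alg{B}}$; as $\bbeta{\alg{A}}$ kills all the latter, $(\bbeta{\alg{A}}\otimes\E_{\alg{B}})[\rDelta_{\alg{B}}(w)]=\bbeta{\alg{A}}(1)\,\E_{\alg{B}}[w]=\E_{\alg{B}}[w]$ and the identity holds trivially. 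For a monomial $w_0=a_1b_1\dotsm b_{n-1}a_n$ of type $\alg{A}$--$\alg{A}$ the deconcatenation reads $\rDelta_{\alg{B}}(w_0)=\sum_{k=1}^{n-1}(a_1b_1\dotsm a_k)\otimes(b_ka_{k+1}\dotsm a_n)$ (with no $1\otimes w_0$ term, as $w_0$ starts in $\widebar{\alg{A}}$); applying $\bbeta{\alg{A}}$ on the left factor, using the bimodule property $\E_{\alg{B}}[b_k v]=b_k\E_{\alg{B}}[v]$ on the right factor, and recalling $\bbeta{\alg{A}}(a_1b_1\dotsm a_k)=\beta_{2k-1}(a_1,b_1,\dots,a_k)$, the right-hand side becomes exactly recurrence \eqref{eq:EBrecurrencebbeta} of Proposition~\ref{Prop:34}, hence equals $\E_{\alg{B}}[w_0]$. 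Finally a type $\alg{A}$--$\alg{B}$ monomial is $w=w_0b'$ with $w_0$ of type $\alg{A}$--$\alg{A}$ and $b'\in\widebar{\alg{B}}$; appending $b'$ only adds the block $b'$ at the end, so $\rDelta_{\alg{B}}(w_0b')=(\id\otimes R_{b'})\rDelta_{\alg{B}}(w_0)+w_0\otimes b'$, and combining $\E_{\alg{B}}[vb']=\E_{\alg{B}}[v]b'$, $\E_{\alg{B}}[b']=b'$ with the already-proven $\alg{A}$--$\alg{A}$ case yields $\E_{\alg{B}}[w_0]b'=\E_{\alg{B}}[w]$, as required (note $\bbeta{\alg{A}}(w)=0$).

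The second identity is then immediate: since $\rnabla_{\alg{B}}=\rDelta_{\alg{B}}-\lDelta_{\alg{B}}$, the two right-hand sides differ by $(\bbeta{\alg{A}}\otimes\E_{\alg{B}})[\lDelta_{\alg{B}}(w)]$, and every left tensor factor produced by $\lDelta_{\alg{B}}$ ends in a $\widebar{\alg{B}}$--letter, hence is not of type $\alg{A}$--$\alg{A}$ and is annihilated by $\bbeta{\alg{A}}$; thus this correction vanishes. For the third identity I would run the mirror image of the first argument, using $\lDelta_{\alg{B}}$ (whose right factors are suffixes beginning in $\widebar{\alg{A}}$) together with the right-hand Boolean recurrence \eqref{eq:recurrenceboolcumright}. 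Concretely, monomials ending in $\widebar{\alg{B}}$ are handled trivially (the split $w\otimes 1$ reproduces $\E_{\alg{B}}[w]$, while all other right factors, of type $\alg{A}$--$\alg{B}$, are killed in the second slot by $\bbeta{\alg{A}}$); for a type $\alg{A}$--$\alg{A}$ core one reorganizes formula \eqref{eq:EYXY} by the \emph{last} selected index $i_k$ rather than the first, producing the left-peeling recurrence $\E_{\alg{B}}[w_0]=\bbeta{\alg{A}}(w_0)+\sum_m \E_{\alg{B}}[a_1b_1\dotsm a_m]\,b_m\,\bbeta{\alg{A}}(a_{m+1}\dotsm a_n)$, which matches $(\E_{\alg{B}}\otimes\bbeta{\alg{A}})[\lDelta_{\alg{B}}(w_0)]$; and a leading $\widebar{\alg{B}}$--letter is factored out exactly as in the $\alg{A}$--$\alg{B}$ step above.

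The routine part is the bookkeeping; the one genuine point requiring care is to pin down the exact deconcatenation action of $\rDelta_{\alg{B}}$ and $\lDelta_{\alg{B}}$ on each of the four monomial types and to verify, case by case, precisely which tensor legs fall outside type $\alg{A}$--$\alg{A}$ and are therefore erased by $\bbeta{\alg{A}}$ --- in particular the asymmetry that a split before the first $\widebar{\alg{B}}$--block does produce a $1\otimes w$ term (making $\alg{B}$--starting monomials trivial) while an $\alg{A}$--$\alg{A}$ monomial produces none. The main conceptual obstacle is the third identity, where the combinatorial reorganization of \eqref{eq:EYXY} by the last index (equivalently, re-deriving the recursion from \eqref{eq:recurrenceboolcumright} in place of \eqref{eq:recurrenceboolcum}) must be carried out to obtain the left-peeling recursion that $\lDelta_{\alg{B}}$ demands.
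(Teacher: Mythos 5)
Your proof is correct and takes essentially the same route as the paper: linearity reduces to monomials, the type $\alg{A}$--$\alg{A}$ case is exactly Proposition~\ref{Prop:34}, the types starting or ending in $\widebar{\alg{B}}$ are handled by the annihilation properties of $\bbeta{\alg{A}}$ together with the $\alg{B}$-bimodule property of $\E_{\alg{B}}$, the second identity follows because the extra terms from $\lDelta_{\alg{B}}$ have left legs of type $*$--$\alg{B}$, and the third identity is the mirrored argument based on \eqref{eq:recurrenceboolcumright}. Your explicit left-peeling recurrence, obtained by reorganizing \eqref{eq:EYXY} according to the last selected index, correctly fills in the details that the paper compresses into a single sentence for the third identity.
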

\begin{proof}
  Consider the first identity.  
  By linearity it is enough to consider  monomials. For a monomial of type
  $\alg{A}$--$\alg{A}$
  the statement is equivalent to Proposition~\ref{Prop:34}.
  
  If $w$ is a monomial of type $\alg{B}$--$\alg{B}$ or
  $\alg{B}$--$\alg{A}$  then by the definition of
  $\bbeta{\alg{A}}$  only one term contributes on the RHS and
  we obtain the trivial identity
  $$
  \E_{\alg{B}}[w]=
  (\bbeta{\alg{A}}\otimes
  \E_{\alg{B}})[1\otimes w]
  .
  $$
  If $w$ is a monomial of type $\alg{A}$--$\alg{B}$ then it can be written as $w=w'b$, where $w'$ is of type $\alg{A}$--$\alg{A}$ and $b\in\widebar{\alg{B}}$ and the right hand side is
\begin{align*}
  \bbeta{\alg{A}}(w'b) + (\bbeta{\alg{A}}\otimes E_{\alg{B}}) [\rDelta_{\alg{B}}(w') (1\otimes b) + w'\otimes b]
  &= 0 + 
  \Bigl(
   (\bbeta{\alg{A}}\otimes E_{\alg{B}})[ \rDelta_{\alg{B}} w'] + \bbeta{\alg{A}}(w')
  \Bigr)b\\
  &= \E_{\alg{B}}[w']b,
\end{align*}
where we observed that the term inside the parenthesis
contains  the recurrence \eqref{eq:EBrecurrence}
for the conditional expectation of the word $w'$
which is of type $\alg{A}$--$\alg{A}$.

For the second identity
recall that   $\rnabla_{\alg{B}}= \rDelta_{\alg{B}}-\lDelta_{\alg{B}}$.
and observe the additional terms produced by $\lDelta_{\alg{B}}$ are tensors whose left
legs are monomials of type $*$--$\alg{B}$
which are annihilated by $\bbeta{\alg{A}}$.

The last identity is proved using the mirrored version
\eqref{eq:recurrenceboolcumright}
of the Boolean recurrence.
\end{proof}

\begin{remark}
  \begin{enumerate}[(i)]
   \item []
 \item
    It is straightforward to extend the functional $\bbeta{}$ and the recurrence
  \eqref{eq:condexprec}
  to  formal power series (provided that the resulting series converge) and in
  particular compute conditional expectations of resolvents of
  the form $(1-zw)^{-1}$ with $w\in\alg{M}$.
    \item
     In the setting of Remark~\ref{rem:augmented-polynomialalg} \eqref{rem:augmented-polynomialalg:it:polynomial},
i.e.,
  $\alg{A}=\IC\langle \alg{X}\rangle$ for some alphabet $\alg{X}=\{X_1,X_2,\dots,X_m\}$
  and   $\alg{B}=\IC\langle \alg{Y}\rangle$ for some alphabet
  $\alg{Y}=\{Y_1,Y_2,\dots,Y_n\}$
  we can also use the partial divided power derivations from Example~\ref{ex:divpower}
  \begin{align*}
    \rdelta_{\alg{Y}}(P)&=\sum_{i=1}^n (1\otimes Y_i)\partial_{Y_i}(P),\\
    \ldelta_{\alg{Y}}(P)&=\sum_{i=1}^n (Y_i\otimes 1)\partial_{Y_i}(P),
  \end{align*} 
  where $\partial_Y$ is the free different quotient
  from Example~\ref{ex:freederiv}
  and we can write
  \begin{align}
    \label{eq:EY=bbetardeltaY}
    \E_{\alg{Y}}^\mu[P] &= \bbeta{\alg{X}}(P) + (\bbeta{\alg{X}}\otimes
                      \E_{\alg{Y}}^\mu)[\rdelta_{\alg{Y}}(P)]
                      .
  \end{align}
  Again the additional terms are annihilated by $\bbeta{\alg{X}}$.

 \item 
  The map $\rDelta_{\alg{B}}$  produces the least number of terms and
  therefore is convenient for explicit calculations.
  On the other hand, $\rnabla_{\alg{B}}$
  (and $\rdelta_{\alg{Y}}$ in the case of   polynomial algebras)
  are derivations, which is a great  advantage in connection with
  non-commutative formal power
  series,  in particular resolvents  and other rational
  functions.
  Indeed, for simplicity let us consider the resolvent $\Psi =
  (1-zP(X,Y))^{-1}$ of a bivariate polynomial $P(X,Y)\in\IC\langle X,Y\rangle$.
  From identity   \eqref{eq:derivationofinverse} we infer  that
  $$
  \rnabla_X\Psi = z (\Psi\otimes 1) \rnabla_X P(X,Y) (1\otimes\Psi)
  $$
  and a similar formula is true for $\rdelta_X\Psi$.
   It does \emph{not} hold for $\rDelta_X$, which does not obey the
   Leibniz rule, yet in the case of  recurrence \eqref{eq:condexprec}
   we can pretend that it does and we have
    \begin{align*}
      \E_{X}[\Psi]
      &= \bbeta{Y}(\Psi)
                      + z(\bbeta{Y}\otimes \E_{X}) [(\Psi\otimes
                         1)(\rdelta_X w)(1\otimes \Psi]\\
 &= \bbeta{Y}(\Psi)
                      + z(\bbeta{Y}\otimes \E_{X}) [(\Psi\otimes
                         1)(\rnabla_X w)(1\otimes \Psi]\\
&= \bbeta{Y}(\Psi)
                      + z(\bbeta{Y}\otimes \E_{X}) [(\Psi\otimes
                         1)(\rDelta_X w)(1\otimes \Psi]
    \end{align*}
    because the extra terms arising in $\rdelta_Xw$ and $\rnabla_Xw$ are annihilated by $\bbeta{Y}$.
\end{enumerate}
\end{remark}
\begin{corollary}
  For any $w\in\alg{M}$
  $$
  (\bbeta{\alg{A}}\otimes \E_{\alg{B}})[\rDelta_{\alg{A}}w]
  =   (\bbeta{\alg{A}}\otimes \E_{\alg{B}})[\lDelta_{\alg{A}}w]
  .
  $$
\end{corollary}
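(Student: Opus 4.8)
The plan is to subtract the two sides, invoke the fact recorded in Example~\ref{ex:DeltaA} that $\rDelta_{\alg{A}}-\lDelta_{\alg{A}}=\rnabla_{\alg{A}}$, and thereby reduce the statement to showing that the functional $(\bbeta{\alg{A}}\otimes\E_{\alg{B}})\circ\rnabla_{\alg{A}}$ vanishes on all of $\alg{M}$. Explicitly,
$$
(\bbeta{\alg{A}}\otimes\E_{\alg{B}})[\rDelta_{\alg{A}}w]-(\bbeta{\alg{A}}\otimes\E_{\alg{B}})[\lDelta_{\alg{A}}w]=(\bbeta{\alg{A}}\otimes\E_{\alg{B}})[\rnabla_{\alg{A}}w],
$$
so it suffices to prove the right-hand side is $0$ for every monomial $w$, and this is exactly where the recurrence underlying Theorem~\ref{thm:condexprec} enters.

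First I would expand $\rnabla_{\alg{A}}w$. In contrast to $\rDelta_{\alg{A}}$ and $\lDelta_{\alg{A}}$, the operator $\rnabla_{\alg{A}}$ is a genuine derivation for the natural action \eqref{eq:AotimAmodule} (Example~\ref{ex:DeltaA}), with $\rnabla_{\alg{A}}u=1\otimes u-u\otimes 1$ for $u\in\widebar{\alg{A}}$ and $\rnabla_{\alg{A}}u=0$ for $u\in\widebar{\alg{B}}$. Writing $w=u_1u_2\dotsm u_m$ as an alternating product of letters, the Leibniz rule yields the telescoping expression
$$
\rnabla_{\alg{A}}w=\sum_{i:\,u_i\in\widebar{\alg{A}}}\bigl(u_1\dotsm u_{i-1}\otimes u_i\dotsm u_m-u_1\dotsm u_i\otimes u_{i+1}\dotsm u_m\bigr).
$$

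Next I would apply $\bbeta{\alg{A}}\otimes\E_{\alg{B}}$ and discard the terms killed by $\bbeta{\alg{A}}$, which by Definition~\ref{def:bbeta} annihilates every monomial not of type $\alg{A}$--$\alg{A}$, while $\bbeta{\alg{A}}(1)=1$. Since $w$ is alternating, in the ``cut before $u_i$'' family the left leg $u_1\dotsm u_{i-1}$ ends in $\widebar{\alg{B}}$ for $i>1$, so it survives only as the empty word when $i=1$; in the ``cut after $u_i$'' family the left leg $u_1\dotsm u_i$ ends in $\widebar{\alg{A}}$ and survives only when it also begins in $\widebar{\alg{A}}$. Both conditions force $w$ to start with an $\alg{A}$-letter; hence if $w$ starts in $\widebar{\alg{B}}$ every surviving term is absent and the sum is $0$. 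If $w$ starts in $\widebar{\alg{A}}$, the first family contributes the single term $\bbeta{\alg{A}}(1)\,\E_{\alg{B}}[w]=\E_{\alg{B}}[w]$, and it remains to show that the second family also sums to $\E_{\alg{B}}[w]$.

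This last identification is the main (though routine) obstacle, and it is precisely the recurrence \eqref{eq:EBrecurrence}. Writing the $\alg{A}$-starting monomial in its block factorization $w=a_1b_1a_2\dotsm$, I would evaluate $\bbeta{\alg{A}}(a_1b_1\dotsm a_k)=\beta_{2k-1}(a_1,b_1,\dots,a_k)$ and peel the leading $\widebar{\alg{B}}$-letter off each right leg via the bimodule identity $\E_{\alg{B}}[b_k v]=b_k\,\E_{\alg{B}}[v]$; the extreme cut with empty right leg produces the pure cumulant term $\beta_{2n-1}(a_1,\dots,a_n)$. The resulting sum is exactly the right-hand side of \eqref{eq:EBrecurrence}, hence equals $\E_{\alg{B}}[w]$ and cancels the first-family term. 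The types $\alg{A}$--$\alg{A}$ and $\alg{A}$--$\alg{B}$ are treated uniformly, factoring off a trailing $\widebar{\alg{B}}$-letter in the latter case before applying the recurrence. In fact this computation proves the sharper statement that each of $(\bbeta{\alg{A}}\otimes\E_{\alg{B}})[\rDelta_{\alg{A}}w]$ and $(\bbeta{\alg{A}}\otimes\E_{\alg{B}})[\lDelta_{\alg{A}}w]$ separately equals $\E_{\alg{B}}[w]$ when $w$ starts in $\widebar{\alg{A}}$ and $0$ otherwise, from which the corollary is immediate.
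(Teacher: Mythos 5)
Your proof is correct, but it settles the decisive step by a genuinely different route than the paper. Both arguments begin identically, using $\rDelta_{\alg{A}}-\lDelta_{\alg{A}}=\rnabla_{\alg{A}}$ to reduce the claim to the vanishing of $(\bbeta{\alg{A}}\otimes\E_{\alg{B}})\circ\rnabla_{\alg{A}}$. The paper then finishes in two lines by playing two identities against each other: since $\rnabla w=1\otimes w-w\otimes 1$, one has $(\bbeta{\alg{A}}\otimes\E_{\alg{B}})[\rnabla w]=\E_{\alg{B}}[w]-\bbeta{\alg{A}}(w)$, while the middle identity of \eqref{eq:condexprec} in Theorem~\ref{thm:condexprec} says that the same quantity equals $(\bbeta{\alg{A}}\otimes\E_{\alg{B}})[\rnabla_{\alg{B}}w]$; comparing and using $\rnabla=\rnabla_{\alg{A}}+\rnabla_{\alg{B}}$ yields the vanishing with no case analysis at all. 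You instead bypass Theorem~\ref{thm:condexprec} and verify the vanishing by hand: expanding $\rnabla_{\alg{A}}w$ by the Leibniz rule on a monomial, discarding the tensors whose left leg is annihilated by $\bbeta{\alg{A}}$, and recognizing the surviving ``cut after'' sum as the right-hand side of the recurrence \eqref{eq:EBrecurrence} of Proposition~\ref{Prop:34} (with the trailing $\widebar{\alg{B}}$-factor peeled off by the bimodule property in the $\alg{A}$--$\alg{B}$ case). This essentially reruns, for the $\alg{A}$-block derivatives, the computation that proved Theorem~\ref{thm:condexprec} for the $\alg{B}$-block derivatives, so it is longer and repeats work the paper treats as a black box; in exchange it yields a sharper statement the paper does not record, namely that each of $(\bbeta{\alg{A}}\otimes\E_{\alg{B}})[\rDelta_{\alg{A}}w]$ and $(\bbeta{\alg{A}}\otimes\E_{\alg{B}})[\lDelta_{\alg{A}}w]$ individually equals $\E_{\alg{B}}[w]$ when the monomial $w$ begins with a letter of $\widebar{\alg{A}}$ and $0$ otherwise, so your argument identifies the common value rather than merely proving the two expressions agree.
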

\begin{proof}
  Observe that from     \eqref{eq:condexprec} we obtain the identity
  \begin{align*}
    (\bbeta{\alg{A}}\otimes \E_{\alg{B}})[\rnabla w]
    &= \E_{\alg{B}}[w] - \bbeta{\alg{A}}(w) \\
    &= (\bbeta{\alg{A}}\otimes \E_{\alg{B}})[\rnabla_{\alg{B}} w];
  \end{align*}
  on the other hand, $\rnabla=\rnabla_{\alg{A}} + \rnabla_{\alg{B}}$ and it
  follows that
  $$
  (\bbeta{\alg{A}}\otimes \E_{\alg{B}})[\rnabla_{\alg{A}} w]=0
  $$
  which is equivalent to the claimed identity.
\end{proof}
We illustrate this machinery with the problem of additive free convolution,
done in two ways.
\begin{example}
  \label{example:condExpAdditive}
  Consider $\Psi=(1-z(X+Y))^{-1}=\sum_{n=0}^{\infty}
  \left(z(X+Y)\right)^n$ then $\rdelta_{X}\Psi=z\Psi\otimes X\Psi$ and
  the conditional expectation is
  \begin{equation*}
\E_{X}[\Psi]
    =\bbeta{Y}(\Psi) + z\bbeta{Y}\otimes \E_{X}[\Psi\otimes    X\Psi] = \bbeta{Y}(\Psi)(1+z X \E_{X}[\Psi])
    .
  \end{equation*}
  Thus
  \begin{equation}
    \label{eq:X+Y:ExPsi=bbetaPsi}
    \E_X[\Psi] =\bbeta{Y}(\Psi)
    \bigl(
    1- z\bbeta{Y}(\Psi)X
    \bigr)^{-1}
    = 
    \bigl(
    1/\bbeta{Y}(\Psi)-z X
    \bigr)^{-1}.
  \end{equation}
  The evaluation of $\bbeta{Y}(\Psi)$ will be discussed in Example~\ref{eg:Intro_1.6}.
\end{example}

In order to obtain the usual formulation of free additive subordination one has to consider the
resolvent $R=(z-(X+Y))^{-1}$ instead of $\Psi$.

\begin{example}
  \label{ex:RX+Y}
  Let $R=R(z)=(z-X-Y)^{-1}$, then $\rdelta_XR=R\otimes XR$ and the solution of the recurrence for the conditional expectation  
  $$
  \E_X[R] = \bbeta{Y}(R) + \bbeta{Y}(R)X \E_X[R]
  $$
  is immediately obtained as
  \begin{equation*}
    \E_X[R] = (\bbeta{Y}(R)^{-1}-X)^{-1}
    .
  \end{equation*}
  Now $\bbeta{Y}(R)$ is an analytic function of $z$ (via $R=R(z)$) 
  and its reciprocal $\omega(z)=1/\bbeta{Y}(R)$ 
  is the subordination function from \eqref{eq:subordination}. For its further evaluation see Example~\ref{ex:Rx+Ydelta}.
\end{example}

In fact the recurrence \eqref{eq:condexprec}  allows for finding a system of
equations for the conditional expectation of resolvents of arbitrary polynomials. 
\begin{proposition}\label{thm:CondExpHX}
  Let $P\in \IC\langle X,Y\rangle $ and suppose
        $\rdelta_X P = \sum_{i=1}^m u_i\otimes  X v_i$,
        i.e., $\partial_X P = \sum_{i=1}^m u_i\otimes  v_i$,
        and let $\Psi=(1-zP)^{-1}$. Then we have
	\begin{align}
          \label{eq:EXPsi=sumbPsiuiXEXviPsi}
          \IE_X[\Psi] &= \bbeta{Y}(\Psi) + z \sum_{i=1}^m \bbeta{Y}(\Psi {u}_i)
		X \IE_X[{v}_i\Psi]
	\end{align}
	where the conditional expectations on the right hand side satisfy the linear system of equations
	
	\begin{equation}
          \label{eq:linearizationForCondExp}
		\begin{bmatrix}
			\IE_X[v_1\Psi]\\
			\IE_X[v_2\Psi]\\
			\vdots\\
			\IE_X[v_m\Psi]
		\end{bmatrix}
		=
		\begin{bmatrix}
			\bbeta{Y}[v_1\Psi]\\
			\bbeta{Y}[v_2\Psi]\\
			\vdots\\
			\bbeta{Y}[v_m\Psi]
		\end{bmatrix}
		+H X
		\begin{bmatrix}
			\IE_X[v_1\Psi]\\
			\IE_X[v_2\Psi]\\
			\vdots\\
			\IE_X[v_m\Psi]
		\end{bmatrix}
	\end{equation}
	where
	$H_{ij} = \bbeta{Y}({v}_i\Psi {u}_j)z+
	\bbeta{Y}({u}_{ij})$.
\end{proposition}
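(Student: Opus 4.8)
The plan is to read both assertions off the master recurrence of Theorem~\ref{thm:condexprec}, specialized to the present situation where we condition onto $\alg{A}=\IC\langle X\rangle$. In this notation the recurrence takes the divided-power form $\E_X[w]=\bbeta{Y}(w)+(\bbeta{Y}\otimes\E_X)[\rdelta_X w]$ for every $w\in\alg{M}$. As explained in the remark following Theorem~\ref{thm:condexprec}, one may use the derivation $\rdelta_X$ here in place of the block derivative $\rDelta_X$, since the extra terms produced by $\rdelta_X$ are annihilated by $\bbeta{Y}$. The advantage of $\rdelta_X$ is that it satisfies the Leibniz rule, so by \eqref{eq:derivationofpsi} the resolvent behaves like an eigenfunction: $\rdelta_X\Psi=z(\Psi\otimes1)(\rdelta_X P)(1\otimes\Psi)=z\sum_{i=1}^m\Psi u_i\otimes X v_i\Psi$, using the hypothesis $\rdelta_X P=\sum_i u_i\otimes X v_i$.

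First I would prove \eqref{eq:EXPsi=sumbPsiuiXEXviPsi}. Applying the recurrence to $w=\Psi$ and substituting the expression just obtained for $\rdelta_X\Psi$ gives $\E_X[\Psi]=\bbeta{Y}(\Psi)+z\sum_i\bbeta{Y}(\Psi u_i)\,\E_X[X v_i\Psi]$. Since $X\in\alg{A}$ and $\E_X$ is a unital $\alg{A}$-bimodule map, $\E_X[X v_i\Psi]=X\E_X[v_i\Psi]$, which is exactly the claimed identity.

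Next, to derive the linear system \eqref{eq:linearizationForCondExp}, I would apply the same recurrence to each $w=v_i\Psi$ and expand $\rdelta_X(v_i\Psi)$ by the Leibniz rule as $(\rdelta_X v_i)(1\otimes\Psi)+(v_i\otimes1)\,\rdelta_X\Psi$. Writing $\rdelta_X v_i=\sum_j u_{ij}\otimes X v_j$, equivalently $\partial_X v_i=\sum_j u_{ij}\otimes v_j$, the first summand contributes $\sum_j\bbeta{Y}(u_{ij})\,\E_X[X v_j\Psi]=\sum_j\bbeta{Y}(u_{ij})\,X\E_X[v_j\Psi]$, while the second contributes $z\sum_j\bbeta{Y}(v_i\Psi u_j)\,X\E_X[v_j\Psi]$, again using the bimodule property. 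Together with the leading term $\bbeta{Y}(v_i\Psi)$ this yields $\E_X[v_i\Psi]=\bbeta{Y}(v_i\Psi)+\sum_j\bigl(z\,\bbeta{Y}(v_i\Psi u_j)+\bbeta{Y}(u_{ij})\bigr)X\E_X[v_j\Psi]$, which is precisely the asserted matrix equation with $H_{ij}=\bbeta{Y}(v_i\Psi u_j)z+\bbeta{Y}(u_{ij})$.

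The one genuine subtlety, and the step I expect to require care, is that this really is a closed finite system: for the expansion $\partial_X v_i=\sum_j u_{ij}\otimes v_j$ to involve only the same family $\{v_j\}$, the $v_i$ must be chosen suitably. This holds if one lets the $v_i$ range over the finite set of right parts obtained by iterating $\partial_X$; for a monomial these are exactly the suffixes starting immediately after an occurrence of $X$, and a suffix of such a suffix is again a suffix of the original word, so the family is closed and the matrix $(u_{ij})$ is well defined. I would also record the usual caveat that everything is to be understood at the level of formal power series, so that $\Psi$, $\rdelta_X\Psi$, and the generating functions $\bbeta{Y}(\cdot)$ are meaningful.
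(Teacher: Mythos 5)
Your proof is correct and follows essentially the same route as the paper's: specialize the recurrence \eqref{eq:condexprec} to $\rdelta_X$, use the resolvent identity $\rdelta_X\Psi=z(\Psi\otimes1)(\rdelta_XP)(1\otimes\Psi)$ to obtain \eqref{eq:EXPsi=sumbPsiuiXEXviPsi}, and then apply the same recurrence with the Leibniz rule to each $v_i\Psi$ to produce the system with $H_{ij}=\bbeta{Y}(v_i\Psi u_j)z+\bbeta{Y}(u_{ij})$. In fact you make explicit two points the paper leaves implicit, namely the bimodule identity $\E_X[Xv_i\Psi]=X\E_X[v_i\Psi]$ and the fact that the family $\{v_j\}$ is closed under $\partial_X$ (so that $\rdelta_Xv_i=\sum_j u_{ij}\otimes Xv_j$ defines the entries $u_{ij}$ and the system is genuinely finite), which is a welcome clarification rather than a deviation.
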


\begin{remark}
	\begin{enumerate}[(1)]
         \item
          Let us label each appearance of $X$ in $P(X,Y)$ by consecutive labels $X_1,X_2,\ldots$ and similarly for $Y$ we label them as $Y_1,Y_2,\ldots$. For example for $XY+YX$ we write $X_1Y_1+X_2Y_2$. Then one can see that the entry $H_{ij}$ is exactly the functional $\bbeta{Y}$
          evaluated in all
	possible sub-words of $\Psi$ which occur between $X_i$ and $X_j$. More precisely
	if we consider $(1-z P)^{-1}$ then if $X_i$ and $X_j$ are in different
	monomials of $P$ then these monomials are of the form
	$u_iX_iv_i$ and $u_jX_jv_j$ and all subwords
	of $(1-zP)^{-1}$ which are between $X_i$ and $X_j$ are exactly of the
	form $v_i(1-zP)^{-1}u_j$ if $X_i$ and $X_j$ are in
	the same monomial then this monomial is of the form
	$u_iX_i u_{ij} X_j v_j$. Thus in this case we get the
	additional term $u_{ij}$ to which we apply $\bbeta{Y}$.
	
	Observe this very matrix $H_Y$ (with slightly
	different powers of $z$) appeared in \cite[Theorem~6.1]{FMNS2} 
	in the computation of the anti-commutator.
        However there the goal was to determine the
	distribution, not the conditional expectation.
	
       \item
        At this point it is not clear how to determine the matrix $H$ above.
        For this it is necessary to evaluate the functional $\bbeta{}$ and this
        will be done  in Section~\ref{sec:calcbbeta}.
	
       \item
        Equation \eqref{eq:linearizationForCondExp} is linear and can
        immediately be turned into a  \emph{linearization}
        (in the sense of \cite{HeltonMaiSpeicher:ApplicationsOfRealizations})
        of the rational
        function $\IE_X[v_i\Psi]$.
        We will develop this systematically in Section~\ref{sec:LinearizationAndCondExp}.
        
\end{enumerate}
\end{remark}
\begin{proof}[Proof of Proposition \ref{thm:CondExpHX}]
	For the recurrence 
	\eqref{eq:condexprec}
	we can use either $\rDelta_X$ or $\rdelta_X$.
	The latter has the advantage of being a derivation.
	In any case
	\begin{align}
          \IE_X[\Psi]
          &= \bbeta{Y}(\Psi)
            + z (\bbeta{Y}\otimes \IE_X[ (\Psi\otimes 1)(\rdelta_X T)(1\otimes\Psi)]
            \label{eq:EXpsidelta}
          \\
          &= \bbeta{Y}(\Psi)
            + z (\bbeta{Y}\otimes \IE_X[ (\Psi\otimes 1)(\rDelta_X T)(1\otimes\Psi)]
\end{align}
	because the extra terms  in  \eqref{eq:EXpsidelta} are annihilated by $\bbeta{Y}$.
	
	Write
	\begin{align*}
		\rdelta_X T &= \sum_{i=1}^m u_i\otimes  X v_i\\
\end{align*}
	i.e., $\partial_X T = \sum_{i=1}^m u_i\otimes v_i$.
Then recurrence \eqref{eq:condexprec} reads
	\begin{align*}
		\IE_X[\Psi] &= \bbeta{Y}(\Psi) + z \sum_{i=1}^m \bbeta{Y}(\Psi {u}_i)
		X \IE_X[{v}_i\Psi]\\
	\end{align*}
	and we can finally establish a system of equations for the conditional expectations
	appearing on the RHS of \eqref{eq:EXPsi=sumbPsiuiXEXviPsi}. Again we employ
	\eqref{eq:condexprec} and get
	\begin{align*}
		\IE_X[{v}_i\Psi] &= \bbeta{Y}({v}_i\Psi) +\sum_j\bbeta{Y}({u}_{ij})X \IE_X[v_j\Psi] + z \sum_{i=1}^m \bbeta{Y}(v_i\Psi {u}_j)X \IE_X[v_j\Psi].
	\end{align*}
	
	Define a matrix $H$ as
\begin{equation*}
		H_{ij} = \bbeta{Y}({v}_i\Psi {u}_j)z+
		\bbeta{Y}({u}_{ij}).     
\end{equation*}
	It is straightforward to see that identity
        \eqref{eq:linearizationForCondExp} holds with $H_Y$ defined as above.
\end{proof}

\clearpage{}
\section{Calculus for the block cumulant functional $\bbeta{}$}
\label{sec:calcbbeta}

The recurrence from Theorem~\ref{thm:condexprec} opens a door to the study of
conditional expectations and in the case of resolvents the resulting linear system
can be solved explicitly to obtain explicit formulas in terms of the functional
$\bbeta{}$. 
Its  evaluation  is the subject of the present section.
However at the time of this writing we  lack sufficient understanding of
Boolean cumulants required to  handle this problem
in the general setting  of Assumption~\ref{ass:augmented}.
Therefore in the remainder of this paper we will mostly work in the formal setting of polynomial
algebras from Remark~\ref{rem:augmented-polynomialalg},
which allows the reduction to univariate Boolean cumulants which are well understood.

\begin{assumption}
  \label{ass:ABpolynomialalg}
  We will work in a formal ncps,
  i.e., the free associative algebra $\IC\langle\alg{X}\rangle$ over some
  alphabet $\alg{X}$ and some distribution
  $\mu:\IC\langle\alg{X}\rangle\to\IC$.
  As usual the augmentation map $\epsilon:\IC \langle \alg{X} \rangle\to \IC$ is the
  constant coefficient map   $\epsilon(P)=P(0)$.

  More specifically, we will work in one of the following settings:
  \begin{enumerate}[(i)]
   \item 
  \label{ass:ABpolynomialalg:it:1}
    $\alg{M}=\IC\langle \alg{X}\cup\alg{Y}\rangle$
    where $\alg{X}=\{X_1,X_2,\dots,X_m\}$ and $\alg{Y}=\{Y_1,Y_2,\dots, Y_n\}$
    are free from each other with respect to $\mu$ and so are
    $\alg{A}=\IC\langle \alg{X}\rangle$ and $\alg{B}=\IC\langle \alg{Y}\rangle$.
    In this case we will denote the corresponding conditional expectations,
    derivations, and functionals
    by $\IE_{\alg{X}}$, $\rDelta_{\alg{X}},\rdelta_{\alg{X}}$, $\bbeta{\alg{X}}$ etc.
    \item 
     $\alg{M}=\left(\IC\langle\uXn \rangle,\mu\right)$
     and 
     \begin{align*}
\alg{A}&=\IC\langle X_i \mid i\in I\rangle
       &
         \alg{B}&=\IC\langle X_j \mid j \in J\rangle.  
     \end{align*}
     where $I\dot{\cup}J=[n]$ is a partition and we  assume in addition that \emph{all} variables $X_1,X_2,\ldots,X_n$ are free from
     each other with respect to the functional $\mu$.
    In this case we will denote the corresponding conditional expectations,
    derivations, and functionals by
     In this case we will denote conditional expectations 
     by $\IE_I$, $\rDelta_I$, $\bbeta{I}$ etc.

     The typical case is 
     \begin{align*}
\alg{A}&=\IC\langle \uXk\rangle
       &
         \alg{B}&=\IC\langle X_{k+1},X_{k+2},\ldots,X_n\rangle.  
     \end{align*}
  \end{enumerate}
\end{assumption}

The following functionals provide the key to evaluate the functionals
$\bbeta{}$. They operate by fully splitting monomials into their factors.
\begin{definition}\label{def:fbeta}
  On the formal ncps $(\IC\langle\alg{X}\rangle,\mu)$
  we define the \emph{fully factored Boolean cumulant} $\fbeta{}$  by prescribing their values on monomials
  \begin{align*}
    \fbeta{}(1) &=1,\\
    \fbeta{}(X_{i_1}X_{i_2}\dotsm X_{i_k})
                &= \beta_{k}(X_{i_1},X_{i_2},\dots,X_{i_k})
                  .
  \end{align*}
  We can decompose the functional $\fbeta{}$ along
  free subsets of variables   similar to the functional $\bbeta{}$:

  For a subset  $\alg{Y}\subseteq\alg{X}$ of the variables
  we define linear functionals $\fbeta{\alg{Y}}$ on monomials as follows:
  \begin{align*}
    \fbeta{\alg{Y}}(1) &=1,\\
    \fbeta{\alg{Y}}(X_{i_1}X_{i_2}\dotsm X_{i_k})
    &=  \begin{cases}
      \beta_{k}(X_{i_1},X_{i_2},\dots,X_{i_k}) & \text{if $X_{i_1},X_{i_k}\in\alg{Y}$}\\
        0 & \text{otherwise}
    \end{cases}
  \end{align*}
  Then $\fbeta{}=\fbeta{\alg{X}}$ and if a set of variables $\alg{Y}$ is the disjoint union of mutually
  free (with respect to $\mu$) subsets $\alg{Y}_j$ then thanks to property
  $(CAC)$
  we have
  $$
  \fbeta{\alg{Y}}=\epsilon+\sum (\fbeta{\alg{Y}_j}-\epsilon)
  .
  $$
\end{definition}

Next we state Lemma \ref{lem:BoolSplitGroup} in terms of the functional
$\fbeta{}$,
which will turn out to be useful in the proof of the next theorem.

\begin{lemma}
  Assume the setting from Assumption~\ref{ass:ABpolynomialalg}
  \eqref{ass:ABpolynomialalg:it:1},
  i.e.,   $\alg{M}=\IC\langle \alg{X}\cup\alg{Y}\rangle$ with $\alg{X}$ and
  $\alg{Y}$ mutually free.
  Then for any letters $X_{i_j}\in\alg{X}$ and any elements $V_j\in\IC\langle\alg{Y}\rangle$
  we have
  \begin{align*}
    \fbeta{\alg{X}}(X_{i_0}V_1X_{i_1}V_2\dotsm V_kX_{i_k})
    =\beta_{2k+1}(X_{i_0},V_1,X_{i_1},V_2,\ldots,V_{k},X_{i_k})
    .
  \end{align*}
  We emphasize that it is not required that $V_j$ have vanishing constant term.
\end{lemma}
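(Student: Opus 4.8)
The plan is to reduce the identity to the combinatorial content of Lemma~\ref{lem:BoolSplitGroup}, the only genuinely new point being the treatment of blocks $V_j$ with nonzero constant term, which is precisely what the statement emphasizes. First I would exploit multilinearity. Both sides are linear in each argument $V_j$: on the left, the map $V_j\mapsto X_{i_0}V_1X_{i_1}\dotsm V_kX_{i_k}$ is multilinear because it is built from the algebra multiplication, and $\fbeta{\alg{X}}$ is a linear functional; on the right, Boolean cumulants are multilinear by construction. Hence it suffices to verify the identity when each $V_j$ ranges over the monomial basis of $\IC\langle\alg{Y}\rangle$, that is, when each $V_j$ is either a nonempty word $V_j = Y_{\ell^{(j)}_1}\dotsm Y_{\ell^{(j)}_{p_j}}$ in the letters of $\alg{Y}$ or the empty word $V_j = 1$. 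It is the latter possibility that the remark in the statement flags.

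Next I would evaluate the left-hand side directly from Definition~\ref{def:fbeta}. Since the word $W = X_{i_0}V_1X_{i_1}\dotsm V_kX_{i_k}$ begins and ends with letters of $\alg{X}$, the functional $\fbeta{\alg{X}}$ returns the Boolean cumulant of its individual letters listed in order,
\[
  \fbeta{\alg{X}}(W) = \beta_N\bigl(X_{i_0}, Y_{\ell^{(1)}_1},\dotsc, Y_{\ell^{(1)}_{p_1}}, X_{i_1},\dotsc, X_{i_k}\bigr),
\]
where $N$ is the total number of letters and a block $V_j = 1$ simply contributes no entry. The remaining task is to transform this fully split cumulant into the blockwise cumulant $\beta_{2k+1}(X_{i_0}, V_1, X_{i_1},\dotsc, V_k, X_{i_k})$.

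Here I would handle the units first and then apply the splitting lemma. Wherever $V_j = 1$ the word $W$ contains two adjacent $\alg{X}$-letters $X_{i_{j-1}}X_{i_j}$; reading Corollary~\ref{cor:boolcumunit}, and in particular \eqref{eq:boolcumunit:3}, in the direction that inserts a unit argument, I would separate each such pair, rewriting $\fbeta{\alg{X}}(W)$ as a cumulant
\[
  \beta_{N'}\bigl(X_{i_0}, B_1, X_{i_1}, B_2,\dotsc, B_k, X_{i_k}\bigr),
\]
in which every block $B_j$ is a nonempty finite sequence of elements of $\alg{B} = \IC\langle\alg{Y}\rangle$: it is the list of letters of $V_j$ when $V_j$ is a genuine word, and the one-term list $(1)$ when $V_j = 1$. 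After these insertions the arguments alternate between single $\alg{X}$-letters and nonempty $\alg{B}$-blocks, so Lemma~\ref{lem:BoolSplitGroup} applies and merges each block $B_j$ back into the single element $b_j$ equal to the product of its entries. For a word block this product is $V_j$, and for the one-term block $(1)$ it is again $1 = V_j$, the latter being the trivial instance of Lemma~\ref{lem:BoolSplitGroup} in which no splitting occurs. The result is exactly $\beta_{2k+1}(X_{i_0}, V_1, X_{i_1},\dotsc, V_k, X_{i_k})$, which proves the claim.

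The main obstacle is conceptual rather than computational: one must not discard the blocks $V_j = 1$ but instead keep them as legitimate one-element blocks over the unital algebra $\alg{B}$, so that inserting units via Corollary~\ref{cor:boolcumunit} restores the strictly alternating pattern required by Lemma~\ref{lem:BoolSplitGroup}. Equivalently, one could avoid the alternation requirement altogether and split each nonempty block off one letter at a time using the elementary product formula of Lemma~\ref{lem:productformula}, where every error term vanishes by property $(CAC)$ since it is a cumulant starting in $\alg{X}$ and ending in $\alg{Y}$.
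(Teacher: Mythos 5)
Your proof is correct and takes essentially the approach the paper intends: the paper states this lemma without an explicit proof, introducing it as a reformulation of Lemma~\ref{lem:BoolSplitGroup}, with gaps where $V_j=1$ handled exactly as you do, by inserting unit arguments via Corollary~\ref{cor:boolcumunit} (cf.\ Remark~\ref{rem:alternating}). Your multilinearity reduction to monomial $V_j$ and the merging of each letter group (or inserted unit) back into a single block via Lemma~\ref{lem:BoolSplitGroup} supply precisely the details the paper leaves implicit.
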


For the next theorem we enhance our toolbox with yet another derivation, which
implements a kind of elementary unshuffle coproduct.
\begin{definition}
  For a subset of variables $\alg{Y}=\{Y_1,Y_2,\dots,Y_k\}\subseteq\alg{X}$
  define the first order unshuffle operator
  \begin{align*}
  L_{\alg{Y}}^*\odot \partial_{\alg{Y}} = \sum L_{Y_j}^*\otimes
  \partial_{Y_j}:\IC\langle\alg{X}\rangle &\to
                                            \IC\langle\alg{Y}\rangle\otimes\IC\langle\alg{X}\rangle\otimes\IC\langle\alg{X}\rangle\\
    W&\mapsto \sum Y_j\otimes \partial_{Y_j}W
       \intertext{Its iterations are defined on the left leg}
  (L_{\alg{Y}}^*\odot\partial_{\alg{Y}})^2 W
       &= \sum_{i,j} Y_iY_j\otimes (\partial_{Y_i}\otimes\id)\partial_{Y_j} W\\
  (L_{\alg{Y}}^*\odot\partial_{\alg{Y}})^p W
       &= \sum_{i_1,i_2,\dots,i_p} Y_{i_1}Y_{i_2}\dotsm Y_{i_p}\otimes
         (\partial_{Y_{i_1}}\otimes\id^{\otimes p-1})(\partial_{Y_{i_2}}\otimes\id^{\otimes p-2})\dotsm\partial_{Y_{i_p}} W
  \end{align*}

\end{definition}
In the theorem below we summarize the relations between the functionals
$\bbeta{}$ and $\fbeta{}$ 
which will allow to evaluate these functionals on some non-commutative power series.

\begin{theorem}
  \label{thm:intro_bbeta_sbeta}
  Assume the setting from Assumption~\ref{ass:ABpolynomialalg}   \eqref{ass:ABpolynomialalg:it:1},
  i.e.,   $\alg{M}=\IC\langle \alg{X}\cup\alg{Y}\rangle$ with $\alg{X}$ and
  $\alg{Y}$ mutually free.
  \begin{enumerate}[(i)]
   \item 
  For any element $P\in \IC\langle \alg{X}\cup\alg{Y} \rangle$ we have
  \begin{equation}\label{eq:intro_betas_prod_as_entries}
    \begin{aligned}
      \bbeta{\alg{X}}(P)
      &= \epsilon(P) + (\fbeta{\alg{X}}\otimes\bbeta{\alg{X}})(\ldelta_{\alg{X}} P)
      \\
      &= \epsilon(P) +
      (\bbeta{\alg{X}}\otimes\fbeta{\alg{X}})(\rdelta_{\alg{X}} P)
      .
    \end{aligned}
  \end{equation}
\item
  For any element $P\in \IC\langle \alg{X}\cup\alg{Y} \rangle$ we have  
\begin{equation}
  \label{eq:IntroVNRP_for_betas}
  \begin{aligned}
    \fbeta{\alg{X}}(P)
    &=\epsilon(P)+\sum_{k=1}^\infty
    \fbeta{}\otimes\left[\epsilon\otimes
      \left(\bbeta{\alg{Y}}\right)^{\otimes(k-1)}\otimes
      \epsilon\right]\left( (L_{\alg{X}}^*\odot\partial_{\alg{X}})^k(P)\right)
    \\
    &= \epsilon(P)+\sum_{k=1}^\infty
    \sum_{i_1,i_2,\dots,i_k} \beta_k(X_{i_1},X_{i_2},\dots, X_{i_k})
    \left[\epsilon\otimes
      \left(\bbeta{\alg{Y}}\right)^{\otimes(k-1)}\otimes
      \epsilon\right]\left(
          \partial_{X_{i_1}}\partial_{X_{i_2}}\dotsm\partial_{X_{i_k}} P
        \right)
  \end{aligned}
\end{equation}
In particular, when $\alg{X}=\{X\}$ consists of a single variable, then
\begin{equation}
  \label{eq:IntroVNRP_for_betas:uni}
  \fbeta{X}(P)
  = \epsilon(P)+\sum_{k=1}^\infty
    \beta_k(X)
    \left[\epsilon\otimes
      \left(\bbeta{\alg{Y}}\right)^{\otimes(k-1)}\otimes
      \epsilon\right]\left(
          \partial_X^k P
        \right)
\end{equation}
  \end{enumerate}
\end{theorem}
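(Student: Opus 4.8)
The plan is to prove both identities by reducing to the monomial basis and recognising each as an algebraic repackaging of a known combinatorial formula for Boolean cumulants: identity (i) encodes the recursive ``products as entries'' formula (Corollary~\ref{cor:RecursiveBoolProd}, ultimately Lemma~\ref{lem:productformula}), while identity (ii) encodes the alternating case \eqref{bocu2} of the VNRP description (Proposition~\ref{prop:vnrp}). By linearity it suffices to verify each on a single monomial $W$. First I would dispose of the trivial and degenerate cases: for $W=1$ both sides reduce to the constant via $\epsilon$ and the normalisations $\bbeta{\alg{X}}(1)=\fbeta{\alg{X}}(1)=1$. For a monomial $W\neq 1$ whose block factorisation is \emph{not} of type $\alg{X}$--$\alg{X}$, the left-hand sides vanish by Property~$(CAC)$ (Proposition~\ref{prop:characterization}) together with Definitions~\ref{def:bbeta} and~\ref{def:fbeta}; on the right one checks that every surviving term is pinned to the endpoints of $W$. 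In (i) the factor $\fbeta{\alg{X}}$ forces the prefix to begin and end in an $\alg{X}$-letter, while in (ii) the two flanking copies of $\epsilon$ force the first and last differentiated letters to be the initial and terminal letters of $W$; hence both right-hand sides vanish as well. This leaves the genuine case of an $\alg{X}$--$\alg{X}$ monomial.

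For identity (i), write $W=A_1b_1A_2\dotsm b_{n-1}A_n$ with $\alg{X}$-blocks $A_i$ and $\alg{Y}$-blocks $b_i$, so that $\bbeta{\alg{X}}(W)=\beta_{2n-1}(A_1,b_1,\dots,A_n)$. I would expand the $\alg{X}$-blocks into individual letters while keeping the $b_i$ blocked, using the recursive product formula of Corollary~\ref{cor:RecursiveBoolProd}. The derivation $\ldelta_{\alg{X}}$ splits $W$ precisely at its $\alg{X}$-letters into a prefix (ending in that letter) and a suffix, which is exactly the set of cuts of Corollary~\ref{cor:RecursiveBoolProd} whose leading factor survives Property~$(CAC)$ (any cut inside a $b_i$ would leave the leading cumulant ending in an $\alg{Y}$-letter, hence zero). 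Among the $\alg{X}$-cuts, those at a block-final $\alg{X}$-letter of a non-terminal block are in turn annihilated on the right, because the trailing factor $\bbeta{\alg{X}}$ then sees a word beginning with an $\alg{Y}$-block; the surviving cuts therefore match Corollary~\ref{cor:RecursiveBoolProd} term by term. It then remains to identify the two factors: the prefix factor is the fully factored cumulant, which equals $\fbeta{\alg{X}}$ of the prefix after re-blocking its internal $\alg{Y}$-runs by Lemma~\ref{lem:BoolSplitGroup}, and the suffix factor is $\bbeta{\alg{X}}$ of the shorter remaining word, with the leading partial $\alg{X}$-block correctly grouped; the $\epsilon(W)$ term accounts for the terminal cut with empty suffix. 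The second form, with $\rdelta_{\alg{X}}$ and the legs of $\fbeta{\alg{X}}$ and $\bbeta{\alg{X}}$ interchanged, follows symmetrically from the mirrored recurrence \eqref{eq:recurrenceboolcumright}.

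For identity (ii), Lemma~\ref{lem:BoolSplitGroup} lets me write $\fbeta{\alg{X}}(W)=\beta_{2k+1}(X_{i_0},V_1,X_{i_1},\dots,V_k,X_{i_k})$, an alternating cumulant in the individual $\alg{X}$-letters and the intervening $\alg{Y}$-words $V_j$. I would then invoke \eqref{bocu2}, whose sum runs over subsets of the $\alg{X}$-positions: the chosen letters contribute the outer cumulant $\beta_k(X_{i_1},\dots,X_{i_k})$, and each gap between consecutive chosen letters contributes a Boolean cumulant that begins and ends with an $\alg{Y}$-block. The remaining task is bookkeeping: the iterated free difference quotient $\partial_{X_{i_1}}\dotsm\partial_{X_{i_k}}$ produces exactly one $(k{+}1)$-tensor for each choice of $k$ ordered $\alg{X}$-occurrences; its outermost legs are annihilated by $\epsilon$ unless those occurrences are the endpoints of $W$, and its $k-1$ inner legs are precisely the gap-words, on which $\bbeta{\alg{Y}}$ returns the gap cumulants of \eqref{bocu2}. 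Collecting the accumulated first legs via $\fbeta{}(X_{i_1}\dotsm X_{i_k})=\beta_k(X_{i_1},\dots,X_{i_k})$ then yields the unshuffle form, and the two displayed expressions in (ii) are equivalent by definition of $\fbeta{}$. The single-variable identity \eqref{eq:IntroVNRP_for_betas:uni} is the immediate specialisation $\partial_{X_{i_1}}\dotsm\partial_{X_{i_k}}=\partial_X^k$ and $\beta_k(X_{i_1},\dots,X_{i_k})=\beta_k(X)$. I expect the main obstacle to be exactly this index matching in (ii): verifying that the $(k{+}1)$-fold tensor legs of the iterated quotient correspond bijectively, with the correct multiplicities, to the subsets and gap-cumulants of \eqref{bocu2}, and that the endpoint pinning by $\epsilon$ is consistent with the vanishing of $\fbeta{\alg{X}}$ off type $\alg{X}$--$\alg{X}$.
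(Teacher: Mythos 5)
Your outline reproduces the paper's own proof: the same reduction to $\alg{X}$--$\alg{X}$ monomials, the same matching of the $\ldelta_{\alg{X}}$-cuts with the terms of Corollary~\ref{cor:RecursiveBoolProd} for (i) (with Property~$(CAC)$ and the definitions of $\fbeta{\alg{X}}$, $\bbeta{\alg{X}}$ killing the spurious terms), and for (ii) the same comparison of the iterated difference quotients with formula \eqref{bocu2} after the identification $\fbeta{\alg{X}}(W)=\beta_{2k+1}(X_{i_0},V_1,\dots,V_k,X_{i_k})$ supplied by Lemma~\ref{lem:BoolSplitGroup}, with the flanking copies of $\epsilon$ pinning the selected occurrences to the endpoints. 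Two points need repair, however.

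First, in (i) the claim that ``the $\epsilon(W)$ term accounts for the terminal cut with empty suffix'' is false and, taken literally, breaks the term count. For a nonconstant monomial $\epsilon(W)=0$, while the terminal cut $W\otimes 1$ is already contained in $\ldelta_{\alg{X}}W$ (recall $\ldelta x^n=\sum_{k=1}^{n}x^k\otimes x^{n-k}$ contains $x^n\otimes 1$) and contributes $\fbeta{\alg{X}}(W)\,\bbeta{\alg{X}}(1)=\fbeta{\alg{X}}(W)$; this is exactly what matches the term $j=n$ of Corollary~\ref{cor:RecursiveBoolProd}, namely the fully factored cumulant $\beta_n(a_1,\dots,a_n)$ times the empty cumulant. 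The summand $\epsilon(P)$ in \eqref{eq:intro_betas_prod_as_entries} serves only to make the identity true on the constant term of $P$ (your case $W=1$, which you already treat separately); if you drop the terminal cut from the derivative and substitute $\epsilon(W)$ for it, your right-hand side is short by precisely $\fbeta{\alg{X}}(W)$. Second, in (ii) what you defer as ``bookkeeping'' is where the paper's proof spends most of its effort: when some gap between consecutive $\alg{X}$-occurrences is empty, the gap cumulants of \eqref{bocu2} acquire unit entries, and the term-by-term correspondence with $\bbeta{\alg{Y}}$ evaluated on the gap words is no longer automatic. The paper settles these configurations by a three-case analysis resting on Corollary~\ref{cor:boolcumunit} and Lemma~\ref{lem:BoolSplitGroup}: a unit sitting at the edge of a gap makes the corresponding terms on both sides vanish, while interior units can be removed consistently from both sides. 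Your plan identifies the right ingredients, but it is not complete until this degenerate case is carried out.
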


\begin{remark}
  \begin{enumerate}[(1)]
\item
    It is straightforward to extend by linearity the functionals
    $\bbeta{\alg{X}}$ and $\fbeta{\alg{X}}$
    and Theorem~\ref{thm:intro_bbeta_sbeta} to formal power series.
    One the one hand, for elements of the algebra     $\IC\llangle \alg{X}\rrangle$
    of formal power series     in
    non-commuting variables $\alg{X}=\{X_1,X_2,\ldots,X_n\}$,
    provided the resulting series converges;
    on the other hand, to the algebra $\IC\langle\alg{X}\rangle((z))$
    of formal power series with non-commuting coefficients.
   \item
    Formula~\eqref{eq:IntroVNRP_for_betas} can also be stated in terms of the
    half-shuffle coproduct $\Delta_{\prec}$ of \cite
    {EbrahimiFardPatras:2015:halfshuffles},
    but this will be dealt with elsewhere.

\end{enumerate}
\end{remark}
\begin{proof}[Proof of Theorem \ref{thm:intro_bbeta_sbeta}]
  \begin{enumerate}[(i)]
   \item 
    Essentially formula \eqref{eq:intro_betas_prod_as_entries}
    is a reformulation of Corollary~\ref{cor:RecursiveBoolProd} in terms of
    the functionals and derivations which we introduced above,
    after observing that the extra terms
    vanish as a consequence of Property $(CAC)$.
	
    We will only prove the first of the two identities
    \eqref{eq:intro_betas_prod_as_entries} as the proof of second equation
    is essentially the same.
    By linearity it suffices to consider monomials  $w\in\IC\langle\alg{X}\cup\alg{Y}\rangle$
    and in fact only words $w$ of type $\alg{X}$--$\alg{X}$,
    any word of different type being annihilated on both sides of the equation by
    definition of $\bbeta{\alg{X}}$.
    So suppose $w=u_1v_1u_2\dotsm v_{k} u_{k+1}$ is a free factorization
    into words $u_i\in\alg{X}^+$ and $v_i\in\alg{Y}^+$, then
    $\bbeta{\alg{X}}(w)=\beta_{2k+1}(u_1,v_1,u_2,\ldots,v_k,u_{k+1})$.
    On the other hand
  \begin{equation}
    \label{eq:deltaAW}
    \ldelta_{\alg{X}}
    w=\sum_{i=1}^{k+1}u_1v_1u_2\cdots v_{i-1}\cdot\ldelta_{\alg{X}}(u_i)\cdot
    v_iu_{i+1}\cdots u_{m+1}.
  \end{equation}
  This produces the same splittings of the factors from $\alg{X}$ in the
  $\alg{X}$--$\alg{Y}$ factorization
  as in \eqref{eq:RecursiveBoolProd}, except for two kinds configurations:
  \begin{enumerate}[(1)]
   \item
    Formula \eqref{eq:RecursiveBoolProd} contains splittings of factors of
    $w$ coming from both  $\alg{X}^+$ and $\alg{Y}^+$.
    The latter are absent from \eqref{eq:deltaAW} because whenever
    a factor  $v_i=v_i' v_i''$ is split, at least one of the factors of the arising term
    $\fbeta{\alg{X}}(u_1v_1u_2\cdots v_i')\bbeta{\alg{X}}(v_i''u_{i+1}\dotsm u_{m+1})$
    vanishes by definition of $\fbeta{\alg{X}}$ and $\bbeta{\alg{X}}$.    
   \item
    In the derivative $\ldelta_{\alg{X}}w$ there are extra terms of 
    type
    $u_1v_1u_2\dotsm u_k\otimes v_ku_{k+1}v_{k+1}\dotsm u_{m+1}$ which do not
    appear in Corollary~\ref{eq:RecursiveBoolProd},
    but the application of
    $\fbeta{\alg{X}}\otimes\bbeta{\alg{X}}$
    creates the factor $\bbeta{\alg{X}}\left(v_k\cdots u_{m+1}\right)=0$.
  \end{enumerate}
 \item 	
  For the proof of \eqref{eq:IntroVNRP_for_betas}  
  first observe that both sides of this equation are $1$ for the empty word
  and  that  both sides are zero unless $w$ is of type $\alg{X}$--$\alg{X}$.
  Thus fix a monomial $w\in(\alg{X}\cup\alg{Y})^+$  of the latter type.
  We write it as $w=X_{i_1}v_1X_{i_1}v_2\dotsm v_k X_{i_{k+1}}$
  with $v_j\in\alg{Y}^*$, i.e., we allow some $v_j=1$.
  However the latter will be considered later  and we assume first that $v_j\neq 1$ for
  $i=1,2,\ldots,k$. Comparing \eqref{eq:IntroVNRP_for_betas} with formula
  \eqref{bocu2} observe that according to \eqref{bocu2} we have a sum over all
  choices of variables from $\alg{X}$, with the restriction that both $X_{i_1}$  and
  $X_{i_{k+1}}$ are always selected. In formula \eqref{eq:IntroVNRP_for_betas}, all
  choices of $p$ $X$'s are given by the $p$-th derivative, the variables
  annihilated by the free derivative are put to the outer block,
  moreover application of
  $\epsilon\otimes\left(\bbeta{\alg{Y}}\right)^{\otimes (k-1)}\otimes
  \epsilon$ evaluates to zero unless both $X_{i_1}$ and $X_{i_k}$ are
  annihilated by the derivative. Further from \eqref{eq:IntroVNRP_for_betas} we
  get a product of cumulants of type
  $\beta_{r}(v_{p},X_{i_{p+1}},\ldots,X_{i_q},v_{q})$,
  which are extracted between the two $X$'s moved to the outer block and
  this is
  exactly equal to $\bbeta{\alg{Y}}(v_{p}X_{i_{p+1}}\cdots X_{i_q}v_{q})$.  
	
  It remains to consider the case where some $v_q=1$.
  Assume that it comes from a pocket determined by $X_{i_p}$ and $X_{i_r}$
  which are both chosen to the outer block, i.e., $p\leq q<r$.
  The existence of such $p$ and $q$ is guaranteed as we always choose the first
  and the last $X$ to the outer block.
  We will consider the contribution of this pocket in both formulas
  $\eqref{bocu2}$ and \eqref{eq:IntroVNRP_for_betas}.
  There are three possible cases depending on how $v_i$ is placed between $j$-th and $k$-th $X_1$:
  \begin{itemize}
   \item $p=q$ and $r=p+1$ then the pocket created by these two elements contains only $v_q$ and formula \eqref{bocu2} gives $\beta_{1}(v_q)=1$ while from \eqref{eq:IntroVNRP_for_betas} we get $\bbeta{\alg{Y}}(1)=1$.
   \item $p=q$ and $r>p+1$, then  the pocket contains
    $v_pX_{i_{p+1}}\cdots v_{r-1}$ and \eqref{bocu2} gives precisely
    $\beta_{2(k-i)-1}(v_p,X_{i_{p+1}},v_{p+1},X_{i_{p+2}},\dots,v_{r-1})=0$
    because the first argument is $1$ and   \eqref{eq:boolcumunit:1} applies.
    On the other hand \eqref{eq:IntroVNRP_for_betas} we get
    $\bbeta{\alg{Y}}(v_pX_{i_{p+1}}v_{p+1}X_{i_{p+1}}\dotsm v_{r-1})
     =\bbeta{\alg{Y}}(X_{i_{p+1}}v_{p+1}X_{i_{p+1}}\dotsm  v_{r-1})=0$
     by definition of the functional $\bbeta{\alg{Y}}$. 
     The case $p<q=r-1$ is treated similarly.

    \item
     Having eliminated all such units, it remains to consider  the case
     $p<q<r-1$.
     By \eqref{bocu2} the contribution of the pocket containing $v_q$ equals
     \begin{multline*}
       \beta_{2(k-i)-1}(v_p,X_{i_{p+1}},v_{p+1},X_{i_{p+2}},\dots,X_{i_q},1,X_{i_{q+1}},\dots,X_{i_{r-1}},v_{r-1})
       \\
       =
       \beta_{2(k-i)-1}(v_p,X_{i_{p+1}},v_{p+1},X_{i_{p+2}},\dots,X_{i_q},X_{i_{q+1}},\dots,X_{i_{r-1}},v_{r-1})
       \\
       =
       \beta_{2(k-i)-1}(v_p,X_{i_{p+1}},v_{p+1},X_{i_{p+2}},\dots,X_{i_q}X_{i_{q+1}},\dots,X_{i_{r-1}},v_{r-1})
     \end{multline*}
     because of   \eqref{eq:boolcumunit:3} and  Lemma~\ref{lem:BoolSplitGroup}.
     
     Assuming that $v_p,v_{p+1},\dots,v_{r-1}\neq 1$, then $\bbeta{\alg{Y}}(v_p
     X_{i_{p+1}}v_{p+1}\dotsm X_{i_q} v_q X_{i_{q+1}}\dotsm
     X_{i_{r-1}}v_{r-1})$ evaluates to the same value. If there are $v_l=1$ for
     $p<l<r-1$ and $l\neq q$ we repeat the same argument.
     Observe that with the previous steps we have already made sure that $v_{p},v_{r-1}\neq 1$.
   \end{itemize}
  \end{enumerate}
\end{proof}

Let us illustrate this with some examples.

\begin{example}
A direct calculation using VNRP
shows that
\begin{align*}
  \fbeta{X}(XYX)=\beta_3(X,Y,X)=\beta_2(X)\beta_1(Y).
\end{align*}

Let us calculate the derivatives of $\partial_X(XYX)=1\otimes YX+XY\otimes 1$ of
course both corresponding terms vanish after application of $\bbeta{Y}$ and
thus $n=1$ does not contribute (it never will, effectively the sum starts from
n=2). The second derivative gives $D_2(XYX)=1\otimes Y\otimes 1$, and hence
from the formula in the theorem we also get 
\begin{align*}
	\fbeta{X}(XYX)=\beta_3(X,Y,X)=\beta_2(X)\beta_1(Y).
\end{align*}
\end{example}

\begin{example}[The additive subordination function]
  \label{ex:Rx+Ydelta}
  In Example~\ref{ex:RX+Y} we concluded that the additive subordination function for
  $R=(z-X-Y)^{-1}$ is $1/\bbeta{Y}(R)$.
  Now $\ldelta_YR =RY\otimes R$ and
  the recurrence \eqref{eq:intro_betas_prod_as_entries} yields
  \begin{align*}
    \bbeta{Y}(R)
    &= \epsilon(R) + (\fbeta{Y}\otimes\bbeta{Y})(\ldelta_YR)\\
    &= \frac{1}{z} + \fbeta{Y}(RY)\bbeta{Y}(R)
  \end{align*}
  and thus
  the subordination function is
  $$
  \omega(z) = 1/\bbeta{Y}(R) = z-z\fbeta{Y}(RY)
  $$
  cf.~\cite[Corollary~3.7]{LehnerSzpojan}.
\end{example}

\begin{example}\label{eg:Intro_1.6}
  In anticipation of matrix valued formulas arising in Section~\ref{sec:LinearizationAndCondExp}
  we continue Example~\ref{example:condExpAdditive}
  and consider the simple example where $n=2$
  and the power series $\Psi=(1-z(X+Y))^{-1}=\sum_{n=0}^{\infty}\left(z(X+Y)\right)^n\in\IC\langle X,Y\rangle((z))$.
  Since we work only with two variables we will write $\bbeta{X},\fbeta{X}$ etc. which should not lead to any confusion.

Clearly we have
\begin{align*}
\rdelta_{X}(\Psi)=1+z\Psi\otimes X \Psi,\qquad\rdelta_{Y}(\Psi)=1+z\Psi\otimes Y \Psi.
\end{align*}
Thus from \eqref{eq:intro_betas_prod_as_entries} we obtain
\begin{align*}
\bbeta{X}(\Psi)=1+z\bbeta{X}(\Psi)\fbeta{X}(X\Psi),\qquad\bbeta{Y}(\Psi)=1+z\bbeta{Y}(\Psi)\fbeta{Y}(Y\Psi).
\end{align*}
Hence we obtain 
\begin{align*}
  \bbeta{X}(\Psi)=\left(1-z\fbeta{X}(X\Psi)\right)^{-1},\qquad\bbeta{Y}(\Psi)=\left(1-z\fbeta{Y}(Y\Psi)\right)^{-1}.
\end{align*}
Comparing with \eqref{eq:X+Y:ExPsi=bbetaPsi} we conclude that
$$
\E_X[\Psi] = (1-z\fbeta{Y}(Y\Psi)-zX)^{-1}
.
$$
Moreover observe that 
\begin{align*}
  \partial_{X}^n(X\Psi)=z^{n-1}1\otimes \Psi^{\otimes n}+z^{n}X\Psi\otimes \Psi^{\otimes n},\qquad\partial_{Y}^n(Y\Psi)=z^{n-1}1\otimes \Psi^{\otimes n}+z^{n}Y\Psi\otimes \Psi^{\otimes n}.
\end{align*}
Thus equation \eqref{eq:IntroVNRP_for_betas:uni} gives
\begin{align*}
  \fbeta{X}(X\Psi)&=\sum_{n=1}^{\infty} \beta_n(X)\bbeta{Y}(\Psi)^{n-1}z^{n-1}=\widetilde{\eta}_{X}(z\bbeta{Y}(\Psi)),\\
  \fbeta{Y}(Y\Psi)&=\sum_{n=1}^{\infty} \beta_n(Y)\bbeta{X}(\Psi)^{n-1}z^{n-1}=\widetilde{\eta}_{Y}(z\bbeta{X}(\Psi)).
\end{align*}
Finally we obtain the following system of equations
\begin{align*}
  \fbeta{X}(X\Psi)=\widetilde{\eta}_{X}\left(z\left(1-z\fbeta{Y}(Y\Psi)\right)^{-1}\right),\quad
  \fbeta{Y}(Y\Psi)=\widetilde{\eta}_{Y}\left(z\left(1-z\fbeta{X}(X\Psi)\right)^{-1}\right).
\end{align*}
We shall see below that this system of equations has   unique  power
series solutions $\fbeta{X}(X\Psi)$ and $\fbeta{Y}(Y\Psi)$ analytic at 0
and in fact yields the fixed point equation for subordination function for free
additive convolution.
Thus this system determines the function needed in Example \ref{example:condExpAdditive}.
\end{example}

\clearpage{}
\section{Linearization and conditional expectations}
\label{sec:LinearizationAndCondExp}
The procedure presented  in Proposition~\ref{thm:CondExpHX}
can be systematized using  linearizations of resolvents from the very beginning.
For the reader not familiar with  rational series and linearizations
the basic facts are collected in  Appendix~\ref{app:linearization}.
Here we will adapt and amplify all previously defined operations to the level of matrices
and then lift  Example~\ref{eg:Intro_1.6} to the matrix-valued
setting.

\subsection{Amplifications of expectations and cumulants}
Most concepts considered in the present paper can be generalized to the
operator valued case.
Rather than do this in the general case we 
restrict the discussion to amplifications to tensor products.
In fact the matrix valued case
would suffice for  later applications to linearizations,
however the proofs are conceptually simpler in the language of
tensor products.

\begin{notation}
  Let $(\alg{A},\varphi)$ be an ncps and $\alg{C}$ a unital algebra.
  We shall consider $\alg{C}\otimes\alg{A}$ as a $\alg{C}$-bimodule with
  action
  $c_1\cdot (c\otimes a)\cdot c_2 = c_1cc_2\otimes a$.

  In the case where $\alg{C}=M_N(\IC)$ is a matrix algebra
  we will denote the elements by $\sum C_iu_i$
  where for a matrix $C\in M_N(\IC)$ and $u\in\alg{A}$ we denote by $Cu=uC\in M_N(\alg{A})$
  the matrix with entries
  $$
  (Cu)_{ij} = c_{ij}u
  .
  $$
\end{notation}

The following lemma is easily verified on elementary tensors.
\begin{lemma}
  \label{lem:amplifications}
  Let $(\alg{A},\varphi)$ be a ncps and $\alg{C}$ a unital algebra.
  and denote by
  $\varphi_{\alg{C}}=\id_\alg{C}\otimes\varphi:\alg{C}\otimes\alg{A}\to\alg{C}$
  the amplification of $\varphi$.
  \begin{enumerate}[(i)]
   \item 
    $\varphi_{\alg{C}}$     is a $\alg{C}$-bimodule map:
    \begin{equation*}
    \varphi_{\alg{C}}(c_1\cdot u\cdot c_2) = c_1 \varphi_{\alg{C}}(u) c_2
    \end{equation*}
    for all $c_1,c_2\in\alg{C}$ and $u\in\alg{C}\otimes\alg{A}$.
   \item 
  Let $\alg{B}\subseteq\alg{A}$ be a subalgebra and $\E_{\alg{B}}:\alg{A}\to\alg{B}$ be a
  conditional expectation for $\varphi$.
  Then its amplification
  $\id_{\alg{C}}\otimes\E_{\alg{B}}:\alg{C}\otimes\alg{A}\to\alg{C}\otimes\alg{B}$
  is a $\alg{C}\otimes\alg{B}$-bimodule map
  and 
  moreover it is a conditional expectation for the amplification 
  $\varphi_{\alg{C}}$
  in the sense that
  \begin{equation*}
    \varphi_{\alg{C}}((\id_{\alg{C}}\otimes\E_{\alg{B}})[u]v) = \varphi_{\alg{C}}(uv)    
  \end{equation*}
  for any $u\in\alg{C}\otimes\alg{A}$ and any $v\in\alg{C}\otimes\alg{B}$.
 \item
  The $\alg{C}$-valued Boolean cumulants defined by the amplification of the
  recurrence 
  \eqref{eq:recurrenceboolcum}
  $$
  \varphi_\alg{C}(u_1u_2\dotsm u_n)
  = \sum_{k=1}^n \beta_k^{\alg{C}}(u_1,u_2,\dots,u_k)\,\varphi_\alg{C}(u_{k+1}u_{k+2}\dotsm u_n)
  $$
  are given by
  $$
  \beta_n^{\alg{C}}(c_1\otimes a_1,c_2\otimes a_2,\dots,c_n\otimes a_n)
  =  c_1c_2\dotsm c_n \,\beta_n( a_1, a_2,\dots, a_n)
  .
  $$
  \end{enumerate}
\end{lemma}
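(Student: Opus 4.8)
The plan is to verify all three assertions on elementary tensors and then extend by multilinearity of every map involved. The one structural observation that makes everything work is that the scalar values produced by $\varphi$ and by the ordinary Boolean cumulants $\beta_n$ lie in $\IC$ and are therefore central in $\alg{C}$, so they may be moved freely past the $\alg{C}$-factors without reordering.

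For part (i) I would take an elementary tensor $u=c\otimes a$, compute $c_1\cdot u\cdot c_2=c_1cc_2\otimes a$, apply $\varphi_{\alg{C}}$ to get $c_1cc_2\,\varphi(a)$, and recognize this as $c_1(c\,\varphi(a))c_2=c_1\,\varphi_{\alg{C}}(u)\,c_2$; linearity then finishes the claim. For part (ii) the bimodule property is the same computation, now using that $\E_{\alg{B}}$ is an $\alg{A}$-bimodule map: for $c_1\otimes b_1,\,c_2\otimes b_2\in\alg{C}\otimes\alg{B}$ one checks $(\id_{\alg{C}}\otimes\E_{\alg{B}})[(c_1\otimes b_1)(c\otimes a)(c_2\otimes b_2)]=c_1cc_2\otimes \E_{\alg{B}}[b_1ab_2]=c_1cc_2\otimes b_1\E_{\alg{B}}[a]b_2$, which matches the left and right actions of $c_1\otimes b_1$ and $c_2\otimes b_2$ on $c\otimes\E_{\alg{B}}[a]$. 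For the conditional-expectation identity I would evaluate both sides on $u=c\otimes a$ and $v=c'\otimes b$ with $b\in\alg{B}$: the left side gives $cc'\,\varphi(\E_{\alg{B}}[a]\,b)$ and the right side $cc'\,\varphi(ab)$, and these agree precisely because $\E_{\alg{B}}$ is a conditional expectation for $\varphi$, so $\varphi(\E_{\alg{B}}[a]\,b)=\varphi(ab)$.

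Part (iii) I would prove by induction on $n$. First I record that the amplified recurrence determines $\beta_n^{\alg{C}}$ uniquely and multilinearly: isolating the $k=n$ term, whose trailing factor is $\varphi_{\alg{C}}$ of the empty word, namely $1_{\alg{C}}$, expresses $\beta_n^{\alg{C}}$ in terms of $\varphi_{\alg{C}}$ and the lower cumulants. The base case is $\beta_1^{\alg{C}}(c_1\otimes a_1)=\varphi_{\alg{C}}(c_1\otimes a_1)=c_1\beta_1(a_1)$. For the inductive step, evaluating on elementary tensors $u_i=c_i\otimes a_i$ gives $\varphi_{\alg{C}}(u_1\cdots u_n)=c_1\cdots c_n\,\varphi(a_1\cdots a_n)$ and, by the inductive hypothesis, $\beta_k^{\alg{C}}(u_1,\dots,u_k)\,\varphi_{\alg{C}}(u_{k+1}\cdots u_n)=c_1\cdots c_k\,\beta_k(a_1,\dots,a_k)\,c_{k+1}\cdots c_n\,\varphi(a_{k+1}\cdots a_n)$; here the scalar $\beta_k(a_1,\dots,a_k)$ is central and recombines the $\alg{C}$-factors into $c_1\cdots c_n$. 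Subtracting the lower terms, the bracket collapses by the scalar recurrence \eqref{eq:recurrenceboolcum} to $c_1\cdots c_n\,\beta_n(a_1,\dots,a_n)$, as claimed.

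I do not anticipate a genuine obstacle, since the statement is purely computational; but the one point requiring care is the ordering of factors in the noncommutative algebra $\alg{C}$ in part (iii). The argument goes through only because the $\alg{C}$-factors appear in left-to-right order in the amplified recurrence while the intervening $\beta_k$ and $\varphi$ values are central scalars, so the products $c_1\cdots c_k$ and $c_{k+1}\cdots c_n$ reassemble into $c_1\cdots c_n$ without any reordering. For this reason I would state the uniqueness and multilinearity of the $\beta_n^{\alg{C}}$ explicitly at the outset, before running the induction.
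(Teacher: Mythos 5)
Your proposal is correct and follows exactly the route the paper intends: the paper offers no written proof beyond the remark that the lemma ``is easily verified on elementary tensors,'' and your verification on elementary tensors, with centrality of the scalar values of $\varphi$ and $\beta_k$ in $\alg{C}$ and the induction on $n$ via the uniqueness of the cumulants determined by the amplified recurrence, is precisely that verification carried out in full. No gaps.
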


In the case where $\alg{C}=M_N(\IC)$ is a matrix algebra  we can apply the usual identification of
$M_n(\IC)\otimes\alg{A}$ with $M_N(\alg{A})$ and  reformulate the lemma in
terms of matrix operations as follows.
\begin{corollary}
  \label{cor:matrixamplifications}
  Let $(\alg{A},\varphi)$  be a ncps.
  \begin{enumerate}[(i)]
   \item 
    The amplification     $\varphi^{(N)}:M_N(\alg{A})\to M_N(\IC)$ to the matrix algebra
    is the     entry-wise application
    $\varphi^{(N)}([a_{ij}]) = [\varphi(a_{ij})]$.
    These maps
    form a family of matrix bimodule maps:
    $$
    \varphi^{(k)}(U\cdot A\cdot V) = U \varphi^{(N)}(A) V
    $$
    holds for any matrix $A\in M_N(\alg{A})$ and any scalar matrices
    $U\in M_{k\times N}(\IC)$ and $V\in M_{N\times k}(\IC)$,
    $k\in\IN$.
   \item 
    Let $\alg{B}\subseteq\alg{A}$ be a subalgebra and $\E_{\alg{B}}:\alg{A}\to\alg{B}$ be a
    conditional expectation for $\varphi$.
  Then the entry-wise application maps
  $\E_{\alg{B}}^{(N)}
  \bigl[
  [a_{ij}]_{ij}
  \bigr]  = 
  \bigl[\E_{\alg{B}}[a_{ij}]
  \bigr]_{ij}$
  form a family of  $M_N(\alg{B})$-bimodule maps:
  \begin{equation}
    \label{eq:EBnbimodule}
    \E_{\alg{B}}^{(k)}[U\cdot A\cdot V] = U \cdot \E_{\alg{B}}^{(N)}[A] \cdot V
  \end{equation}
  holds for any matrix $A\in M_N(\alg{A})$ and any scalar matrices
  $U\in M_{k\times N}(\IC)$ and $V\in M_{N\times k}(\IC)$.
  Moreover it is a conditional expectation for the map 
  $\varphi^{(N)}$
  in the sense that
  $\varphi^{(N)}(\E_{\alg{B}}^{(N)}(A)B) = \varphi^{(N)}(AB)$
  for any $A\in M_N(\alg{A})$ and any $B\in M_N(\alg{B})$.
 \item
  The entries of the $M_N(\IC)$-valued Boolean cumulants defined by the amplification of the
  recurrence 
  \eqref{eq:recurrenceboolcum}
  $$
  \varphi^{(N)}(A_1A_2\dotsm A_n)
  = \sum_{k=1}^n \beta_k^{(N)}(A_1,A_2,\dots,A_k)\,\varphi^{(N)}(A_{k+1}A_{k+2}\dotsm A_n)
  $$
  are given by
  $$
\beta_n^{(N)}(A_1,A_2,\dots,A_n)_{ij}
  =  \sum_{i_1,i_2,\dots,i_{n-1}}\beta_n( a_{ii_1}^{(1)}, a_{i_1i_2}^{(2)},\dots, a^{(n)}_{i_{n-1}j})
  .
  $$
  \end{enumerate}
\end{corollary}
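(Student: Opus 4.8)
The plan is to read off all three parts from Lemma~\ref{lem:amplifications} by specializing to $\alg{C}=M_N(\IC)$ and transporting everything along the canonical isomorphism $M_N(\IC)\otimes\alg{A}\cong M_N(\alg{A})$, under which the elementary tensor $E_{pq}\otimes a$ (with $E_{pq}$ the standard matrix units) corresponds to the matrix whose only nonzero entry is $a$, in position $(p,q)$. Since every $A=[a_{pq}]\in M_N(\alg{A})$ is the finite sum $\sum_{p,q}E_{pq}\otimes a_{pq}$, and all the maps in sight ($\varphi_{\alg{C}}$, $\id_{\alg{C}}\otimes\E_{\alg{B}}$, and the $\alg{C}$-valued cumulants) are $\IC$-multilinear, it suffices to verify the formulas on such elementary tensors and extend by linearity.

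For part (i), applying $\id_{\alg{C}}\otimes\varphi$ to $\sum_{p,q}E_{pq}\otimes a_{pq}$ yields $\sum_{p,q}E_{pq}\,\varphi(a_{pq})=[\varphi(a_{pq})]$, which is precisely the entry-wise map $\varphi^{(N)}$. The bimodule identity $\varphi^{(k)}(UAV)=U\varphi^{(N)}(A)V$ for scalar matrices $U\in M_{k\times N}(\IC)$, $V\in M_{N\times k}(\IC)$ is then a direct consequence of $(UAV)_{ij}=\sum_{p,q}U_{ip}a_{pq}V_{qj}$ together with the linearity of $\varphi$; this also covers the rectangular case, which is not literally contained in Lemma~\ref{lem:amplifications}(i) since the latter was stated for square $\alg{C}$ only.

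Part (ii) is handled the same way: $\E_{\alg{B}}^{(N)}$ is the entry-wise version of $\E_{\alg{B}}$, its $M_N(\alg{B})$-bimodule property follows entry-wise from the $\alg{B}$-bimodule property of $\E_{\alg{B}}$ (specializing to \eqref{eq:EBnbimodule} for scalar $U,V$), and the conditional-expectation identity is obtained by writing $(\E_{\alg{B}}^{(N)}(A)B)_{ij}=\sum_k\E_{\alg{B}}(a_{ik})b_{kj}$ and applying the defining property $\varphi(\E_{\alg{B}}(u)b)=\varphi(ub)$ termwise before resumming to $\varphi^{(N)}(AB)_{ij}$.

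The only computation with any content is part (iii), and I expect the bookkeeping of matrix-unit products to be the (mild) main obstacle. Expanding each argument as $A_\ell=\sum_{p,q}E_{pq}\otimes a^{(\ell)}_{pq}$ and using multilinearity together with the elementary-tensor formula from Lemma~\ref{lem:amplifications}(iii) gives
\[
\beta_n^{(N)}(A_1,\dots,A_n)=\sum E_{p_1q_1}E_{p_2q_2}\dotsm E_{p_nq_n}\,\beta_n\bigl(a^{(1)}_{p_1q_1},\dots,a^{(n)}_{p_nq_n}\bigr).
\]
Since $E_{p_1q_1}\dotsm E_{p_nq_n}=E_{p_1q_n}$ exactly when $q_1=p_2,\ q_2=p_3,\ \dots,\ q_{n-1}=p_n$ and vanishes otherwise, the telescoping collapses the free indices: setting $i=p_1$, $j=q_n$ and renaming the common values $i_\ell:=q_\ell=p_{\ell+1}$ isolates the $(i,j)$ entry as $\sum_{i_1,\dots,i_{n-1}}\beta_n(a^{(1)}_{ii_1},a^{(2)}_{i_1i_2},\dots,a^{(n)}_{i_{n-1}j})$, which is the claimed formula.
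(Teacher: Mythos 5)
Your proposal is correct and follows essentially the same route as the paper, which presents Corollary~\ref{cor:matrixamplifications} as an immediate reformulation of Lemma~\ref{lem:amplifications} under the identification $M_N(\IC)\otimes\alg{A}\cong M_N(\alg{A})$. You merely make explicit the elementary-tensor expansion and the matrix-unit bookkeeping (including the rectangular bimodule case) that the paper leaves implicit.
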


\begin{notation}
  The corresponding matrix valued versions of the functionals $\bbeta{}$ and
  $\fbeta{}$
  are defined analogously as
\begin{align*}
    \bbetan{\alg{A}}{N}(A)_{ij}
    &= \bbeta{\alg{A}}(a_{ij})
    &
      \fbetan{\alg{A}}{N}(A)_{ij}
    &= \fbeta{\alg{A}}(a_{ij})
      ,
  \end{align*}
  whenever these are defined (Assumption~\ref{ass:augmented}, resp.\ Assumption~\ref{ass:ABpolynomialalg}).
\end{notation}

\subsection{Amplifications of derivations}

\begin{lemma}
  \label{lem:amplifyderiv}
  Let $\alg{A}$ and $\alg{C}$ be algebras, $D:\alg{A}\to\mathfrak{M}$ be a
  derivation into an $\alg{A}$-bimodule $\mathfrak{M}$.
Then $D^{(\alg{C})}=\id_{\alg{C}}\otimes D:\alg{C}\otimes\alg{A}\to \alg{C}\otimes\mathfrak{M}$
  is a derivation, where $\alg{C}\otimes\mathfrak{M}$ is an
  $\alg{C}\otimes\alg{A}$-module
  with action
  $$
  (c_1\otimes a_1) \cdot (c\otimes\mathfrak{m}) \cdot (c_2\otimes a_2) =  c_1cc_2\otimes(a_1\cdot \mathfrak{m}\cdot a_2)
  $$
\end{lemma}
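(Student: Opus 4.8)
The plan is to verify the claim by a direct computation on elementary tensors, after first checking that the stated formula genuinely makes $\alg{C}\otimes\mathfrak{M}$ into an $\alg{C}\otimes\alg{A}$-bimodule. Recall that $\alg{C}\otimes\alg{A}$ is an algebra under the product $(c_1\otimes a_1)(c_2\otimes a_2)=c_1c_2\otimes a_1a_2$. For the bimodule axioms one checks, on elementary tensors, that
\[
\bigl((c_1\otimes a_1)(c_1'\otimes a_1')\bigr)\cdot(c\otimes\mathfrak{m})
= (c_1c_1'c)\otimes\bigl((a_1a_1')\cdot\mathfrak{m}\bigr)
= (c_1\otimes a_1)\cdot\bigl((c_1'\otimes a_1')\cdot(c\otimes\mathfrak{m})\bigr),
\]
which follows from associativity of multiplication in $\alg{C}$ together with the left-module axiom for $\mathfrak{M}$ over $\alg{A}$; the right action and the compatibility of left and right actions are handled identically, and all expressions extend to general tensors by bilinearity.

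Next I would establish the Leibniz rule for $D^{(\alg{C})}=\id_{\alg{C}}\otimes D$. Since $D^{(\alg{C})}$ is by construction linear, it suffices to evaluate both sides of \eqref{eq:leibniz} on a product of elementary tensors. On the left,
\[
D^{(\alg{C})}\bigl((c_1\otimes a_1)(c_2\otimes a_2)\bigr)
= c_1c_2\otimes D(a_1a_2)
= c_1c_2\otimes\bigl(D(a_1)\cdot a_2 + a_1\cdot D(a_2)\bigr),
\]
using that $D$ satisfies the Leibniz rule in $\mathfrak{M}$. On the right, using the definition of the $\alg{C}\otimes\alg{A}$-action,
\[
D^{(\alg{C})}(c_1\otimes a_1)\cdot(c_2\otimes a_2)
+ (c_1\otimes a_1)\cdot D^{(\alg{C})}(c_2\otimes a_2)
= c_1c_2\otimes\bigl(D(a_1)\cdot a_2\bigr) + c_1c_2\otimes\bigl(a_1\cdot D(a_2)\bigr).
\]
The two expressions coincide, so the Leibniz rule holds on elementary tensors, and hence everywhere by bilinearity.

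There is no serious obstacle here: the statement is a formal consequence of the fact that tensoring a derivation with the identity preserves the Leibniz rule once the target is equipped with the evident module structure. The only points requiring a little care are the well-definedness of the module action on tensor products (which I would justify by the universal property of the tensor product, checking bilinearity in each argument) and the observation that, by linearity of $D^{(\alg{C})}$ and bilinearity of the module actions, verification on elementary tensors is sufficient to conclude the general case.
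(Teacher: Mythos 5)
Your proof is correct and follows essentially the same route as the paper: verifying the Leibniz rule for $D^{(\alg{C})} = \id_{\alg{C}}\otimes D$ on elementary tensors $(c_1\otimes a_1)(c_2\otimes a_2)$ and concluding by bilinearity. The only difference is that you also spell out the bimodule axioms for the stated action, which the paper takes for granted; this is a harmless (and slightly more careful) addition.
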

\begin{proof}
  It suffices to verify the Leibniz rule    \eqref{eq:leibniz}
  for   $D^{(\alg{C})}$ on elementary tensors:
  \begin{align*}
    D^{(\alg{C})}
    \bigl(
    (c_1\otimes a_1)(c_2 \otimes a_2)
    \bigr)
    &= c_1c_2 \otimes D(a_1a_2)\\
    &= c_1c_2 \otimes (D(a_1)\cdot a_2) + c_1c_2 \otimes (a_1\cdot D(a_2))\\
    &= D^{(\alg{C})}(c_1 \otimes a_1)\cdot(c_2\otimes a_2) +  (c_1\otimes
      a_1)\cdot D^{(\alg{C})}(c_2\otimes a_2) 
  \end{align*}
\end{proof}

This is true in particular for $\alg{C}=M_N(\IC)$
(cf.~\cite[Section~3]{MaiSpeicher:1805.04150}).

\begin{corollary}
  Let $\mathfrak{M}$ be an $\alg{A}$-bimodule and  $D:\alg{A}\to\mathfrak{M}$ be  a derivation.
  Then $M_N(\mathfrak{M})$ is an $M_N(\alg{A})$-bimodule with action
  $$
  [a'_{ij}]_{ij}\cdot [\mathfrak{m}_{ij}]_{ij}\cdot [a''_{ij}]_{ij}
  = 
  \Bigl[
  \sum_{k,l} a'_{ik}\cdot \mathfrak{m}_{kl}\cdot a''_{lj}
  \Bigr]_{ij}
  $$
  and the matrix amplification $D^{(N)}:M_N(\alg{A}) \to M_N(\mathfrak{M})$,
  i.e., entry-wise application
  $$
  D^{(N)}([a_{ij}]) = [D(a_{ij})]
  $$
  satisfies the Leibniz rule \eqref{eq:leibniz}.
\end{corollary}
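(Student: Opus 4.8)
The plan is to obtain this corollary directly from Lemma~\ref{lem:amplifyderiv} by specializing to $\alg{C} = M_N(\IC)$ and transporting everything across the standard identifications $M_N(\IC)\otimes\alg{A} \cong M_N(\alg{A})$ and $M_N(\IC)\otimes\mathfrak{M} \cong M_N(\mathfrak{M})$. First I would make these identifications explicit on matrix units: the elementary tensor $E_{ij}\otimes a$ corresponds to the matrix with $a$ in position $(i,j)$ and zeroes elsewhere, and likewise for $\mathfrak{M}$. Under this correspondence the multiplication of $M_N(\IC)\otimes\alg{A}$ becomes ordinary matrix multiplication with $\alg{A}$-valued entries, and the bimodule action of Lemma~\ref{lem:amplifyderiv}, namely $(c_1\otimes a_1)\cdot(c\otimes\mathfrak{m})\cdot(c_2\otimes a_2) = c_1cc_2\otimes(a_1\cdot\mathfrak{m}\cdot a_2)$, evaluated on matrix units reproduces exactly the displayed formula $[a'_{ij}]\cdot[\mathfrak{m}_{ij}]\cdot[a''_{ij}] = \bigl[\sum_{k,l} a'_{ik}\cdot\mathfrak{m}_{kl}\cdot a''_{lj}\bigr]$. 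By bilinearity this extends from matrix units to arbitrary matrices, so the claimed $M_N(\alg{A})$-bimodule structure is precisely the transported one.

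Next I would observe that the amplification $D^{(\alg{C})} = \id_{\alg{C}}\otimes D$ of the lemma sends $E_{ij}\otimes a$ to $E_{ij}\otimes D(a)$, which under the identification is exactly the entry-wise map $D^{(N)}([a_{ij}]) = [D(a_{ij})]$. Since Lemma~\ref{lem:amplifyderiv} already asserts that $D^{(\alg{C})}$ satisfies the Leibniz rule \eqref{eq:leibniz} for the bimodule structure it describes, the same rule transfers verbatim to $D^{(N)}$, which is what we want.

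For a reader who prefers a self-contained check, I would note that the Leibniz rule also follows from a one-line entry-wise computation: from $(AB)_{ij} = \sum_k a_{ik}b_{kj}$ and the Leibniz rule of $D$ on $\alg{A}$ one gets $D^{(N)}(AB)_{ij} = \sum_k\bigl(D(a_{ik})\cdot b_{kj} + a_{ik}\cdot D(b_{kj})\bigr)$. Unwinding the displayed bimodule action with one factor set to the identity matrix shows that the right action of $B$ on $D^{(N)}(A)$ has entry $\sum_l D(a_{il})\cdot b_{lj}$ and the left action of $A$ on $D^{(N)}(B)$ has entry $\sum_k a_{ik}\cdot D(b_{kj})$; summing these reproduces $D^{(N)}(AB)_{ij}$. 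There is essentially no genuine obstacle here, as the statement is a packaging of the already-proved lemma. The only point demanding care is keeping the index bookkeeping of the bimodule action aligned with matrix multiplication, so that the right action contracts the second matrix index of $\mathfrak{M}$-valued matrices and the left action contracts the first; once this is set up correctly, everything matches.
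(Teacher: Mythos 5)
Your proposal is correct and takes essentially the same route as the paper: there the corollary is stated without separate proof, as an immediate specialization of Lemma~\ref{lem:amplifyderiv} to $\alg{C}=M_N(\IC)$ under the standard identifications $M_N(\IC)\otimes\alg{A}\cong M_N(\alg{A})$ and $M_N(\IC)\otimes\mathfrak{M}\cong M_N(\mathfrak{M})$, which is precisely your first two paragraphs. Your supplementary entry-wise verification of the Leibniz rule is consistent with this and adds nothing that conflicts with the paper's argument.
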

\begin{example}
  Let $D:\alg{A}\to \mathfrak{M}$ be a derivation where
  $\mathfrak{M}=\alg{A}\otimes\alg{A}$
  is the bimodule with action
  $$
  a'\cdot (a_1\otimes a_2)\cdot a'' = a'a_1\otimes a_2a'' = (a'\otimes
  1)(a_1\otimes a_2)(1\otimes a'')
  .
  $$
  Then using Notation~\ref{notation:tensor}
  the Leibniz rule for the product of two matrices $A_1,A_2\in M_N(\alg{A})$
  reads
$$
  D^{(N)}(A_1A_2) = D^{(N)}(A_1)(1_{\alg{A}}\odot A_2) + (A_1\odot
  1_{\alg{A}})D^{(N)}(A_2)
 \in  M_N(\alg{A}\otimes\alg{A}),
  $$
i.e.,
  $$
  D^{(N)}(A_1A_2)_{ij}
  = \sum_k D(a^{(1)}_{ik})(1_{\alg{A}}\otimes  a^{(2)}_{kj})
  + (a^{(1)}_{ik}\otimes 1_{\alg{A}}) D(a^2_{kj})
  .
  $$
  In particular,
  for a resolvent $\bPsi = (I_N-zA)^{-1}$ the derivation results in
  $$
  D^{(N)}\bPsi = z(\bPsi\odot 1)D^{(N)}(A)(1\odot\bPsi)
  $$
  and in the case of an elementary tensor $A=C\otimes a$ with $D(a) = \sum u_i\otimes v_i$,
  $$
  D^{(N)}\bPsi = z\sum \bPsi u_i C\odot v_i\bPsi.
  $$
  
\end{example}

\subsection{Computing conditional expectations via linearizations}
In the following we will omit the
superscripts from   $\IE_{\alg{B}}^{(N)}$  etc.\ and write  $\IE_{\alg{B}}$ etc.\ whenever the context is unambiguous.
Now the bimodule property \eqref{eq:EBnbimodule} allows us to rewrite
the conditional expectation of a linearization
$$
(1-z^mP)^{-1} = u^t(I-zL)^{-1}v
$$
as
\begin{equation}
  \label{eq:EB1-zP=utEB1-zLv}
\E_{\alg{B}}
  \bigl[
  (1-z^mP)^{-1}
  \bigr]
= u^t\E_{\alg{B}}
\bigl[(I-zL)^{-1}\bigr]v
\end{equation}
and the problem is reduced to the computation of conditional expectations of matrix pencils.
We shall see that this accomplished by repeating the computation from
Example~\ref{ex:1:additiveconvolution} with matrix coefficients.

It is immediate to verify the
amplification of the recurrence
\eqref{eq:condexprec}, namely
$$
(\id_{\alg{C}}\otimes\E_{\alg{B}})[c\otimes P]
= (\id_{\alg{C}}\otimes\bbeta{\alg{A}})(P)
+ (\id_{\alg{C}}\otimes\bbeta{\alg{A}}\otimes\E_{\alg{B}})[\id_{\alg{C}}\otimes\rDelta_{\alg{B}}(P)]
$$
etc.
The matrix analog of
Theorem~\ref{thm:condexprec}
in terms of the amplifications of $\E_{\alg{B}}$, $\bbeta{\alg{A}}$,
$\rDelta_{\alg{B}}$ and $\rnabla_{\alg{B}}$
from Lemmas~\ref{lem:amplifications} and   \ref{lem:amplifyderiv}
reads as follows.
\begin{proposition}
  Let $M\in M_N(\alg{M})$ and $\alg{A},\alg{B}\subseteq\alg{M}$ as in
  Assumption~\ref{ass:augmented}.
  Then
  \begin{align*}
    \E_{\alg{B}}^{(N)}[M] &= \bbetan{\alg{A}}{N}(M)
                      + (\bbeta{\alg{A}}\otimes \E_{\alg{B}})^{(N)} [\rDelta_{\alg{B}}^{(N)}(M)]\\
                    &= \bbetan{\alg{A}}{N}(M)
                      + (\bbeta{\alg{A}}\otimes\E_{\alg{B}})^{(N)}
                      [\rnabla_{\alg{B}}^{(N)}(M)]\\
                     &= \bbetan{\alg{A}}{N}(M)
                      + (\E_{\alg{B}}\otimes\bbeta{\alg{A}})^{(N)} [\lDelta_{\alg{B}}^{(N)}(M)]
  \end{align*}
In particular, for a linear matrix pencil $L=\sum C_i'a_i+C_j''b_j\in M_N(\alg{M})$ 
  the conditional expectation of the resolvent
  $\bPsi = 
  \bigl(
  I_N-z(\sum C_i'a_i+ \sum C''_j b_j)
  \bigr)^{-1}$ is
  \begin{equation}
    \label{eq:EnPsi=1-zHAsumCbHA}
  \E_{\alg{B}}^{(N)}[\bPsi] = 
  \bigl(
  I_N- z H_{\alg{A}}\sum C_j'' b_j
  \bigr)^{-1} H_{\alg{A}}
  ,
  \end{equation}
  where
  $H_{\alg{A}}= \bbetan{\alg{A}}{N}(\bPsi)\in M_N(\IC)$.
\end{proposition}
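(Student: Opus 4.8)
The plan is to derive both assertions from the scalar recurrence of Theorem~\ref{thm:condexprec} together with the resolvent identity \eqref{eq:derivationofpsi}. For the three displayed identities I would argue entrywise. The operators $\E_{\alg{B}}^{(N)}$, $\bbetan{\alg{A}}{N}$, $\rDelta_{\alg{B}}^{(N)}$, $\rnabla_{\alg{B}}^{(N)}$ and $\lDelta_{\alg{B}}^{(N)}$ are all obtained by applying their scalar counterparts entrywise (Lemmas~\ref{lem:amplifications} and \ref{lem:amplifyderiv}), and the factor $\bbeta{\alg{A}}(\cdot)$ produced on each left tensor leg is a \emph{scalar} that merely rescales the corresponding matrix entry. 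Hence the $(i,j)$-entry of each asserted matrix identity is precisely the scalar identity of Theorem~\ref{thm:condexprec} applied to $M_{ij}$, and there is nothing further to check beyond confirming that the amplified composite $(\bbeta{\alg{A}}\otimes\E_{\alg{B}})^{(N)}\circ\rDelta_{\alg{B}}^{(N)}$ agrees entrywise with $(\bbeta{\alg{A}}\otimes\E_{\alg{B}})\circ\rDelta_{\alg{B}}$, and likewise for $\rnabla_{\alg{B}}$ and $\lDelta_{\alg{B}}$.

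For the resolvent formula I would invoke the second of these identities, since $\rnabla_{\alg{B}}=\rDelta_{\alg{B}}-\lDelta_{\alg{B}}$ is a genuine derivation (Example~\ref{ex:DeltaA}) and therefore respects the Leibniz rule on $\bPsi$. The amplified version of \eqref{eq:derivationofpsi}, which follows from the matrix Leibniz rule of Lemma~\ref{lem:amplifyderiv}, gives
$$
\rnabla_{\alg{B}}^{(N)}(\bPsi)=z\,(\bPsi\odot 1)\,\rnabla_{\alg{B}}^{(N)}(L)\,(1\odot\bPsi).
$$
Since $\rnabla_{\alg{B}}$ annihilates the $\alg{A}$-part $\sum_i C_i'a_i$ of $L$ and acts as $\rnabla$ on each $b_j\in\widebar{\alg{B}}$, we obtain $\rnabla_{\alg{B}}^{(N)}(L)=\sum_j C_j''\,(1\otimes b_j-b_j\otimes 1)$. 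Substituting this and expanding the $\odot$-products is then a bookkeeping exercise in tracking the two tensor legs.

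The decisive point is that the terms arising from $1\otimes b_j$ and from $b_j\otimes 1$ behave oppositely. In the $b_j\otimes 1$ terms the element $b_j$ lands on the left tensor leg, so each entry carries a factor $\bbeta{\alg{A}}(\bPsi_{ik}\,b_j)$; because $b_j\in\widebar{\alg{B}}$, every monomial of $\bPsi_{ik}b_j$ ends in an element of $\alg{B}$ and is thus of type $*$--$\alg{B}$, on which $\bbeta{\alg{A}}$ vanishes by definition. This kills the $b_j\otimes 1$ terms uniformly across all orders of the series $\bPsi$. In the surviving $1\otimes b_j$ terms the left leg contributes $\bbeta{\alg{A}}(\bPsi_{ik})=(H_{\alg{A}})_{ik}$ and the right leg contributes $\E_{\alg{B}}[b_j\bPsi_{lm}]=b_j\E_{\alg{B}}[\bPsi_{lm}]$ by the bimodule property \eqref{eq:EBnbimodule}; summing the matrix indices yields
$$
(\bbeta{\alg{A}}\otimes\E_{\alg{B}})^{(N)}[\rnabla_{\alg{B}}^{(N)}(\bPsi)]
= z\,H_{\alg{A}}\Bigl(\sum_j C_j'' b_j\Bigr)\E_{\alg{B}}^{(N)}[\bPsi].
$$

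Feeding this into the second identity produces the linear equation $\E_{\alg{B}}^{(N)}[\bPsi]=H_{\alg{A}}+zH_{\alg{A}}(\sum_j C_j''b_j)\E_{\alg{B}}^{(N)}[\bPsi]$, and solving for $\E_{\alg{B}}^{(N)}[\bPsi]$ by factoring it out on the left gives exactly \eqref{eq:EnPsi=1-zHAsumCbHA}. I expect the only real obstacle to be notational rather than conceptual: keeping the two tensor legs and the matrix $\odot$-multiplication straight, and checking carefully that the left-hand leg of every $b_j\otimes 1$ term genuinely ends in $\alg{B}$ so that the vanishing of $\bbeta{\alg{A}}$ applies to all terms of the power series simultaneously.
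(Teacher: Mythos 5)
Your proposal is correct and takes essentially the same route as the paper: the three matrix identities are the entrywise amplification of Theorem~\ref{thm:condexprec}, and \eqref{eq:EnPsi=1-zHAsumCbHA} is obtained by repeating the computation of Example~\ref{example:condExpAdditive} with matrix coefficients, which is exactly what the paper's terse proof asserts. Your choice of the derivation $\rnabla_{\alg{B}}$ (in place of the divided-power derivative $\rdelta_X$, which is available only for polynomial algebras) and the explicit check that the $b_j\otimes 1$ terms die under $\bbeta{\alg{A}}$ because their left tensor legs end in elements of $\alg{B}$ supply precisely the cancellation that the paper leaves implicit in the general augmented setting.
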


Formula \eqref{eq:EnPsi=1-zHAsumCbHA} is a matrix valued version of
Example~\ref{example:condExpAdditive}
and the proof runs along the same lines.

Next we switch to Assumption~\ref{ass:ABpolynomialalg} and
lift
    \eqref{eq:EY=bbetardeltaY}
and
Theorem~\ref{thm:intro_bbeta_sbeta} 
to the matrix level.

\begin{proposition}
  Let $\alg{M}= M_N(\IC\langle\alg{X}\cup\alg{Y}\rangle)$ and $\alg{X},\alg{Y}$
  free with respect to $\mu:\alg{M}\to\IC$ as in
  Assumption~\ref{ass:ABpolynomialalg}.
  Then
  \begin{align*}
    \E_{\alg{Y}}^{(N)}[M] &= \bbetan{\alg{X}}{N}(M)
                      + (\bbeta{\alg{X}}\otimes \E_{\alg{Y}})^{(N)} [\rdelta_{\!\alg{Y}}^{(N)}(M)]
  \end{align*} 
\end{proposition}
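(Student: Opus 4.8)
The plan is to obtain this identity as a purely entrywise lift of the scalar recurrence \eqref{eq:EY=bbetardeltaY}, which is itself the polynomial-algebra instance of Theorem~\ref{thm:condexprec}. The key observation is that every operator in the statement acts entrywise on matrices: by Corollary~\ref{cor:matrixamplifications} and the Notation following it we have $\E_{\alg{Y}}^{(N)}[M]_{ij}=\E_{\alg{Y}}[M_{ij}]$ and $\bbetan{\alg{X}}{N}(M)_{ij}=\bbeta{\alg{X}}(M_{ij})$, while Lemma~\ref{lem:amplifyderiv} identifies the amplified derivation $\rdelta_{\alg{Y}}^{(N)}$ with $\id_{M_N(\IC)}\otimes\rdelta_{\alg{Y}}$, so that $\rdelta_{\alg{Y}}^{(N)}(M)_{ij}=\rdelta_{\alg{Y}}(M_{ij})\in\alg{M}\otimes\alg{M}$.

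First I would write $M=\sum_{ij}E_{ij}\otimes M_{ij}$ in terms of matrix units, so that $\rdelta_{\alg{Y}}^{(N)}(M)=\sum_{ij}E_{ij}\otimes\rdelta_{\alg{Y}}(M_{ij})\in M_N(\IC)\otimes\alg{M}\otimes\alg{M}$. Next I would unwind the composite amplification $(\bbeta{\alg{X}}\otimes\E_{\alg{Y}})^{(N)}=\id_{M_N(\IC)}\otimes(\bbeta{\alg{X}}\otimes\E_{\alg{Y}})$: writing $\rdelta_{\alg{Y}}(M_{ij})=\sum_k u^{(k)}_{ij}\otimes v^{(k)}_{ij}$ and applying the amplification produces $\sum_{ij}E_{ij}\otimes\sum_k\bbeta{\alg{X}}(u^{(k)}_{ij})\,\E_{\alg{Y}}(v^{(k)}_{ij})$, whose $(i,j)$ entry is precisely $(\bbeta{\alg{X}}\otimes\E_{\alg{Y}})[\rdelta_{\alg{Y}}(M_{ij})]$. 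Comparing the two sides entrywise, the asserted matrix identity is therefore equivalent to the scalar identity \eqref{eq:EY=bbetardeltaY} applied to each polynomial entry $M_{ij}\in\IC\langle\alg{X}\cup\alg{Y}\rangle$, which holds by Theorem~\ref{thm:condexprec} together with the remark recorded after it (the extra terms distinguishing $\rdelta_{\alg{Y}}$ from $\rDelta_{\alg{Y}}$ being annihilated by $\bbeta{\alg{X}}$).

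The only point demanding genuine care is the claim that the composite amplification decomposes entrywise, with no summation over intermediate matrix indices of the kind appearing for the operator-valued Boolean cumulants in Corollary~\ref{cor:matrixamplifications}. Such intermediate index sums arise exactly when a multilinear cumulant is evaluated on a matrix product $A_1A_2\cdots A_n$. Here, by contrast, $\bbeta{\alg{X}}$ and $\E_{\alg{Y}}$ are applied to the two tensor legs of a single derivative term, all of which carry the common matrix unit $E_{ij}$, rather than to a product of matrix-valued factors; hence the amplification is the plain entrywise map $\id_{M_N(\IC)}\otimes(\bbeta{\alg{X}}\otimes\E_{\alg{Y}})$ and the index bookkeeping collapses. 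Once this is observed, no further computation is required and the matrix recurrence follows immediately.
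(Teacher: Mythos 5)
Your proof is correct and follows essentially the same route as the paper: the paper also treats this proposition as the immediate entrywise (elementary-tensor) lift of the scalar recurrence \eqref{eq:EY=bbetardeltaY}, which in turn is Theorem~\ref{thm:condexprec} combined with the observation that the extra terms of $\rdelta_{\alg{Y}}$ relative to $\rDelta_{\alg{Y}}$ are annihilated by $\bbeta{\alg{X}}$. Your closing remark distinguishing this plain entrywise amplification from the intermediate-index sums of the matrix-valued cumulants in Corollary~\ref{cor:matrixamplifications} is a correct clarification of a point the paper leaves implicit.
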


Next, identity \eqref{eq:intro_betas_prod_as_entries} trivially lifts to
tensor products
\begin{equation*}
  \begin{aligned}
    (\id_{\alg{C}}\otimes\bbeta{\alg{X}})(c\otimes P) 
    &= c \epsilon(P) + c\cdot(\fbeta{\alg{X}}\otimes\bbeta{\alg{X}})(\ldelta_{\alg{X}} P)
    \\
    &= c \epsilon(P) +
    c\cdot(\bbeta{\alg{X}}\otimes\fbeta{\alg{X}})(\rdelta_{\alg{X}} P)
  \end{aligned}
\end{equation*}
which immediately translates to the case of matrices as follows. 
\begin{proposition}
  Let $M\in M_N(\alg{M})$, then with Assumption~\ref{ass:ABpolynomialalg} we have
\begin{equation*}
  \begin{aligned}
    \bbetan{\alg{X}}{N}(M) 
    &=  \epsilon^{(N)}(M) + (\fbeta{\alg{X}}\otimes\bbeta{\alg{X}})^{(N)}(\ldelta_{\alg{X}}^{(N)} M)
    \\
    &=  \epsilon^{(N)}(M) + (\bbeta{\alg{X}}\otimes\fbeta{\alg{X}})^{(N)}(\rdelta_{\alg{X}}^{(N)} M)
  \end{aligned}
\end{equation*}
  In particular, for a linear matrix pencil $L=\sum C_i'X_i+C_j''Y_j\in M_N(\alg{M})$ 
  the block cumulant functional at 
  $\bPsi = 
  \left(
    I_N-z(\sum C_i'X_i+ \sum C''_j Y_j)
  \right)^{-1}$ is
  \begin{equation}
    \label{eq:bbetaPsi=1-zsumfbetaPsiaiCi}
    \begin{aligned}
    \bbetan{\alg{X}}{N}(\bPsi)
    &=
    \bigl(
    I_N- z \sum\fbetan{\alg{X}}{N}(\bPsi X_i)C_i
    \bigr)^{-1}
    = 
    \bigl(
    I_N- z \fbetan{\alg{X}}{N}(\bPsi L)
    \bigr)^{-1}
    \\
    &=
    \bigl(
    I_N- z \sum C_i\fbetan{\alg{X}}{N}(X_i\bPsi)
    \bigr)^{-1}
    = 
    \bigl(
    I_N- z \fbetan{\alg{X}}{N}(L\bPsi)
    \bigr)^{-1}
    .
    \end{aligned}
  \end{equation}
\end{proposition}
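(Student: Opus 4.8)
The plan is to prove the two general identities first by a trivial entry-wise passage, and then to specialize to the resolvent by applying one of them together with the amplified derivation rule. For the two displayed identities, I would note that the tensor-product lift of \eqref{eq:intro_betas_prod_as_entries} stated immediately above holds for each scalar entry of $M$. Since $\bbetan{\alg{X}}{N}$, $\fbetan{\alg{X}}{N}$ and $\epsilon^{(N)}$ are by definition the entry-wise applications, and since the superscript $(N)$ on $(\fbeta{\alg{X}}\otimes\bbeta{\alg{X}})$ means that the two legs of each tensor entry are evaluated entry-wise and the matrix product over the shared index is formed, assembling these entry-wise identities gives
\[
\bbetan{\alg{X}}{N}(M)=\epsilon^{(N)}(M)+(\fbeta{\alg{X}}\otimes\bbeta{\alg{X}})^{(N)}(\ldelta_{\alg{X}}^{(N)}M)
\]
and the analogous $\rdelta$-version, directly from Theorem~\ref{thm:intro_bbeta_sbeta}(i). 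By the remark following that theorem these extend by linearity to formal power series, hence to the matrix resolvent $\bPsi$ viewed as an element of $M_N(\IC\langle\alg{X}\cup\alg{Y}\rangle((z)))$.

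For the resolvent formula I would feed $M=\bPsi=(I_N-zL)^{-1}=\sum_{n\ge 0}(zL)^n$ into the $\rdelta$-identity. Since the pencil $L$ has no constant term, only the $n=0$ term survives the augmentation, so $\epsilon^{(N)}(\bPsi)=I_N$. Because $\rdelta_{\alg{X}}$ is a genuine derivation (Example~\ref{ex:divpower}), the amplification of the resolvent rule \eqref{eq:derivationofpsi} applies in its elementary-tensor form $D^{(N)}\bPsi=z\sum\bPsi u_iC\odot v_i\bPsi$; with $\rdelta_{X_i}(X_i)=1\otimes X_i$ (so $u_i=1$, $v_i=X_i$, $C=C_i'$) and with the $Y_j$-summands of $L$ annihilated by $\rdelta_{\alg{X}}$, this yields $\rdelta_{\alg{X}}^{(N)}\bPsi=z\sum_i\bPsi C_i'\odot X_i\bPsi$. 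Applying $(\bbeta{\alg{X}}\otimes\fbeta{\alg{X}})^{(N)}$ and pulling the scalar matrices $C_i'$ past $\bbetan{\alg{X}}{N}$ gives the closed linear matrix equation
\[
\bbetan{\alg{X}}{N}(\bPsi)=I_N+z\,\bbetan{\alg{X}}{N}(\bPsi)\sum_i C_i'\,\fbetan{\alg{X}}{N}(X_i\bPsi),
\]
whose unique formal-power-series solution is $\bbetan{\alg{X}}{N}(\bPsi)=\bigl(I_N-z\sum_i C_i'\fbetan{\alg{X}}{N}(X_i\bPsi)\bigr)^{-1}$, the third displayed form (the factor $I_N-z(\cdots)$ is invertible over $\IC((z))$ since it reduces to $I_N$ at $z=0$). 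The first form follows symmetrically from the $\ldelta$-identity, where $\ldelta_{X_i}(X_i)=X_i\otimes 1$ places $\bPsi X_i$ on the left leg and $C_i'$ on the right, producing $\bbetan{\alg{X}}{N}(\bPsi)=I_N+z\sum_i\fbetan{\alg{X}}{N}(\bPsi X_i)C_i'\,\bbetan{\alg{X}}{N}(\bPsi)$.

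To obtain the compact forms $\bigl(I_N-z\fbetan{\alg{X}}{N}(\bPsi L)\bigr)^{-1}$ and $\bigl(I_N-z\fbetan{\alg{X}}{N}(L\bPsi)\bigr)^{-1}$, I would invoke that $\fbeta{\alg{X}}$ annihilates every monomial beginning or ending with a variable from $\alg{Y}$ (Definition~\ref{def:fbeta}). Expanding $\fbetan{\alg{X}}{N}(L\bPsi)=\sum_i C_i'\fbetan{\alg{X}}{N}(X_i\bPsi)+\sum_j C_j''\fbetan{\alg{X}}{N}(Y_j\bPsi)$, each term of the second sum begins with $Y_j$ and hence vanishes, so $\fbetan{\alg{X}}{N}(L\bPsi)=\sum_i C_i'\fbetan{\alg{X}}{N}(X_i\bPsi)$; symmetrically $\fbetan{\alg{X}}{N}(\bPsi L)=\sum_i\fbetan{\alg{X}}{N}(\bPsi X_i)C_i'$, which matches the first two forms.

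I expect the only genuine difficulty to be purely notational: the consistent tracking of the two legs of the tensor under the $\odot$-product when amplifying the derivation rule, i.e.\ making sure that $\fbeta{\alg{X}}$ acts on the leg carrying the detached variable $X_i$ while $\bbeta{\alg{X}}$ acts on the resolvent leg, and that the scalar coefficient matrices may be commuted past the functionals. (I also note that the statement writes $C_i$ where the pencil uses $C_i'$ for the coefficient of $X_i$; these are to be read as the same matrices.)
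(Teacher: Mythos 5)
Your proposal is correct and takes essentially the same route as the paper: the two general identities are exactly the entry-wise (equivalently, tensor-product) lift of \eqref{eq:intro_betas_prod_as_entries}, and the resolvent formulas follow from the amplified derivation rule $\rdelta_{\alg{X}}^{(N)}\bPsi = z\sum_i \bPsi C_i'\odot X_i\bPsi$ (resp.\ $\ldelta_{\alg{X}}^{(N)}\bPsi = z\sum_i \bPsi X_iC_i'\odot \bPsi$) together with solving the resulting linear matrix equation, which is precisely the computation the paper intends here and carries out again in the proof of Theorem~\ref{thm:EIPsimain}. Your remaining observations --- that $\fbeta{\alg{X}}$ annihilates the $Y_j$-summands of $L\bPsi$ and $\bPsi L$, giving the compact forms, and that the $C_i$ in \eqref{eq:bbetaPsi=1-zsumfbetaPsiaiCi} are the $C_i'$ of the pencil --- are also accurate.
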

Thus in order to conclude the computation
of the conditional expectation   \eqref{eq:EB1-zP=utEB1-zLv}
it remains to evaluate
$\fbetan{\alg{X}}{N}(\bPsi X_i)$ for all $i$.
This can be done if we assume that all variables are free
with the help of   \eqref{eq:IntroVNRP_for_betas:uni} which we now lift to the matrix level.
\begin{proposition}
    \label{prop:fbetaX1NM}
  Let $M\in M_N(\IC\langle X_1,X_2,\dots,X_n\rangle)$, then
  \begin{equation}
    \label{eq:fbetaX1NM}
  \fbetan{X_1}{N}(M)=\epsilon^{(N)}(M)+\sum_{k=1}^\infty\beta_k(X_1)\left[\epsilon\otimes\left(\bbeta{\alg{B}}\right)^{\otimes
      (k-1)}\otimes \epsilon\right]^{(N)}\left(
    \left.
      \partial_{X_1}^k\!\!\!
    \right.^{(N)}(M)\right).
\end{equation}
  
\end{proposition}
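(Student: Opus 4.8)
The plan is to deduce the matrix identity \eqref{eq:fbetaX1NM} from its scalar counterpart \eqref{eq:IntroVNRP_for_betas:uni} by a purely entry-wise argument, exactly as the phrase ``which we now lift to the matrix level'' suggests. The key observation is that every operation appearing in \eqref{eq:fbetaX1NM} is by construction the entry-wise amplification of the corresponding scalar map: we have $\fbetan{X_1}{N}(M)_{ij}=\fbeta{X_1}(m_{ij})$ and $\epsilon^{(N)}(M)_{ij}=\epsilon(m_{ij})$ by the Notation preceding the proposition; the amplified derivative satisfies $(\partial_{X_1}^{k})^{(N)}(M)_{ij}=\partial_{X_1}^{k}(m_{ij})$ by Lemma~\ref{lem:amplifyderiv}; and the amplified functional $\left[\epsilon\otimes(\bbeta{\alg{B}})^{\otimes(k-1)}\otimes\epsilon\right]^{(N)}$ is likewise applied entry by entry in the sense of Corollary~\ref{cor:matrixamplifications}. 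The decisive point is that the scalar factor $\beta_k(X_1)$ is the $k$-th \emph{univariate} Boolean cumulant of the single variable $X_1$, hence a genuine number; unlike the matrix-valued cumulants of Corollary~\ref{cor:matrixamplifications}(iii) it carries no matrix structure and therefore pulls out of all the entry-wise operations.

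Concretely, I would fix indices $i,j$ and apply the scalar formula \eqref{eq:IntroVNRP_for_betas:uni} to the polynomial entry $m_{ij}\in\IC\langle X_1,X_2,\dots,X_n\rangle$, taking the distinguished variable to be $X=X_1$ and the complementary free algebra to be $\alg{B}=\IC\langle X_2,\dots,X_n\rangle$. This is legitimate because under Assumption~\ref{ass:ABpolynomialalg} all the variables $X_1,X_2,\dots,X_n$ are free, so in particular $X_1$ is free from $\alg{B}$ and the hypotheses of \eqref{eq:IntroVNRP_for_betas:uni} are satisfied. Since $m_{ij}$ is a polynomial, $\partial_{X_1}^{k}(m_{ij})=0$ once $k$ exceeds the degree of $m_{ij}$ in $X_1$, so the sum over $k$ is finite and no convergence question arises. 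Reading off the resulting scalar identity as the $(i,j)$-entry of \eqref{eq:fbetaX1NM} and invoking the entry-wise descriptions above gives the claim for each entry; assembling the entries into a matrix then yields \eqref{eq:fbetaX1NM}.

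I do not expect a genuine obstacle here: the proposition is a mechanical lift, and the only point meriting a moment's care is verifying that $\beta_k(X_1)$ commutes past the entry-wise functional rather than being replaced by a matrix-valued cumulant. Because it is a scalar this is automatic, and it is exactly the reason the lift is this clean for a \emph{single} distinguished variable. Had one attempted to lift the multivariate identity \eqref{eq:IntroVNRP_for_betas} for a set $\alg{X}$ of several variables directly, the cumulants $\beta_k(X_{i_1},\dots,X_{i_k})$ would interact with the matrix coefficients of $M$ and the amplification would no longer be purely entry-wise; it is precisely the restriction to the univariate case $\alg{X}=\{X_1\}$ that makes the entry-wise reduction go through verbatim.
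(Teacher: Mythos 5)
Your proof is correct and is essentially the paper's own argument: the paper states Proposition~\ref{prop:fbetaX1NM} without a separate proof, precisely because it is the entry-wise lift of the scalar identity \eqref{eq:IntroVNRP_for_betas:uni} that you spell out --- every map carrying a superscript $(N)$ acts entry by entry, the scalar $\beta_k(X_1)$ passes through these operations, and on polynomial entries the sum over $k$ terminates, so the identity holds entry by entry and hence as matrices.

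One correction to your closing remark, which is a side comment and does not affect the proof: it is not true that the multivariate identity \eqref{eq:IntroVNRP_for_betas} would resist the same entry-wise lift. The mixed cumulants $\beta_k(X_{i_1},X_{i_2},\dots,X_{i_k})$ appearing there are Boolean cumulants of the scalar variables with respect to $\mu$, hence genuine numbers exactly like $\beta_k(X_1)$; they do not interact with the matrix coefficients of $M$, and \eqref{eq:IntroVNRP_for_betas} lifts entry by entry verbatim. (Matrix structure would only enter if one replaced these by matrix-valued cumulants of matrix arguments, as in Corollary~\ref{cor:matrixamplifications}(iii), which is not what the lift requires.) The reason the paper records only the univariate case is pragmatic: in Theorem~\ref{thm:EIPsimain} each matrix $F_i=\fbetan{X_i}{N}(X_i\bPsi)$ singles out one distinguished variable, and it is only in that case that the sum collapses to the shifted cumulant generating function $\widetilde{\eta}_{X_i}$.
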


We can now subsume the essence of the previous calculations as follows.
Theorem~\ref{thm:1.1} follows by evaluating formula \eqref{eq:EIps=utsumCiXiCjFj}
in a specific algebra $\alg{A}$.
\begin{theorem}
  \label{thm:EIPsimain}
  Let $\mu:\IC\langle X_1,X_2,\dots,X_n\rangle\to\IC$ be a distribution such
  that  $X_1,X_2,\dots,X_n$ are free.
  In the following for an index set $I\subseteq \{1,2,\dots,n\}$
  the conditional expectation onto  the
  subalgebra generated by the subset $\{X_i\}_{ i\in I}$
  will be denoted   by $\E_I$.
  Suppose that the resolvent of a given polynomial
  $P=P(X_1,X_2,\dots,X_n)\in \IC\langle X_1,X_2,\dots,X_n\rangle$
  of degree $m$ has linearization
  \begin{equation}
    \label{eq:psi=utLv}
    \Psi=(1-z^mP)^{-1}= u^t\left(I_N-z(C_1X_1+C_2X_2+\dots+ C_n X_n)\right)^{-1}v
  \end{equation}
  with $C_1,C_2,\ldots,C_n\in M_N(\IC)$.
  
  Then the conditional expectations of $\Psi$ are
  \begin{equation}
    \label{eq:EIps=utsumCiXiCjFj}
  \begin{aligned}
    \E_I
    [(1-z^m P)^{-1}]
    &=u^t \Bigl(I_N-zH_{J}
      \Bigl(\sum_{i\in I}
      C_iX_i\Bigr)\Bigr)^{-1}H_{J}v\\
    &=u^t \Bigl(I_N-z\Bigl(\sum_{i\in I} C_iX_i - \sum_{j\in J} C_jF_j\Bigr)\Bigr)^{-1}v
  \end{aligned}
  \end{equation}
  where $J=[n]\setminus I$ is the complement and $H_J=
  \bigl(
  I_N-z\sum_{j\in J} C_j F_j
  \bigr)^{-1}$
  and the matrices $F_i=\fbetan{X_i}{N}(X_i\bPsi)\in M_N(\IC)$, $i=1,2,\dots,n$
  are  the unique solution  of the system of matrix equations
  \begin{equation}
    \label{eq:Fi=etaXi}
    \begin{aligned}
      F_i
      &=\widetilde{\eta}_{X_i}
      \Bigl(
      z\bigl(
      I_N-z\sum_{j\neq i} C_jF_j
      \bigr)^{-1} C_i
      \Bigr)
      &i=1,2,\dots,n
  \end{aligned}
  \end{equation}
  which is analytic at $z=0$.
  In particular,
  \begin{equation}
    \label{eq:phi=uMv}
    \varphi((1-z^m P)^{-1})
    =u^t \Bigl(I_N-z\Bigl(\sum_{j=1}^n C_jF_j\Bigr)\Bigr)^{-1}v
  \end{equation}
\end{theorem}

\begin{proof}
  Let
  $$
  \bPsi =\left(I_N-z(C_1X_1+C_2X_2+\dots+ C_n X_n)\right)^{-1}
  $$
  be the matrix of the linearization  $\Psi=u^t\bPsi v$ from \eqref{eq:psi=utLv}.
  In the following we abbreviate the functionals
  \begin{align*}
    \bbeta{I} &= \sum_{i\in I}\bbeta{X_i} &
    \fbeta{I} &= \sum_{i\in I}\fbeta{X_i}
                .
  \end{align*}
  Then from     \eqref{eq:EnPsi=1-zHAsumCbHA} with
  $\alg{B}$ the algebra generated by $\{X_i\}_{i\in I}$
  and $\alg{A}$ the algebra   generated by $\{X_j\}_{j\in J}$ we obtain
  $$
  \E_I[\Psi]
  = u^t
  \bigl(
     I_N-zH_J\sum_{i\in I}C_iX_i
  \bigr)^{-1}H_Jv
  $$
  where
  \begin{equation*}
  H_J =\bbetan{J}{N}(\bPsi)
= \bigl(
     I_N-z\sum_{j\in J} C_j\fbetan{J}{N}(X_j\bPsi)
     \bigr)^{-1}
     .
  \end{equation*}
   by \eqref{eq:bbetaPsi=1-zsumfbetaPsiaiCi}.

  Next we apply Proposition~\ref{prop:fbetaX1NM}
  in order to establish equations for $F_i = \fbetan{X_i}{N}(X_i\bPsi)$,
  similar to Example~\ref{eg:Intro_1.6}.
  First observe that
  $$
  \partial_{X_i}\bPsi = z (\bPsi\odot 1)C_i(1\odot\bPsi) = z\bPsi C_i\odot \bPsi
  $$
  thus
  $$
  \partial_{X_i}X_i\bPsi = 1\odot\bPsi + z X_i\bPsi C_i\odot \bPsi
  $$
  and applying iterated derivatives to the left leg
  $$
  \partial_{X_i}^kX_i\bPsi = z^{k-1}1\odot(\bPsi C_i)^{\odot k-1}\odot\bPsi
  + z^k  X_i(\bPsi C_i)^{\odot k} \odot\bPsi
  $$
  and \eqref{eq:fbetaX1NM} becomes
  \begin{align*}
    \fbetan{X_i}{N}(X_i\bPsi)
    &=
      \sum_{k=1}^\infty
      \beta_k(X_i)
      z^{k-1}
      \bbetan{[n]\setminus i}{N}(\bPsi C_i)^{k-1}
    \\
    &= \tilde{\eta}_{X_i}(zH_{[n]\setminus i}C_i)
      .
  \end{align*}
  Uniqueness of the  matrices $F_i$ follows from the iteration in
  Lemma~\ref{lem:iteration} below.
\end{proof}

We have seen that the matrices
$
F_i(z) = \fbetan{X_i}{N}(X_i\bPsi(z))
$
satisfy the fixed point equation     \eqref{eq:Fi=etaXi}
and it remains to show that the latter has a unique  solution analytic at 0.
To this end we expand $F_i(z)$ into a power series
\begin{equation}
  \label{eq:Fiz=series}
F_i(z) = \fbetan{X_i}{N}(X_i\bPsi(z))
= \sum_{k=0}^\infty
\fbetan{X_i}{N}(X_i L^k)z^{km}
\end{equation}
and show that iterating the fixed point equation produces a series
whose coefficients converge to those of the series   \eqref{eq:Fiz=series}.

\begin{lemma}\label{lem:iteration}
   Start with constant matrices
\begin{align*}
F_i^{(0)}(z) &= \beta_1(X_i)I_N
                   & i = 1,2,\dots,n
\end{align*}
and iterate
\begin{align*}
  F_i^{(r+1)}(z) &= \tilde{\eta}_i
                   \Bigl(
                   z
                   \bigl(
                   I_N-z\sum_{j\ne i}C_jF_j^{(r)}(z)
                   \bigr)^{-1}C_i
                   \Bigr)
                   & i = 1,2,\dots,n
\end{align*}
Then for all $r\in\IN_0$ we have
  \begin{align*}
    F_i^{(r)}(z) - F_i(z) &= \Ord(z^{r+1})
                   & i = 1,2,\dots,n
  \end{align*}
\end{lemma}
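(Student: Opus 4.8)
The plan is to argue by induction on $r$, working in the ring of formal power series in $z$ (equivalently, germs of analytic matrix-valued functions at $z=0$), and to exploit the fact, established just above in the proof of Theorem~\ref{thm:EIPsimain}, that the true matrices $F_i(z)=\fbetan{X_i}{N}(X_i\bPsi(z))$ are already a fixed point of the system \eqref{eq:Fi=etaXi}. Writing $\Phi_i(G_1,\dots,G_n)(z)=\tilde\eta_{X_i}\bigl(z(I_N-z\sum_{j\neq i}C_jG_j)^{-1}C_i\bigr)$ for the $i$-th component of the fixed point map, the iteration reads $F_i^{(r+1)}=\Phi_i(F_1^{(r)},\dots,F_n^{(r)})$ while the true solution satisfies $F_i=\Phi_i(F_1,\dots,F_n)$, so that $F_i^{(r+1)}-F_i=\Phi_i(F^{(r)})-\Phi_i(F)$ and everything reduces to a contraction estimate for $\Phi$ in the $z$-adic order.

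For the base case I would note that $\bPsi(0)=I_N$, whence $F_i(0)=\fbetan{X_i}{N}(X_i I_N)=\beta_1(X_i)I_N=F_i^{(0)}(z)$, giving $F_i^{(0)}-F_i=\Ord(z)$ for every $i$, which is the claim for $r=0$.

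The heart of the argument is the following Lipschitz-type bound: if $G_j-\tilde G_j=\Ord(z^s)$ for all $j$ with $s\ge 1$, then $\Phi_i(G)-\Phi_i(\tilde G)=\Ord(z^{s+1})$. To see this, set $w_i[G]=z(I_N-z\sum_{j\neq i}C_jG_j)^{-1}C_i$, which is $\Ord(z)$ and in particular vanishes at $z=0$. The resolvent identity $A^{-1}-B^{-1}=A^{-1}(B-A)B^{-1}$ applied to $A=I_N-z\sum_{j\neq i}C_jG_j$ and $B=I_N-z\sum_{j\neq i}C_j\tilde G_j$ yields $w_i[G]-w_i[\tilde G]=z^2 A^{-1}\bigl(\sum_{j\neq i}C_j(G_j-\tilde G_j)\bigr)B^{-1}C_i=\Ord(z^{s+2})$, the two inverses being well defined power series since $A,B=I_N+\Ord(z)$. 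Since $\tilde\eta_{X_i}(w)=\sum_{k\ge1}\beta_k(X_i)w^{k-1}$ has scalar coefficients and both $w_i[G],w_i[\tilde G]$ vanish at $z=0$, the telescoping identity $w^{k-1}-\tilde w^{k-1}=\sum_{l=0}^{k-2}w^l(w-\tilde w)\tilde w^{k-2-l}$ shows that $\tilde\eta_{X_i}(w_i[G])-\tilde\eta_{X_i}(w_i[\tilde G])$ has the same order as $w_i[G]-w_i[\tilde G]$, i.e.\ it lies in $\Ord(z^{s+2})\subseteq\Ord(z^{s+1})$.

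Feeding the induction hypothesis $F_j^{(r)}-F_j=\Ord(z^{r+1})$ (for all $j$) into this bound with $s=r+1$ gives $F_i^{(r+1)}-F_i=\Phi_i(F^{(r)})-\Phi_i(F)=\Ord(z^{r+2})$, which closes the induction. I expect the only genuinely delicate point to be the order bookkeeping in the middle step, in particular checking that applying $\tilde\eta_{X_i}$ cannot lower the $z$-adic order; this is exactly why one needs $w_i[\cdot]$ to vanish at $0$, so that no constant-order ($\beta_1$) term of $\tilde\eta_{X_i}$ interferes with the difference. The resolvent-identity estimate and the base case are routine by comparison.
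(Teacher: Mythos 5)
Your proof is correct and follows essentially the same route as the paper's: induction on $r$, the resolvent identity to show that the arguments of $\tilde{\eta}_{X_i}$ differ by $\Ord(z^{r+2})$ under the induction hypothesis, and power-series order bookkeeping to conclude that applying $\tilde{\eta}_{X_i}$ preserves this order, using that $F_i$ is itself a fixed point of the iteration map. The only differences are presentational: you spell out the base case (via $\bPsi(0)=I_N$), which the paper leaves implicit, and you control the composition with $\tilde{\eta}_{X_i}$ by telescoping the full series, where the paper instead truncates it at order $r+1$ and absorbs the tail into $\Ord(z^{r+2})$.
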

\begin{proof}
  We proceed by induction.
  First observe that if $F_i^{(r)}(z) - F_i(z) = \Ord(z^{r+1})$ for $i=1,2,\dots,n$
  then
  $$
  \bigl(
  I_N-z\sum_{j\ne i}C_jF_j^{(r)}(z)
  \bigr)^{-1}C_i
  -
  \bigl(
  I_N-z\sum_{j\ne i}C_jF_j(z)
  \bigr)^{-1}C_i
  =\Ord(z^{r+2})
  $$
  as well
  and thus
  \begin{align*}
  F_i^{(r+1)}(z)-  F_i(z)
    &=
      \begin{multlined}[t]
        \sum_{k=0}^{r+1}
         \beta_{k+1}(X_i)z^k
         \Bigl(
          \bigl(
           I_N-z\sum_{j\ne i}C_jF_j^{(r)}(z)
          \bigr)^{-1}C_i
         \Bigr)^k
        +
        \Ord(z^{r+2})\\
      - 
      \sum_{k=0}^{r+1}
       \beta_{k+1}(X_i)z^k
      \Bigl(
      \bigl(
      I_N-z\sum_{j\ne i}C_jF_j(z)
      \bigr)^{-1}C_i
      \Bigr)^k
      + \Ord(z^{r+2})      
    \end{multlined}
    \\
    &= \Ord(z^{r+2})
  \end{align*}
\end{proof}

  We conclude this section with a few hints for the practical
  solution of the system \eqref{eq:Fi=etaXi}.

\begin{remark}[Compression]
  \label{rem:projectionsQi}
  The main computational problem  with the solution of the system
  \eqref{eq:Fi=etaXi} is caused by the large number of variables
  featuring in the matrices $F_i$.
  One way to address this issue arises from the observation
  that the solutions $F_i$ enter
  the final expression \eqref{eq:EIps=utsumCiXiCjFj} for the conditional
  expectation only in the products $C_iF_i$ and we can take
  advantage of the fact that the  coefficient matrices $C_i$
  usually are sparse.
  Indeed let $P_i$ be the projection onto $\ker C_i$ and
  $Q_i=I-P_i$, i.e., $C_iP_i=0$ and $C_iQ_i=C_i$.
  Then in the expression \eqref{eq:EIps=utsumCiXiCjFj} we
  can replace the matrices  $F_i$ with the compressed matrices 
  $\tilde{F}_i = Q_iF_iQ_i$ which satisfy the
  slightly modified system
  \begin{equation}
    \label{eq:Fit=QietaXi}
    \begin{aligned}
      \tilde{F}_i
      &=Q_i\widetilde{\eta}_{X_i}
      \Bigl(
      z\Bigl(
      I_N-z\sum_{j\neq i} C_j\tilde{F}_j
      \Bigr)^{-1} C_i
      \Bigr)
      &i=1,2,\dots,n
      .
  \end{aligned}
  \end{equation}
  In this case the starting point for the fixed point
  iteration \eqref{eq:Fit=QietaXi} is
  $\tilde{F}_i^{(0)} = \beta_1(X_i)Q_i$.
  We thus obtain power series expansions
  of the matrices $F_i$ which in turn
  can be used to further reduce the  number of variables.
\end{remark}

\begin{remark}[Algebraic functions]
  \label{rem:algebraic}
  In the case when 
  the involved shifted Boolean transforms are algebraic, e.g.,
  in the case of semicircular, arcsine or free Poisson distribution,
  it may be helpful to set up implicit matrix equations
  and manipulate them directly.
  To this end  split the system   \eqref{eq:thm:1.1:eta}
  into two parts 
  \begin{align}
    H_i &= 
          \Bigl(
          I_N-z\sum_{j\ne i} C_jF_j
          \Bigr)^{-1} & i &= 1,2,\dots,n\\
    \label{eq:Fi=etaHiCi}
    F_i
      &=\widetilde{\eta}_{X_i}(zH_i C_i)
      &i&=1,2,\dots,n
  \end{align}
  and turn the algebraic equation for $\tilde{\eta}$ into
  a algebraic equation in the noncommuting variables $F_i$ and $H_i$.
  To illustrate this, consider a standard semicircular family $X_i$.
  Then the individual shifted Boolean transforms of the $X_i$
  satisfy the quadratic equation
  $$
  z\tilde{\eta}(z)^2-\tilde{\eta}(z)+z = 0
  $$
  and consequently equation     \eqref{eq:Fi=etaHiCi} can be rewritten
  as  \begin{equation}
    \label{eq:zHiFi^2-Fi+zHiCi}
  zH_iC_iF_i^2-F_i+zH_iC_i = 0
  .
  \end{equation}
  Now let
  $$
  M(z)=\varphi
  \Bigl(\Bigl(I_N-z\sum_j C_jX_j\Bigr)^{-1}\Bigr)
  =
  \Bigl(I_N-z\sum_j C_jF_j\Bigr)^{-1}
  $$
  be the matrix-valued moment generating function.
  Multiplying     \eqref{eq:zHiFi^2-Fi+zHiCi} with
  $H_i^{-1} = M^{-1}+zC_iF_i$ from the left leads to
  a cancellation and we obtain the identity
  $F_i=zMC_i$.  Substituting this identity into the equations we can eliminate
  the $F_i$ altogether 
  and  the result is the well-known fixed point equation
  $$
  M(z) = \Bigl(I_N-z^2\sum_j C_jMC_j\Bigr)^{-1}
  $$
  for the matrix-valued moment generating function of the matrix-valued
  semicircular element $\sum C_i\otimes X_i$,
  cf.~\cite {Lehner:computing:1999,HeltonRashidiSpeicher:operator:2007}.

\end{remark}

\clearpage{}
\section{Examples}
\label{sec:examples}

The following examples illustrate how the different methods developed in this paper can be used to compute distributions and conditional expectations of polynomials and rational functions in free random variables explicitly,
or, in more complex cases, to obtain irreducible algebraic equations for the functions involved.

Roughly, the procedure for the more complicated examples below runs as follows:

\begin{enumerate}[1.]
 \item Given a non-commutative polynomial $P$ of degree $d$ compute a
  linearization
  of $\Psi(z^2)= (1-z^dP)^{-1}$ using the suffix basis or similar as described in
  Algorithm~\ref{algo:linearization}.
 \item Set up the matrix equations
  \eqref{eq:thm:1.1:eta}, see Remark~\ref{rem:algebraic}.
 \item
  Reduce the number of unknowns using the following methods, if applicable:
  \begin{enumerate}[(i)]
   \item If the coefficient matrices $C_i$ are sparse,
    use  the compressed version \eqref{eq:Fit=QietaXi}
    as described in Remark~\ref{rem:projectionsQi}.
   \item  Compute a few iterations of the fixed point equations
    \eqref{eq:thm:1.1:eta} (resp.~\eqref{eq:Fit=QietaXi})
    to obtain power series expansions of the matrices $F_i(z)$.
    An inspection  of these matrices quickly reveals zero entries
    and identities among the nonvanishing ones,
    which allow the immediate elimination of some variables.
    Since the solution is unique, the worst consequence of
    imposing more conditions is a system without solutions.
   \item 
    Alternatively (and more rigorously), compute the entire inverse of the linearization
    matrix using Schur
    complements to determine the entries which are annihilated by $\fbeta{X_i}$ due to property $(CAC)$
    or other symmetries.
   \item If the initial distribution have algebraic Cauchy transforms,
    it can be possible to eliminate entire matrices by noncommutative
    algebraic manipulations as described in Remark~\ref{rem:algebraic}.
  \end{enumerate}
 \item Solve the resulting sparse system using resultants, Gröbner bases etc.
\end{enumerate}

In the following, whenever the dimension $N$ of the linearization is clear
from context it will be omitted from the decorations of the matricial versions
of the mappings  $\E$,  $\bbeta{}$, $\fbeta{}$ etc. 
The  calculations were carried out with the help of \texttt{FriCAS}
\cite{fricas}.
Most solutions in the examples below satisfy algebraic equations of degree at most
four and could in principle be expressed explicitly using Cardano's
formulas, however these being  rather voluminous and not giving any useful insights,
we refrain from doing so and  keep the calculations implicit.

\subsection{The product of free random variables}
We start with the example
\begin{equation*}
  T=XY  
\end{equation*}
which can be solved directly
by working with the resolvent $\Psi(z) = (1-zXY)^{-1}$,
without invoking linearizations.

The derivatives are $\rdelta_X\Psi=z\Psi\otimes XY\Psi$ and
$\rdelta_Y\Psi=z\Psi X\otimes Y\Psi$, respectively.
First notice that from the resolvent identities
\begin{equation}
	\label{eq:XY:resolventidentityPsi}
	\Psi = 1+zXY\Psi  = 1+z\Psi XY 
\end{equation}
we immediately infer that
\begin{equation}
	\label{eq:XY:bbetaPsi=1}
	\bbeta{X}(\Psi) =   \bbeta{Y}(\Psi) = 1
\end{equation}
and thus
\begin{equation}
	\label{eq:EX(Psi)-XY}
	\IE_X[\Psi] = \bbeta{Y}(\Psi) + z \bbeta{Y}\otimes \IE_X[\Psi\otimes XY\Psi] =
	1+ zX \IE_X[Y\Psi]
	;
\end{equation}
further
$$
\IE_X[Y\Psi] = \bbeta{Y}(Y\Psi)(1+zX\IE_X[Y\Psi])
$$
and thus
\begin{equation}
	\label{eq:XY:EX(YPsi)}
	\IE_X[Y\Psi] = \bbeta{Y}(Y\Psi)
	\bigl(
	1-z\bbeta{Y}(Y\Psi)X
	\bigr)^{-1}
	.
\end{equation}
Putting together  \eqref{eq:EX(Psi)-XY} and 
\eqref{eq:XY:EX(YPsi)} we find
$$
\IE_X[\Psi] = 
\bigl(
1-z\bbeta{Y}(Y\Psi)X
\bigr)^{-1}
$$
and the subordination equation
\begin{equation*}
	M(z) = M_X(z\bbeta{Y}(Y\Psi))
	.
\end{equation*}
We have thus identified the first subordination function
$$
\omega_1(z) = z\bbeta{Y}(Y\Psi)
.
$$
The conditional expectation with respect to $Y$ is simpler to obtain,
but the result is analogous:
$$
\IE_Y[\Psi] = 1 + z\bbeta{X}(\Psi X)Y\IE_Y[\Psi]
$$
and thus
\begin{align*}
	\IE_Y[\Psi] &= 
	\bigl(
	1 - z\bbeta{X}(\Psi X)Y
	\bigr)^{-1}\\
	M(z) &= M_Y(z\bbeta{X}(\Psi X))
	.
\end{align*}
The second subordination function is
$$
\omega_2(z) = z\bbeta{X}(\Psi X)
.
$$
To find relations between these functions we will resort
to Theorem~\ref{thm:intro_bbeta_sbeta}.
It is immediate to see combinatorially that
$\bbeta{Y}(Y\Psi)=\fbeta{Y}(Y\Psi)$
and
$\bbeta{X}(\Psi X)=\fbeta{X}(\Psi X)$.
This can also be  verified from the recurrence from Proposition~\ref{thm:intro_bbeta_sbeta};
indeed,
\begin{align*}
  \bbeta{Y}(Y\Psi)
  	&= \fbeta{Y}\otimes\bbeta{Y}(\ldelta_Y(Y\otimes \Psi))\\
	&= \fbeta{Y}\otimes\bbeta{Y}(Y\otimes \Psi + zY\Psi XY\otimes\Psi) \\
	&= \fbeta{Y}(Y(1+z\Psi XY))\bbeta{Y}(\Psi)= \fbeta{Y}(Y\Psi)
\end{align*}
because of the identities   \eqref{eq:XY:resolventidentityPsi} and
\eqref{eq:XY:bbetaPsi=1}.
Similarly
\begin{align*}
	\bbeta{X}(\Psi X)
	&= \fbeta{X}\otimes\bbeta{X}( z\Psi X \otimes Y\Psi X+ \Psi X\otimes 1) = \fbeta{X}(\Psi X).
\end{align*}
Thus we have
$$
\omega_1(z) = z\fbeta{Y}(Y\Psi)
\qquad
\omega_2(z) = z\fbeta{X}(\Psi X)
.
$$

Next we will use the expansion \eqref{eq:IntroVNRP_for_betas}.
To this end observe that $\partial_Y(\Psi)=z\Psi X\otimes \Psi$ and thus
\begin{align*}
	\partial_Y(Y\Psi)&=1\otimes \Psi+zY\Psi X\otimes\Psi\\
	\partial_Y^2(Y\Psi)&=z1\otimes\Psi X\otimes\Psi+z^2Y\Psi X\otimes\Psi X\otimes \Psi\\
	\ldots&\\
	\partial_Y^n(Y\Psi)&=z^{n-1}1\otimes(\Psi X)^{\otimes(n-1)}\otimes\Psi+z^nY\Psi X\otimes(\Psi X)^{\otimes(n-1)}\otimes \Psi.\\
\end{align*}
Hence from \eqref{eq:IntroVNRP_for_betas} we obtain 
\[\fbeta{Y}(Y\Psi)=\widetilde{\eta}_Y(z\bbeta{X}(\Psi X))=\widetilde{\eta}_Y(z\fbeta{X}(\Psi X)).\]
Similarly
\[\fbeta{X}(\Psi X)=\widetilde{\eta}_X(z\fbeta{Y}(Y \Psi)),\]
which gives a system of fixed point equations for the subordination functions.

\subsection{Free commutators and anti-commutators}
\label{ssec:commutator}
We show that for symmetric free random variables
$X,Y$ the conditional expectations of
$
(1-z^2 P(X,Y))^{-1}
$
onto one of the
variables coincide for
the  anti-commutator.
\begin{equation*}
T_1=  XY+YX
\end{equation*}
and the
commutator
\begin{equation*}
T_i=  i(XY-YX)
  .
\end{equation*}
In fact, since $XY$ and $YX$ form an $R$-diagonal pair
\cite[Theorem~15.17]{NicaSpeicherLect},
their joint $*$-distribution is invariant under rotation
and thus the distribution of
$$
T_\alpha=\alpha XY+\overline{\alpha}YX
$$
does not depend on
$\alpha$ as long as $\abs\alpha = 1$.
Indeed we will show that this holds for the conditional expectations as well:
\begin{proposition}
  Assume that $X,Y$ are free and have symmetric distribution. If $\abs{\alpha}=1$ then the conditional expectations
  $E_X[(1-zT_\alpha)^{-1}]$ and $E_Y[(1-zT_\alpha)^{-1}]$
  do not  depend on $\alpha$ and are equal to
  $E_X[(1-zT_1)^{-1}]$ resp.~ $E_Y[(1-zT_1)^{-1}]$;
  in particular the conditional  expectations of the commutator and the
  anticommutator coincide.
\end{proposition}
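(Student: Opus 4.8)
The plan is to reduce the statement to the vanishing of certain conditional expectations of individual words and to extract that vanishing from the rotation invariance encoded in R-diagonality. First I would pass to the power series $\E_X[(1-zT_\alpha)^{-1}]=\sum_{n\ge0}z^n\,\E_X[T_\alpha^n]$, convergent for $z$ near $0$, so that it suffices to show each coefficient $\E_X[T_\alpha^n]$ is independent of $\alpha$. Expanding the power gives $T_\alpha^n=\sum_s \alpha^{a(s)}\overline{\alpha}^{\,b(s)}\,w(s)$, where $s$ ranges over the $2^n$ sequences of factors in $\{XY,YX\}$, $w(s)$ is the resulting word in $X$ and $Y$, and $a(s)$ (resp.\ $b(s)$) counts the factors equal to $XY$ (resp.\ $YX$). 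Since $\abs\alpha=1$ the weight is $\alpha^{a(s)-b(s)}$, and the balanced sequences $a(s)=b(s)$ carry weight $1$; hence the whole claim reduces to showing $\E_X[w(s)]=0$ whenever $a(s)\neq b(s)$.

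Next I would convert this into a scalar statement. Because $\E_X$ is the faithful, trace-preserving conditional expectation onto the commutative algebra generated by $X$, the element $\E_X[w]$ is uniquely determined by the numbers $\varphi(\E_X[w]\,X^k)=\varphi(w\,X^k)$ for $k\ge0$; so it is enough to prove $\varphi(w(s)X^k)=0$ for all $k$ whenever $s$ is unbalanced. Here the hypotheses enter: $X$ and $Y$ are free and symmetric, so mixed free cumulants vanish and every odd cumulant of $X$ and of $Y$ vanishes. Expanding $\varphi(wX^k)$ over monochromatic non-crossing partitions, a nonzero contribution would require grouping the letters into monochromatic blocks of even size with no crossings, and the imbalance $a(s)\neq b(s)$ is exactly the nesting (winding) obstruction that forces every admissible partition to cross. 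Conceptually this is the rotation invariance of the R-diagonal pair $(XY,YX)$ from \cite[Theorem~15.17]{NicaSpeicherLect}, which I would invoke to organise the cancellation; within the present framework one may instead feed each unbalanced word into \eqref{eq:EYXY} and observe that it produces a factor $\beta_{2r-1}$ which vanishes by Property $(CAC)$ together with the vanishing of odd single-variable cumulants.

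With the coefficients shown to be $\alpha$-independent, I would conclude $\E_X[T_\alpha^n]=\sum_{a(s)=b(s)}\E_X[w(s)]$ for every $n$, so that $\E_X[(1-zT_\alpha)^{-1}]$ coincides with its value at $\alpha=1$, which is the anticommutator case; taking $\alpha=i$ gives the commutator, proving that the two conditional expectations agree. The corresponding statement for $\E_Y$ follows by interchanging the roles of $X$ and $Y$, which replaces $T_\alpha$ by $T_{\overline\alpha}$ and $\E_X$ by $\E_Y$, the phase again being irrelevant.

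The hard part is precisely that last vanishing, namely upgrading the distributional rotation invariance of $(XY,YX)$ to the operator-valued level $\varphi(w(s)X^k)=0$ for unbalanced $s$. A naive phase automorphism sending $XY\mapsto\lambda XY$ while fixing the algebra generated by $X$ cannot exist, since it would force $Y\mapsto\overline\lambda Y$ and thereby destroy self-adjointness; so the cancellation must be obtained either from the non-crossing/nesting combinatorics above, or by realising the rotation inside an enlarged algebra obtained by adjoining a Haar unitary $v$ free from $\alg{M}$, which carries the phase while leaving the conditional expectation onto the algebra generated by $X$ unchanged (conditional expectations being compatible with such inclusions). I expect the cleanest write-up to be the Boolean-cumulant route through \eqref{eq:EYXY}, since it keeps the entire argument inside the machinery already developed in the paper.
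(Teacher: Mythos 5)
You take a genuinely different route from the paper, whose proof linearizes $T_\alpha$, derives the fixed-point system \eqref{eq:final_ac}, and then removes $\alpha$ by the substitution $\tilde f_{x,13}=\overline{\alpha}zf_{x,13}$, etc., after which both the system \eqref{eq:final_ac1} and the expressions \eqref{eq:cond_exp_ac} are visibly $\alpha$-free. Your coefficient-wise reduction is correct: $\E_X[T_\alpha^n]=\sum_s\alpha^{a(s)-b(s)}\E_X[w(s)]$, so the proposition follows once one knows $\E_X[w(s)]=0$ for every unbalanced $s$, equivalently $\varphi(w(s)X^k)=0$ for all $k\ge 0$. This key lemma is true, but your proposal never proves it, and two of the three justifications you offer for it are defective. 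The Boolean one is misattributed: in \eqref{eq:EYXY} with $\alg{B}=\IC\langle X\rangle$ the factors $\beta_{2r-1}$ begin \emph{and} end with $Y$-blocks, i.e.\ with elements of the same algebra, so Property $(CAC)$ says nothing about them; their vanishing would have to be extracted from \eqref{bocu2} together with the vanishing of odd one-variable cumulants, by an induction you do not carry out. The Haar-unitary fallback fails outright: if $u$ is Haar and free from $\{X,Y\}$, the rotated triple $(uXY,\,YXu^*,\,X)$ does \emph{not} have the joint distribution of $(XY,\,YX,\,X)$; indeed $\varphi\bigl(uXY\cdot YXu^*\cdot X^2\bigr)=\varphi(XY^2X)\,\varphi(X^2)=\varphi(Y^2)\varphi(X^2)^2$, while $\varphi\bigl(XY\cdot YX\cdot X^2\bigr)=\varphi(Y^2X^4)=\varphi(Y^2)\varphi(X^4)$, and these differ already for semicircular $X$. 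So R-diagonality of the pair $(XY,YX)$ cannot be ``upgraded'' by adjoining a free unitary; this is exactly the hard point you flagged, and it remains open in your write-up.

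The gap is fillable along your first (combinatorial) line, after which your proof closes. By freeness and symmetry of $X$ and $Y$, $\varphi(w(s)X^k)=\sum_{\pi}\kappa_\pi$, where only non-crossing partitions with monochromatic blocks of even cardinality contribute. For such a $\pi$, the gap between two consecutive elements of a block is a disjoint union of complete blocks of $\pi$ (otherwise there would be a crossing), hence has even cardinality; therefore consecutive elements of any block have opposite parity, and every block contains equally many odd and even positions. Number the letters of $w(s)$ so that the $i$-th factor occupies positions $2i-1,2i$: a factor $XY$ puts its $Y$ at an even position, a factor $YX$ at an odd one, and the trailing $X^k$ contains no $Y$ at all. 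Summing the parity balance over the $Y$-blocks gives $a(s)=b(s)$. Hence for unbalanced $s$ every $\kappa_\pi$ vanishes, $\varphi(w(s)X^k)=0$ for all $k$, and $\E_X[w(s)]=0$ by faithfulness. With this lemma inserted, your argument is complete, and it is shorter and more conceptual than the paper's computation; what it does not give, and the paper's fixed-point method does, are the explicit equations \eqref{eq:final_ac1} for the coefficient functions appearing in the conditional expectations.
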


To this end we apply
Algorithm~\ref{algo:linearization}
to obtain the following  linearization matrix for  $T_\alpha$ :
\[
L = L_\alpha=
\begin{bmatrix}
0 & {  \overline{\alpha}Y} & {  \alpha X} \\
{  X} & 0 & 0 \\
{  Y} & 0 & 0 
\end{bmatrix}
= C_X X+C_Y Y
\]
where
\begin{align*}
C_X&=
\begin{bmatrix}
0 & 0 & \alpha \\
1 & 0 & 0 \\
0 & 0 & 0 
\end{bmatrix}
&
C_Y&=
\begin{bmatrix}
  0 & \overline{\alpha}& 0 \\
0 & 0 & 0 \\
1 & 0 & 0 
\end{bmatrix}
        .
\end{align*}
Then
\[
  \Psi(z) = 
  (1-z^2T_\alpha)^{-1}=
  e_1^t
 (I-zL)^{-1} 
e_1
\]
is the inverse Schur complement of the matrix $I-zL$ with
respect to the upper left corner and its full inverse is
$$
(I-zL)^{-1} =
\begin{bmatrix}
  \Psi & \overline{\alpha}z\Psi Y & \alpha z\Psi X\\
  zX\Psi& 1+\overline{\alpha}z^2X\Psi Y & \alpha z^2 X\Psi X\\
  zY\Psi& \overline{\alpha}z^2Y\Psi Y& 1+\alpha z^2Y\Psi X
\end{bmatrix}
.
$$
Now the expansion of $\Psi$ involves only words in $X$ and $Y$ of even  length 
and since  both variables are assumed to have symmetric distribution, Boolean
cumulants of odd order vanish,
leaving only few nonzero entries in the matrices $F_X$ and $F_Y$:
\begin{align*}
  F_X(z) &= \fbeta{X}(X(I-zL)^{-1})
           =
\begin{bmatrix}
  0 & 0 & \alpha z\fbeta{X}(X\Psi X)\\
  z\fbeta{X}(X^2\Psi)& 0 & 0\\
  z\fbeta{X}(XY\Psi)& 0& 0
\end{bmatrix}
  \\
  F_Y(z) &= \fbeta{Y}(Y(I-zL)^{-1})
           =
\begin{bmatrix}
  0 & \overline{\alpha}z\fbeta{Y}(Y\Psi Y) & 0\\
  z\fbeta{Y}(YX\Psi)& 0 & 0\\
  z\fbeta{Y}(Y^2\Psi)& 0 & 0
\end{bmatrix}
.
\end{align*}
After compression with the cokernel projections
$$
Q_X =
\begin{bmatrix}
1 & 0 & 0 \\ 
0 & 0 & 0 \\ 
0 & 0 & 1 
\end{bmatrix}
\qquad
Q_Y =
\begin{bmatrix}
1 & 0 & 0 \\ 
0 & 1 & 0 \\ 
0 & 0 & 0 
\end{bmatrix}
$$
we find a total of two nonzero entries in each matrix:
\begin{align}
  \label{eq:anti:FX}
  \tilde{F}_X(z)
  &=
    \begin{bmatrix}
      0 & 0 & f_{{x,  1  3}}(z) \\
      0 & 0 & 0 \\
      f_{{x,  3  1}}(z) & 0 & 0 
    \end{bmatrix}
  \\
  \label{eq:anti:FY}  
  \tilde{F}_Y(z)
  &=
    \begin{bmatrix}
      0 & f_{{y,  1  2}}(z) & 0 \\
      f_{{y,  2 1}}(z) & 0 & 0 \\
      0 & 0 & 0 
    \end{bmatrix}
              .
\end{align}
From the main  Theorem~\ref{thm:EIPsimain} in
its compressed version from  Remark~\ref{rem:projectionsQi}
we infer the following  system of equations:
\begin{align}\label{eq:main_system}
  \begin{cases}
    &H_X=(I-zC_Y \tilde{F}_Y)^{-1}\\
    &H_Y=(I-zC_X \tilde{F}_X)^{-1}\\
    &\tilde{F}_X=Q_X  \widetilde{\eta}_X(z H_X C_X)\\
    &\tilde{F}_Y=Q_Y  \widetilde{\eta}_Y(z H_Y C_Y)
  \end{cases}
\end{align}
Substituting $\tilde{F}_X$ and $\tilde{F}_Y$ from equations
\eqref{eq:anti:FX} and \eqref{eq:anti:FY}
into the first half of the system
\eqref{eq:main_system}
we obtain
\begin{align}
	H_X&=
	\begin{bmatrix}\label{eq:HX_ac}
          {\frac{1}{1-\overline{\alpha}zf_{{y,  2  1}} }} & 0 & 0 \\
          0 & 1 & 0 \\
          0 & {zf_{{y,  1  2}}} & 1 
	\end{bmatrix}
	&H_Y&=
	\begin{bmatrix}
          {\frac{1}{1-\alpha zf_{{x,  3  1}}}} & 0 & 0 \\
          0 & 1 & {zf_{{x,  1 3}}} \\
          0 & 0 & 1 
	\end{bmatrix}
  .
\end{align}
Now
\begin{align*}
	H_X C_X&=
	\begin{bmatrix}
          0 & 0 & \frac{\alpha}{1-\overline{\alpha}zf_{y,  2  1}} \\
          1 & 0 & 0 \\
          zf_{y,  1  2} & 0 & 0 
	\end{bmatrix}
  &
    \left(H_X C_X\right)^2
    &=
	\begin{bmatrix}
\frac{\alpha  z f_{y,  12}  }{1-  \overline\alpha  z f_{y,  21} } & 0 & 0 \\
0 & 0 & \frac{\alpha}{1- \overline\alpha  z f_{{y,  2,  1}}  } \\
0 & 0 & \frac{\alpha  z f_{{y,  1,  2}}  }{1-\overline\alpha  z f_{y,  21} } 
      \end{bmatrix}
  .
\end{align*}
and thus  the odd powers of $H_XC_X$ are
\begin{align*}
	\left(H_X C_X\right)^{2n+1}& =\left(\frac{\alpha zf_{y,12}}{1-\overline{\alpha}zf_{y,21}}\right)^{n}H_X C_X
\end{align*}
and similarly 
\begin{align*}
  \left(H_Y C_Y\right)^{2n+1}&=\left(\frac{\overline{\alpha}zf_{x,31}}{1-\alpha z f_{x,13}}\right)^{n}H_Y C_Y.
\end{align*}
Now the right hand side of     \eqref{eq:Fit=QietaXi} is
\begin{align*}
  \widetilde{\eta}_X(z H_X C_X)
  &=\sum_{n=1}^{\infty}\beta_{2n}(X)\left(z H_X C_X\right)^{2n-1}
    =\frac{1-\overline{\alpha}zf_{y,21}}{\alpha z^2f_{y,12}}
    \sum_{n=1}^{\infty}\beta_{2n}(X) z^{2n} \left(\frac{\alpha
    zf_{y,12}}{1-\overline{\alpha}zf_{y,21}}\right)^{n}
     H_XC_X
  \\
  &=
    \frac{1-\overline{\alpha}zf_{y,21}}{\alpha z^2f_{y,12}}
    \,
    \eta_X\!\!\left(z\sqrt{\frac{\alpha zf_{y,12}}{1-\overline{\alpha}zf_{y,21}}}\right)
    H_XC_X
    ,\\
  \intertext{and similarly}
  \widetilde{\eta}_Y(z H_Y
  C_Y)&=
        \frac{1-\alpha zf_{x,31}}{\overline{\alpha}z^2f_{x,13}}
    \,
        \eta_Y\!\!\left(z\sqrt{\frac{\overline{\alpha}zf_{x,13}}{1-\alpha zf_{x,31}}}\right)
        H_Y C_Y
        .
\end{align*}
Substituting this into
the second half of the system
\eqref{eq:main_system}
results in two matrix equations,
each with exactly two non-zero entries, which we can rewrite as
\begin{align}\label{eq:final_ac}
  \left\{
	\begin{aligned}
		f_{x,13}&=\frac{1}{z^2f_{y,12}}\,\eta_X\!\!\left(z\sqrt{\frac{\alpha
                      zf_{y,12}}{1-\overline{\alpha}zf_{y,21}}}\right)
                &
                f_{y,12}&=\frac{1}{z^2f_{x,13}}\,\eta_Y\!\!\left(z\sqrt{\frac{\overline{\alpha}zf_{x,13}}{1-\alpha
                      zf_{x,31}}}\right)
                \\
		f_{x,31}&=\frac{1-\overline{\alpha}zf_{y,21}}{\alpha
                  z}\,\eta_X\!\!\left(z\sqrt{\frac{\alpha zf_{y,12}}{1-\overline{\alpha}zf_{y,21}}}\right)
                &
		f_{y,21}&=\frac{1-\alpha
                  f_{x,31}}{\overline{\alpha}z}\,\eta_Y\!\!\left(z\sqrt{\frac{\overline{\alpha}zf_{x,13}}{1-\alpha
                      zf_{x,31}}}\right)
	\end{aligned}
  \right.
\end{align}
On the other hand,
\begin{equation}
  \label{eq:cond_exp_ac}
\begin{aligned}
  \E_X[(1-z^2 T_\alpha)^{-1}]
  &=
    e_1^t
    \E_X\left[(I-zL)^{-1}\right]
    e_1\\
    &= e_1^t(I-zH_X C_X X)^{-1} H_Xe_1\\
     & =(1-\overline{\alpha}zf_{y,21}-\alpha z^3f_{y,12} X^2)^{-1}\\
  \E_Y[(1-z^2T_\alpha)^{-1}]
  &=
    e_1^t
    \E_Y\left[(I-zL)^{-1}\right]
    e_1
\\
  &  = e_1^t(I-zH_Y C_Y Y)^{-1} H_Ye_1
\\
  &=(1-\alpha z f_{x,31}-\overline{\alpha} z^3 f_{x,13}Y^2)^{-1}
\end{aligned}
\end{equation}

Assuming $\abs\alpha=1$, substitute
\begin{align*}
\tilde{f}_{x,13} &= \overline{\alpha}zf_{x,13}    
&
\tilde{f}_{x,31} &= \alpha zf_{x,31}    
&
\tilde{f}_{y,12} &= \alpha z f_{y,12}    
&
\tilde{f}_{y,21} &= \overline{\alpha} zf_{x,21}    
\end{align*}
into the system \eqref{eq:final_ac}
and the conditional expectations
  \eqref{eq:cond_exp_ac}
to obtain the system
\begin{equation}\label{eq:final_ac1}
  \left\{
	\begin{aligned}
		\tilde{f}_{x,13}&=\frac{1}{\tilde{f}_{y,12}}\,\eta_X\left(z\sqrt{\frac{\tilde{f}_{y,12}}{1-\tilde{f}_{y,21}}}\right)
                &
                \tilde{f}_{y,12}&=\frac{1}{\tilde{f}_{x,13}}\,\eta_Y\left(z\sqrt{\frac{\tilde{f}_{x,13}}{1-\tilde{f}_{x,31}}}\right)
                \\
		f_{x,31}&=(1-\tilde{f}_{y,21})\,\eta_X\left(z\sqrt{\frac{\tilde{f}_{y,12}}{1-\tilde{f}_{y,21}}}\right)
                &
		f_{y,21}&=(1-\tilde{f}_{x,31})\,\eta_Y\left(z\sqrt{\frac{\tilde{f}_{x,13}}{1-\tilde{f}_{x,31}}}\right)
	\end{aligned}
  \right.
\end{equation}
and the expressions
\begin{align*}
  \label{eq:cond_exp_ac1}
  \E_X[(1-z^2 T_\alpha)^{-1}]
     & =
       \bigl(
       1-\tilde{f}_{y,21}- z^2\tilde{f}_{y,12} X^2
       \bigr)^{-1}\\
  \E_Y[(1-z^2T_\alpha)^{-1}]
  &=
    \bigl(
    1-\tilde{f}_{x,31}- z^2\tilde{f}_{x,13}Y^2
    \bigr)^{-1}
\end{align*}
which both  do not depend on $\alpha$, and neither does the moment generating
function
$$
M(z)
= \frac{1}{1-\tilde{f}_{x,31}-\tilde{f}_{y,21}}
.
$$

\subsection{A Lie polynomial}
\label{ssec:ex:lie}
We consider next the conditional expectation of the Lie polynomial
\begin{equation*}
  \label{eq:liepoly}
  P = \sigma X+i(XY-YX)
  .
\end{equation*}
First we show the general form of the subordination, and next we show that for
specific choices of the distributions of $X$ and $Y$ we can go a step  further
and obtain explicit or implicit formulas  for the coefficients of the rational expression of
the conditional expectation.

The linearization for this example is
$$
(1-z^2P)^{-1} = u^t(I-zL)^{-1}v
$$
with
$u^t=(1,0,0)$, $v^t=(1,0,0)$ and $L=C_X X+C_Y Y$ where
\begin{align*}
C_X&=
\begin{bmatrix}
	\sigma z & 0 & i  \\
	1 & 0 & 0    \\
	0 & 0 & 0   
\end{bmatrix}
&
C_Y&=
\begin{bmatrix}
	0 & -i & 0 \\
	0 & 0 & 0 \\
	1 & 0 & 0
\end{bmatrix}
.
\end{align*}
The cokernel projections are
\begin{align*}
Q_X&=
\begin{bmatrix}
	1 & 0 & 0  \\
	0 & 0 & 0    \\
	0 & 0 & 1   
\end{bmatrix}
&
Q_Y&=
\begin{bmatrix}
	1 & 0 & 0 \\
	0 & 1 & 0 \\
	0 & 0 & 0
\end{bmatrix}
\end{align*}
and the compressed matrices $\widetilde{F}_X=Q_XF_XQ_X$ and $\widetilde{F}_Y=Q_YF_YQ_Y$ have the form
\begin{align*}
  Q_XF_XQ_X
  &=
\begin{bmatrix}
	f_{x,11} & 0 & f_{x,13} \\
	0 & 0 & 0 \\
	f_{x,31} & 0 & f_{x,33} 
\end{bmatrix}
&
Q_YF_YQ_Y&=
\begin{bmatrix}
	f_{y,11} & f_{y,12} & 0 \\
	f_{y,21} & f_{y,22} & 0 \\
	0 & 0 & 0 
\end{bmatrix}
.
\end{align*}
From this we obtain the  conditional expectations
\eqref{eq:EIps=utsumCiXiCjFj} as follows
\begin{align}
  \label{eq:EXlie}
  \E_X\left[(1-z^2P)^{-1}\right]
&=
    \left(
    1 +izf_{y,21}  - z^2(\sigma -if_{y,22}+if_{y,11})X  -iz^3f_{y,12}X^2 
    \right)^{-1}
  \\
    \label{eq:EYlie}
 \E_Y\left[(1-z^2P)^{-1}\right]
  &=
    \left(
    1
-\sigma z^2 f_{x,11}
-iz f_{x,31}
- z^2(\sigma zf_{x,13}+if_{x,33}-if_{x,11}) Y
    + iz^3f_{x,13} Y^2
    \right)^{-1}
\end{align}

\subsubsection{Semicircular and Bernoulli}
\begin{proposition}\label{Cor:semicircleExample}
  Let  $a$ be a standard semicircular random variable,
  free from a standard symmetric Bernoulli random variable $b$.
  Then $a$ and $i(ab-ba)$ have semicircle distribution with variances $1$ and $2$ respectively, they are not free, and yet $\sigma a + i(ab-ba)$ has semicircle  distribution with variance
  $\sigma^2+2$ for every $\sigma\in\R$.
\end{proposition}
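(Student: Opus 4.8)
The plan is to run the linearization machinery of Theorem~\ref{thm:EIPsimain} on $P=\sigma X+i(XY-YX)$, using the coefficient matrices $C_X,C_Y$ and cokernel projections $Q_X,Q_Y$ already recorded for this example, specialised to $X=a$ standard semicircular and $Y=b$ standard symmetric Bernoulli. The whole computation hinges on the Boolean cumulant generating functions of the two marginals, so I would record these first. For the Bernoulli variable $M_b(z)=(1-z^2)^{-1}$, hence $\eta_b(z)=z^2$ and $\beta_n(b)=0$ for all $n\neq 2$, so the shifted generating function collapses to the identity map $\widetilde{\eta}_b(W)=W$. For the semicircular variable $\widetilde{\eta}_a(w)=\frac{1-\sqrt{1-4w^2}}{2w}=w+w^3+2w^5+\cdots$ contains only odd powers. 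The linearity of $\widetilde{\eta}_b$ is exactly what will make the fixed-point system solvable in closed form, and symmetry of both distributions kills every odd Boolean cumulant, so only a handful of entries of the $3\times 3$ matrices survive.

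Next I would set up the compressed system \eqref{eq:Fit=QietaXi} in the form used for the anti-commutator, namely $H_X=(I-zC_Y\tilde F_Y)^{-1}$, $H_Y=(I-zC_X\tilde F_X)^{-1}$, together with $\tilde F_X=Q_X\widetilde{\eta}_a(zH_XC_X)$ and $\tilde F_Y=Q_Y\widetilde{\eta}_b(zH_YC_Y)=zQ_YH_YC_Y$. Because $\widetilde{\eta}_b$ is the identity, the last equation is linear and lets me eliminate $\tilde F_Y$, and hence $H_X$, explicitly in terms of the surviving entries $f_{x,ij}$ of $\tilde F_X$. Substituting into the remaining equation $\tilde F_X=Q_X\widetilde{\eta}_a(zH_XC_X)$ and using that only the odd powers of $H_XC_X$ contribute, which collapse to a scalar multiple of $H_XC_X$ exactly as in Section~\ref{ssec:commutator}, reduces everything to a small scalar fixed-point system that I can solve.

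With the entries of $F_X,F_Y$ in hand I would substitute into the conditional-expectation formulas \eqref{eq:EXlie} and \eqref{eq:EYlie} and into the moment generating function $M(z^2)=e_1^t\bigl(I_3-z(C_XF_X+C_YF_Y)\bigr)^{-1}e_1$ of Theorem~\ref{thm:EIPsimain}, and then verify that the resulting series satisfies the quadratic $(\sigma^2+2)z^2M^2-M+1=0$ that characterises the moment generating function of a semicircle of variance $\sigma^2+2$. Specialising to $\sigma=0$ gives variance $2$ for the commutator $i(ab-ba)$, while the variance $1$ of $a$ is immediate. The last ingredient is non-freeness of $a$ and $c:=i(ab-ba)$: both are centred, so if they were free one would necessarily have $\varphi(acac)=0$, since the only like-coloured pair partition of $acac$ is $\{1,3\}\{2,4\}$, which is crossing. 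A direct expansion of $acac$ using $b^2=1$, traciality and freeness of $a,b$ yields instead $\varphi(acac)=-2\neq 0$, so $a$ and $c$ are not free.

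The main obstacle is the middle step: the honest solution of the compressed fixed-point system \eqref{eq:Fit=QietaXi} (via Remark~\ref{rem:projectionsQi}) and the verification of the quadratic. I expect the decisive simplification to be that the single Boolean cumulant of the Bernoulli variable linearises the $Y$-half of the system, leaving $\widetilde{\eta}_a$ as the only transcendental input, so that the semicircularity of the output with additive variance $\sigma^2+2$ should emerge from the semicircle functional equation for $\widetilde{\eta}_a$ together with the sparsity of $C_X,C_Y$. Confirming that the detailed form of $\widetilde{\eta}_a$ cancels to leave \emph{exactly} $\sigma^2+2$, independently of $\sigma$, is the part requiring care, and is where a computer algebra check (as used elsewhere in this section) is the convenient route.
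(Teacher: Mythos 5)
Your plan follows the paper's own route---the same linearization matrices $C_X,C_Y$, the compressed fixed-point system \eqref{eq:Fit=QietaXi}, elimination of the Bernoulli half through $\widetilde{\eta}_b(w)=w$, and a final verification of the quadratic $(\sigma^2+2)z^2M(z)^2-M(z)+1=0$---and most ingredients are sound: the Boolean cumulant data of the marginals, the elimination $\widetilde{F}_Y=zQ_YH_YC_Y$, and the non-freeness argument. Indeed, expanding $acac=-\bigl(a^2ba^2b-a^2baba-aba^3b+aba^2ba\bigr)$ and using freeness of $a,b$ with $b^2=1$ gives $\varphi(acac)=-(1-0-0+1)=-2$, while freeness of the centred pair $(a,c)$ would force $\varphi(acac)=0$; this is correct and more elementary than the paper's argument, which computes the free cumulant $\kappa_4(a,a,c,c)=1$ or compares \eqref{eq:EXlie} with the subordination formula \eqref{eq:subordination}.

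The genuine gap is the step you yourself single out as the crux: the collapse $(H_XC_X)^{2n+1}=\lambda^n H_XC_X$ of Section~\ref{ssec:commutator} is \emph{false} for every $\sigma\neq0$, i.e.\ in exactly the cases the proposition adds beyond the commutator. The collapse relied on $C_X$ having zero diagonal, whereas here $C_X$ carries the entry $\sigma z$ in its upper-left corner. Explicitly, writing $d=1+izf_{y,21}$ one finds
\begin{equation*}
  H_XC_X=
  \begin{bmatrix}
    p & 0 & q\\
    1 & 0 & 0\\
    r & 0 & s
  \end{bmatrix},
  \qquad
  p=\frac{\sigma z-izf_{y,22}}{d},\quad
  q=\frac{i}{d},\quad
  s=\frac{izf_{y,11}}{d},
\end{equation*}
so $\bigl((H_XC_X)^3\bigr)_{23}=q(p+s)$ while $(H_XC_X)_{23}=0$. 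Since $\widetilde{F}_Y=zQ_YC_Y+\Ord(z^3)$, i.e.\ $f_{y,12}=-iz+\Ord(z^3)$ and $f_{y,11},f_{y,22}=\Ord(z^3)$, one has $p+s=\sigma z+\Ord(z^3)\neq0$, so no proportionality holds; compressing does not rescue it, because proportionality of $Q_X(H_XC_X)^3$ and $Q_XH_XC_X$ is equivalent to $(p+s)(ps-qr)=0$ and $ps-qr=-izf_{y,12}/d=-z^2+\Ord(z^4)\neq0$ as well. (For the same reason the diagonal entries $f_{x,11},f_{x,33}$ of $\widetilde{F}_X$ do \emph{not} vanish when $\sigma\neq0$: they all appear in \eqref{eq:EYlie}, so the sparsity you expect from symmetry is also absent.) The repair is precisely the route the paper takes: instead of resumming $\widetilde{\eta}_a(zH_XC_X)$, use the algebraic relation $w\widetilde{\eta}_a(w)^2-\widetilde{\eta}_a(w)+w=0$ of the semicircular Boolean cumulant series; since $F_X$ is a power series in $zH_XC_X$ and $C_XQ_X=C_X$, $F_XQ_X=F_X$, the $X$-equation becomes the quadratic matrix equation
\begin{equation*}
  zQ_XH_XC_X\widetilde{F}_X^{\,2}+zQ_XH_XC_X-\widetilde{F}_X=0,
  \qquad H_X=(I-zC_Y\widetilde{F}_Y)^{-1},
\end{equation*}
which together with $\widetilde{F}_Y=zQ_YH_YC_Y$ is a polynomial system in the entries $f_{x,ij}$ with $\sigma$ as a parameter. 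Solving that system by elimination (the paper does this with \texttt{FriCAS}) and checking the quadratic for $M$ completes your plan for all $\sigma$, with $\sigma=0$ giving the variance-$2$ claim for the commutator.
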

In other words, we obtain  a pair of semicircular elements in ``additive free position''
(an infinite dimensional analogue of \cite[Definition~5.1]{Marcus:2021} and \cite{ArizmendiLehnerRosenmann:2309.14343}),
i.e., any real linear combination of $a$ and $i(ab-ba)$ has semicircular distribution, without
actually being free.
The first instance of this phenomenon was observed in \cite{BoseDeyEjsmont:2018:characterization}.

\begin{proof}
  Indeed $ab$ and $ba$ are free \cite[Lemma~3.4]{HaagerupLarsen},
  but $a$ and $c=i(ab-ba)$ are not:
  a quick calculation shows that $\kappa_4(a,a,c,c)=1$;
  alternatively, this can be seen by comparing the conditional
  expectation     \eqref{eq:EXlie}  with
  the conditional expectation \cite{Biane98}
  $$
  E_X[(z-\sigma X-\sqrt{2}Y)^{-1}]
  = (\omega(z) - \sigma X)^{-1}
  $$
  where
  $X$ and $Y$ are free semicircular, which is the inverse of a linear polynomial in
  $X$.
  Yet because of some nontrivial cancellations $\kappa_n(a+c)=0$ for all $n\neq 2$ and thus $a+c$ has the same distribution as the sum of free copies of $a$ and $c$.
  We note in passing that this phenomenon does not occur with $X + XY+YX$ which
  indeed does not have the same distribution as $X + i(XY-YX)$ as could be surmised from
  example in section~\ref{ssec:commutator}.

  Put $X=a$ where $a$ has standard semicircle distribution and
  $Y=b$ where b has Bernoulli distribution, then
  $z\widetilde{\eta}_X(z)^2-\widetilde{\eta}_X(z)+z=0$ and
  $\widetilde{\eta}_Y(z)=z$ and the system
  \eqref{eq:Fit=QietaXi}
  takes the form
  \begin{equation*}
    \left\{
      \begin{aligned}
        Q_X(I-z C_Y \widetilde{F}_Y )^{-1}C_X\widetilde{F}_X^2+z Q_X(I-z C_Y \widetilde{F}_Y )^{-1}C_X-\widetilde{F}_X&=0\\
        \widetilde{F}_Y-z Q_Y (I-z C_X \widetilde{F}_X )^{-1} C_Y&=0.
      \end{aligned}
    \right.
  \end{equation*}
From the explicit solution we obtain (for $\sigma=1$) the expressions
  \begin{multline*}
    \E_a\left[(1-z^2P(a,b))^{-1}\right]
\\
    =
    (12 z^4+3)
    \left(
      2
      + ( z^4+1 ) \nu(z)+14  z^4
      -( 3  z^2  \nu(z) +24  z^6 ) a
      +( 3  \nu(z) -6 )  z^4 a^2
    \right)^{-1}
    .
  \end{multline*}
  where $\nu(z)=\sqrt{1-12 z^4}$
  and
  \begin{equation*}
    \E_b\left[(1-z^2P(a,b))^{-1}\right]
=\frac{1-\sqrt{1-12 z^4}}{12z^4}
  \end{equation*}
  which does not depend on $b$, and is equal to the moment generating function of a semicircular
  variable of variance 3.
\end{proof}

\subsubsection{Semicircular and Semicircular}
In this case
the system 
    \eqref{eq:Fit=QietaXi} takes the form
\begin{equation*}
  \left\{
	\begin{aligned}
		Q_X(I-z C_Y \widetilde{F}_Y )^{-1}C_X\widetilde{F}_X^2+z Q_X(I-z C_Y \widetilde{F}_Y )^{-1}C_X-\widetilde{F}_X&=0\\
		Q_Y(I-z C_X \widetilde{F}_X )^{-1}C_Y\widetilde{F}_Y^2+z Q_Y(I-z C_X \widetilde{F}_X )^{-1}C_Y-\widetilde{F}_Y&=0.
	\end{aligned}
  \right.
\end{equation*}
and from its solution we obtain the quartic
equation
$$
\sigma^2  z^4  M(z)^4 -z^2  M(z)^3 -(\sigma^2+1 ) z^2  M(z)^2+M(z) -1=0
$$
for the moment generating function $M(z)$.
Plots for $\sigma=1/2$ and $\sigma=1$ are shown in
Figure~\ref{fig:densX+comm05}.

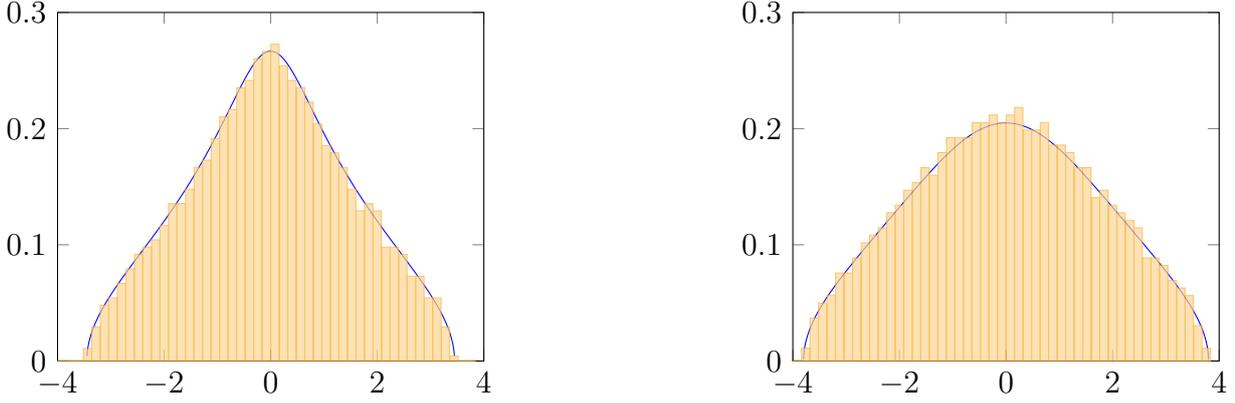
\begin{figure}[h]
	\centering{}
	\begin{minipage}{0.47\textwidth}
\begin{tikzpicture}
 \begin{axis}[
  width=0.9\textwidth,
 xmin=-4,
 xmax=4,
 ymin=0,
 ymax=0.3,
yticklabel style={
        /pgf/number format/fixed,
        /pgf/number format/precision=5
},
scaled y ticks=false
 ]
 \addplot[color=blue] table {pictures/densX+comm05.dat};
 \end{axis}
 \begin{axis}[ybar, bar width=0.155,
  width=0.9\textwidth,
 xmin=-4,
 xmax=4,
 ymin=0,
 ymin=0.3,
 yscale=1,
 yticklabel=\empty,
 xticklabel=\empty,
 hide axis
   ]
   \addplot+[draw=histcolor,fill=histcolor!50!white,opacity=0.7,line width = 0.15] 
file {pictures/histX+comm05.dat};
\end{axis} \end{tikzpicture}  
\end{minipage}\hfill
\begin{minipage}{0.47\textwidth}
	\centering{}
\begin{tikzpicture}
 \begin{axis}[
  width=0.9\textwidth,
 xmin=-4,
 xmax=4,
 ymin=0,
 ymax=0.3,
yticklabel style={
        /pgf/number format/fixed,
        /pgf/number format/precision=5
},
scaled y ticks=false
 ]
 \addplot[color=blue] table {pictures/densX+comm1.dat};
 \end{axis}
 \begin{axis}[ybar, bar width=0.155,
  width=0.9\textwidth,
 xmin=-4,
 xmax=4,
 ymin=0,
 ymin=0.3,
 yscale=0.8,
 yticklabel=\empty,
 xticklabel=\empty,
 hide axis
   ]
   \addplot+[draw=histcolor,fill=histcolor!50!white,opacity=0.7,line width = 0.15] 
file {pictures/histX+comm1.dat};
\end{axis} \end{tikzpicture}  
\end{minipage}
\caption{Densities of $X+i[X,Y]$ and $\frac{1}{2}X+i[X,Y]$ with both $X$ and $Y$ semicircular
	and a sample of eigenvalues of the corresponding 1000$\times$1000 random
	matrix models.}
\label{fig:densX+comm05}
\end{figure}

\subsection{A Symmetric polynomial of  degree 3}
\label{ex:XYZ}
The noncommutative homogeneous symmetric polynomial of degree 3
\begin{equation}
  \label{eq:XYX:T3}
T_3=XYZ+YXZ+XZY+ZXY+YZX+ZYX
\end{equation}
in free identically distributed semicircular variables
$X,Y,Z$ has a linearization of dimension 7 and the procedure described at the
beginning of this section succeeds.
We omit the details of the lengthy computation which is similar to the one in
Example~\ref{ssec:commutator} and which yields the following result.
\begin{proposition}
  Let $\alpha$, $\beta$, $\gamma\in\IC$ with
  $\abs{\alpha}=\abs{\beta}=\abs{\gamma}=1$.
  Then the distribution of
  \begin{equation}
    T_{\alpha\beta\gamma}=
  \alpha ZXY
  +\overline\alpha YXZ
  +\beta XYZ
  +\overline\beta ZYX 
  +\gamma YZX
  +\overline\gamma XZY
\end{equation}
is the same as the distribution of $T_3$.
This is in particular true for the third Amitsur-Levitzki standard polynomial
\begin{equation}
    \label{eq:XYX:S3}
  S_3 = i(XYZ-XZY+YZX - YXZ+ZXY-ZYX)
.
\end{equation}
\end{proposition}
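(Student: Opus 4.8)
The plan is to bypass the degree-$3$ linearization entirely and instead establish a \emph{rotation invariance} of the joint $*$-distribution of the three cyclic words. Write $w_1=ZXY$, $w_2=XYZ$, $w_3=YZX$; since $X,Y,Z$ are self-adjoint, $w_j^*$ is the reversed word, and one checks directly that $T_{\alpha\beta\gamma}=\sum_{j=1}^3\bigl(\alpha_j w_j+\overline{\alpha_j}w_j^*\bigr)$ with $(\alpha_1,\alpha_2,\alpha_3)=(\alpha,\beta,\gamma)$, while $T_3$ is the special case $\alpha=\beta=\gamma=1$ and $S_3$ the case $\alpha=\beta=\gamma=i$. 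Hence it suffices to prove that the joint $*$-distribution of $(w_1,w_2,w_3)$ is invariant under the independent phase rotations $w_j\mapsto\alpha_j w_j$, $|\alpha_j|=1$: granting this, the $*$-distribution of $(\alpha_1 w_1,\alpha_2 w_2,\alpha_3 w_3)$ equals that of $(w_1,w_2,w_3)$, and therefore the two self-adjoint elements $\sum_j(\alpha_j w_j+(\alpha_j w_j)^*)$ and $\sum_j(w_j+w_j^*)$ have the same distribution.

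Such phase invariance is equivalent to the vanishing of every mixed $*$-moment $\varphi(w_{j_1}^{\varepsilon_1}\cdots w_{j_m}^{\varepsilon_m})$, $\varepsilon_i\in\{1,*\}$, except when, for each fixed colour $j\in\{1,2,3\}$, the number of unstarred factors $w_j$ equals the number of starred factors $w_j^*$. Indeed such a moment carries the phase $\prod_j\alpha_j^{(\#w_j)-(\#w_j^*)}$, which is independent of the $\alpha_j$ precisely when all these exponents vanish. So the whole statement reduces to a combinatorial vanishing lemma.

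Here I would use the standard description of semicircular moments: since $X,Y,Z$ are free standard semicircular, $\varphi(W)$ for a word $W$ in these letters counts the non-crossing pair partitions in $\NC_2$ matching equal letters, and this count is nonzero if and only if $W$ can be emptied by repeatedly cancelling adjacent equal letters. As $X,Y,Z$ behave as involutions under such cancellation, this happens exactly when $W$ represents the identity in the free product $G=\IZ_2\ast\IZ_2\ast\IZ_2=\langle X,Y,Z\mid X^2=Y^2=Z^2=1\rangle$. Moreover $w_j^*$ is the reverse of $w_j$, that is $w_j^{-1}$ in $G$, so $\varphi(w_{j_1}^{\varepsilon_1}\cdots w_{j_m}^{\varepsilon_m})\neq 0$ forces the word $w_{j_1}^{\pm1}\cdots w_{j_m}^{\pm1}$ to be trivial in $G$.

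The crux, and the step I expect to be the main obstacle, is to show that $w_1,w_2,w_3$ generate a \emph{free} subgroup $H\cong\IF_3$ of $G$; then the abelianisation homomorphism $H\to\IZ^3$, $w_j\mapsto e_j$, is well defined, and triviality of the word in $G$ forces each exponent difference $(\#w_j)-(\#w_j^*)$ to vanish, which is exactly the required balance. Freeness I would obtain by a short cancellation/normal-form argument inside $G$: listing the six reduced length-$3$ words $ZXY,XYZ,YZX$ and their inverses $YXZ,ZYX,XZY$, one checks that at the junction of any two non-inverse generators at most one letter cancels (two or more letters cancel only for an inverse pair), and that this single cancellation never exposes a further equal pair; consequently the middle letter of each syllable always survives free reduction, so any reduced word of length $k$ in the $w_j^{\pm1}$ reduces in $G$ to a word of length at least $k$ and is in particular nontrivial. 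This yields $H\cong\IF_3$ and completes the argument, simultaneously covering $T_3$ and the Amitsur--Levitzki polynomial $S_3=T_{i,i,i}$. Alternatively one can follow the computational route of Section~\ref{ssec:commutator}: build the $7$-dimensional linearization, compress by the cokernel projections as in Remark~\ref{rem:projectionsQi}, and verify that the fixed-point system \eqref{eq:Fit=QietaXi} and the resulting conditional expectations depend on $\alpha,\beta,\gamma$ only through the constraint $|\alpha|=|\beta|=|\gamma|=1$; the symmetry argument above explains \emph{why} this cancellation must occur.
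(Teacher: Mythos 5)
Your proof is correct in the setting the paper declares for this example (free identically distributed semicircular $X,Y,Z$), and it takes a genuinely different route: the paper offers no conceptual argument at all, recording the proposition as the outcome of an omitted ``lengthy computation'' --- a $7$-dimensional linearization of $(1-z^3T_{\alpha\beta\gamma})^{-1}$, compression by the cokernel projections of Remark~\ref{rem:projectionsQi}, and solution of the fixed-point system \eqref{eq:Fit=QietaXi}, with the phases absorbed by substitutions exactly as in Section~\ref{ssec:commutator}. You instead reduce the claim to phase-invariance of the joint $*$-distribution of $(w_1,w_2,w_3)=(ZXY,XYZ,YZX)$, identify nonvanishing semicircular moments with words that are trivial in $\IZ_2\ast\IZ_2\ast\IZ_2$, and eliminate the unbalanced moments by showing that $w_1,w_2,w_3$ generate a copy of $\IF_3$. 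I checked your cancellation lemma: at every junction of two non-inverse syllables exactly one pair can cancel ($w_1w_3\to ZXZX$, $w_2w_1\to XYXY$, $w_3w_2\to YZYZ$, and their inverses), the exposed middle letters never coincide, so every syllable's middle letter survives reduction and nonempty reduced words in the $w_j^{\pm1}$ are nontrivial; the abelianization of $\IF_3$ then gives precisely the balance condition. As for what each approach buys: yours is short, rigorous, and explains \emph{why} the phases must cancel, but as written it is tied to the semicircle law through the pairing formula and produces no quantitative output; the paper's computation runs at the level of the generating functions $\widetilde{\eta}$, hence is not tied to semicircularity (it covers, e.g., the arcsine variables treated immediately afterwards) and yields the explicit equations for $M(z)$ and the coefficients $c_{\cdot,i}(z)$ used in the rest of the section. (Your argument does extend to general even distributions with one extra step --- for free even elements a nonvanishing moment still forces $\IZ_2^{\ast 3}$-triviality, since centering each run $X^a$ leaves alternating centered words and odd runs have zero mean --- but that step is not in your write-up.)

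One claim in your closing remark is wrong and should be deleted: it is \emph{not} true that the resulting conditional expectations depend on $\alpha,\beta,\gamma$ only through the constraint $\abs{\alpha}=\abs{\beta}=\abs{\gamma}=1$. The paper states, directly after the proposition, that $\E_{XY}[(1-z^3T_{\alpha\beta\gamma})^{-1}]$ in \eqref{eq:XYZ:EXY} genuinely depends on $\alpha$, $\beta$, $\gamma$; only the distribution, the single-variable conditional expectations \eqref{eq:XYZ:EX}, and the scalar coefficient functions are phase-independent. This does not affect your main argument --- phase-invariance of the $*$-distribution of $(w_1,w_2,w_3)$ controls moments of $*$-polynomials in the $w_j$ alone, not conditional expectations onto subalgebras generated by individual letters --- but as written the sentence contradicts the paper.
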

Interestingly, contrary to Example~\ref{ssec:commutator} this is not true for the conditional
expectations:
Indeed,
the conditional expectation
\begin{multline}
\label{eq:XYZ:EXY}
E_{XY}[(1-z^3T_{\alpha\beta\gamma})^{-1}]
\\
=
\bigl(
1 - c_{Z,0}(z) - c_{Z,2}(z)(X^2+Y^2) - c_{Z,4}(z) (\overline{\alpha}YX+\beta XY)(\alpha XY+\overline{\beta}YX)
\bigr)^{-1}
\end{multline}
(and analogously $E_{YZ}$ and $E_{XZ}$) depends on the coefficients $\alpha$,
$\beta$ and $\gamma$.
Here 
the coefficient functions $c_{X,i}(z)$, $c_{Y,i}(z)$ and $c_{Z,i}(z)$  do not depend on $\alpha$, $\beta$ and $\gamma$.
In particular, in the case of the symmetric polynomial   \eqref{eq:XYX:T3}
we have
$$
E_{XY}[ (1-z^3T_3)^{-1}] = 
\bigl(
1 - c_{Z,0}(z) - c_{Z,2}(z)(X^2+Y^2) - c_{Z,4}(z) (XY+YX)^2
\bigr)^{-1}
$$
while for the Amitsur-Levitzki  polynomial   \eqref{eq:XYX:S3} we have
$$
E_{XY}[ (1-z^3S_3)^{-1}] = 
\bigl(
1 - c_{Z,2}(z)(X^2+Y^2) + c_{Z,4}(z) (XY-YX)^2
\bigr)^{-1}
.
$$
On the other hand,
\begin{equation}
  \label{eq:XYZ:EX}
\E_X[(1-z^3T_{\alpha\beta\gamma})^{-1}] 
=
\left(
  1 - c_{Y,0}(z)-c_{Z,0}(z)-(c_{Y,2}(z)+c_{Z,2}(z))X^2
\right)^{-1}
\end{equation}
does not depend on $\alpha$, $\beta$ and $\gamma$.

\subsubsection{Semicircular case}
In the case where all variables have standard semicircular distribution
the moment generating function of the element $T_3$ from   \eqref{eq:XYX:T3}
has power series expansion
\begin{equation*}
M(z) = 1+6 z^2 + 96 z^4 + 2064 z^6 + \dotsm
\end{equation*}
and satisfies the surprisingly simple quartic equation
\begin{equation*}
  2M(z)^2(M(z)+2)^2z^2-3(M(z)-1)=0
\end{equation*}
It follows that 
the Cauchy transform $G(z)$
satisfies  the  equation
\begin{equation}
  {2  {{G(z)}^4}  {{z}^2}}
  +{8  {{G(z)}^{3}} z}
  +{8  {{G(z)}^2}} 
  -{3  G(z)  {{z}^5}}
  +{3  {{z}^4}}
  = 0  
\end{equation}
from which the following equation for the density $-\frac{1}{\pi}v(t)$ can be extracted:
\begin{multline*}
  {{65536}  {{t}^{8}}  {{v}^{{12}}}}
  +{{262144}  {{t}^{6}}  {{v}^{{10}}}}
  +{{(
        {{98304}  {{t}^{6}}}+{{393216}  {{t}^4}} )} {{v}^{8}}}
  +{{(
      -{{59904}  {{t}^{6}}}+{{98304}  {{t}^4}}+{{262144}  {{t}^2}} )}
  {{v}^{6}}}
\\
+{{( -{{313344}  {{t}^4}} -{{98304}  {{t}^2}}+{65536} )}
  {{v}^4}}
- {{( {{93312}  {{t}^4}} +{{262656}  {{t}^2}} +{98304}
    )} {{v}^2}}
\\
-{{2187}  {{t}^4}}+{{79488}  {{t}^2}}+{27648} =0\end{multline*}
The spectral radius is 
$$
\rho 
  =\frac{4}{9}\sqrt{26\sqrt{ 13}+92}
  \approx
  6.05724678788
$$
and
a plot and histogram of the density is shown in Fig.~\ref{fig:densXYZ}.

\begin{figure}[h]
	\centering{}
\begin{tikzpicture}
 \begin{axis}[
  width=0.6\textwidth,
 xmin=-6.06,
 xmax=6.06,
 ymin=0,
 ymax=0.3,
yticklabel style={
        /pgf/number format/fixed,
        /pgf/number format/precision=5
},
scaled y ticks=false
 ]
 \addplot[color=blue] table {pictures/densXYZ.dat};
 \end{axis}
 \begin{axis}[ybar, bar width=0.155,
  width=0.6\textwidth,
 xmin=-6.06,
 xmax=6.06,
 ymin=0,
 ymin=0.2,
 yscale=0.75,
 yticklabel=\empty,
 xticklabel=\empty,
 hide axis
   ]
   \addplot+[draw=histcolor,fill=histcolor!50!white,opacity=0.7,line width = 0.15] 
file {pictures/histXYZ.dat};
\end{axis}
\end{tikzpicture}  
\caption{Density of $XYZ+YXZ+XZY+ZXY+YZX+ZYX$ for semicircular variables and a sample of eigenvalues of
  the corresponding 1000$\times$1000 random matrix model.}
	\label{fig:densXYZ}
\end{figure}
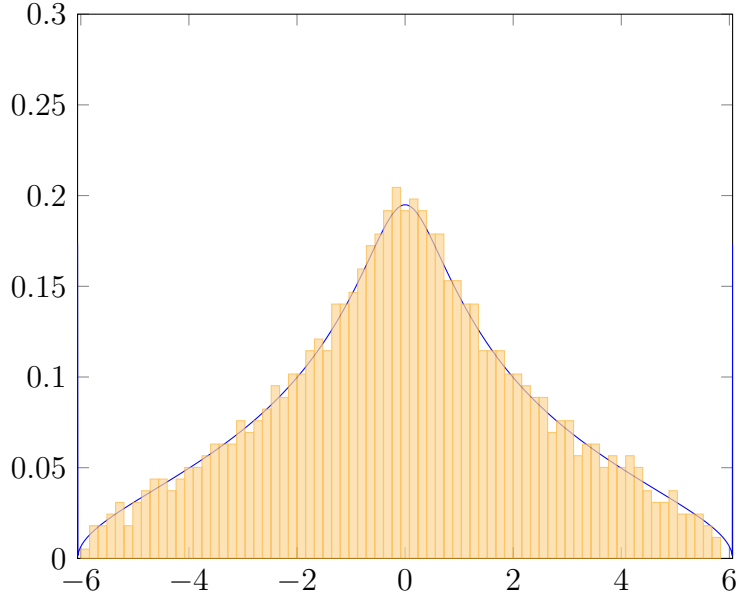

The conditional expectations \eqref{eq:XYZ:EXY} and \eqref{eq:XYZ:EX} are
evaluated as follows.
As a consequence of symmetry the coefficients $c_{X,i}(z) = c_{Y,i}(z) =
c_{Z,i}(z) =:c_i(z)$ are identical for all variables and the system is
solvable:
The constant coefficient
$$
{c}_0(z) = 2z^2+20z^4+376z^6+\dotsm
$$
satisfies the equation
$$
27{c}_0(z)^4-27{c}_0(z)^3+(8z^2+9){c}_0(z)^2
-(8z^2+1){c}_0(z)+2z^2=0,
$$
the quadratic coefficient
$$
{c}_2(z) = z^2+8  z^4+140  z^6+\dotsm
$$
satisfies the equation
$$
12{c}_2(z)^4+8z^2{c}_2(z)^2-z^2{c}_2(z)+z^4=0
$$
and the quartic coefficient is $c_4(z) = z^2M(z)$.

\subsubsection{Arcsine law and a random walk on the free group}
Let $\IF_3$ be the free group with three generators $g_1$, $g_2$ and $g_3$
and put $X=g_1+g_1^{-1}$, $Y=g_2+g_2^{-1}$ and $Z=g_3+g_3^{-1}$.
Then up to the factor $N=48$ the operator
$T_3$ from  \eqref{eq:XYX:T3}
is the transition operator of a simple random walk of step length 3 on the
Cayley graph of $\IF_3$, i.e., the regular tree of order 6.
More precisely,
$$
T_3 = \sum g_{\sigma(1)}^{\epsilon_1} g_{\sigma(2)}^{\epsilon_2} g_{\sigma(3)}^{\epsilon_3}
$$
where the sum extends over all permutations of the generators and all choices
of exponents.
Now $X$, $Y$ and $Z$ have arcsine distribution,
and the calculation results in the quartic equation
\begin{equation}
  \label{eq:XYZ:arcsine}
( 2304  z^2 -1 ) M(z)^4+( 9216 z^2 -20 ) M(z)^3+( 9216  z^2      -114 )
M(z)^2 -140  M(z)+275= 0
\end{equation}
for the moment generating function
$$
M(z) = 1+48  z^2+5184  z^4+720384  z^6+113304576  z^8+19186556928
z^{10}+\dotsm
.
$$
The moment generating function, also called \emph{Green's function},
carries essential information about the random walk:
\begin{enumerate}[1.]
 \item 
The moments of the transition operator reproduce the $n$-step return
probabilities
$$
p^{(n)}(e,e) = \frac{m_n}{N^n}
$$
\item  The smallest positive singularity of $M(z)$ is the reciprocal of the spectral radius
 $\rho$ which governs the asymptotics of the random walk
 \cite[Ch.~2]{Woess:2000},
 in particular, 
 $$
 m_n\propto\rho^n
 $$
\end{enumerate}
In our example the spectral radius can be computed from the discriminant
of equation   \eqref{eq:XYZ:arcsine}. The result
is $\rho=8\sqrt{y_0}\approx  15.08724$ where $y_0$ is the largest real
root of the cubic equation
$$
y^3+326  y^2 -1135  y -132    = 0
.
$$

\subsection{A rational example}
\label{ssec:ex:rational}
Non-commutative rational functions have linearizations just like polynomials,
(see, e.g.,  \cite{HeltonMaiSpeicher:ApplicationsOfRealizations} for excellent
discussion about relations between linearizations and questions in free
probability) and consequently our method is not restricted to
polynomials but also allows to compute conditional expectations and
distributions of non-commutative rational functions in free random variables.
Let us illustrate this with the concrete example of the rational function
$r(X,Y)=X(1-a X-b Y)^{-1}X$ which will show that nevertheless
certain technical issues arise.
First we must put conditions on the coefficients $a$ and $b$ in order to
ensure invertibility of $1-aX-bY$.
Next we compute the conditional expectation of $(1-z r(X,Y))^{-1}$
onto $X$ and $Y$ and then integrate the latter in order to determine the
distribution of  $X(1-aX-bY)^{-1}X$.
Using the method described in Appendix~\ref{app:linearization} we obtain the
linearization
$$
\left(1-zX(1-aX-bY)^{-1}X\right)^{-1}
= u^t(I-L)^{-1}v
$$
where  $L=C_X X+C_Y Y$ with
\begin{align*}
C_X=
\begin{bmatrix}
	0 & z \\
	1 & a   
\end{bmatrix}
\quad
C_Y=
\begin{bmatrix}
	0 & 0 \\
	0 & b
\end{bmatrix}
\end{align*}
and $u=v=(1,0)^t$.

Let us record some observations here. In order to be able to use the iteration scheme from
Lemma~\ref{lem:iteration} it is necessary to start with the correct constant terms
$F_X=\beta_1(X)I$ and $F_Y=\beta_1(Y)I$, however there is a catch:
The constant term in $F_X(z)=\fbeta{X}(X(I-L)^{-1})$ is not equal to
$\beta_1(X)$, since $L$ lacks the factor $z$ and there are infinitely many
terms in the series expansion which do not depend on $z$. Therefore
in order to be able to distinguish the correct solution coming from
Lemma~\ref{lem:iteration} we introduce an extra parameter $s\in\IC$ and 
consider $F_X(s,z)=\fbeta{X}(X(I-s L)^{-1})$. Then our iteration converges to the
unique solution of the system of equations for the matrices $F_X$ and $F_Y$ whose entries are
analytic in $s$. In order to evaluate the conditional expectation
in  $X=a$ and $Y=b$ for  some free variables $a,b\in\alg{M}$, we have to make
sure that the corresponding functional
$\fbeta{X}\left(X(I-sL)^{-1}\right)$  is analytic at $s=1$.
The following rough estimate will ensure this.

We expand the Neumann series into all words $W\in\alg{X}^*$ over the alphabet
$\alg{X}=\{X,Y\}$.
For a word $W=W_1W_2\cdots W_n\in \alg{X}^*$ we denote by
$C_W=C_{W_1}C_{W_2}\cdots C_{W_n}$ the corresponding coefficient matrix.
The length  of a word $W$ will be denoted  by $\abs{W}$
and the number of occurrences of the letters $X$ and $Y$ by
$\abs{W}_X$ and $\abs{W}_Y$, respectively,
i.e.,  $\abs{W}=\abs{W}_X+\abs{W}_Y$. 
Then we have
\begin{align*}
	\fbeta{X}(X(I-sL)^{-1})=\sum_{W\in\alg{X}^*} s^{\abs{W}}\fbeta{X}(XW)C_W.
\end{align*}
A standard estimate gives
$\abs{\fbeta{X}(XW)}\leq
2^{\abs{W}+1}\norm{X}^{\abs{W}_X+1}\norm{Y}^{\abs{W}_Y}$.
Moreover it is easy to verify that $\norm{C_X}^2\leq 1+a^2+\abs{z}^2$ and
$\norm{C_Y}=\abs{b}$.
Thus we have
\begin{align*} 
  \abs{\fbeta{X}(X(I-sL)^{-1})}
  &\leq \sum_{W\in\alg{X}^*} s^{\abs{W}}\abs{\fbeta{X}(XW)}\norm{C_W}\\
	&\leq \sum_{W\in\alg{X}^*} s^{\abs{W}} 2^{\abs{W}+1}\norm{X}^{\abs{W}_X+1}\norm{Y}^{\abs{W}_Y}\norm{C_X}^{\abs{W}_X+1}\norm{C_Y}^{\abs{W}_Y}\\
	&=2 \norm{X}\left(1-2 s(\norm{X} \norm{C_X}+\norm{Y} \norm{C_Y})\right)^{-1}.
\end{align*} 
The last equality holds whenever $2 s(\norm{X} \norm{C_X}+\norm{Y} \norm{C_Y}<1$
and a sufficient condition for this inequality to hold for $s=1$ is $2(\norm{X}\sqrt{1+a^2+\abs{z}^2}+\norm{bY})<1$.

If we abbreviate $R=(1-aX-bY)^{-1}$ and $\Psi = (1-zXRX)^{-1}$ then a short calculation using the Schur
complement results in
$$
(I-L)^{-1} =
\begin{bmatrix}
  \Psi & z\Psi XR\\
  RX\Psi & R +zRX\Psi XR
\end{bmatrix}
.
$$
It follows that
$$
F_Y = \fbeta{Y}(YL) =
\begin{bmatrix}
  0 & f_{y,12}\\
    0 & f_{y,22}
\end{bmatrix}
$$
and
\begin{align}
\label{eq:ratexample:EX}
\IE_X\left[\Psi\right]
  &=
    (1-bf_{y,22} -aX)
    \left(
    1-bf_{y,22} - a X -     zX^2
    \right)^{-1}
    \\
  &=
    \left(
    1-zX(1-  a X -  bf_{y,22})^{-1}X
    \right)^{-1}
    \\
\IE_Y\left[\Psi\right]
  &=
    (1-f_{x,12}-af_{x,22} -bY)
    \left(
    1-f_{x,12}-af_{x,22} - z(f_{x,21}+\det F_X) -b(1+f_{x,21}Y
    \right)^{-1}
  \\
  M(z)
  &= \frac{
    1-f_{x,12}-af_{x,22}-bf_{y,22}
    }{
    1-f_{x,12}-af_{x,22} -bf_{y,22}- z(f_{x,21}+\det F_X) -bf_{x,21}f_{y,22}
    }
\end{align}

\subsubsection{Bernoulli}
If both $X$ and $Y$ have Bernoulli distribution we obtain
\begin{align*}
  f_{y,22}(z)
  &=\frac{
    \sqrt{  ((z-1)^2-(a-b)^2)((z-1)^2-(a+b)^2)  } - (z-1)^2+a^2-b^2
    }{
    2b(z-1)
    }
  \\
  M(z) &=1+\frac{
  z(z-1)\sqrt{ ((z-1)^2-(a-b)^2)((z-1)^2-(a+b)^2)}
  }{
   ((z-1)^2-(a-b)^2)((z-1)^2-(a+b)^2)
  }
\end{align*}
Note that in this case $X^2=1$ and our element $X(1-aX-bY)^{-1}X$ has the same distribution
as  $(1-aX-bY)^{-1}$ which can be obtained by additive free convolution as
well. However this is not true of the conditional expectation \eqref{eq:ratexample:EX}.
A plot of the density for $a=b=\frac{1}{8}$ is shown in
Fig.~\ref{fig:x_rational}.

\subsubsection{Semicircular}
If both $X$ and $Y$ have semicircle distribution then the moment generating
function
satisfies the quintic equation
\begin{multline*}
16b^2z^6M(z)^5+(-4(8b^2+a^2)z^5+(24b^4-36a^2b^2-a^4)z^4)
M(z)^4
\\
+(16b^2z^5-4(8b^4+(-8a^2-5)b^2-2a^2)z^4-(4b^2+a^2)(7b^2-10a^2)z^3+(9b^6-25a^2b^4+22a^4b^2+2a^6)z^2)
M(z)^3
\\
+(4(4b^4-4b^2-a^2)z^4+(40b^4+(-48a^2-4)b^2-8a^4-5a^2)z^3
\\
-(16b^6+(-56a^2-8)b^4+(38a^4+a^2)b^2+2a^6+9a^4)z^2
-(b^2+a^2)
(5b^4-7a^2b^2+6a^4)z+(b-a)(b+a)(b^2+a^2)^3)
M(z)^2
\\
+(-4(4b^4+(-4a^2-1)b^2-a^2)z^3+(8b^6+(-48a^2-12)b^4+(16a^4+2a^2)b^2+12a^4+a^2)z^2
\\
+(9b^6-2a^2b^4+(-a^4-a^2)b^2+10a^6+a^4)z-2(b-a)(b+a)(b^2+a^2)^3)
M(z)
\\
+(4a^2+1)(2b^2-a)(2b^2+a)z^2-(4b^6-a^2b^2+4a^6+a^4)z+(b-a)(b+a)(b^2+a^2)^3
\end{multline*}

\begin{figure}[h]
	\centering{}
	\includegraphics[width=0.43\textwidth]{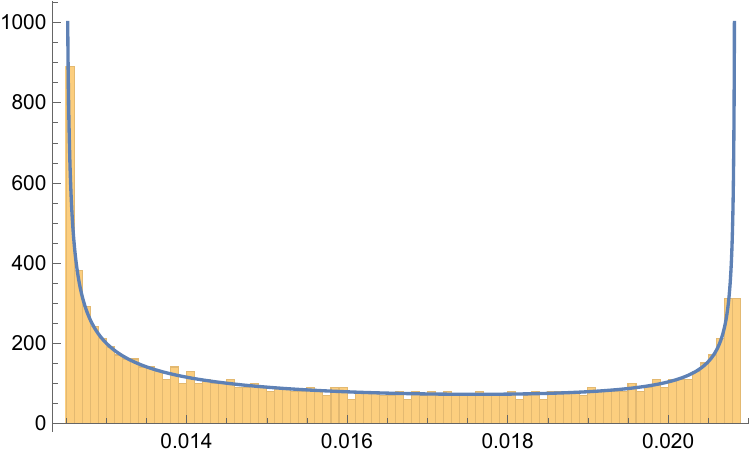}
	\caption{Density of $a(1-a-b)^{-1}a$ for $a,b$ with distribution
          $\frac{1}{2}(\delta_{-1/8}+\delta_{1/8}).$} together with a random matrix approximation.
	\label{fig:x_rational}
\end{figure}

\clearpage{}
\appendix{}
\section{An algebraic approach to Boolean cumulants}
\label{sec:algbool}
In this first appendix we present purely algebraic proofs of basic facts about
Boolean cumulants\ and implement a formal calculus on the double tensor algebra, which avoids the complications discussed in Remarks~\ref{rem:42} and~\ref{rem:augmentation}~\eqref{rem:item:augmentation}.

\subsection{Algebraic proofs of Lemma~\ref{lem:productformula} and
 Corollary~\ref{cor:RecursiveBoolProd}}
\label{ssec:algproof}
\begin{proof}[Proof of Lemma~\ref{lem:productformula}]
  Induction on $n$.
  We compute $\varphi(a_1a_2\cdots a_n)$ in two ways.
  First consider $a_pa_{p+1}$ as one factor 
  and apply recurrence \eqref{eq:recurrenceboolcum},
  splitting the sum into two at
  the entry $a_pa_{p+1}$.
  \begin{multline}
    \label{eq:phiap2}
    \varphi(a_1a_2\dotsm (a_pa_{p+1})a_{p+2}\cdots a_n)
    \\
    \begin{aligned}[t]
      &=
      \begin{multlined}[t]
        \sum_{k=1}^{p-1}
        \beta_k(a_1,a_2,\dots,a_k) 
        \,
        \varphi(a_{k+1}a_{k+2} \cdots a_n)
        \\
        + \sum_{k=p+1}^{n-1}
        \beta_{k-1}(a_1,a_2,\dots,a_pa_{p+1},a_{p+2},\dots,a_k) 
        \,
        \varphi(a_{k+1}a_{k+2} \cdots a_n)
        \\
        +         \beta_{n-1}(a_1,a_2,\dots,a_pa_{p+1},a_{p+2},\dots,a_n) 
      \end{multlined}
    \end{aligned}
  \end{multline}
  The left hand side is unchanged if we consider $a_p$ and $a_{p+1}$
  as separate factors.
  We apply recurrence \eqref{eq:recurrenceboolcum}, (artificially) splitting the sum 
  at the entry $a_p$.
  \begin{multline}
    \label{eq:phiap}      
    \varphi(a_1a_2\dotsm a_pa_{p+1}a_{p+2}\cdots a_n)
    \\
    \begin{aligned}[t]
      &=
      \begin{multlined}[t]
        \sum_{k=1}^{p-1}
        \beta_k(a_1,a_2,\dots,a_k) 
        \,
        \varphi(a_{k+1}a_{k+2} \cdots a_n)
+ \beta_{p}(a_1,a_2,\dots,a_p) \,\varphi(a_{p+1}a_{p+2}\cdots a_n)
        \\
        + \sum_{k=p+1}^{n-1}
        \beta_{k}(a_1,a_2,\dots,a_k) 
        \,
        \varphi(a_{k+1}a_{k+2} \cdots a_n)
+         \beta_{n}(a_1,a_2,\dots,a_n) 
      \end{multlined}
    \end{aligned}
  \end{multline}
  The first part is the same as in     \eqref{eq:phiap2}.
  By induction hypothesis, the second part of \eqref{eq:phiap2} can be
  replaced by
  \begin{multline*}
    \sum_{k=p+1}^{n-1}
    \beta_{k-1}(a_1,a_2,\dots,a_pa_{p+1},a_{p+2},\dots,a_k) 
    \,
    \varphi(a_{k+1}a_{k+2} \cdots a_n)
    \\
    =
    \sum_{k=p+1}^{n-1}
    (\beta_p(a_1,a_2,\dots,a_p)\beta_{k-p}(a_{p+1},a_{p+2},\dots,a_k)
    + \beta_{k}(a_1,a_2,\dots,a_k))
    \,
    \varphi(a_{k+1}a_{k+2} \cdots a_n)
    .
  \end{multline*}
  On the other hand, applying recurrence
  \eqref{eq:recurrenceboolcum} to
  the second part of \eqref{eq:phiap}  we obtain
  \begin{multline*}
    \beta_{p}(a_1,a_2,\dots,a_p) \,\varphi(a_{p+1}a_{p+2}\cdots a_n)
    \\
    = \beta_{p}(a_1,a_2,\dots,a_p)
    \sum_{k=p+1}^{n-1}
    \beta_{k-p}(a_{p+1},a_{p+2},\dots,a_k)\,\varphi(a_{k+1}a_{k+2}\cdots
    a_n)
    \\
    +
    \beta_{p}(a_1,a_2,\dots,a_p)
    \,
    \beta_{n-p}(a_{p+1},a_{p+2},\dots,a_n)
  \end{multline*}
  Finally canceling equal terms from  \eqref{eq:phiap2} and
  \eqref{eq:phiap} we arrive at     \eqref{eq:lem:productformula}.	
\end{proof}

\begin{proof}[Proof of Corollary~\ref{cor:RecursiveBoolProd}]
  By induction on the number $r$ of multiplication signs.
  For $r=0$ there is nothing to prove;
  for $r=1$ this is Lemma~\ref{lem:productformula}.

  Assume that the formula holds up to $r-1$ multiplication signs and consider a
  Boolean cumulant of the form \eqref{eq:RecursiveBoolProd}
with $r$ multiplication signs,
  i.e., $r=n-m-1$.
  Now apply \eqref{eq:lem:productformula}
  to remove the first multiplication
  \begin{multline*}
    \beta_{m+1}(a_1a_2\cdots a_{d_1},a_{d_1+1}a_{d_1+2}\cdots a_{d_2},\ldots,a_{d_{m}+1}a_{d_{m}+2}\cdots a_{n}) \\
    = \beta_{1}(a_1)\,
       \beta_{m+1}(a_2\cdots a_{d_1},a_{d_1+1}a_{d_1+2}\cdots    a_{d_2},\ldots,a_{d_{m}+1}a_{d_{m}+2}\cdots a_{n})\\
    +    \beta_{m+2}(a_1,a_2\cdots a_{d_1},a_{d_1+1}a_{d_1+2}\cdots    a_{d_2},\ldots,a_{d_{m}+1}a_{d_{m}+2}\cdots a_{n})
    .
  \end{multline*}
   In both terms there are now $r-1$ multiplications and the induction
   hypothesis can be applied to obtain   \eqref{eq:RecursiveBoolProd}.
\end{proof}

\subsection{Boolean cumulants via tensor algebras}

For a vector space $V$ denote by
$\tensT(V) = \bigoplus_{n\geq 0}V^{\otimes n}$ its \emph{tensor algebra}
and $\tensTb(V) = \bigoplus_{n>0}V^{\otimes n}$ its augmentation ideal,
i.e., $\tensTb(V)=\ker\epsilon$, where $\epsilon$ is 
the projection onto $\IC=V^{\otimes 0}$
called the \emph{counit}.
In the following we have to deal with two different tensor products
(in addition to the internal product in the algebra), which makes a total of
three different multiplication operations.
In order to distinguish different levels in the hierarchy of tensors
we will denote the multiplication in $\tensT(V)$
by the symbol $\itensor$ and thus elementary tensors
are written $a_1\itensor a_2\itensor\dotsm\itensor a_n$.
 The ``outer'' tensor on the double tensor algebra
$\tensT(\tensTb(V))$ is denoted by the usual symbol $\otimes$, i.e.,
any element of $\tensT(\tensTb(V))$ is a sum of simple tensors of the form
$$
(a_1\itensor a_2\itensor\dotsm\itensor a_k)\otimes (b_1\itensor b_2\itensor\dotsm\itensor
b_l)\otimes\dotsm\otimes (c_1\itensor c_2\itensor\dotsm\itensor c_m)
.
$$
The tensor algebra has the following fundamental extension property
\cite[Prop.~1.1.2]{LodayVallette:2012}:

\begin{lemma}
  \label{lem:derivTV}
  Let $V$ be a vector space and $M$ a $\tensT(V)$-bimodule.
  Then any linear map $f:V\to M$ can be extended to a derivation
  $\tensT(f):\tensT(V)\to M$ by setting
  $$
  \tensT(f)(v_1\itensor v_2\itensor \dotsm \itensor v_n) =
  \sum_{k=1}^n v_1\itensor v_2\itensor \dotsm \itensor v_{k-1}\itensor f(v_k) \itensor
  v_{k+1}\itensor v_{k+2}\itensor \dotsm \itensor v_n
  $$
\end{lemma}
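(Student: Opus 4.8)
The plan is to verify directly that the explicit formula defines a well-defined linear map satisfying the Leibniz rule; uniqueness then follows automatically from the fact that $\tensT(V)$ is generated by $V$. Throughout, the key point of interpretation is that the symbol $\itensor$ is overloaded: between the $v_i$ it denotes the associative product of $\tensT(V)$, but where it flanks $f(v_k)\in M$ the expression is to be read as the bimodule product $(v_1\dotsm v_{k-1})\cdot f(v_k)\cdot(v_{k+1}\dotsm v_n)$, using the left and right actions of $\tensT(V)$ on $M$.

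First I would address well-definedness. On the homogeneous component $V^{\otimes n}$ the right-hand side is multilinear in $v_1,\dots,v_n$, since each summand $(v_1\dotsm v_{k-1})\cdot f(v_k)\cdot(v_{k+1}\dotsm v_n)$ is linear in every $v_i$ because $f$ is linear and the two module actions are bilinear. By the universal property of the tensor product this multilinear map $V^n\to M$ descends to a unique linear map $V^{\otimes n}\to M$, and assembling these over $n$ (the empty sum giving $\tensT(f)(1)=0$ on $V^{\otimes 0}$) yields a linear map $\tensT(f):\tensT(V)\to M$. Note that for $n=1$ the formula reduces to $f(v_1)$, so $\tensT(f)$ genuinely extends $f$.

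Next I would check the Leibniz rule $\tensT(f)(xy)=\tensT(f)(x)\cdot y+x\cdot\tensT(f)(y)$. By bilinearity of the product and of the module actions it suffices to test on elementary tensors $x=v_1\itensor\dotsm\itensor v_m$ and $y=v_{m+1}\itensor\dotsm\itensor v_n$, whose product is the concatenation $v_1\itensor\dotsm\itensor v_n$. Applying the defining formula and splitting the sum $\sum_{k=1}^n$ at $k=m$ is the heart of the argument: for $k\le m$ one factors the trailing string as $(v_{k+1}\dotsm v_m)\cdot y$ and pulls $y$ out on the right (via the module axiom) to obtain $\tensT(f)(x)\cdot y$; for $k>m$ one factors the leading string as $x\cdot(v_{m+1}\dotsm v_{k-1})$, pulls $x$ out on the left, and reindexes by $j=k-m$ to recognize the bracketed sum as $\tensT(f)(y)$. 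Summing the two pieces gives the claim. Uniqueness then follows since any derivation $D$ restricting to $f$ on $V$ must satisfy $D(1)=0$ (because $D(1)=D(1\cdot1)=2D(1)$) and is determined on every product $v_1\dotsm v_n$ by iterating the Leibniz rule, which reproduces the stated formula.

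The computation itself is routine; the only step requiring genuine care is the bookkeeping described above, namely keeping straight the overloaded meaning of $\itensor$ and correctly factoring the leading and trailing strings so that the module axioms $(ab)\cdot\mathfrak m=a\cdot(b\cdot\mathfrak m)$ and $\mathfrak m\cdot(ab)=(\mathfrak m\cdot a)\cdot b$ apply in the split sum. Once this reading is fixed, both the multilinearity argument for well-definedness and the index splitting for the Leibniz rule are immediate.
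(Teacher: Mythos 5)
Your proof is correct, and it is worth noting that the paper itself does not prove this lemma at all: it is stated as a known fact with a citation to \cite[Prop.~1.1.2]{LodayVallette:2012}, so your blind write-up is in effect supplying the proof the paper outsources. Your argument is the standard one and it is complete: you correctly identify the overloaded meaning of $\itensor$ (the terms flanking $f(v_k)$ act via the bimodule structure, so each summand lies in $M$); well-definedness follows from multilinearity of each summand together with the universal property of $V^{\otimes n}$; and the Leibniz rule on a product of elementary tensors falls out of splitting the sum $\sum_{k=1}^{n}$ at the concatenation index $m$ and invoking the bimodule axioms $(ab)\cdot\mathfrak m=a\cdot(b\cdot\mathfrak m)$ and $\mathfrak m\cdot(ab)=(\mathfrak m\cdot a)\cdot b$ to absorb the untouched factor. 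The uniqueness remark, via $D(1)=0$ and iteration of the Leibniz rule on generators, is a correct bonus not claimed in the statement. The only implicit hypothesis you rely on (as does the paper) is that the bimodule actions are unital, which is needed both for $D(1)=D(1\cdot 1)=2D(1)$ and for the degenerate case $x=1$ or $y=1$ in the Leibniz verification; it would be worth stating this in one clause, but it is not a gap in substance.
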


In the following the underlying vector space will always be an algebra
$V=\alg{A}$,
whose multiplication will be denoted as usual by $ab$ or $a\cdot b$.

For example, let $\alg{A}$ be the free algebra and 
$\bdeconc:\alg{A}\to \alg{A}\otimes\alg{A} \subseteq
\tensTb(\alg{A})\otimes\tensTb(\alg{A})$ the reduced deconcatenation coproduct
$$
\bdeconc(a_1a_2\dotsm a_n) = \sum_{k=1}^{n-1} a_1a_2\dotsm a_k\otimes
a_{k+1}\dotsm a_n
;
$$
let further
$$ 
\fdeconc(a_1a_2\dotsm a_n) = a_1\itensor a_2\itensor \dotsm\itensor
a_n\in \tensT(\alg{A})
$$
the full deconcatenation, i.e., $\fdeconc(w_1w_2) = \fdeconc(w_1\itensor w_2)$.
Then the derivation from Lemma~\ref{lem:derivTV} is
$$
\tensT(\bdeconc)( w_1\itensor w_2\itensor \dotsm\itensor w_n) =
\sum_{k=1}^{n-1} ((w_1\itensor w_2\itensor\dotsm\itensor w_{k-1}) \otimes 1)\itensor
\bdeconc(w_k) \itensor(1\otimes(w_{k+1}\itensor\dotsm\itensor w_{n}))
$$
We will denote this derivation by $\bdeconc$ as well.

The moment and cumulant functionals are defined on $\tensTb(\alg{A})$ via
\begin{align*}
  \varphi(w_1\itensor w_2\itensor\dotsm\itensor w_n)
  &= \varphi(w_1w_2\dotsm w_n)
  \\
  \beta(w_1\itensor w_2\itensor\dotsm\itensor w_n)
  &= \beta_n(w_1, w_2,\dots, w_n)
\end{align*}
and the recurrence \eqref{eq:recurrenceboolcum} can be reformulated with the
second level deconcatenation operator $\bdeconci: \tensTb(A)\to
\tensTb(A)\otimes\tensTb(A)$ defined by
$$
\bdeconci (w_1\itensor w_2\itensor\dotsm\itensor w_n) =
\sum_{k=1}^{n-1} w_1\itensor w_2\itensor\dotsm\itensor w_k \otimes w_{k+1}\itensor w_{k+2}\itensor\dotsm\itensor w_n
$$
as follows:
\begin{equation*}
  \varphi(w_1\itensor w_2\itensor\dotsm \itensor w_n)  =
    \beta(w_1\itensor w_2\itensor\dotsm \itensor w_n)  +
    (\beta\otimes\varphi) \bdeconci (w_1\itensor w_2\itensor\dotsm\itensor w_n) 
\end{equation*}

On the other hand, if we
define the full Boolean cumulant
$\fbeta{}(w) =\beta(\fdeconc(w))$, i.e.
$$\fbeta{}(a_1a_2\dotsm a_n) = 
\beta(a_1\itensor a_2\itensor \dotsm \itensor a_n)
= \beta_n(a_1, a_2,\dots, a_n)
$$
and extend it to 
$\tensTb(\alg{A})$ via
\begin{align*}
  \fbeta{}(w_1\itensor w_2\itensor\dotsm\itensor w_n)
  &= \fbeta{}(w_1w_2\dotsm w_n)
    \\
  &= \beta(\fdeconc(w_1)\itensor\fdeconc(w_2)\itensor\dotsm\itensor\fdeconc( w_n))
\end{align*}

then the full Boolean cumulants satisfy the corresponding recurrence
\begin{equation*}
\varphi(w) = \fbeta{}(w) + (\fbeta{}\otimes\varphi) \bdeconc(w)  
\end{equation*}
which we can generalize to $\tensTb(\alg{A})$.
In order to formulate it, we introduce some further notation.
\begin{definition}
An interval partition is a partition whose blocks are intervals.
The interval partitions  $\IntPart(\{1,2,\dots,n\})$ are thus in bijection with
compositions  $\Comp(n) = \{ (k_1,k_2,\dots,k_m) \mid k_1+k_2+\dotsm k_m=n\}$.
The bijection maps an interval partition $\pi\in\IntPart(n)$
to the sequence of block lengths (in their natural order).
We denote this bijection by $C:\IntPart(n)\to\Comp(n)$.
A composition $\lambda=(k_1,k_2,\dots,k_m)\in\Comp(n)$
is uniquely determined by its partial sums, called \emph{descents}
$$
\descents(\lambda) = \{k_1,k_1+k_2,\dots,k_1+k_2+\dotsm + k_{m-1}\}
\subseteq \{1,2,\dots,n-1\}
$$
and thus the compositions of order $n$ are in bijection with the Boolean lattice
of order $n-1$,
The descents of an interval partition are the descents of
the corresponding composition $\descents(\pi) = \descents(C(\pi))$.
This bijection is a poset anti-isomorphism  and
$\pi\leq\rho$ if and only if $\descents(\rho)\supseteq\descents(\pi)$.
In particular, $\descents(\pi\vee\rho) = \descents(\pi)\cap\descents(\rho)$.
\end{definition}

\begin{proposition}
  \label{prop:boolcumrectensor}
  \begin{equation*}
    \beta(w_1\itensor w_2\itensor\dotsm\itensor w_n) =
    \fbeta{}(w_1\itensor w_2\itensor\dotsm\itensor w_n) + (\fbeta{}\otimes\beta)\circ\bdeconc(w_1\itensor w_2\itensor\dotsm\itensor w_n) 
  \end{equation*}
\end{proposition}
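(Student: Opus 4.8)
The plan is to recognize the claimed identity as a tensorial repackaging of the recursive formula for Boolean cumulants with products as entries, i.e.\ Corollary~\ref{cor:RecursiveBoolProd}. By multilinearity it suffices to treat the case where each $w_i$ is a monomial $w_i = a^{(i)}_1 a^{(i)}_2\cdots a^{(i)}_{m_i}$; write $N = m_1 + m_2 + \dots + m_n$ for the total number of letters and let $\rho\in\IntPart(N)$ be the interval partition whose blocks group the letters according to the words $w_1,w_2,\dots,w_n$. The whole proof is then a matching of two sums indexed by the same combinatorial data.

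First I would apply Corollary~\ref{cor:RecursiveBoolProd} to $\beta(w_1\itensor\dotsm\itensor w_n)=\beta_n(w_1,\dots,w_n)$. This writes it as a sum over all positions $j\in\{1,2,\dots,N\}$ that are \emph{not} interior block boundaries of $\rho$, each contributing a product of two factors: $\beta_j$ evaluated at the first $j$ individual letters, times a Boolean cumulant \emph{with products as entries} formed from the tail of the block containing $j$ followed by the subsequent full words. The single term $j=N$ has empty second factor and equals $\beta_N(a^{(1)}_1,\dots,a^{(n)}_{m_n})$, which is precisely $\fbeta{}(w_1\itensor\dotsm\itensor w_n)$ by the definition of the fully factored cumulant. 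The positions $j$ that remain, namely those with $j<N$, are exactly the positions strictly interior to some word $w_k$.

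It then remains to identify these interior terms with $(\fbeta{}\otimes\beta)\circ\bdeconc(w_1\itensor\dotsm\itensor w_n)$. Here I would unfold the derivation $\bdeconc$: it sums over a choice of word $w_k$ and an internal cut $\ell$ with $1\le\ell\le m_k-1$, producing the left leg $w_1\itensor\dotsm\itensor w_{k-1}\itensor(a^{(k)}_1\cdots a^{(k)}_\ell)$ and the right leg $(a^{(k)}_{\ell+1}\cdots a^{(k)}_{m_k})\itensor w_{k+1}\itensor\dotsm\itensor w_n$. The crucial bookkeeping fact is that $\fbeta{}$ forgets the inner $\itensor$-structure of its argument and returns the ordinary Boolean cumulant of all constituent letters; hence $\fbeta{}$ applied to the left leg equals $\beta_j(a^{(1)}_1,\dots,a^{(k)}_\ell)$ with $j=m_1+\dots+m_{k-1}+\ell$, which is the first factor above, while $\beta$ applied to the right leg is exactly the product-entry cumulant appearing as the second factor. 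Under the bijection $j\leftrightarrow(k,\ell)$ between interior positions and internal cuts the two sums agree term by term.

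The main obstacle is therefore purely a matter of careful bookkeeping: checking that the index set $\{\,j : j \text{ not an interior boundary of }\rho,\ j<N\,\}$ of Corollary~\ref{cor:RecursiveBoolProd} is in bijection with the $(k,\ell)$-indexed terms of $\bdeconc$, and that under this bijection both factors match. As a sanity check, the case $n=1$ collapses to the defining recurrence \eqref{eq:recurrenceboolcum}: there $\beta(w_1)=\beta_1(w_1)=\varphi(w_1)$, $\fbeta{}(w_1)=\beta_{m_1}(a_1,\dots,a_{m_1})$, and the interior cuts reproduce the tail sum $\sum_{\ell}\beta_\ell(a_1,\dots,a_\ell)\,\varphi(a_{\ell+1}\cdots a_{m_1})$. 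Alternatively one could argue by induction on $N$ using Lemma~\ref{lem:productformula} directly, peeling off one letter at a time rather than invoking its iterated form, but the appeal to Corollary~\ref{cor:RecursiveBoolProd} keeps the correspondence most transparent.
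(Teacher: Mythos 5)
Your proposal is correct and follows essentially the same route as the paper: both proofs identify the tensorial identity with the product formula for Boolean cumulants with products as entries, regrouped according to the position of the first cut, with the uncut term giving $\fbeta{}(w_1\itensor\dotsm\itensor w_n)$ and the remaining terms matching $(\fbeta{}\otimes\beta)\circ\bdeconc$ term by term under the bijection between interior letter positions and internal cuts. The only cosmetic difference is that you invoke the recursive Corollary~\ref{cor:RecursiveBoolProd} directly, which already packages this regrouping, whereas the paper starts from Proposition~\ref{prop:boolprod} written via descent sets, splits off the partition $1_n$, groups the rest by the first descent, and re-sums the inner partitions to recover the right-leg cumulant.
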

\begin{proof}
  Let $k_i=\ell(w_i)$ be the lengths of the words
  and
  $\rho=\{I_1,I_2,\dots,I_m\}\in\IntPart(n)$  the induced interval
  partition,
  i.e., $\abs{I_j}=k_j$ and   $C(\rho) = (k_1,k_2,\dots,k_m)$
  with descent set $\descents(\rho) =\{r_1,r_2,\dots,r_{m-1}\}$
  where $r_0=d_0(\rho)=0$, $r_1=d_1(\rho)=k_1$, $r_2=d_2(\rho)=k_1+k_2$, etc.
  Then if $w_i=a_{r_{i-1}+1}a_{r_{i-1}+2}\dotsm a_{r_i}$
  the product   formula
  \eqref{eq:BoolProd} can be rewritten in terms of descents as
  \begin{multline*}
    \beta_{m}(a_1a_2\cdots a_{r_1},a_{r_1+1}\dotsm
    a_{r_2},\ldots,a_{r_{m-1}+1}\cdots a_n)
    \\
    \begin{aligned}
    &=\sum_{\substack{\pi \in \IntPart(n)\\
        \descents(\pi)\cap\descents(\rho)=\emptyset}}\beta_\pi(a_1,\ldots,a_n)
    \\
    &= \beta_n(a_1,a_2,\dots, a_n)
    + \sum_{s=1}^m
    \sum_{\substack{\pi \in \IntPart(n)\setminus \{1_n\}\\
    \descents(\pi)\cap\descents(\rho)=\emptyset \\ d_1(\pi)\in I_s}}\beta_\pi(a_1,\ldots,a_n)
    \end{aligned}
  \end{multline*}
where  we split off the term corresponding to $\pi=1_n$ (i.e.,
    $\descents(\pi)=\emptyset$)
    and regroup the remaining terms according to the location of the first
    descent $d_1(\pi)$.
    Fix $1\leq s\leq m$ and assume that $d=d_1(\pi)\in I_s$.
    Then the first block of $\pi$ is $B_1 = \{1,2,\dots,d_1\}$ and
    $\pi=\{B_1\}\cup \pi'$ with $\pi'=\pi|_{[d+1,n]} \in \IntPart[d+1,n]$ and
    $\descents(\pi')\cap\descents(\rho')=\emptyset$ where
    $\rho'=\rho|_{[d+1,n]}$.
    Now by assumption $r_{s-1}<d<r_s$ and
    thus
    \begin{multline*}
    \sum_{\substack{\pi \in \IntPart(n)\setminus \{1_n\}\\
    \descents(\pi)\cap\descents(\rho)=\emptyset \\ d_1(\pi)\in I_s}}\beta_\pi(a_1,\ldots,a_n)\\
    \begin{aligned}
      &=
        \sum_{d=r_{s-1}+1}^{r_s-1} \beta_{d}(a_1,a_2,\dots,a_{d}) \sum_{
        \substack{\pi' \in \IntPart([d+1,n])\\
    \descents(\pi')\cap\descents(\rho')=\emptyset }}
      \beta_{\pi'}(a_{d+1},a_{d+2},\ldots,a_n)
\\
      &=
      \sum_{d=r_{s-1}+1}^{r_s-1} \beta_{d}(a_1,a_2,\dots,a_{d})
      \,
      \beta_{m-s+1}(a_{d+1}a_{d+2}\dotsm a_{r_s},w_{s+1},w_{s+2},\dots,w_n)
\\
      &= (\beta^\delta\otimes\beta)(
        (w_1 \itensor w_2\itensor \dotsm\itensor w_{s-1}\otimes 1)
        (\bdeconc w_s)
        (1\otimes (w_{s+1} \itensor w_{s+2}\itensor \dotsm\itensor w_{m}))
        )
    \end{aligned}
    \end{multline*}
\end{proof}

\begin{remark}
  Although freeness implies property $(CAC)$
  (Lemma~\ref{lem:beta(ab)=0})
  and thus $\fbeta{}(w_1\itensor
  w_2\itensor \dotsm \itensor w_n)=0$ if $w_1$ begins with $X$ and $w_n$ ends
  in $Y$,
  it is important to keep in mind that this is not necessarily true for
  $\beta{}(w_1\itensor   w_2\itensor \dotsm \itensor w_n)$.
\end{remark}

\begin{example}[Cumulants of products]
  Let us apply  Proposition~\ref{prop:boolcumrectensor}
  to
  $$
  \Psi^\itensor = \sum_{n=0}^\infty (XY)^{\itensor n}z^{2n} 
  $$
  to compute the cumulant generating function
  $$
  B(z)= \beta(\Psi^\itensor) = \sum_{n=0}^\infty \beta_n(XY)z^{2n}
  .
  $$
  From this we can then obtain the moment generating
function
$$
M(z)= \frac{1}{1-B(z)}
.
$$
Applying the Leibniz rule to the identity $\Psi^\itensor\itensor (1- z^2XY) =
1$ we obtain 
  \eqref{eq:derivationofpsi}
  \begin{equation*}
\bdeconc(\Psi^\itensor) =
    (\Psi^\itensor\itensor X)\otimes (Y\itensor
    \Psi^\itensor)
    z^2
    .
  \end{equation*}
  Now by Proposition~\ref{prop:boolcumrectensor} we have
  \begin{align*}
    \beta(\Psi^\itensor)
    &= \fbeta{}(\Psi^\itensor) + \fbeta{}(\Psi^\itensor\itensor X) \beta(Y\itensor \Psi^\itensor) z^2
  \end{align*}
  and similarly
  \begin{align*}
    \bdeconc(Y\itensor \Psi^\itensor)
    &= (Y\otimes 1)\itensor \bdeconc(\Psi^\itensor)
    \\
    &= (Y\itensor \Psi^\itensor\itensor X)\otimes (Y\itensor \Psi^\itensor)z^2
    \\
    \beta(Y\itensor \Psi^\itensor)
    &= \fbeta{}(Y\itensor \Psi^\itensor) +
      \fbeta{}(Y\itensor \Psi^\itensor\itensor X)\beta (Y\itensor
      \Psi^\itensor)z^2
      .
      \intertext{Consequently,}
      \beta(Y\itensor \Psi^\itensor)
    &= \frac{\fbeta{}(Y\itensor \Psi^\itensor)}{1-\fbeta{}(y\itensor \Psi^\itensor\itensor
      X)z^2}
    \\
    \beta(\Psi^\itensor) 
    &= \fbeta{}(\Psi^\itensor) + \frac{\fbeta{}(\Psi^\itensor\itensor
      X)\,\fbeta{}(Y\itensor \Psi^\itensor)z^2}{1-\fbeta{}(Y\itensor \Psi^\itensor\itensor
      X)z^2}
      \intertext{where}
      \fbeta{}(\Psi^\itensor\itensor X)
      &= \sum_{n=0}^\infty \beta_{2n+1}(X,Y,X,Y,\dots,X)z^{2n}\\
      \fbeta{}(Y\itensor \Psi^\itensor)
      &= \sum_{n=0}^\infty \beta_{2n+1}(Y, X,Y,X,\dots,Y)z^{2n}
  \end{align*}

  If we assume $x$ and $y$ free then we infer from the resolvent identity
$$
  \Psi = 1 + z^2XY\Psi
  $$
  that
  $\fbeta{}(\Psi^\itensor)=1$ and   $\fbeta{}(Y\itensor \Psi^\itensor\itensor X)=0$ and thus
  $$
  \beta(\Psi^\itensor)-1
  = \fbeta{}(\Psi^\itensor\itensor X)
    \,
    \fbeta{}(Y\itensor \Psi^\itensor)
  $$
Thus
$$
B(z) = \beta(\Psi^\itensor) = 1 + z \fbeta{X}(\Psi X)\fbeta{Y}(Y\Psi)
$$
and finally
$$
zB(z) = z + \omega_1(z)\,\omega_2(z)
$$
which reproduces
\cite[Theorem~3.2 (3)]{BelBerNewApproach} and
\cite[(10)]{ChistyakovGoetze:2011:arithmetic}.
\end{example}

\clearpage{}
\section{Rational series and linearizations}
\label{app:linearization}
The non-commutative free field and in particular non-commutative rational
functions have seen many applications in free probability recently
\cite{HeltonMaiSpeicher:ApplicationsOfRealizations}.
The main tool for their study are linearizations.
For technical reasons we restrict our study to regular rational functions,
i.e., rational functions which can be represented as formal power series.
These form a subalgebra of $\IC\llangle\bX\rrangle$ and can be characterized
in several ways \cite{BerstelReutenauer:2011}.
\begin{definition}
  For a series $S  \in \IC\llangle\bX\rrangle$ we
  denote the extraction of coefficients by
  $\langle S,w\rangle$, i.e.,
  $$
  S = \sum_{w\in\bX^*} \langle S,w\rangle\, w
  .
  $$

  A series $S\in \IC\llangle\bX\rrangle$ is called \emph{proper} if it has no
  constant term, i.e., $\langle S,1\rangle=0$.
  Proper series have a \emph{quasi-inverse}
  $$
  S^+ = \sum_{k=1}^\infty S^k \in \IC\llangle \bX\rrangle
  .
  $$
  $Q=S^+$ is the unique solution of the equation
  $$
  Q = S + QS.
  $$
  The algebra of rational series is the smallest subalgebra of
  $\IC\llangle\bX\rrangle$ which contains all  quasi-inverses of its proper
  elements. This implies that for any element $S$ without constant term the
  element $1-S$ is invertible and its inverse is given by the geometric
  series
  $$
  (1-S)^{-1}=  \sum_{k=0}^\infty S^k \in \IC\llangle \bX\rrangle
  $$
  and therefore rational.
  
  A series $S \in \IC\llangle\bX\rrangle$ is called \emph{recognizable}
  if there exists a matrix representation of the free monoid,
  i.e., a multiplicative map $\rho:\bX^*\to  M_n(\IC)$,
  and vectors $u,v\in\IC^n$ 
  such that the
  coefficients of $S$ are given by
  $$
  \langle S,w\rangle = u^t\rho(w) v
  .
  $$
  Here the representation $\rho$ is uniquely determined by the matrices
  $M_x=\rho(x)$ for $x\in\bX$.
  Indeed,   $\rho(x_1x_2\dotsm x_k) = M_{x_1}M_{x_2}\dotsm M_{x_k}$
  and thus a recognizable series has a \emph{linearization}
  $$
  S = u^t
  \Bigl(
  \sum_{w\in \bX^*}  \rho(w) \, w
  \Bigr) v
  = u^t\Bigl(I-\sum_{x\in\bX}M_xx\Bigr)^{-1}v
  .
  $$
  Conversely, any series which is given by a linearization is recognizable.
\end{definition}

\begin{definition}
For a letter $x\in\bX$ denote by $L_x$ the left annihilation operator on $\IC\llangle \bX\rrangle$,
i.e., for a word $w\in \bX^*$ set
$$
L_xw =
\begin{cases}
  w' & \text{if $w=xw'$}\\
  0 & \text{otherwise}
\end{cases}
$$
and extend this operator linearly to $\IC\llangle \bX\rrangle$.
For a word $w=x_1x_2\dotsm x_n\in\bX^*$  we denote by $L_w= L_{x_n}L_{x_{n-1}}\dotsm L_{x_1}$
the composition of the annihilation operators.

A subspace $U\subseteq \IC\llangle \bX\rrangle$ is called \emph{stable} if it is
invariant
under $L_{x}$ for all $x\in\bX$.
\end{definition}

\begin{remark}
With this notation the coefficients of a series are given by
\begin{equation}
  \label{eq:s,w=LwS,1}
\langle S, w\rangle = \langle L_wS,1\rangle
\end{equation}
\end{remark}

One of the main results of the theory of rational series is the following
\begin{theorem}[\cite{BerstelReutenauer:2011}]
  For a series $S\in\IC\llangle \bX\rrangle$ the following are equivalent,
  \begin{enumerate}[(i)]
   \item $S$ is rational.
   \item $S$ is recognizable.
   \item $S$ has a linearization.
   \item There is a finite dimensional stable  subspace of $\IC\llangle
    \bX\rrangle$ containing $S$.
  \end{enumerate}
\end{theorem}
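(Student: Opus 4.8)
The plan is to establish the cycle $(iii)\Rightarrow(iv)\Rightarrow(ii)\Rightarrow(iii)$ together with the equivalence $(i)\Leftrightarrow(ii)$. I would first observe that $(ii)$ and $(iii)$ are essentially two readings of the same data: given a matrix representation $\rho$ with $M_x=\rho(x)$, the multiplicativity $\rho(x_1\dotsm x_k)=M_{x_1}\dotsm M_{x_k}$ together with the expansion of the geometric series turns $\langle S,w\rangle=u^t\rho(w)v$ into $S=u^t(I-\sum_{x\in\bX}M_xx)^{-1}v$, and conversely. So the real content lies in the two transfer steps between linearizations and stable subspaces, plus the identification of rational with recognizable.

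For $(iii)\Rightarrow(iv)$ I would read a stable subspace directly off the resolvent vector of the linearization. Writing $V=(I-\sum_x M_xx)^{-1}v$, a vector whose $n$ entries $V_1,\dots,V_n$ lie in $\IC\llangle\bX\rrangle$, the defining relation $V=v+\sum_x M_x\,xV$ gives, after stripping a leading letter,
\[
L_xV_i=\sum_j(M_x)_{ij}V_j,
\]
so the finite-dimensional space $U$ spanned by $V_1,\dots,V_n$ is invariant under every $L_x$, hence stable, and it contains $S=u^tV=\sum_i u_iV_i$. For the converse $(iv)\Rightarrow(ii)$ I would use the shift operators themselves as the representation: fixing a basis $e_1,\dots,e_n$ of a finite-dimensional stable $U\ni S$, stability yields scalar matrices with $L_xe_i=\sum_j\rho(x)_{ji}e_j$. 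A direct computation using $L_w=L_{x_n}\dotsm L_{x_1}$ and the coefficient formula \eqref{eq:s,w=LwS,1}, namely $\langle S,w\rangle=\langle L_wS,1\rangle$, then reproduces the coefficients of $S$ in the form $u^t\rho(w)v$ with $v_j=\langle e_j,1\rangle=\epsilon(e_j)$; here the reversal of indices in $L_w$ is exactly compensated by passing to transposed matrices, so that $\rho$ is genuinely multiplicative on $\bX^*$. This is recognizability.

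It remains to tie in $(i)$. The direction recognizable$\,\Rightarrow\,$rational is immediate from the linearization: $\sum_x M_xx$ is a matrix of proper series, $(I-\sum_x M_xx)^{-1}$ is assembled from quasi-inverses, so its entries and any linear combination such as $S$ are rational. For rational$\,\Rightarrow\,$recognizable I would show that the recognizable series form a subalgebra of $\IC\llangle\bX\rrangle$ closed under quasi-inverse, via the classical Schützenberger constructions: sums from block-diagonal representations, products from Kronecker products of representations, and the quasi-inverse of a proper recognizable series from a block-triangular enlargement. Since the recognizable series contain all polynomials and the rational series are the smallest subalgebra closed under quasi-inverse, the inclusion follows.

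I expect the main obstacle to be the closure of the recognizable series under the quasi-inverse (the star operation): in contrast to sums and products, this requires building an enlarged representation whose resolvent reproduces the geometric series of the original while preserving properness. The index bookkeeping and transpose in the step $(iv)\Rightarrow(ii)$ is a second, more routine, technical point that must nonetheless be handled carefully.
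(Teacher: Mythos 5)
The statement you are proving is one the paper itself does not prove: it is quoted from Berstel--Reutenauer, and the only fragments of the argument that appear in the paper are the dictionary between (ii) and (iii) (built into the definition of recognizability) and the passage from a finite dimensional stable subspace to a linearization, which is exactly Algorithm~\ref{algo:linearization}. Your handling of those parts is correct and coincides with the paper's: the resolvent vector $V=(I-\sum_{x}M_x x)^{-1}v$ satisfies $V=v+\sum_x M_x\,xV$, whence $L_yV_i=\sum_j (M_y)_{ij}V_j$, so the span of the entries $V_i$ is a finite dimensional stable subspace containing $S=u^tV$; conversely, expanding $L_xe_i$ in a basis of a stable subspace and using $L_{w'w''}=L_{w''}L_{w'}$ together with \eqref{eq:s,w=LwS,1} produces a multiplicative representation, the index reversal being absorbed into the choice of convention exactly as you say.

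The gap sits in the one step that carries the real content, $(i)\Rightarrow(ii)$, precisely where you locate the difficulty. ``Products from Kronecker products of representations'' is not correct: the Kronecker (tensor) product of two representations recognizes the \emph{Hadamard} (coefficient-wise) product, $\langle S\odot T,w\rangle=\langle S,w\rangle\langle T,w\rangle$, not the Cauchy (concatenation) product that is needed for closure of the recognizable series as a subalgebra. For the Cauchy product one uses instead a block upper-triangular construction on $\IC^{n_1+n_2}$, with diagonal blocks $\mu_1(x),\mu_2(x)$ and upper-right block $\mu_1(x)\gamma_1\lambda_2$, plus a correction term accounting for constant coefficients. (Conversely, the quasi-inverse needs no enlargement at all: if $S$ is proper and recognized by $(\lambda,\mu,\gamma)$, then $\nu(x)=\mu(x)+\mu(x)\gamma\lambda$ recognizes $S^+$, the verification using properness in the form $\lambda\gamma=\langle S,1\rangle=0$.) A second, smaller overstatement is that recognizable $\Rightarrow$ rational is ``immediate'': one still has to show that the entries of $(I-N)^{-1}$, with $N$ a matrix of proper rational series, lie in the rational subalgebra, which requires an induction on the matrix size via Schur complements. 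Both points are classical and repairable, but as written your product-closure step fails, and with it the closure argument on which rational $\Rightarrow$ recognizable rests.
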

The last statement gives rise to the following algorithm for linearizations of rational series.

\begin{algorithm}
  \label{algo:linearization}
Assume our series $S$ is contained in a finite dimensional stable subspace
  spanned by a basis $(S_1,S_2,\dots,S_N)\subseteq \IC\llangle \bX\rrangle$, say
  $S=\sum u_i S_i$.
\begin{enumerate}[1.]
 \item 
  Since the subspace is stable, for any $x\in\bX$  we can expand
  $$
  L_xS_i = \sum \alpha_{ij}(x) S_j
  .
  $$
 \item 
  Collect these coefficients in a matrix $M_x=
  \left[
    \alpha_{ij}(x)
  \right]$.
 \item 
  Then for a word $w\in\bX^*$ after iteration we obtain
  $$
  L_wS_i = \sum_j (M_w)_{ij}S_j
  $$
  (recall that $L_{w'w''} = L_{w''}L_{w'}$).
 \item 
  From \eqref{eq:s,w=LwS,1} we infer
  $
  \langle S_i,w\rangle =  \langle L_wS,1\rangle = (M_wv)_i
  $ where $v_i=  \langle S_i,1\rangle$.
   \item 
Finally
$
\langle S,w\rangle = u^t M_w v
$
for every $w\in\alg{X}^*$,
i.e.,
$$
S = u^t
\Bigl(
I_N-\sum_{x\in\alg{X}} M_xx
\Bigr)^{-1}v
.
$$
\end{enumerate}
\end{algorithm}

To illustrate this let us start with polynomials which are clearly rational.
Let  $w\in\bX^*$ be a word.
A word $w''\in\bX^*$ is a \emph{right suffix} of $w$ if there exists a word
$w'\in\bX^*$
such that $w=w'w''$. Equivalently, $w''$ can be obtained from $w$ by applying the
left annihilation operator $L_{w'}$.

Let now $P\in\IC\langle \bX\rangle$ be a polynomial.
If we successively apply annihilation operators $L_X$, $X\in\bX$ we obtain
elements contained in the span of all right suffixes of the monomials occurring
in $P$ (including the empty word $1$). Clearly the span of $P$ and all its
right suffixes is stable and thus can be used to obtain a linearization for $P$.

From this basis we can immediately construct a basis of a stable subspace
containing $\Psi = (1-z^mP)^{-1}$ and construct a linearization of the latter,
where $m=\deg(P)$,
i.e., the length of the longest monomial in $P$.
Indeed let $\{S_1=P,S_2,\dots,S_r=1\}$ be a basis consisting of $P$ and
the right suffixes of $P$ spanning a stable invariant subspace for $P$.
Now we use the resolvent identity
$$
\Psi  = (1-z^mP)^{-1} = 1+z^mP\Psi
$$
to obtain
$
L_x\Psi =z^m (L_xP)\Psi
$
and thus $\{\Psi,z^{m_2}S_2\Psi,\dots,z^{m_{r-1}}S_{r-1}\Psi\}$ spans a stable subspace containing
$\Psi$, where $m_i=\deg S_i$.
Moreover $\langle S_i\Psi,1\rangle=0$ for $i=2,\dots,r-1$ and thus we can set
$u=v=e_1$.
The inclusion of the factor $z^{m_i}$ 
ensures that our linearization has the form $\Psi=u^{t}(I-zL)^{-1}v$,
where the entries of $L$ are polynomials in $z$ and thus has
a formal power series expansion around zero, which is essential in Section~\ref{sec:LinearizationAndCondExp}.
\begin{example}
  Consider $P(X,Y)=X+XYX$, then we are looking for a linearization of $\Psi=(1-z^3P)^{-1}$. We have $\Psi=1+z^3P \Psi$ and thus we have
  \begin{align*}
    S_1&=\Psi,
         L_X \Psi=z^3\Psi+z^3 YX \Psi=z^3S_1+z S_2,
  \end{align*}
  where $S_2=z^2YX\Psi$. Of course $L_Y S_1=0$. Next $L_X S_2=0$ and $L_Y S_2=z^2 X \Psi=z S_3$. Finally $L_X S_3=z\Psi=z S_1$ and $L_Y S_3=0$. Hence we obtain $\Psi=u^{t}(I-zL)^{-1} v$ with
  \begin{equation*}
    zL=\begin{bmatrix}
      z^3& z &0\\
      0 & 0 & 0\\
      z & 0 & 0
    \end{bmatrix}
    X
    +
    \begin{bmatrix}
      0& 0 &0\\
      0 & 0 & z\\
      0 & 0 & 0
    \end{bmatrix}
    Y.
\end{equation*}
\end{example}

\begin{remark}\label{remark:ourLinearization}
	Another way to obtain a linearization of $\Psi=(1-z^mP)^{-1}$ for a polynomial P is to directly follow the multiplication structure. Let us describe this with an example, occasionally we will use this linearization instead of the one described above. 
	
	Consider the non-commutative polynomial $XYX+YXY$ and the corresponding polynomial with numbered variables$X_1Y_1X_2+Y_2X_3Y_3$. When we look at powers of this polynomial, we see that:
			
		\begin{itemize}
			\item $X_1$ is always followed by $Y_1$,
			\item $X_2$ is followed by $X_1$ or $Y_2$,
			\item $Y_1$ is followed by $X_1$ or $Y_2$,
			\item $Y_2$ is always followed by $X_2$.
		\end{itemize} 
		This can be represented graphically as an automaton, see Figure~\ref{fig:1}. 
		\begin{center}
			\begin{figure}		
				\begin{tikzpicture}[scale=0.5]
					\begin{scope}[every node/.style={circle,thick,draw}]
						\node (A) at (0,3) {$X_1$};
						\node (B) at (3,3) {$Y_1$};
						\node (C) at (6,3) {$X_2$};
						\node (D) at (0,0) {$Y_2$};
						\node (E) at (3,0) {$X_3$};
						\node (F) at (6,0) {$Y_3$} ;
					\end{scope}	
					\begin{scope}[>={Stealth[black]},
						every node/.style={fill=white,circle},
						every edge/.style={draw=red,very thick}]
						\path [->] (A) edge (B);
						\path [->] (B) edge (C);
						\path [->] (C) edge[bend right=60] (A);
						\path [->] (C) edge (D);
						\path [->] (D) edge (E);
						\path [->] (E) edge (F);
						\path [->] (F) edge (A);
						\path [->] (F) edge[bend left=60] (D);
					\end{scope}
				\end{tikzpicture}
				\caption{Multiplication for $X_1Y_1X_2+Y_2X_3Y_3$.}
				\label{fig:1}
			\end{figure}
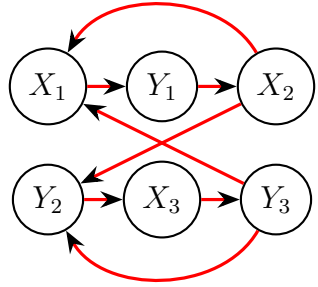
		\end{center}

		Let us name all variables in the considered polynomial $Z_1,Z_2,\ldots,Z_n$, where $n$ is the total number of variables in the polynomial. The polynomial $X_1Y_1X_2+Y_2X_3Y_3$ we have $Z_1 Z_2 Z_3+Z_4 Z_5 Z_6$, we will consider a square matrix in which each row and column corresponds to one of the variables, more precisely we set $L_{i,j}=Z_j$ if $Z_j$ follows $Z_i$, otherwise we set $L_{i,j}=0$.
		We obtain the following linearization matrix (we show the labels of rows and columns)
			\[
		L=\,\begin{blockarray}{ccccccc}
			& X_1 & X_2 & X_3 & Y_1 & Y_2 & Y_3 \\
			\begin{block}{l[cccccc]}
				X_1\, & 0 & 0 & 0 & Y_1 & 0 & 0\\
				X_2\, & X_1 & 0 & 0 & 0 & Y_2 & 0\\
				X_3\, & 0 & 0 & 0 & 0 & 0 & Y_3\\
				Y_1\, & 0 & X_2 & 0 & 0 & 0 & 0\\
				Y_2\, & 0 & 0 & X_3 & 0 & 0 & 0\\
				Y_3\, & X_1 & 0 & 0 & 0 & Y_2 & 0\\
			\end{block}
		\end{blockarray}
		\]
		In order to get a linearization of
                $(1-z^3(X_1Y_1X_2+Y_2X_3Y_3))^{-1}$ we look at $(I-zL)^{-1}$,
                it is clear that every term in expansion of
                $(1-z^3(X_1Y_1X_2+Y_2X_3Y_3))^{-1}$ starts with $X_1$ or $Y_2$,
                hence $u^t=(0,1,0,0,0,0)$ or equivalently
                $u^{t}=(0,0,0,0,0,1)$. On the other hand each term in
                $(1-z^3(X_1Y_1X_2+Y_2X_3Y_3)^{-1}$ ends in $X_2$ or $Y_3$,
                hence we have to terminate with the second or sixth column of $L$, thus $v^t=(0,1,0,0,0,1)$. Clearly we have the equality of formal power series
		\[(1-z^3(X_1Y_1X_2+Y_2X_3Y_3))^{-1}=u^t (I-zL)^{-1} v.\]
		
\end{remark}

\bibliographystyle{abbrv}

\end{document}